\newtheorem{theo}{Theorem}[section] 
\newtheorem{lemma}[theo]{Lemma}    
\newtheorem{corol}[theo]{Corollary}
\newtheorem{propo}[theo]{Proposition}
\theoremstyle{definition}
\newtheorem*{notation}{Notation}
\newtheorem*{tab}{Table}
\newtheorem{remar}[theo]{Remark}
\newtheorem{proc}[theo]{Procedure}
\numberwithin{equation}{section}
 \newcommand{\ld}{,\ldots ,}
\newcommand{\ra}{ \rightarrow }
\newcommand{\Id}{\mathop{\rm Id}\nolimits}
 \newcommand{\Irr}{\mathop{\rm Irr}\nolimits}
 \newcommand{\CC}{\mathbb{C}}
 \newcommand{\ep}{\varepsilon}
\newcommand{\lam}{\lambda }
\newcommand{\si}{\sigma }
 \def\f{{following }}
\def\ii{{if and only if }}
 \def\ir{{irreducible }}
 \def\irt{{irreducible. }}
\def\irr{{irreducible representation }}
 \def\itf{{It follows that }}
\def\mult{{multiplicity }}
\def\rep{{representation }}
 \def\reps{{representations }}
 \def\rept{{representation. }}
 \def\syl{{Sylow $p$-subgroup }}
 \newcommand{\ZZ}{{\mathbb Z}}
\newcommand{\med}{\medskip}
\newcommand{\Tr}[2]{\overline{#2}_{#1}}
\newcommand{\tabA}[1]{\textsc{chevie} sets ${\bf X}_h$, their degrees and parameters for #1.}
\newcommand{\tabB}[1]{{Some $Syl_p$-decompositions of \ir characters for #1.}}
\newcommand{\tabC}[1]{The $Syl_p$-vanishing characters of degree $St(1)$ for #1.}
\newcommand{\tabD}[1]{The $p$-vanishing characters of degree $St(1)$ and their values for
#1.}
\newcommand{\tabDD}[1]{\noindent the character #1. This character differs from
$St$ only on the following classes:}
\begin{document}

\title[On characters of Chevalley groups]{On characters of Chevalley groups vanishing
at the non-semisimple elements}

\author{M.A. Pellegrini}
\address{Departamento de Matem\'atica, Universidade de Bras\'ilia, 
70910-900 Bras\'ilia - DF, 
Brazil}
\email{pellegrini@unb.br}
\thanks{The first author was supported by FEMAT and CNPq - Brazil}

\author{A.E. Zalesski}
\address{Universit\`a degli Studi di Milano-Bicocca, 
Dipartimento di Matematica e Applicazioni, 
via R.Cozzi 53,  20125 Milano,
Italy}
\email{alexandre.zalesski@gmail.com}
\thanks{The work of the second author is a part of 
the collaboration project ``Cohomology and representations'' between the 
University of Milano-Bicocca (Italy) and the University of Brasilia (Brazil).}

\dedicatory{Dedicated to Lino Di Martino on the occasion of his 65th birthday}

\subjclass[2000]{20C33, 20C40}
\keywords{Chevalley groups; generalized Steinberg characters; projective modules;
Gelfand-Graev characters; $p$-singular elements}

\begin{abstract}
Let $G$ be a finite simple group of Lie type. In this paper we study  characters of  $G$
that vanish at the non-semisimple elements and whose degree is equal to the order of a
maximal 
unipotent subgroup of $G$. Such characters can be viewed as a natural generalization of
the Steinberg character.
For groups $G$ of small rank we also determine  the characters of this degree 
vanishing only at the non-identity unipotent elements. 
\end{abstract}

\maketitle

\section{Introduction}

Let $G$ be a finite simple group of Lie type in defining characteristic
$p$, and let $|G|_{p}$ denote the order of a Sylow $p$-subgroup
of $G$.  There is a unique \irr of $G$  of
dimension $|G|_{p}$ over the complex numbers, and this is called {\it the Steinberg
representation}. We denote here by $St$ the character of the
Steinberg representation. The Steinberg character plays a
prominent role in the character  theory of groups of Lie type, so
this encourages one to look for generalizations 
of such character. In particular, in this paper we consider 
$p$-vanishing characters of degree $|G|_p$.

Let $H$ be any finite group and $p$ a prime. Elements $x\in H$ of  order divisible by $p$
are called $p$-singular.
  Characters (in
general reducible) that vanish at the $p$-singular elements of $H$ are
called $p$-{\it vanishing characters} in this paper. The characters of projective modules over a field of
characteristic $p$ are $p$-vanishing.  We think that $p$-vanishing characters are of significant interest due to the 
connection with projective modules. In the case where $H$ is $p$-solvable, the well known
result of Fong (1962) tells us  
that
every $p$-vanishing character is the character of a projective
module \cite[32.17]{CR1}. In general, this is not true.  To the best of our knowledge,
no publications on $p$-vanishing  characters appears since 1962.

As $St$ is the
character of a projective module, one may think of the
characters of  projective modules of dimension $|G|_p$ as
natural analogs of the Steinberg character. Malle and Weigel
\cite{MW} determined the projective modules of degree $|G|_p$
whose character has the trivial character  $1_G$ as a
constituent.  If $G$ is a group of Lie type in
characteristic $p$, the second named author showed in \cite{Z}
that the restriction on $1_G$ can be dropped. In other words, the Steinberg
character and the characters obtained in \cite{MW} are the only
characters of projective modules of $G$ of degree $|G|_p$. 
(Note that for other primes dividing $|G|$ and for other groups the
problem remains open.) In both the papers \cite{MW} and \cite{Z} the argument
relies on the $p$-modular representation theory. Our results on $p$-vanishing characters of degree $|G|_p$ are stronger,
and make no use of the theory of modular representations. Our main result is the \f theorem:

\begin{theo}\label{mt0}
Let $G$ be a
simple group of Lie type in defining characteristic $p$. If  $G\in \{A_n(q)$, $n>4$, ${}^2A_n(q)$,  $n>2$, $B_n(q)$, $q$ odd, $n>5$,
 $C_n(q)$, $n>2$, $D^+_n(q)$, $n>5$, $D^-_n(q)$, $n>3$,  ${}^3D_4(q)$,
$E_6(q)$, ${}^2E_6(q)$, $E_7(q)$, $E_8(q)$, $F_4(q)$, ${}^2F_4(q^2)$, $G_2(q)\}$
then the Steinberg character is the only  $p$-vanishing character of degree $|G|_p$ (in particular every projective module of this dimension is the Steinberg module).
\end{theo}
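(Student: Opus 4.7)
The plan is to write an arbitrary $p$-vanishing character $\chi$ of degree $|G|_p$ as a nonnegative integer combination $\chi=\sum a_i\chi_i$ of the complex irreducible characters of $G$, and to show that the only admissible combination is $\chi=St$. Since $\chi(1)=|G|_p=St(1)$, every constituent of $\chi$ satisfies $\chi_i(1)\le|G|_p$, so the first task is to assemble, for each family $G$ in the statement, a short explicit list of irreducible characters of degree $\le|G|_p$. For this one can quote the known lower bounds and classifications of small irreducible degrees for groups of Lie type (Landazuri--Seitz, Seitz--Zalesski, L\"ubeck, Tiep--Zalesski), which become very restrictive precisely in the rank ranges imposed in the statement: outside a handful of small-degree ``generalized Steinberg'' candidates one essentially sees only $1_G$, the reflection character, and $St$ itself.

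With the candidate list in hand, I would impose the vanishing condition $\sum_i a_i\chi_i(x)=0$ on carefully chosen $p$-singular classes. The most effective test elements are long-root (transvection) elements and regular unipotent elements, since the values of the candidate constituents on these classes are either standard or can be read off from the \textsc{chevie} data already used in the paper. Combined with the degree equation $\sum_i a_i\chi_i(1)=|G|_p$, this yields an integer programming system of very small size whose only nonnegative solution should be $a_{St}=1$ and all other $a_i=0$. An auxiliary observation that helps frame the argument is that every projective character has degree divisible by $|G|_p$, so any projective $p$-vanishing character of degree $|G|_p$ must already be a single indecomposable projective; the papers \cite{MW,Z} classify these, but since the present theorem concerns all $p$-vanishing characters (not merely the projective ones), this remark only dispatches part of the problem and the bulk of the work is the character-theoretic case analysis above.

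The main obstacle will be the non-uniformity of this case analysis and the need to handle the groups near the boundary of each rank range individually. In particular, for $C_n(q)$ with small $n$, for $D_n^{\pm}(q)$ near the stated rank bounds, and for the twisted/exceptional families ${}^3D_4(q)$, $G_2(q)$, $F_4(q)$, ${}^2F_4(q^2)$, additional small-degree characters -- cuspidal unipotent characters, Weil-type characters, or non-principal constituents of Gelfand--Graev representations -- appear and must each be eliminated by explicit values on unipotent classes rather than by a pure arithmetic count. The rank restrictions in the statement are presumably chosen exactly so that outside these borderline situations no combination other than $St$ can simultaneously meet the degree constraint and vanish on the selected unipotent classes, yielding a uniform conclusion family by family.
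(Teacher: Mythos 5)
There is a genuine gap, and it occurs at the very first step. You propose to list all irreducible characters of degree at most $|G|_p$ and then solve a small integer program. But $|G|_p=q^N$, where $N$ is the number of positive roots, is an enormous bound: the Steinberg character is (up to small factors) among the \emph{largest}-degree irreducible characters of $G$, so the overwhelming majority of $\Irr G$ — on the order of $q^{\ell}$ characters, growing without bound with $q$ — have degree below $|G|_p$. The Landazuri--Seitz/L\"ubeck-type results you cite classify only the few characters of degree polynomially small compared to $|G|_p$ (e.g.\ degree about $q^{n-1}$ versus $|G|_p=q^{n(n-1)/2}$ for $A_{n-1}(q)$), so they cannot produce the ``short explicit list'' your argument needs. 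Without such a list, and without any a priori bound on the multiplicities $a_i$, evaluating at a transvection and a regular unipotent class gives only two linear conditions on an unbounded number of unknowns; this is nowhere near enough to force $\chi=St$. The delicacy of the problem is visible in the excluded case $C_2(q)$: reducible $p$-vanishing characters of degree $|G|_p$ exist exactly when $7\mid(q+1)$, a condition detected in the paper only by solving cubic equations in $\cos(2\pi k/(q+1))$ over many classes $C(i)$ simultaneously — no fixed finite set of test classes chosen uniformly in $q$ would reveal it.

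The paper's route is entirely different: it inducts on the BN-pair rank using the Harish-Chandra restriction $\overline{\chi}_L$, which is again $p$-vanishing with $\overline{\chi}_L(1)=|L|_p$ (Lemma \ref{x2a}). Multiplicities of constituents are controlled by the identity $(\chi,\nu^G)=\chi(1)/|G|_p=1$ for every (general) Gelfand--Graev character $\nu^G$ (Lemma \ref{gg1}), and the key dichotomy ``$\chi=St$ or $(\chi,1_G)=1$'' (Theorem \ref{nh2}) is proved via the Curtis--Iwahori--Kilmoyer parametrization of the constituents of $1_B^G$ by $\Irr W$. The inductive step (Theorem \ref{fr4}) then chooses two overlapping subsets $J,K$ of the Dynkin diagram so that $G_J$ has no reducible $p$-vanishing character of degree $|G_J|_p$; this forces $\overline{\chi}_{L_J}$ to be the Steinberg character of $L_J$ and rules out $(\chi,1_G)=1$. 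The base of the induction — rank $1$ and $2$ groups together with $GL(n,q)$ for $n=4,5,6$ and $CSp(6,q)$ — is where the explicit generic character table computations live, and it is these computations, not a degree count, that eliminate the non-Steinberg candidates. If you want to pursue your global approach, you would at minimum need a uniform-in-$q$ description of all characters of degree $\le |G|_p$ together with their values on whole families of semisimple and mixed classes, which is essentially what the generic character tables provide and what makes the problem as hard as the paper's case analysis.
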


Naturally, one wishes to know whether the converse is true. Unfortunately, we have not been able to
manage with the groups $B_3(q)$, $B_4(q)$, $B_5(q)$, $D_4(q)$ and $D_5(q)$ for arbitrary $q$.    
For all other groups we have the full answer:

\begin{theo}\label{mt1}
Let $G$ be a
simple group of Lie type in defining characteristic $p$.
\begin{itemize}
\item[(1)] Groups $A_1(q)$,  $A_2(q)$, ${}^2B_2(q^2)$, ${}^2G_2(q^2)$
have reducible $p$-vanishing characters of degree $|G|_p$, 
where $q$ is any $p$-power in the former two cases, whereas in the latter
two cases $q^2$ is an odd $p$-power for $p=2,3$, respectively).
\item[(2)] Groups ${}^2A_2(q)$, $A_3(q)$, $A_4(q)$ have reducible $p$-vanishing characters
of degree $|G|_p$ \ii $q+1$ is divisible by $3$.
\item[(3)] Groups $C_2(q)\cong B_2(q)$ have reducible $p$-vanishing
characters of degree $|G|_p$ \ii $q+1$ is divisible by $7$.
\end{itemize}
\end{theo}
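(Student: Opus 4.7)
I would treat each of the groups listed in Theorem \ref{mt1} individually, leveraging the explicit ordinary character tables available in the literature: the classical table of $A_1(q)=\mathrm{PSL}_2(q)$, Suzuki's table of ${}^2B_2(q^2)$, Ward's table of ${}^2G_2(q^2)$, the Simpson--Frame table of ${}^2A_2(q)=\mathrm{PSU}_3(q)$, Srinivasan's table of $B_2(q)\cong C_2(q)$, and the $\mathrm{GL}_n$/\textsc{chevie} tables for $A_3(q)$ and $A_4(q)$. In each case the set of $p$-singular classes is short and the value of every irreducible character at these classes is a known polynomial in $q$ and a few roots of unity, so the problem reduces to a finite, case-by-case linear-algebra calculation.

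\textbf{Existence (Part (1) and the ``if'' directions of (2) and (3)).} For each group I would exhibit an explicit non-negative integer combination $\chi=\sum_i n_i\chi_i$ of irreducibles (necessarily omitting $St$, since adding anything of positive degree to $St$ would overshoot $|G|_p$) satisfying $\chi(1)=|G|_p$ and $\chi(g)=0$ on every $p$-singular $g$. For example, in $\mathrm{PSL}_2(q)$ the sum $1_G+\chi_\alpha$, where $\chi_\alpha$ is a cuspidal character of degree $q-1$ taking value $-1$ on the non-identity unipotent classes, is a $p$-vanishing character of degree $q$. The Suzuki and small Ree cases admit similar short combinations of cuspidal characters whose values cancel on the few unipotent classes. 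In the cases ${}^2A_2(q)$, $A_3(q)$, $A_4(q)$ and $C_2(q)$ the analogous combinations involve characters whose multiplicities become non-negative integers precisely when $3\mid q+1$ or $7\mid q+1$ respectively; once the divisibility holds, writing down $\chi$ is direct.

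\textbf{Necessity (Parts (2) and (3)).} To prove the converses, I would enumerate all multisets $\{(n_i,\chi_i)\}$ with $\sum_i n_i\chi_i(1)=|G|_p$ and $n_i\in\mathbb{Z}_{\geq 0}$, and solve the linear system imposed by $\chi(g)=0$ on the $p$-singular classes. The key observation is that in $\mathrm{PSU}_3(q)$ certain character values on regular unipotent classes are sums of primitive cube roots of unity whose rationality (as needed for non-negative integer solutions) is equivalent to $3\mid q+1$, and analogously in $C_2(q)$ the two families of regular unipotent classes impose congruence conditions that reduce to $7\mid q+1$. When the relevant divisibility fails, the only solution is $\chi=St$, so no reducible candidate exists. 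The $A_3(q)$ and $A_4(q)$ analyses are parallel to the $\mathrm{PSU}_3$ case, since the same cuspidal-type characters appear with multiplicities indexed by $\gcd(q+1,3)$.

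\textbf{Main obstacle.} I expect the hardest part to be the necessity direction in (3). The character table of $C_2(q)$ contains several families of characters of degree close to $|G|_p=q^4$, so the combinatorial search for candidate decompositions is large; moreover the appearance of the prime $7$, which is neither a bad prime nor a torsion prime of the root system $C_2$, suggests that the obstruction is subtle and will emerge only from a careful accounting of character values at the two classes of regular unipotent elements. I anticipate this case will require most of the computational effort and possibly a computer-algebra check to exhaust the remaining decompositions; the ${}^2A_2(q)$, $A_3(q)$, $A_4(q)$ arguments should be cleaner because $\gcd(q+1,3)$ enters uniformly through the eigenvalues of the Frobenius on a single cuspidal character.
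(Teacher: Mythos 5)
Your overall plan (a case-by-case analysis against explicit generic character tables, with a direct construction for existence and an exhaustion argument for necessity) is the same as the paper's, and your existence construction for part (1) matches the one actually used for $A_1(q)$ (Lemma \ref{s21}: $\psi-1_G$ an irreducible cuspidal character). But there are two genuine gaps on the necessity side. First, you locate the obstruction in the character values at the regular unipotent classes. That cannot work: vanishing on unipotent elements is only the $Syl_p$-vanishing condition, and reducible $Syl_p$-vanishing characters of degree $|G|_p$ exist for \emph{every} $q$ in these families --- for $Sp(4,q)$ with $q$ odd the paper's Table \ref{TSp4}.2-C lists $76$ reducible ones, and for $U(3,q)$ there are reducible ones for all $q$. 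The divisibility conditions $3\mid(q+1)$ and $7\mid(q+1)$ appear only when one further imposes vanishing at the \emph{mixed} $p$-singular classes $su$ with $s\neq 1$ semisimple commuting with $u\neq 1$ unipotent (together with the block constraint of Lemma \ref{bb1}); it is on those classes that the values become combinations of $(q+1)$-th roots of unity, and for $C_2(q)$ the cubic $C^3+\tfrac12 C^2-\tfrac12 C-\tfrac18=\prod_{t=1}^{3}\bigl(C-\cos\tfrac{2t\pi}{7}\bigr)$ forces $7\mid(q+1)$. Your linear system has to be set up on those classes, not on the unipotent ones.

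Second, ``enumerate all multisets with $\sum_i n_i\chi_i(1)=|G|_p$'' is not a finite search as stated: the number of irreducible characters grows with $q$ and nothing in your proposal bounds the multiplicities $n_i$, which can genuinely exceed $1$ (e.g.\ $\chi_1+2\chi_2+\chi_4+\chi_5+\chi_6$ for $PSU(3,5)$, Remark \ref{re0}). The paper makes the enumeration finite and uniform in $q$ by three devices you would need substitutes for: (i) the Gelfand--Graev constraint $(\chi,\nu^G)=\chi(1)/|G|_p=1$ for every linear character $\nu$ of $U$, which forces the Gelfand--Graev part of $\chi$ to be multiplicity free with at most one constituent in common with each $\nu^G$; (ii) the bookkeeping by $Syl_p$-equivalence classes and systematic substitution of $Syl_p$-decompositions $\tau\equiv\chi_1+\cdots+\chi_k\pmod U$, which reduces the search to the characters admitting no such decomposition; and (iii) for $A_3(q)$ and $A_4(q)$, the extension of any $p$-vanishing character of $SL(n,q)$ to a $Syl_p$-vanishing character of $GL(n,q)$ (Proposition \ref{lu6}), where the Gelfand--Graev theory is cleanest, before computing. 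Without these the combinatorial search does not terminate, and the $A_3$, $A_4$ cases are considerably heavier than ``parallel to $PSU_3$'' suggests.
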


Moreover, we give an explicit decomposition of every  $p$-vanishing character as a sum of \ir ones, and this is 
used in the   proof of Theorem \ref{mt0}. For instance, in order to 
prove that $PSL(6,q)$ belongs to the list of  Theorem \ref{mt0}, we need to know all \ir constituents of 
the $p$-vanishing characters of degree $q^{10}$ for the group $PSL(5,q)$.

As long as we have obtained the decompositions of  the  characters  in items (1), (2), (3) of Theorem  \ref{mt1}
in terms  of \ir ones, we are able  to compute 
the  values of every $p$-vanishing character and compare it with those of the Steinberg character.  
 (Full information is tabulated 
at the end of the paper.) We find out that every such character has real values and does not vanish at the semisimple elements. An essential feature of our results is that they are uniform with respect of  $q$.
 
One of possible applications of our results to projective modules and the structure of the decomposition matrices can be outlined as follows.
Let $\Psi,\Phi$ be the characters of projective modules; we write  $\Psi>\Phi$ if $\Psi-\Phi$ is a proper character.
Looking at known decomposition matrices one observes a lot of examples of projective  indecomposable modules whose characters $\Psi,\Phi$ satisfy $\Psi>\Phi$. However, their dimensions cannot differ by $|G|_p$ for most groups:  

\begin{corol} Let $G$ be a group as in \emph{Theorem \ref{mt0}}. Let $\Psi>\Phi$ be the characters of two projective modules of $G$.
Then either $\Psi-\Phi=St$ or  $\Psi(1)-\Phi(1)\geq 2|G|_p$.\end{corol}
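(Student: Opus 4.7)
The plan is to reduce the corollary to Theorem \ref{mt0} by observing that $\Psi-\Phi$ is forced to be a $p$-vanishing character and then using the standard divisibility constraint on its degree.

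First I would note that characters of projective $kG$-modules (with $k$ of characteristic $p$), lifted to characteristic zero, vanish on all $p$-singular elements of $G$. Hence both $\Psi$ and $\Phi$ are $p$-vanishing in the sense of the paper, and so is their difference $\Psi-\Phi$. By the hypothesis $\Psi>\Phi$, this difference is moreover a proper character of $G$, i.e.\ a non-negative integer combination of irreducible complex characters. So $\Psi-\Phi$ is a proper $p$-vanishing character.

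Next I would invoke the well-known fact that the degree of any virtual $p$-vanishing character of $G$ is divisible by $|G|_p$. This follows from Brauer's description of the cospace of class functions vanishing on $p$-singular elements: the characters of the projective indecomposable $kG$-modules form a $\ZZ$-basis of this $\ZZ$-module, and each such character has degree divisible by $|G|_p$. Applying this to $\Psi-\Phi$, we get $\Psi(1)-\Phi(1)=k|G|_p$ for some integer $k\geq 1$ (strict positivity from $\Psi>\Phi$).

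To conclude, split on the value of $k$. If $k=1$, then $\Psi-\Phi$ is a $p$-vanishing character of degree exactly $|G|_p$; since $G$ appears in the list of Theorem \ref{mt0}, the only such character is $St$, hence $\Psi-\Phi=St$. If $k\geq 2$, then directly $\Psi(1)-\Phi(1)\geq 2|G|_p$, which is the remaining alternative in the statement.

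There is no real obstacle: the corollary is a one-step consequence of Theorem \ref{mt0} once the divisibility $|G|_p\mid (\Psi(1)-\Phi(1))$ is in place, and that divisibility is a classical property of projective characters rather than something one must reprove here.
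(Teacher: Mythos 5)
Your argument is correct and is exactly the reduction the paper intends: the paper states this corollary without a separate proof precisely because $\Psi-\Phi$ is a proper $p$-vanishing character, its degree is a positive multiple of $|G|_p$ (the paper records this divisibility for $Syl_p$-vanishing proper or generalized characters via restriction to a Sylow $p$-subgroup), and Theorem \ref{mt0} handles the case of degree exactly $|G|_p$. No substantive difference from the paper's approach.
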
 

If $\Psi,\Phi$ are the characters of indecomposable projective modules, then $\Psi>\Phi$ means that, for every row of the 
decomposition matrix, 
the $\Phi$-entry is not greater than the $\Psi$-entry. The above corollary implies $\Psi(1)-\Phi(1)\geq 2|G|_p$
(as the first option of the corollary cannot occur).

Our proofs of Theorems \ref{mt0} and \ref{mt1} is based on the contemporary theory of 
characters of groups of Lie type. Particularly important role is played by the
Gelfand-Graev characters and  
Harish-Chandra theory, together with regular and cuspidal characters. 
Note that if $\chi$ is a $p$-vanishing character, then so is
the Harish-Chandra restriction $\overline{\chi}_L$ of $\chi$ to every Levi subgroup $L$ of $G$
(see \cite{MW,Z} or Lemma \ref{x2a} below). Moreover,  $\chi(1)/|G|_p=\overline{\chi}_L(1)/|L|_p$. 
Let $L_u$ be the subgroup
of $L$ generated by the unipotent elements of $L$. Then, the  restriction $\tilde \chi_L$
of $\overline{\chi}_L$ to $L_u$
is a $p$-vanishing character of $L_u$ of the same degree. Therefore, through induction assumption we have information
on properties of  $\tilde \chi_L$ for all Levi subgroups $L$ of $G$. Therefore, in order to run induction, we need first to prove
Theorem \ref{mt1} 
for groups of BN-pair rank $1$. This case is dealt with in   Section
\ref{rk1}.
Surprisingly, only for $G=SL(2,q)$  this is easy. 

One could expect that these groups constitute the base of induction for our proof.
However, this is not the case as the situation is more complex. Most groups of rank $2$, especially, $Sp(4,q)$, ${}^3D_4(q)$ 
and ${}^2F_4(q^2)$
require quite a lot of computational work. Probably, performing this work manually 
is not realistic, but with use of \textsc{chevie}   we complete the analysis of groups of rank $2$
(see Section \ref{rk2}).

Our results on groups of rank $2$ are sufficient to run induction for unitary groups and $D^-_n(q)$. However, for the series $SL(n,q)$ we cannot run induction from
 $n<6$, and for the series  $Sp(2n,q)$, 
the induction 
starts from $n=3$. Groups $B_3(q)$ and $D_4^+(q)$ should also be handled by computations.
However, the \textsc{chevie} 
package does not contain enough data to deal with these groups. 
We are still unable to  complete the work for groups $B_n(q)$, $n=3,4,5$, and  $D_n^+(q)$, $n=4,5$.
If $n>5$, as well as for the groups $E_6(q),E_7(q),E_8(q)$, 
we do have the result stated in Theorem \ref{mt1}  due to the fact that we can use the result for $SL(n,q)$ for
induction purpose. 

For computer computations we use the program package \textsc{chevie}, created by a research group at University of Aachen. This
 is better adapted to deal with groups with connected center such as $GL(n,q)$ or $CSp(2n,q)$, 
the conformal symplectic group. In particular, we elaborate in details the cases 
$GL(6,q)$ and $CSp(6,q)$.
We expect that every $p$-vanishing character of a simple group $G$ of Lie type  in defining characteristic $p$
is the restriction to $G$ of a $p$-vanishing character of a suitable group with connected center. Unfortunately, 
we have not been able to prove this. Instead, we prove 
that every $p$-vanishing character of
$G$  is the restriction to $G$ of a $Syl_p$-vanishing character of a  group $H$ with
connected center (Proposition 
\ref{lu6}). 
A $Syl_p$-vanishing character of a finite group $G$ means a character vanishing at all
non-identity 
elements of a \syl of $G$.
This forces us
to pay significant attention to determining $Syl_p$-vanishing characters of degree $|G|_p$ for groups $G$ 
of small rank. In fact,
we determine all $Syl_p$-vanishing characters of degree $|H|_p$ for quasi-simple groups
$H$ of BN-pair rank $1$ and $2$ 
and the groups $H=CSp(6,q)$, $q$ odd, $GL(n,q)$, $n=4,5,6$, $U(n,q)$, $n=3,4,5$, 
and use certain elements of $H$ to rule out the characters that are not $p$-vanishing.
This gives us the list
of the $p$-vanishing characters of degree $|H|_p$ for $H$.
Our results on $Syl_p$-vanishing characters are auxiliary for the purpose of this paper, however, we expect that they will play some role in general theory.

In order  to perform the inductive step, we have to convert  information on $\tilde \chi_L$
for various Levi subgroups $L$ to a certain conclusion on $\chi$. Apart from   
Harish-Chandra induction
(see  \cite{CR2}), 
we use some basic theory of general
Gelfand-Graev characters.  We define a (general)
{\it Gelfand-Graev character}  to be $\nu^G$, where $\nu$ is a linear character of
a \syl of $G$ for $p$ being the defining characteristic of $G$. (The traditional definition 
of a Gelfand-Graev character (see \cite[14.29]{DM} or \cite[p. 254]{Ca})
 requires a certain non-degeneracy condition on $\nu$, which is dropped in our definition. Gelfand-Graev characters 
satisfying the non-degeneracy  condition are called here {\it non-degenerate} as in \cite{Ko}.)
Then we have $(\chi, \nu^G)=\chi(1)/|G|_p$
for every $p$-vanishing character $\chi$ (Lemma \ref{b1}). This allows us to take some control
of the \ir constituents  of $\chi$ that are common with $\nu^G$ for some $\nu$. 

For groups other than $SL(n,q)$ there are \ir characters that do not belong to any
$\nu^G$. We are unable  to control such constituents of $\chi$, and this is a source of
certain difficulties. Strictly speaking, our non-computational results only concern  with  the
"Gelfand-Graev part`` $\gamma(\chi) $ of $\chi,$ where $\gamma(\chi) $ is the sum of all \ir 
constituents of $\chi$ that occur in  $\nu^G$ for some $\nu$.

The formula   $(\chi, \nu^G)=\chi(1)/|G|_p$ allows us to bound
the multiplicities of the constituents common for $\chi$ and $\nu^G$ but does not help in identifying
them. In particular, if $G$ is with connected center of BN-pair rank $2$, and if $\chi$ is $p$-vanishing character of degree $|G|_p$
then the number of the constituents of $\chi$ in question is at most 4, however, even in these cases we cannot avoid
computations with explicit character tables. For projective module characters of degree $|G|_p$ this 
has been done manually, see \cite{MW} and \cite{Z}. For some groups  this can be done manually 
in our situation too, however, some groups of rank 2 such as ${}^2F_4(q^2)$ seem to require unbearable volume of
manual work. 

The induction step would  be easy if some Levi subgroup
of $G$ had no $p$-vanishing character of degree $|G|_p$. However,
this never happens, as the defect zero \ir characters of $L$ (in
particular, the Steinberg one) are $p$-vanishing of degree $|L|_p$.
In some cases of interest for every maximal Levi subgroup
$L$ every $p$-vanishing character of degree $|G|_p$ is of defect
$0$. This leads to the natural question:  is  a
$p$-vanishing character  \ir if the Harish-Chandra restriction
of it to every Levi subgroup is a sum of defect $0$ \ir characters?
In the case of projective indecomposable modules the
Brauer-Nesbitt correspondence  reduces  this to a similar question
for \ir modules, which are easy to handle (see \cite{Z}). In our case no
analog of the Brauer-Nesbitt correspondence is available. This
explains why one cannot mimic any machinery from the theory of
projective modules to deal with $p$-vanishing characters.

Now we continue our discussion of groups of small rank. As mentioned above, the task is to determine
all $Syl_p$-vanishing characters of degree $|G|_p$.
Technical difficulties are forced us to look for tools for 
simplifying computational work. We have found an advantage to 
determine first the characters $\chi$ of $G$ such that $\chi|_U$
is the  character of the regular representation of $U$, where $U$ is a \syl of $G$.
For groups of small rank the number of such characters is not large,
and we are able to check which of them are $p$-vanishing.
The matter is that the \textsc{chevie}  format of the character table is better adapted to
deal with $Syl_p$-vanishing characters, as \textsc{chevie}  organizes them in sets
such that all characters in the same set have the same restriction to $U$.
In this way we found another  phenomenon of  general interest. 
Specifically, we discover a considerable amount of formulas of type $\chi_1|_U=\chi_2|_U+ \chi_3|_U$, 
where $\chi_i$ $(i=1,2,3)$ are \ir characters of $G$. 
Therefore, if $\chi_1$ occurs in a decomposition of some $Syl_p$-vanishing character
in terms of \ir characters, then the substitution $\chi_2+ \chi_3$ for $\chi_1$
yields another decomposition, in which the degrees of $\chi_2$, $ \chi_3$ are less
than that of $\chi_1$. We did not try in this paper to obtain all formulas of this kind, 
but those we have got are collected in the tables at the end of the paper. 
Partially, we expose them  as tools helping  the reader to control 
our computations. Another purpose is to provide experimental material for theoretical analysis.  

Note that some formulas provided in the tables  involve more than three 
terms (see for instance Table \ref{T2B2}-A for $G={}^2B_2(q^2)$). 

\begin{notation}
${\mathbb C}$ is the complex number field, ${\mathbb Z}$ is the ring
of integers and $F_q$ is the finite field of $q$ elements. For a positive integer $m$
we define ${\mathbb Z}_m={\mathbb Z}/m{\mathbb Z}$.  We often view ${\mathbb Z}_m$ as the set of integers 
$\{1\ld m\}$. 

For a  finite group  $G$ we denote by $G'$, $Z(G)$ and $|G|$ the derived subgroup,  the center and  the
order of $G$, respectively. If $p$ is a prime then $|G|_p$ is the $p$-part of
$|G|$ and also the order of every \syl of $G$. A $p'$-group means
a finite group with no element of order $p$. If $g\in G$ then
$|g|$ is the order of $g$. All \reps and modules are over
${\mathbb C}$
(unless otherwise stated). 
Therefore, we take liberty to use the term `$G$-module'.  All
modules are assumed to be finitely generated.   The character of the regular \rep
of $G$ is denoted by $\rho_G^{\rm reg}$ and the trivial one-dimensional
\rep is denoted by $1_G$. We also use $1_G$ to denote the trivial
one-dimensional module and its  character. The inner product of 
class functions $\eta,\eta'$ is denoted by $(\eta,\eta')$.

The set of \ir characters of $G$ is denoted by $\Irr G$, and any integral linear
combination of
elements of  $ \Irr G$
 is called a generalized  character of $G$. 

Let $H$ be a subgroup of $G$ and $N$  a normal subgroup of $H$. Let $M$ be 
an $H$-module over a field.
Then $C_M(N)$ is an $H$-module, and when it is viewed as
$H/N$-module, it is denoted  by $\overline{M}_{H/N}$ (or
$\overline{M}$), and called the Harish-Chandra restriction of $M$
to $H/N$. Conversely, given an $H/N$-module $D$, one can view it
as an $H$-module with trivial action of $N$. Then the induced
$G$-module $D^G$ (when $D$ is viewed as an $H$-module) is denoted
by $D^{\# G}$ and  called Harish-Chandra induced from $D$. For
details see  \cite[p. 667, \S 70A]{CR2}, where these operations
are called generalized restriction and induction. They correspond
to similar operations on characters (or Brauer characters). The Harish-Chandra
restriction and induction extend by linearity to class functions
on $G$ with values in $\CC$. So if $\chi$ is a class function on $G$ then
$\overline{\chi}_{H/N}$ is the corresponding Harish-Chandra
restriction of $\chi$ to  $H/N$, and if $\lam$ is a class function
on $H/N$ then $\lam^{\# G}$ denotes the Harish-Chandra induced
class function on $G$. Let $\eta$ be a class function on $G$.  The
formula $(\lam^{\# G},\eta)=(\lam , \overline{\chi}_{H/N})$ is an
easy consequence of the Frobenius reciprocity and called the
Harish-Chandra reciprocity. (This is the formula 70.1(iii) in
\cite[p. 668]{CR2}.) Given a class function $f$ on $G$ define a
function on $H$ by $h\ra \frac{1}{|N|}\sum_{n\in N}f(hn)$ $(h\in  H)$.
This function is constant on the cosets $hN$ and therefore defines
a function on $H/N$ called the {\it truncation of $f$ to} $H/N$
with respect to $N$ (see \cite[70.4]{CR2} or \cite[pp. 262 - 283]{Ca}). 
Sometimes we also call the truncation the function $h\ra
\frac{1}{|N|}\sum_{n\in N}f(hn)$ on $H$ (which hopefully does not lead
to a confusion). 
Note that the truncation and the Harish-Chandra restriction of a character
coincide.

Let ${\mathbf H}$ be a reductive algebraic group in defining characteristic $p>0$. An algebraic group endomorphism  
$Fr:{\mathbf H}\ra {\mathbf H}$ is called {\it Frobenius} if 
the the subgroup ${\mathbf H}^{Fr}=\{h\in {\mathbf H}:Fr(h)=h\}$ is finite.
This subgroup is called a {\it finite reductive group}, and $p$ is referred as 
the defining characteristic of ${\mathbf H}^{Fr}$ too. Every finite reductive group  
is a group with BN-pair, and our use of terms `Borel, parabolic  and Levi subgroup, Weyl group' is as 
in the theory of groups with BN-pair \cite[\S 65]{CR2}. More detailed notation will be introduced later.   

If ${\mathbf H}$ is a simple algebraic group of universal type 
then we refer to 
${\mathbf H}^{Fr}$ as a {\it Chevalley group}. (Some authors  additionally assume that  $Fr$ acts 
trivially on the Weyl group of ${\mathbf H}$, but we prefer to refer to such groups as 
non-twisted Chevalley groups.)  

Let $H={\mathbf H}^{Fr}$ be a finite reductive group. We use some results on the Curtis
duality (called also the Curtis-Alvis duality) for the class
functions on $H$, see \cite[\S 71A]{CR2}. If  $\phi$ is a class function on $H$, we denote
by $D(\phi)$ the Curtis dual of $\phi$.  If $D(\phi)=\pm \phi$ then we call $\phi$ Curtis self-dual. 

Our notation for simple groups of Lie type are as in \cite{Ca}, however, for classical groups we also 
use the traditional notation such as $PSU(n,q)$ for the unitary group. Similarly, for quasi-simple groups and 
classical groups with connected center. For instance, $CSp(2n,q)$ denotes the conformal symplectic group.  

\end{notation}

\section{$p$-vanishing  characters and the
Harish-Chandra restriction}

We start with few observations valid for every finite group.

\subsection{$p$-vanishing and $Syl_p$-vanishing characters}
Let $\phi$ be a class function on a finite group $G$, and 
let $p$ a prime dividing $|G|$. We say that $\phi$ is
$p$-{\it vanishing} if $\phi(g)=0$ whenever $p$ divides $|g|$
$(g\in G)$. This also applies to characters and to generalized
characters. For instance, the character of the  regular module is
$p$-vanishing for every prime $p$. The characters of projective
$G$-modules over a field of characteristic $p$ are $p$-vanishing. It is well
known that every $p$-vanishing function is a linear combination of
the characters of projective indecomposable modules, and every $p$-vanishing generalized character
is an integer linear combination of them, see  \cite[Ch. IV,
Corollary 3.13]{Fe} or \cite[Theorem 18.26]{CR1}.
Furthermore, we say that $\phi$ is $Syl_p$-{\it vanishing}  if
$\phi(g)=0$ whenever $g\neq 1$ belongs to a \syl of $G$. Obviously,
$p$-{\it vanishing} class functions are $Syl_p$-vanishing, but the
converse is not always true. However, for \ir characters, the converse is
true (well known). (If $G=PSL(3,q)$, $q=p^k$, then there are
$Syl_p$-vanishing reducible characters
of degree $|G|_p$ that are not $p$-vanishing.)

If $\chi$ is a $Syl_p$-vanishing proper or generalized character
of $G$ then $\chi(1)$ is a multiple of $|G|_p$, and we set
$c_\chi=\chi(1)/|G|_p$.  
This notation allows us to express certain information in a
more compact form.

A proper character $\chi$ is $Syl_p$-vanishing \ii the restriction of $\chi$
to a \syl $U$ of $G$ is the character of a free $U$-module.

A proper $p$-vanishing (resp. $Syl_p$-vanishing) character $\chi$
is called {\it minimal} if whenever $\chi=\chi_1+\chi_2$, where
$\chi_1$, $\chi_2$ are proper characters of $G$, then neither
$\chi_1$ nor $\chi_2$ is $p$-vanishing (respectively, $Syl_p$-vanishing).

Obviously, \ir $p$-vanishing and $Syl_p$-vanishing characters are
minimal and have defect 0. The converse is not true, for instance,
every group $SL(2,q)$, $q=p^k$, has a reducible $p$-vanishing
character of degree $q$. Thus, minimal $p$-vanishing characters
are analogous to \ir characters of defect $0$.
Other examples of minimal $p$-vanishing and $Syl_p$-vanishing
characters $\chi$ are those with $c_\chi=1$.  
Note that $c_\chi=1$ means that $\chi|_U=\rho_U^{reg}$, where $U$
 is a \syl of $G$.

\begin{lemma}\label{bb1}
Let $G$ be a finite group and let $\chi$ be a minimal
$p$-vanishing character of $G$. Then all \ir constituents of
$\chi$ are in the same block.
 \end{lemma}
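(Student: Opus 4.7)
The plan is to expand $\chi$ in the basis of PIM characters, regroup that expansion by $p$-blocks, and use minimality to force only one block to contribute.

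First I would invoke the fact quoted in the excerpt (see \cite[Ch. IV, Cor.~3.13]{Fe} or \cite[Thm.~18.26]{CR1}) that every $p$-vanishing generalized character of $G$ is an integer linear combination of the characters $\Phi_\varphi$ of the projective indecomposable modules $P_\varphi$, where $\varphi$ ranges over the irreducible $p$-Brauer characters of $G$. Since $\chi$ is a \emph{proper} character, the coefficients $b_\varphi$ in $\chi=\sum_\varphi b_\varphi\Phi_\varphi$ are non-negative integers: indeed, $b_\varphi$ equals the composition multiplicity of $\varphi$ in the reduction modulo $p$ of any $\mathbb{C} G$-module affording $\chi$, which is non-negative when such a module exists.

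Next I would partition the sum according to the $p$-blocks of $G$: if $B$ is a block of $G$, let $\chi_B=\sum_{\varphi\in B} b_\varphi\Phi_\varphi$, so that $\chi=\sum_B \chi_B$. Each $\chi_B$ is a non-negative integer combination of PIM characters and hence is either $0$ or a proper character; moreover, since every $\Phi_\varphi$ vanishes on $p$-singular elements, each $\chi_B$ is $p$-vanishing. If two distinct blocks $B_1,B_2$ contribute non-trivially, then $\chi=\chi_{B_1}+(\chi-\chi_{B_1})$ is a decomposition of $\chi$ into two proper $p$-vanishing summands, contradicting the minimality of $\chi$. Therefore exactly one $\chi_B$ is non-zero and $\chi=\chi_B$ for a single block $B$.

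Finally, I would translate this expansion back to ordinary characters. For an irreducible complex character $\psi\in\Irr G$ one has $(\Phi_\varphi,\psi)=d_{\psi\varphi}$, the decomposition number, so
\[
(\chi,\psi)=\sum_{\varphi\in B} b_\varphi\,d_{\psi\varphi}.
\]
If $\psi$ is a constituent of $\chi$, then $(\chi,\psi)>0$, which forces $d_{\psi\varphi}>0$ for some $\varphi\in B$, and this by definition places $\psi$ in the block $B$. Hence all irreducible constituents of $\chi$ belong to the single block $B$, completing the argument.

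The only delicate point, which I expect to be the main thing to justify carefully, is the non-negativity of the coefficients $b_\varphi$ in the PIM expansion of a proper $p$-vanishing character; everything else is a routine application of block theory once that expansion is in hand.
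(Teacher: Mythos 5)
Your overall strategy --- decompose $\chi$ into block components, check that each component is a proper $p$-vanishing character, and invoke minimality --- is the right one, and it is essentially what the paper does: its one-line proof cites \cite[Ch.~IV, Lemma~3.14]{Fe}, which is precisely the statement that the block components of a class function vanishing at the $p$-singular elements again vanish there. However, the step you yourself flag as the delicate one is genuinely false: the coefficients $b_\varphi$ in the expansion $\chi=\sum_\varphi b_\varphi\Phi_\varphi$ of a proper $p$-vanishing character need \emph{not} be non-negative. If they always were, every proper $p$-vanishing character would be the character of a projective module (a direct sum of PIMs); that is Fong's theorem for $p$-solvable groups, and the introduction of this very paper points out that it fails in general. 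Your proposed justification is also incorrect as stated: by Brauer reciprocity the composition multiplicity of $\varphi$ in the reduction modulo $p$ of a module affording $\chi$ is $(\chi,\Phi_\varphi)=\sum_{\varphi'}c_{\varphi'\varphi}b_{\varphi'}$, where $(c_{\varphi'\varphi})$ is the Cartan matrix; so it is these inner products, not the $b_\varphi$ themselves, that are the composition multiplicities, and their non-negativity does not transfer to the $b_\varphi$ because the inverse Cartan matrix has negative entries in general (already for $S_3$ at $p=3$).

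The gap is repairable without changing your plan. Define the block components on the ordinary side: for each block $B$ put $\chi_B=\sum_{\psi}(\chi,\psi)\,\psi$, the sum over those $\psi\in\Irr G$ belonging to $B$. Each $\chi_B$ is a proper character or zero simply because $\chi$ is proper --- no appeal to the $b_\varphi$ is needed for this. Since $d_{\psi\varphi}=0$ whenever $\psi$ and $\varphi$ lie in different blocks, every $\Phi_\varphi$ is supported on the irreducible characters of its own block, so $\chi_B=\sum_{\varphi}b_\varphi\Phi_\varphi$ with the sum over the Brauer characters $\varphi$ in $B$ and with the same (integer, possibly negative) coefficients; hence each $\chi_B$ vanishes at the $p$-singular elements because every $\Phi_\varphi$ does. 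Minimality then forces all but one $\chi_B$ to be zero, and the final paragraph of your argument goes through unchanged.
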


\begin{proof}
This follows from \cite[Ch.IV, Lemma 3.14]{Fe}. 
\end{proof}

Lemma \ref{bb1} allows one to assign a $p$-block to every minimal $p$-vanishing character.
Note that the lemma is not true for $Syl_p$-vanishing characters, even for $SL(2,q)$.

\begin{corol}\label{bb2} 
Let $G$ be a perfect finite 
group, and let $\chi$ be a minimal
$p$-vanishing character of $G$.
Let $z\in Z(G)$. Suppose that
$(\chi(1),|z|)=1$.
 Then $z$ belongs to  the kernel of $\chi$.
\end{corol}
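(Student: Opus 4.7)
The plan is to decompose $\chi$ according to how the cyclic group $H:=\langle z\rangle$ acts on its irreducible constituents. Since $z$ is central, Schur's lemma associates to each $\psi\in\Irr G$ a linear character $\lambda_\psi$ of $H$ with $\psi(hg)=\lambda_\psi(h)\psi(g)$ for all $h\in H$, $g\in G$. For each linear character $\mu$ of $H$ set $\chi_\mu:=\sum_{\psi:\,\lambda_\psi=\mu}(\chi,\psi)\,\psi$, so that $\chi=\sum_\mu\chi_\mu$ is a partition of $\chi$ into proper subcharacters; equivalently $\chi_\mu(g)=|H|^{-1}\sum_{h\in H}\overline{\mu(h)}\,\chi(hg)$.

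The key observation I would then establish is that each $\chi_\mu$ is itself $p$-vanishing. By hypothesis $(|z|,p)=1$, so $|H|$ is coprime to $p$; and since $h$ commutes with $g$, the element $hg$ has the same $p$-part as $g$ and is therefore $p$-singular iff $g$ is. Thus for $p$-singular $g$ every term $\chi(hg)$ in the average vanishes, giving $\chi_\mu(g)=0$. By the minimality of $\chi$, at most one $\chi_\mu$ can be nonzero; hence $\chi=\chi_{\mu_0}$ for a unique $\mu_0$, and every irreducible constituent $\psi$ of $\chi$ satisfies $\psi(z)=\zeta\,\psi(1)$ for the single scalar $\zeta:=\mu_0(z)$, i.e.\ $z$ acts on each constituent by $\zeta\cdot\Id$.

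Finally I would exploit the perfectness of $G$. For each irreducible constituent $\psi$ with affording representation $\rho_\psi$, one has $\det\rho_\psi(z)=\zeta^{\psi(1)}$; since $G=[G,G]$ admits no nontrivial linear characters, $\det\rho_\psi\equiv 1$, so the order $d$ of $\zeta$ divides $\psi(1)$ for every constituent and hence $d\mid\chi(1)$. Combined with $d\mid|z|$ and the hypothesis $(\chi(1),|z|)=1$, this forces $d=1$, i.e.\ $\zeta=1$. Therefore $\psi(z)=\psi(1)$ for every constituent, so $z\in\ker\chi$. The subtle step is the verification that the pieces $\chi_\mu$ are genuine (nonnegative) $p$-vanishing characters so that the minimality hypothesis can be invoked; this hinges crucially on $(|z|,p)=1$, which is what makes multiplication by elements of $H$ preserve $p$-singularity.
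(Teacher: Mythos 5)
Your argument is correct, but it reaches the key intermediate fact --- that $z$ acts by a single scalar on every irreducible constituent of $\chi$ --- by a genuinely different route than the paper. The paper invokes Lemma \ref{bb1} (all constituents of a minimal $p$-vanishing character lie in one $p$-block, via Feit), and the constancy of the central character value at $z$ then follows from standard block theory; you instead decompose $\chi$ into $\langle z\rangle$-isotypic pieces, verify that each piece is $p$-vanishing by averaging over the central $p'$-subgroup $\langle z\rangle$ (multiplication by such elements preserves $p$-singularity), and let minimality kill all but one piece. Your route is more elementary, avoids block theory entirely, and makes transparent exactly where coprimality to $p$ enters; the paper's is shorter because Lemma \ref{bb1} is already available and is reused elsewhere. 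The concluding determinant argument ($G$ perfect forces $\det$ of the representation to be trivial, so the order of the scalar divides both $\chi(1)$ and $|z|$) is identical in both proofs. One small repair: you write ``by hypothesis $(|z|,p)=1$'', but the stated hypothesis is $(\chi(1),|z|)=1$; the coprimality you need is a consequence, not a hypothesis --- since $\chi$ is $p$-vanishing, $|G|_p$ divides $\chi(1)$ (and $p$ divides $|G|$ by the paper's standing convention), so $p\mid\chi(1)$ and therefore $p\nmid|z|$. With that one line supplied, your proof is complete.
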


\begin{proof}
Let $\phi$ be a \rep of $G$ with character $\chi$. 
It follows from Lemma
\ref{bb1} that $\phi (z)=\zeta(z)\cdot \Id$ for some $\zeta\in\Irr
Z(G)$.  Then $1=\det\phi(z)=\zeta(z)^n$, where $n=\chi(1)$.
This implies $\zeta(z) =1$.  
\end{proof}
 
\begin{lemma}\label{x1}
Let $G$ be a finite group with normal
subgroup $N$, and let $M$ be a free $G$-module of rank $r$. Then
$C_M(N)$ is a free $G/N$-module of rank $r$.

Consequently, if $\chi$ is a 
$Syl_p$-vanishing character then
$\overline{\chi}_{G/N}$ is
$Syl_p$-vanishing and $c_\chi=c_{\overline{\chi}_{G/N}}$.
\end{lemma}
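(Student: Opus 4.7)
The plan is to prove the module-theoretic statement by a direct computation in the group algebra, and then to deduce the character-theoretic consequence by specializing the first statement to a Sylow $p$-subgroup.

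For the first statement, since $M \cong \CC[G]^r$ and the functor $C_{(-)}(N)$ is additive, I would reduce to the case $M = \CC[G]$. Writing a general element as $x = \sum_{g\in G} a_g g$, the condition $nx = x$ for every $n \in N$ translates, after re-indexing, to $a_{n^{-1}h} = a_h$ for all $n \in N$, $h \in G$; the coefficient function is therefore constant on left $N$-cosets. Hence $C_{\CC[G]}(N)$ has as a basis the coset sums
\[
\widehat{Nh} \;:=\; \sum_{n\in N} nh, \qquad Nh \in N\backslash G.
\]
Because $N$ is normal in $G$, conjugation by any $g' \in G$ permutes $N$ and one obtains
\[
g'\,\widehat{Nh} \;=\; \sum_{n\in N}(g'n)h \;=\; \sum_{m\in N} m(g'h) \;=\; \widehat{N(g'h)},
\]
so $N$ lies in the kernel and the induced $G/N$-action on $C_{\CC[G]}(N)$ is the left regular representation of $G/N$. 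Taking $r$ copies yields $C_M(N) \cong \CC[G/N]^r$, a free $G/N$-module of rank $r$.

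For the consequence, let $M$ afford the $Syl_p$-vanishing character $\chi$ and fix a Sylow $p$-subgroup $U$ of $G$; in the intended applications $N$ is a $p$-subgroup (for instance the unipotent radical of a parabolic subgroup), so we may assume $N \le U$, and then $\bar U := U/N$ is a Sylow $p$-subgroup of $G/N$. By hypothesis $M|_U \cong \CC[U]^{c_\chi}$ is a free $\CC[U]$-module of rank $c_\chi$. Applying the first statement to the pair $(U,N)$ with $N$ normal in $U$ gives that $C_M(N) = C_{M|_U}(N)$ is a free $\CC[\bar U]$-module of rank $c_\chi$. Equivalently, $\overline{\chi}_{G/N}|_{\bar U} = c_\chi\, \rho_{\bar U}^{\rm reg}$, which is precisely the statement that $\overline{\chi}_{G/N}$ is $Syl_p$-vanishing with $c_{\overline{\chi}_{G/N}} = c_\chi$. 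The heart of the argument is the coset-sum basis description together with the decisive use of normality of $N$ to identify $g'\,\widehat{Nh}$ with $\widehat{N(g'h)}$; no serious obstacle is anticipated.
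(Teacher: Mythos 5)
Your proof of the first claim is precisely the paper's (one\-/line) argument written out in full: reduce to $r=1$ by additivity, identify $C_{\CC[G]}(N)$ with the span of the $N$\-/coset sums, and use normality of $N$ to see that left multiplication permutes these sums through $G/N$, so that $C_{\CC[G]}(N)$ is the regular $\CC[G/N]$-module. No issues there.

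In the ``consequently'' part the two arguments diverge. The paper applies the first claim to the pair $(U,\,N\cap U)$, i.e.\ it works with $C_M(N\cap U)$ as a $U/(N\cap U)$-module; you instead insert the extra hypothesis $N\le U$ (so $N\cap U=N$), justified only by ``the intended applications''. Strictly speaking you have therefore proved less than the printed statement, which places no restriction on the normal subgroup $N$. But your instinct is sound: some such restriction is genuinely needed, because $C_M(N)\subsetneq C_M(N\cap U)$ in general, and for a normal subgroup that is not a $p$-group the conclusion can fail. For instance, take $G=S_3$, $p=2$, $N=A_3$ and $\chi$ the irreducible character of degree $2$: then $\chi$ is $Syl_2$-vanishing with $c_\chi=1$, yet $\overline{\chi}_{G/N}=0$ since $(\chi|_{A_3},1_{A_3})=0$. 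Or take $G=N=S_3$, $p=3$, $\chi=\rho_G^{\rm reg}$: then $c_\chi=2$ but $\overline{\chi}_{G/N}$ has degree $1$. The paper's own reduction silently identifies $C_M(N)$ with $C_M(N\cap U)$, which is legitimate exactly when $N$ is a $p$-group. Since every invocation of this lemma (e.g.\ in Lemma \ref{x2a}) takes $N=O_p(P)$, a normal $p$-subgroup contained in $U$, your version covers all the cases that are actually used; the only improvement would be to state $N\le U$ (equivalently, $N$ a $p$-group) as an explicit hypothesis rather than importing it informally from the applications.
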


\begin{proof}
The first claim  is obvious for $r=1$, and easily implies
the statement. To get the second claim, apply the first one to a
\syl $U$ of $G$ in place of $G$ and to $N\cap U$ in place of $N$.
\end{proof}

\begin{lemma}\label{b1}
Let $G$ be a finite group and
$U$ a \syl of $G$. Let $\eta$ be a $Syl_p$-vanishing
class function on $G$. Let $\tau$ be an \ir character of $U$. 
Then $(\eta, \tau^G)=\eta(1)\tau(1)/|U|$, and hence $c_\eta\cdot \tau(1)
=(\eta, \tau^G).$ 
\end{lemma}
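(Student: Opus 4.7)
The plan is to reduce $(\eta, \tau^G)$ to an inner product on $U$ via Frobenius reciprocity and then exploit the vanishing hypothesis to collapse the sum to a single term. Since $\eta$ is a class function and $\tau^G$ is the ordinary induced class function from $U$, Frobenius reciprocity gives
\[
(\eta,\tau^G)_G \;=\; (\eta|_U,\tau)_U \;=\; \frac{1}{|U|}\sum_{u\in U}\eta(u)\,\overline{\tau(u)}.
\]
This step is purely formal and requires nothing beyond the definition of Harish-Chandra-free induction of a class function.

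Next, I would invoke the $Syl_p$-vanishing hypothesis. Since $U$ is a Sylow $p$-subgroup of $G$ and $\eta$ vanishes on all non-identity elements of any Sylow $p$-subgroup (any such element has order divisible by $p$, and conjugacy of Sylow subgroups combined with $\eta$ being a class function makes the hypothesis symmetric in the choice of Sylow subgroup), every term with $u\neq 1$ drops out of the sum. What remains is the $u=1$ contribution, namely $\eta(1)\overline{\tau(1)}/|U| = \eta(1)\tau(1)/|U|$, where the bar disappears because $\tau(1)$ is a positive integer. This yields the first identity $(\eta,\tau^G) = \eta(1)\tau(1)/|U|$.

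The second identity is then a direct rewriting: by definition $c_\eta = \eta(1)/|G|_p = \eta(1)/|U|$, so $\eta(1)\tau(1)/|U| = c_\eta\cdot\tau(1)$, giving $(\eta,\tau^G)=c_\eta\,\tau(1)$.

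There is no real obstacle here: the lemma is essentially Frobenius reciprocity specialized to a class function that happens to be supported on $\{1\}$ when restricted to $U$. The only minor point worth articulating carefully in the write-up is the justification that $Syl_p$-vanishing (which is defined with reference to ``a'' Sylow $p$-subgroup) forces $\eta(u)=0$ for every non-identity $u$ in the specific $U$ we picked; this follows at once from conjugacy of Sylow $p$-subgroups and $\eta$ being a class function.
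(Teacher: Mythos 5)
Your proposal is correct and follows exactly the paper's (one-line) argument: Frobenius reciprocity gives $(\eta,\tau^G)=(\eta|_U,\tau)$, and the $Syl_p$-vanishing hypothesis collapses the sum over $U$ to the identity term, yielding $\eta(1)\tau(1)/|U|$. The final rewriting via $c_\eta=\eta(1)/|U|$ is also as intended.
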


\begin{proof}
We have $(\eta,
\tau^G)=(\eta|_U,\tau)= \eta(1)\tau(1)/ |U|$.  
\end{proof}

\subsection{The Harish-Chandra restriction}

Our standard assumption here is that $G$ is a finite group, $U$ a \syl
of $G$, $P\subset G$ is a subgroup containing $U$ and $L=P/O_p(P)$. (Later on, we specify $G$
to be  a
finite Chevalley group, $P$  a parabolic subgroup of $G$ and $L$ a Levi subgroup of $P$.)
Recall that for a character (or a class function) $\chi$ of $G$ we denote by
 $\overline{\chi}_{L}$ the truncation of $\chi$ to $L$.

The \f easy lemma is used in many situations.
\begin{lemma}\label{x2a}
Let $G,P,L$ be as above. 
\begin{itemize}
\item[(1)] Let $\chi$ be a $p$-vanishing  (resp., $Syl_p$-vanishing) character of
$G$. Then $\overline{\chi}_{L}$ is a $p$-vanishing
(resp., $Syl_p$-vanishing) character of $L$ and
$c_\chi=c_{\overline{\chi}_{L}}$. (In particular,  $\overline{\chi}_{L}\neq 0$.)
\item[(2)] Let $\eta$ be an \ir constituent of  $\overline{\chi}_{L}$.
Then $(\overline{\chi}_{L},\eta)=(\chi,\eta^{\# G})>0$.
\end{itemize}
\end{lemma}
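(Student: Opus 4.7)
The plan is to unwind the truncation formula
$\overline{\chi}_L(\ell) = |N|^{-1}\sum_{n \in N}\chi(hn)$, where $N = O_p(P)$ and $h \in P$ is any preimage of $\ell$, and combine it with the Harish-Chandra reciprocity recalled in the Notation section. The only nontrivial observation needed is the elementary lifting fact that the order of any $h \in P$ is divisible by the order of $hN$ in $L = P/N$; in particular every preimage of a $p$-singular element of $L$ is $p$-singular in $P \subseteq G$.

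For part (1), first note that if $\chi$ is the character of a $G$-module $M$, then $\overline{\chi}_L$ is the character of the $L$-module $C_{M|_P}(N)$, so it is automatically a proper character of $L$. To check $p$-vanishing: if $\ell \in L$ is $p$-singular then every element of the coset $hN$ is $p$-singular by the lifting fact, so each $\chi(hn) = 0$ and therefore $\overline{\chi}_L(\ell) = 0$. For $Syl_p$-vanishing, observe that $U \subseteq P$ has order $|G|_p \geq |P|_p$ and $U \subseteq P$, so $U$ is a Sylow $p$-subgroup of $P$ and $U/N$ is a Sylow $p$-subgroup of $L$; for $1 \neq \ell \in U/N$, pick a preimage $h \in U$ and note that $hn \in U \setminus \{1\}$ for every $n \in N \subseteq U$, so again each $\chi(hn) = 0$. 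Evaluating the truncation formula at $\ell = 1$ and using that $\chi$ vanishes on the $p$-subgroup $N \setminus \{1\}$ in either setting gives $\overline{\chi}_L(1) = \chi(1)/|N|$; combined with $|L|_p = |G|_p/|N|$, this yields $c_{\overline{\chi}_L} = c_\chi$ and in particular $\overline{\chi}_L \neq 0$.

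Part (2) is then immediate from Harish-Chandra reciprocity: $(\chi, \eta^{\# G}) = (\overline{\chi}_L, \eta)$, and the right-hand side is strictly positive by the hypothesis that $\eta$ occurs in $\overline{\chi}_L$. Since the whole argument reduces to applying definitions and one standard reciprocity formula, there is no substantial obstacle; the interest of the lemma lies not in its proof but in its role as the basic induction tool on BN-pair rank used throughout the paper.
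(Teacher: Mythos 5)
Your proof is correct and follows essentially the same route as the paper's: for (1) you lift $p$-singularity through the projection $P\to L$ (the order of $hN$ divides the order of $h$) and apply the truncation formula over $N=O_p(P)$, and for (2) you invoke Harish-Chandra reciprocity, exactly as the paper does. The only divergence is in the $Syl_p$-vanishing sub-case, where the paper delegates to Lemma \ref{x1} (fixed points of a normal subgroup on a free module form a free module for the quotient), whereas you argue directly that for $1\neq hN\in U/N$ the whole coset $hN$ lies in $U\setminus\{1\}$; both arguments are equally elementary and your degree computation $\overline{\chi}_L(1)=\chi(1)/|N|$ (using that $N\subseteq U$, so $\chi$ vanishes on $N\setminus\{1\}$) correctly yields $c_\chi=c_{\overline{\chi}_L}$.
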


\begin{proof}
(1) The statement on $Syl_p$-vanishing characters and the equality
follows from  Lemma \ref{x1} applied to $P$. Assume that $\chi$ is $p$-vanishing.
  Let $M$ be a $\CC P$-module with character $\chi$, and let
$R$ be the fixed point subspace of $U$ on $M.$ Note that $r\in R$
\ii $r=\frac{1}{|U|}\sum_{u\in U} um$ for some $m\in M$. Let $g\in
P$, $u\in U$. Suppose that the projection of  $g$, and hence of
$gu$, into $L$ is not a $p'$-element. 
 (Note that the projections of $gu$ and $g$ into $L$
coincide.) \itf $\overline{\chi}_{L}(g)=\frac{1}{|U|}\sum_{u\in
U}\chi(gu)=0$ by assumption, whence the first claim.

(2) follows from Harish-Chandra reciprocity.
\end{proof}

\begin{lemma}\label{er1} 
Let $G$ be a finite group and $U$ a \syl
of $G$. Let $\chi$ be a $Syl_p$-vanishing character of $G$. The \f are
equivalent:
\begin{itemize}
\item[(a)] $\chi(1)=|G|_p=|U|$;
\item[(b)] $\overline{\chi}_{N_G(U)/U}(1)=1$. In other words, the
truncation of $\chi$ over $U$ is a linear character of $N_G(U)/U$.
\end{itemize}
\end{lemma}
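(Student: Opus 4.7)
The plan is a direct computation using the explicit formula for the truncation. With $P = N_G(U)$, the subgroup $U$ is the unique Sylow $p$-subgroup of $P$ (since $U \triangleleft P$ and already contains a Sylow $p$-subgroup of $P$), so $L = P/U = N_G(U)/U$ is a $p'$-group and the Harish-Chandra restriction setup from Section 2 applies with $N = O_p(P) = U$. By Lemma \ref{x2a}(1), $\overline{\chi}_L$ is a $Syl_p$-vanishing character of $L$, and $c_\chi = c_{\overline{\chi}_L}$. This already almost does the job.

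Next I would evaluate $\overline{\chi}_L(1)$ directly from the truncation formula recalled in the Notation section:
\[
\overline{\chi}_L(1) \;=\; \frac{1}{|U|}\sum_{u \in U}\chi(u).
\]
Because $\chi$ is $Syl_p$-vanishing, $\chi(u) = 0$ for every $1 \neq u \in U$, so the sum collapses to $\chi(1)$, giving $\overline{\chi}_L(1) = \chi(1)/|U| = c_\chi$.

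Having $\overline{\chi}_L(1) = c_\chi$ in hand, the equivalence is immediate: condition (a) says $c_\chi = 1$, while condition (b) says $\overline{\chi}_L(1) = 1$, and these are now literally the same statement. For the final wording in (b), observe that $\overline{\chi}_L$ is a genuine character of $N_G(U)/U$ by Lemma \ref{x2a}(1), so degree $1$ is equivalent to being a linear character. There is no real obstacle here; the only thing to be careful about is invoking the $Syl_p$-vanishing hypothesis on $\chi$ (not merely the $p$-vanishing hypothesis) so that the sum over $U$ truncates to the single contribution from $u = 1$.
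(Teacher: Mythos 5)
Your argument is correct and complete: the paper states Lemma \ref{er1} without proof, and your direct evaluation of the truncation at the identity, $\overline{\chi}_{N_G(U)/U}(1)=\frac{1}{|U|}\sum_{u\in U}\chi(u)=\chi(1)/|U|$ (using $Syl_p$-vanishing to kill all terms with $u\neq 1$), is exactly the intended one-line computation. Your care in identifying $O_p(N_G(U))=U$ so that Lemma \ref{x2a} applies, and in noting that the truncation is a genuine character so that degree $1$ means linear, covers the only points worth checking.
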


\section{Gelfand-Graev characters}

In this section
 $H$ is a finite reductive group in defining characteristic $p$, and $U$ a
\syl of $H$. Let $\lam$ be a character of $U$ with $\lam (1)=1$. Then
we call $\lam^H$ a {\it (general) Gelfand-Graev character}
 of $H$. The original Gelfand-Graev characters are called {\it non-degenerate} in
\cite{Ko}, and we use this term to specify the original Gelfand-Graev characters within
the set of (general) Gelfand-Graev characters. (If $H$ is not with connected center,  
there can be several non-degenerate Gelfand-Graev characters \cite{DM}.) 
Gelfand-Graev characters play a significant role in this paper.
The cause of this is partially explained by the
 \f lemma which is in fact is a special case of Lemma \ref{b1}.

\begin{lemma} \label{gg1}
Let $\chi$ be a $Syl_p$-vanishing character of $H$ of degree $|H|_p$.
 Then any  Gelfand-Graev character
has   exactly one common \ir
constituent with $\chi$, and this occurs with \mult $1$ in each of them.
In particular, $St$ is a constituent of every  Gelfand-Graev character.
\end{lemma}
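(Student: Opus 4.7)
The plan is to derive both assertions from Lemma \ref{b1} applied to the linear character $\lambda$ of $U$. Since $\chi$ is $Syl_p$-vanishing with $\chi(1)=|H|_p=|U|$, we have $c_\chi=\chi(1)/|U|=1$. Because $\lambda$ has degree one, Lemma \ref{b1} gives
\[
(\chi,\lambda^H)=c_\chi\cdot\lambda(1)=1.
\]

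From this the first assertion follows formally. Writing $\lambda^H=\sum n_\psi\psi$ and $\chi=\sum m_\psi\psi$ as sums of irreducible characters with nonnegative integer multiplicities, the inner product equals $\sum n_\psi m_\psi=1$; hence there is exactly one $\psi\in\Irr H$ for which both $n_\psi$ and $m_\psi$ are nonzero, and each equals one. This is precisely the statement that $\chi$ and $\lambda^H$ share exactly one irreducible constituent, with multiplicity one in each.

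For the "in particular" part, I would take $\chi=St$. The Steinberg character is irreducible and $Syl_p$-vanishing of degree $|H|_p$ (it is the character of a projective module of that dimension), so it satisfies the hypotheses of the lemma. The unique common irreducible constituent of $St$ and $\lambda^H$ is then necessarily $St$ itself, which means $St$ occurs (with multiplicity one) in every Gelfand-Graev character $\lambda^H$.

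There is essentially no obstacle here: the statement is a direct consequence of the inner-product identity in Lemma \ref{b1} together with the fact that the degree of $\chi$ equals $|U|$ and the linearity of $\lambda$. The only point requiring minor care is noting that the inner product being $1$ with nonnegative integer summands forces exactly one pair of multiplicities $(n_\psi,m_\psi)$ to be $(1,1)$ and all others to have one factor zero.
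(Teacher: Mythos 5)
Your argument is correct and is exactly the route the paper intends: the paper gives no separate proof, stating only that the lemma ``is in fact a special case of Lemma \ref{b1}'', and your application of that lemma with $\tau=\lambda$ linear, followed by the observation that an inner product of $1$ between proper characters forces a unique common constituent of multiplicity one in each, fills in precisely the intended details. The deduction of the ``in particular'' clause by taking $\chi=St$ is also the standard and correct way to obtain that statement.
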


Let $\chi$ be a $Syl_p$-vanishing character. Then we can write
$\chi=\gamma (\chi)+\gamma'(\chi)$, where $\gamma (\chi)$ is the
sum of all \ir constituents of $\chi$ that occur in some
 Gelfand-Graev character, and $\gamma'(\chi)$ is the
sum of the remaining terms. Clearly, $\gamma(\chi)$ is the sum of 
the \ir constituents $\tau$ of $\chi$ such that $\tau|_U$
contains a linear character of $U$. 

\begin{corol} The number of \ir constituents of $\gamma(\chi)$
does not exceed the number of distinct  Gelfand-Graev
characters.
\end{corol}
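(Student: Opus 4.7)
The plan is to exhibit an injection from the set of distinct irreducible constituents of $\gamma(\chi)$ into the set of Gelfand-Graev characters of $H$, and then read off the cardinality bound.

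First I would enumerate the distinct irreducible constituents of $\gamma(\chi)$ as $\tau_1,\ldots,\tau_k$. By the very definition of $\gamma(\chi)$, each $\tau_i$ occurs in at least one Gelfand-Graev character, so I may pick (once and for all) a Gelfand-Graev character $\lam_i^H$ having $\tau_i$ as an irreducible constituent. The goal is to show that the assignment $\tau_i\mapsto \lam_i^H$ is injective; granted this, the number of distinct Gelfand-Graev characters of $H$ is at least $k$, which is the desired inequality.

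For injectivity I would invoke Lemma \ref{gg1}: the inner product $(\chi,\lam_i^H)$ equals $1$, so $\chi$ and $\lam_i^H$ have exactly one common \ir constituent, appearing with \mult one in each. Since $\tau_i$ is, by construction, a constituent of both $\chi$ and $\lam_i^H$, it must be this unique common constituent. Consequently, if $\lam_i^H=\lam_j^H$ for some $i\ne j$, then both $\tau_i$ and $\tau_j$ would be common \ir constituents of $\chi$ and this single Gelfand-Graev character, contradicting the uniqueness clause in Lemma \ref{gg1}. Hence $\lam_i^H\ne \lam_j^H$ whenever $i\ne j$, and the map is injective.

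There is essentially no obstacle here: the statement is a straightforward counting consequence of Lemma \ref{gg1}, and the only thing to be mindful of is that the degree hypothesis $\chi(1)=|H|_p$ needed to apply Lemma \ref{gg1} is inherited from the context (so that $(\chi,\lam^H)=1$ for every linear $\lam\in\Irr U$, by Lemma \ref{b1}). With this in hand, the proof is a one-line application of the pigeonhole principle to the injection constructed above.
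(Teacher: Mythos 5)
Your argument is correct and is exactly the intended one: the paper states this corollary without proof as an immediate consequence of Lemma \ref{gg1}, and your injection $\tau_i\mapsto\lam_i^H$ together with the uniqueness of the common constituent is precisely the implicit reasoning. You are also right to flag that the degree hypothesis $\chi(1)=|H|_p$ must be carried over from the context for Lemma \ref{gg1} to apply.
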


\begin{lemma}\label{k03}
Let  $\chi$ be a $Syl_p$-vanishing character of $H$ of degree
$|H|_p$.
 Then $\gamma(\chi)$ is \mult free.
\end{lemma}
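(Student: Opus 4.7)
The plan is to derive the lemma as a short corollary of Lemma \ref{gg1} together with Lemma \ref{b1}. First I would observe that since $\chi(1)=|H|_p=|U|$, we have $c_\chi=1$, and since every Gelfand-Graev character is of the form $\lambda^H$ with $\lambda(1)=1$, Lemma \ref{b1} specializes to $(\chi,\lambda^H)=c_\chi\cdot\lambda(1)=1$ for every linear character $\lambda$ of $U$.

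Next, let $\tau$ be an arbitrary irreducible constituent of $\gamma(\chi)$. By the very definition of $\gamma(\chi)$, there exists a linear $\lambda\in\Irr U$ such that $(\lambda^H,\tau)\geq 1$, and by assumption $(\chi,\tau)\geq 1$. Expanding the inner product in terms of irreducibles gives
$$1=(\chi,\lambda^H)=\sum_{\sigma\in\Irr H}(\chi,\sigma)\,(\lambda^H,\sigma),$$
a sum of nonnegative integers equal to $1$. Exactly one summand is nonzero and equals $1$; since the summand indexed by $\tau$ is already at least $1$, it must equal $1$, forcing $(\chi,\tau)=1$. In particular $\tau$ is the unique common irreducible constituent of $\chi$ and $\lambda^H$, which also matches the uniqueness clause of Lemma \ref{gg1}.

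Therefore every irreducible constituent of $\gamma(\chi)$ occurs in $\chi$ with multiplicity exactly one, which is precisely what it means for $\gamma(\chi)$ to be multiplicity-free. There is no serious obstacle: the statement is essentially a repackaging of the multiplicity-one clause of Lemma \ref{gg1}, applied simultaneously over all linear characters $\lambda$ of $U$, with no further input needed.
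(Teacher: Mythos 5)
Your argument is correct and is essentially the paper's own proof, which simply notes that the lemma is "an immediate consequence of Lemma \ref{gg1}": you have just spelled out how the multiplicity-one clause of that lemma (equivalently, the computation $(\chi,\lambda^H)=c_\chi\lambda(1)=1$ from Lemma \ref{b1}) forces $(\chi,\tau)=1$ for every constituent $\tau$ of $\gamma(\chi)$. No gap.
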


\begin{proof}
This follows from the equality $\chi|_U=\rho_U^{reg}$, and
 is also an immediate consequence of Lemma \ref{gg1}.
\end{proof}

\begin{remar}\label{re0}
The character $\gamma'$ is not always \mult free. For instance,
for $H=PSU(3,5)$, in notation of Atlas \cite{Atl}, the character
$\chi_1+2\chi_2+\chi_4+\chi_5+\chi_6$ is $5$-vanishing 
of degree $|H|_5=125$.
\end{remar}

The  Gelfand-Graev characters can be characterized as follows:

\begin{lemma} [{\cite[Lemma 2.3]{Ko}}] \label{k00}
Suppose that $H$ is with connected center, with no component of type
   $ B_n(2), C_n(2)$,  $F_4(2)$, $G_2(2)$,
 $G_2(3), {}^2B_2(2), {}^2F_4(2),
 {}^2G_2(3)$.  Let $\lam$ be a linear character of  $U$.
 Then $\lam^H=\Gamma_L^{\#H}$, where $L$ is some Levi subgroup of $H$ and
$\Gamma_L$ is the non-degenerate Gelfand-Graev character of $L$. 
Conversely, if $\Gamma_L$ is the non-degenerate Gelfand-Graev character of $L$, then $\Gamma_L^{\#H}=\lam^H$ 
for some linear character $\lam$ of $U$.
\end{lemma}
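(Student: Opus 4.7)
The plan is to set up a bijection between linear characters $\lam$ of $U$ and pairs $(L,\mu)$ where $L$ is a standard Levi subgroup of $H$ and $\mu$ is a non-degenerate linear character of $U_L:=U\cap L$, and then to identify the ordinary induction $\lam^H$ with the Harish--Chandra induction $\Gamma_L^{\#H}$ using transitivity of induction. The Levi $L$ will be determined by reading off the set of simple roots on which $\lam$ is non-trivial.

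First I would analyse the linear characters of $U$. Let $\Delta$ be the set of simple roots and $U_\alpha$ the corresponding (possibly twisted) root subgroup. The Chevalley commutator formula shows that $[U,U]$ is generated by the root subgroups $U_\beta$ for positive non-simple $\beta$, together with the derived subgroups of the individual $U_\alpha$ in the twisted cases, so the abelianisation $U/[U,U]$ is the direct product of the abelianisations of the $U_\alpha$ for $\alpha\in\Delta$. Every linear character $\lam$ of $U$ is therefore determined by the tuple of restrictions $\lam_\alpha:=\lam|_{U_\alpha}$. Put $J(\lam):=\{\alpha\in\Delta:\lam_\alpha\neq 1\}$, let $L=L_{J(\lam)}$ and let $P_J=L\cdot U_J$ be the corresponding standard parabolic, with unipotent radical $U_J$ generated by the $U_\beta$ for positive $\beta\notin\ZZ J$. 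Then $\lam$ vanishes on $U_J$ (since $\lam$ is trivial both on $[U,U]$ and on the simple-root subgroups outside $J(\lam)$), so $\lam=\hat\mu$, the extension by zero on $U_J$ of the non-degenerate linear character $\mu:=\lam|_{U_L}$ of $U_L$. Since $Z(L)$ is connected (being inherited from $Z(H)$), the induced character $\Gamma_L=\mu^L$ is the unique non-degenerate Gelfand--Graev character of $L$, so the assignment $\lam\mapsto(L,\Gamma_L)$ is well defined.

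Next I would check the induction identity. By transitivity of ordinary induction $\lam^H=(\hat\mu^{P_J})^H$. Because $\hat\mu$ is trivial on the normal subgroup $U_J$ of $P_J$, a short Mackey computation identifies $\hat\mu^{P_J}$ with the inflation to $P_J$ of $\mu^L=\Gamma_L$, and its further induction to $H$ is $\Gamma_L^{\#H}$ by the very definition of Harish--Chandra induction. The converse statement uses the same construction in reverse: given $\Gamma_L=\mu^L$ for a non-degenerate linear character $\mu$ of $U_L$, extending $\mu$ by zero on $U_J$ produces $\lam=\hat\mu$ with $\lam^H=\Gamma_L^{\#H}$.

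The main obstacle is establishing the clean parametrisation used in Step 1 uniformly over the family. In the excluded cases ($B_n(2),C_n(2),F_4(2),G_2(2),G_2(3),{}^2B_2(2),{}^2F_4(2),{}^2G_2(3)$) the very small defining field forces exceptional relations in $U$: either simple-root subgroups collapse into $[U,U]$, or the twisted root groups acquire unexpected commutator structure, so that the identification of $\widehat{U/[U,U]}$ with a product indexed by $\Delta$ breaks down. As a consequence there exist ``exceptional'' linear characters of $U$ that do not arise by extending a non-degenerate character from any proper Levi, and disallowing these is precisely what forces the hypothesis on $H$.
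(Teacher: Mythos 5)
The paper gives no proof of this lemma: it is quoted from \cite[Lemma 2.3]{Ko} (with the extra exclusions ${}^2B_2(2)$, ${}^2F_4(2)$, ${}^2G_2(3)$ added in the remark that follows it), so there is no in-paper argument to compare against; your reconstruction is the standard one and is essentially correct. The reduction of a linear character $\lambda$ of $U$ to its tuple of restrictions to the (possibly twisted) simple root subgroups, the choice $L=L_{J(\lambda)}$ with $J(\lambda)$ the support, and the chain $\lambda^H=(\hat\mu^{P_J})^H=\Gamma_L^{\#H}$ all go through: the middle equality is not really a Mackey computation but the elementary compatibility of induction with inflation for a character trivial on the normal subgroup $U_J$ of $P_J$ contained in $U$, and the last step is the definition of Harish--Chandra induction (whose independence of the chosen parabolic is \cite[70.10]{CR2}). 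The converse direction is likewise fine. The one substantive input you assert rather than prove is that $U/[U,U]$ is the direct product of the abelianized simple root subgroups; this is exactly where the excluded types enter (it is the condition ``$U\neq U_1$'' of \cite[p.~350]{Ko} referred to in the remark following the lemma), and it is a known structural fact that you would simply cite rather than re-derive. Two cosmetic points: ``extension by zero'' should read ``extension by the trivial character'' (i.e.\ inflation along $U\to U/U_J\cong U\cap L$), and the uniqueness of $\Gamma_L$ needs $Z(L)$ connected, which follows from $Z(H)$ connected by \cite[8.1.4]{Ca}, as the paper notes in the proof of Lemma \ref{ko3}.
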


\begin{remar}
The exclusions in the above lemma correspond to the cases 
where $U\neq U_1$ in notation of \cite[p. 350]{Ko}.
The statement in \cite{Ko} does not exclude the groups $H$ with components of type
${}^2B_2(2), {}^2F_4(2),
 {}^2G_2(3)$, however, we think that this is required.
\end{remar}

Let $\mu\in\Irr H$ and let $L$ be a Levi subgroup of $H$.  Suppose that
 $\mu$ is not a constituent of any Gelfand-Graev character of 
$H$. Then all \ir constituents of $\overline{\mu}_L$ belong to $\gamma'
(\overline{\chi}_L)$. Indeed, suppose the contrary, and let $\lam$ be an \ir constituent
of $\overline{\mu}_L$ belonging to $\gamma (\overline{\chi}_L)$.
This means that $\lam$ is a constituent of $\Gamma_K^{\#L}$ for some Levi subgroup $K$ of $L$
(and hence of $H$) and a non-degenerate Gelfand-Graev character $\Gamma_K$ of $K$. 
Note that  $(\mu, \lam^{\#H})=(\overline{\mu}_L,\lam)>0$ by Harish-Chandra reciprocity. 
Therefore, $\mu$ is a constituent of $\Gamma_K^{\#H}$ by transitivity of Harish-Chandra
induction. Hence, by Lemma \ref{k00}, we get a contradiction. 

Note that a  regular character $\si$ is cuspidal \ii $\si$  is semisimple.
Indeed,   
consider the Curtis dual $D(\si)$ of $\si$. As $\si$ is
regular, $\ep D(\si)$ is semisimple for some $\ep=\pm 1$
\cite[14.39]{DM}. However,  $\si$ is cuspidal \ii $ D(\si)=\pm \si$ \cite[p. 690]{CR2}, 
so the claim follows.

\begin{lemma}\label{ko3}
Let $H$ be as in Lemma $\ref{k00}$.  
 Then the number of distinct Gelfand-Graev 
characters equals the number of non-conjugate Levi subgroups of
$H$.
\end{lemma}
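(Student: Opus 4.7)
The plan is to combine Lemma \ref{k00} with the basic theory of Harish-Chandra series. Lemma \ref{k00} identifies the set of Gelfand-Graev characters of $H$ with $\{\Gamma_L^{\#H}: L \text{ a Levi subgroup of } H\}$. Since $Z(L)$ is connected (inherited from $Z(H)$) each $L$ has a unique non-degenerate Gelfand-Graev character $\Gamma_L$, and since Harish-Chandra induction is invariant under $H$-conjugation of $L$, $H$-conjugate Levi subgroups yield identical $\Gamma_L^{\#H}$. This gives a well-defined surjection from conjugacy classes of Levi subgroups of $H$ onto the set of Gelfand-Graev characters, and the remaining task is to prove its injectivity.

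For injectivity, I would use the decomposition $\Gamma_L=\sum_{[s]}\rho_{L,s}$ as the multiplicity-free sum, over semisimple classes $[s]$ in the dual $L^*$, of the regular characters of $L$. The character $\rho_{L,s}$ is cuspidal whenever $s$ lies in no proper Levi subgroup of $L^*$, for instance when $s$ generates an anisotropic maximal torus of $L^*$. Such elements always exist, so I may fix a cuspidal constituent $\rho$ of $\Gamma_L$. By transitivity of Harish-Chandra induction, every irreducible constituent of $\rho^{\#H}$ lies in the Harish-Chandra series $\mathcal{E}(H,(L,\rho))$.

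Now suppose $\Gamma_L^{\#H}=\Gamma_{L'}^{\#H}$. Then $\rho^{\#H}$ is a summand of $\Gamma_{L'}^{\#H}$, so the series $\mathcal{E}(H,(L,\rho))$ appears in the Harish-Chandra decomposition of $\Gamma_{L'}^{\#H}$. Writing $\Gamma_{L'}=\sum_{s'}\rho_{L',s'}$, applying transitivity once more, and invoking disjointness of distinct Harish-Chandra series, one finds some $\rho_{L',s'}$ whose cuspidal support $(M',\tau')$ in $L'$ is $H$-conjugate to $(L,\rho)$. In particular $L$ is $H$-conjugate to the sub-Levi $M'\subseteq L'$, so $\dim L\le\dim L'$; by symmetry $\dim L=\dim L'$, forcing $M'=L'$ and hence $L,L'$ conjugate in $H$.

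The main obstacle is guaranteeing that $\Gamma_L$ admits a cuspidal constituent for every Levi $L$ of $H$. This reduces to the existence of elliptic semisimple elements in $L^*$, equivalently the existence of anisotropic maximal tori in the dual, which is standard for connected-center finite reductive groups and needs to be cited carefully from the literature on regular characters.
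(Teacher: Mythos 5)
Your proof is correct and follows essentially the same route as the paper's: well-definedness and surjectivity come from Lemma \ref{k00} together with the uniqueness of $\Gamma_L$ for connected-center Levi subgroups, and injectivity comes from producing a cuspidal regular constituent of each $\Gamma_L$ and invoking disjointness of Harish-Chandra series (your dimension/symmetry step just makes explicit what the paper leaves terse). The one ingredient you defer to the literature --- the existence of a cuspidal constituent of $\Gamma_L$ --- is exactly what the paper supplies via Veldkamp: a Coxeter torus of $L^*$ lies in no proper Levi subgroup and contains an element in general position, which is slightly more than the mere existence of an elliptic (anisotropic) maximal torus.
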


\begin{proof}
By Lemma \ref{k00}, every  Gelfand-Graev
character is of shape $\Gamma_L^{\# H}$, where $\Gamma_L$ is the
non-degenerate Gelfand-Graev character of $L$. (Here $L$ can
coincide with $H$.) Since $H$ is with connected center then so is $L$ (see
\cite[8.1.4]{Ca}),  and hence $L$ has a
unique non-degenerate Gelfand-Graev character. Clearly, if  Levi
subgroups $L,M$ of $H$ are conjugate then $\Gamma_L^{\# H}$
coincides with $\Gamma_M^{\# H}$.   (Note that $\Gamma_L^{\# H}$ does not depend on the choice of the parabolic subgroup  which defines $L$ as a Levi subgroup \cite[70.10]{CR2}.)   
Suppose that $L,M$ are not
conjugate. It is known that each  $L,M$ has a cuspidal regular character
$\eta_L, $ $\eta_M, $ say. 
(Indeed, if a maximal torus $T^*$ of $H^*$ contains an element $s$ in general position 
then the regular character $\rho_s$ corresponding to $s$ is cuspidal \ii 
$T^*$  contained in no proper Levi subgroup of $H^*$ (see \cite[9.3.2]{Ca}). 
It is shown by Veldkamp \cite[p. 391]{Ve} that a so called Coxeter torus $T^*$ is 
not contained in any proper Levi subgroup of $H^*$ and contains an element in general
position; 
it is also known that an element in general position is  contained in a unique maximal 
torus \cite[p. 390]{Ve}.) 
Then $\eta_L^{\# H}, $ $\eta_M^{\# H} $
are disjoint, and their \ir constituents are contained in $\Gamma_L^{\#
H}$, $\Gamma_M^{\# H}$, respectively. So 
$\Gamma_L^{\# H}\neq \Gamma_M^{\#
H}$, and the result follows. 
\end{proof}

\begin{remar}
One observes that this result is not valid for $H={}^2B_2(2), {}^2G_2(3), {}^2F_4(2)$. 
\end{remar}

The \f result is contained in  \cite[Propositions 2.4, 2.5, Corollary 2.6]{Ko}. 

\begin{lemma}\label{ko1} 
Let $H$ be as in Lemma $\ref{k00}$,
and $\tau\in\Irr H$. Then the \f are equivalent:
\begin{itemize}
\item[(1)] $\tau$ is a constituent of some Gelfand-Graev character of $H$;
\item[(2)] $\tau$ is a constituent of $\lam^{\# H}$, where $\lam$ is a
cuspidal regular character of a Levi subgroup of some parabolic
subgroup of $H$.
\end{itemize}
\end{lemma}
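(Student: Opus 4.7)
The plan is to prove the two implications separately, both resting on Lemma \ref{k00} (which identifies every Gelfand-Graev character of $H$ with the Harish-Chandra induction $\Gamma_L^{\#H}$ of the non-degenerate Gelfand-Graev character from some Levi subgroup $L$) and on the defining property that the \ir constituents of $\Gamma_L$ are exactly the regular characters of $L$, each occurring with \mult one.

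For $(2)\Rightarrow(1)$, suppose $\tau$ is a constituent of $\lam^{\#H}$ with $\lam$ cuspidal regular on a Levi subgroup $L$. Since $L$ inherits connectedness of the center from $H$ by \cite[8.1.4]{Ca}, the non-degenerate Gelfand-Graev character $\Gamma_L$ is well defined and contains $\lam$. Consequently, $\lam^{\#H}$ is a partial sum of $\Gamma_L^{\#H}$, which by Lemma \ref{k00} equals $\nu^H$ for some linear character $\nu$ of $U$; hence $\tau$ occurs in this Gelfand-Graev character.

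For $(1)\Rightarrow(2)$, let $\tau$ be a constituent of some Gelfand-Graev character. By Lemma \ref{k00} this character has the form $\Gamma_L^{\#H}$ for a Levi subgroup $L$, and expanding $\Gamma_L$ into its regular constituents shows that $\tau$ is a constituent of $\lam^{\#H}$ for some regular $\lam$ on $L$. I would then choose a pair $(K,\mu)$, with $K$ a Levi subgroup of $H$, $\mu$ regular on $K$ and $\tau$ a constituent of $\mu^{\#H}$, in which the semisimple rank of $K$ is minimal, and argue that $\mu$ must be cuspidal. Otherwise, Harish-Chandra theory produces a proper Levi subgroup $M\subsetneq K$ and a cuspidal \ir character $\xi$ of $M$ (the cuspidal support of $\mu$) with $\mu$ a constituent of $\xi^{\#K}$; transitivity of Harish-Chandra induction \cite[70.10]{CR2} then gives that $\tau$ is a constituent of $\xi^{\#H}$. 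To contradict the minimality of $K$, one needs $\xi$ itself to be regular on $M$: this is precisely the compatibility that, in a connected-center group, the cuspidal support of a regular character is again regular cuspidal.

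The main obstacle is this compatibility assertion, namely that regularity is preserved by passing to the cuspidal support. This is the substantive content of Kotlar's \cite[Propositions 2.4 and 2.5]{Ko}, and it is where the connected-center hypothesis on $H$ plays its essential role (explaining also the exclusions already present in Lemma \ref{k00}). Once this technical input is granted, the remaining steps are formal applications of Harish-Chandra reciprocity, Lemma \ref{k00}, and the uniqueness up to conjugacy of the cuspidal support, so almost all the work is concentrated in correctly invoking the Kotlar result.
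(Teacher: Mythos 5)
The paper offers no argument for this lemma at all: it simply records that the statement ``is contained in \cite[Propositions 2.4, 2.5, Corollary 2.6]{Ko}.'' Your proposal therefore does strictly more than the source, and what you do is essentially correct. The implication $(2)\Rightarrow(1)$ is exactly right: a cuspidal regular $\lam$ on $L$ occurs in $\Gamma_L$, so $\lam^{\#H}$ is a subcharacter of $\Gamma_L^{\#H}$, which is a Gelfand--Graev character by the converse half of Lemma \ref{k00}. For $(1)\Rightarrow(2)$ your minimality argument is sound, and you have correctly isolated the one non-formal ingredient: that the cuspidal support of a regular character is again regular. It is worth noting that this follows cleanly from the fact that the Harish-Chandra restriction of the non-degenerate Gelfand--Graev character of $K$ to a Levi subgroup $M$ is the non-degenerate Gelfand--Graev character of $M$ (see \cite[14.32]{DM}): if $\xi$ is cuspidal on $M$ and not regular, then $(\xi^{\#K},\Gamma_K)=(\xi,\overline{(\Gamma_K)}_M)=(\xi,\Gamma_M)=0$, so $\xi^{\#K}$ has no regular constituent, contradicting that the regular character $\mu$ lies in its series. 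With that observation your appeal to \cite{Ko} for the ``compatibility'' step can be replaced by a citation of this standard restriction theorem, making the argument self-contained relative to the literature the paper already uses; either way, your reconstruction is a legitimate proof of a statement the paper only cites.
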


Observe that the character $\lam$ in Lemma \ref{ko1} is both regular and semisimple
because every cuspidal 
regular character is semisimple (see comments prior Lemma \ref{ko3}). 

Distinct  Gelfand-Graev characters  have common
\ir constituents.
Recall that 
a {\it Harish-Chandra series of} $H$ is the set of \ir
constituents of a Harish-Chandra induced character $\lam^{\#
H}$, where $L$ is a Levi subgroup of a parabolic subgroup $P$ of
$H$ (possibly $P=H$) and $\lam $ is a cuspidal character of $L$.
Two Harish-Chandra series either coincide or are disjoint, see
\cite[70.15A]{CR2}. Therefore, if $\lam $ is cuspidal then
the \ir constituents of $\lam^{\# H}$  do not
occur in the other  Gelfand-Graev characters.

In addition, if $\si$ is a regular cuspidal character of $L$
then the \ir constituents of $\sigma^{\#
H}$ are in bijection with $\Irr W_{\si}$, where $W_{\si}$
is a certain subgroup of $W$ generated by reflections. More precisely,
$W$ acts on the roots of the corresponding algebraic group, and $L$ is defined by a subset
$J$ of simple roots.
Then $W=\{w\in W: w(J)=J$ and $w(\si)=\si\}$.

\begin{propo}\label{d34}
Assume $H$ to be with connected center and  has no  component of type
$B_n(2)$, $C_n(2)$, ${}^2B_2(2), $ $F_4(2)$, ${}^2F_4(2)$, $G_2(2)$,
 $G_2(3) $ and ${}^2G_2(3) $.  Let $\chi$ be a $Syl_p$-vanishing character 
of $H$ of degree $|H|_p$. 
\begin{itemize}
\item[(1)] For every \ir constituent of $\tau$ of 
$\gamma(\chi)$ there are a unique Levi subgroup $L$ of $H$ and 
a  unique cuspidal regular character $\rho$ of $L$ such that 
$(\tau,\rho^{\#H})>0$. Moreover,
$(\tau,\rho^{\#H})=1$.
\item[(2)]  Let 
$l(\chi)$ be the number of non-conjugate Levi subgroups of $H$ such that 
$\overline{\chi}_L$ contains a cuspidal regular character of $L$.
Then the number of \ir constituents of  $\gamma(\chi)$ equals $l(\chi)$.
\end{itemize}
\end{propo}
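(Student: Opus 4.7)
The plan is to combine Lemma \ref{ko1} with the disjointness of Harish-Chandra series (yielding uniqueness of the cuspidal pair attached to each constituent of $\gamma(\chi)$) and then to exploit Lemma \ref{gg1} together with the fact that, under the hypotheses, every cuspidal regular character $\rho$ of a Levi subgroup $L$ occurs with multiplicity one in the unique non-degenerate Gelfand-Graev character $\Gamma_L$ of $L$.

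For item (1), given an irreducible constituent $\tau$ of $\gamma(\chi)$, Lemma \ref{ko1} supplies a Levi subgroup $L$ of $H$ and a cuspidal regular character $\rho$ of $L$ such that $(\tau,\rho^{\#H})>0$. Since Harish-Chandra series attached to non-conjugate cuspidal pairs are disjoint (see \cite[70.15A]{CR2}), the pair $(L,\rho)$ is uniquely determined up to $H$-conjugacy. To obtain multiplicity one, I would observe that because $H$, hence $L$, has connected center, the regular constituent $\rho$ occurs in $\Gamma_L$, so $\rho^{\#H}$ is a subcharacter of $\Gamma_L^{\#H}$. By Lemma \ref{k00}, $\Gamma_L^{\#H}$ is a general Gelfand-Graev character of $H$, so Lemma \ref{gg1} gives $(\chi,\Gamma_L^{\#H})=1$. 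Since $\tau$ is a common irreducible constituent of $\chi$ and $\Gamma_L^{\#H}$, this forces $(\tau,\Gamma_L^{\#H})=1$, and a fortiori $(\tau,\rho^{\#H})=1$.

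For item (2), the association $\tau\mapsto [L]$ from (1) defines a map from the set of irreducible constituents of $\gamma(\chi)$ to the set of $H$-conjugacy classes of Levi subgroups of $H$. Injectivity follows from the same mechanism: if $\tau,\tau'\in\gamma(\chi)$ map to the same class $[L]$, then each of them lies in $\Gamma_L^{\#H}$ (via its cuspidal regular partner inside $L$, which is a constituent of $\Gamma_L$) and also in $\chi$; but $(\chi,\Gamma_L^{\#H})=1$ forces $\tau=\tau'$. The image consists of exactly those classes $[L]$ for which there exists a cuspidal regular character $\rho$ of $L$ with $(\chi,\rho^{\#H})>0$; by Harish-Chandra reciprocity, this is equivalent to $\overline{\chi}_L$ containing a cuspidal regular character of $L$, which is the defining condition of $l(\chi)$. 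Hence the number of irreducible constituents of $\gamma(\chi)$ equals $l(\chi)$.

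The point that requires the most care is the multiplicity-one statement in (1): one must ensure that every cuspidal regular character $\rho$ of $L$ genuinely occurs in $\Gamma_L$, so that $\rho^{\#H}\leq \Gamma_L^{\#H}$. This is where the connected-center hypothesis on $H$ (inherited by every Levi $L$ via \cite[8.1.4]{Ca}) and the exclusion list of Lemma \ref{k00} enter in an essential way; without them one could not identify $\Gamma_L^{\#H}$ with a general Gelfand-Graev character of $H$ and apply Lemma \ref{gg1}.
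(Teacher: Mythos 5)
Your argument is correct, and it draws on the same toolkit as the paper (Lemmas \ref{k00}, \ref{ko1}, \ref{ko3}, \ref{gg1}, Harish-Chandra reciprocity), but it is organized in the dual direction and uses a different mechanism for the uniqueness claims. The paper starts from a complete set $\{\nu_1,\ldots,\nu_k\}$ of representatives of the distinct Gelfand-Graev characters, in bijection with conjugacy classes of Levi subgroups, obtains a surjection $i\mapsto\chi_i$ onto the constituents of $\gamma(\chi)$, and then determines its fibers: for each $j$ the unique regular constituent $\rho_j$ of $\overline{\chi}_{L_j}$ (unique because $(\overline{\chi}_{L_j},\Gamma_{L_j})=1$) either is cuspidal or descends to a smaller Levi, in which case $\chi_j$ coincides with some $\chi_i$. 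You instead go from each constituent $\tau$ of $\gamma(\chi)$ to its cuspidal regular support via Lemma \ref{ko1}, get uniqueness of the pair $(L,\rho)$ from the disjointness of Harish-Chandra series, and prove injectivity of $\tau\mapsto[L]$ and the multiplicity-one statement from $(\chi,\Gamma_L^{\#H})=1$. Your route makes item (1) more explicit than the paper's proof does (the paper concentrates mainly on the count in item (2)), and your identification of the image of $\tau\mapsto[L]$ with the set counted by $l(\chi)$ via Harish-Chandra reciprocity is clean; the paper's fiber computation, on the other hand, makes it transparent why non-cuspidal $\rho_j$'s must be discarded from the count. Both arguments rest on the same two essential inputs: that $\Gamma_L^{\#H}$ is a general Gelfand-Graev character (Lemma \ref{k00}), so that Lemma \ref{gg1} applies to it, and that every regular character of the connected-center group $L$ occurs (with multiplicity one) in its unique non-degenerate Gelfand-Graev character.
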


\begin{proof}
Let $\{\nu_1\ld \nu_k\}$ be a maximal set of linear characters of $U$ such that
$\nu_i^H\neq \nu_j^H$ for $i\neq j$.
By Lemma \ref{k00}, $\nu_i^H=\Gamma_L^{\#H}$ for some Levi subgroup $L$, and by Lemma
\ref{ko3}, the set $\{\nu_1\ld \nu_k\}$ is in bijection with the set of non-conjugate Levi
subgroups.
Let $L_i$ correspond to $\nu_i$ for $i=1\ld k$. 
As $(\chi,\nu_i^H)=1$ (Lemma \ref{gg1}), we may denote by $\chi_i$ the constituent of 
 $\chi$ common with $\nu_i^H$. (So $\chi_i$ is a constituent of
 $\gamma(\chi).$) The mapping $i\ra \chi_i$ is surjective, but 
not injective in general.

To see when $\chi_i=\chi_j$, fix some $j$. Then  $\chi_j$ is a constituent of 
$\Gamma_{L_j}^{\#H}$, so there is a regular character $\rho_j$ of $L_j$
such that $(\chi, \rho_j^{\#H})=1$. Moreover, $\rho_j$ is unique. Indeed,
as $\overline{\chi}_{L_j} (1)=|L_j|_p$ and $\overline{\chi}_{L_j}$ is $Syl_p$-vanishing,
we have $(\overline{\chi}_{L_j}, \Gamma _{L_j})=1$, where  $ \Gamma _{L_j}$ is a non-degenerate Gelfand-Graev character of ${L_j}$. 
As $L_j$ is with connected center too \cite[8.1.4]{Ca}, $ \Gamma _{L_j}$ is unique, and hence $\rho_j$ is unique. 

Suppose that $\rho_j$ is not cuspidal.
Then $\rho_j$ is a constituent of $\lam_i^{\#L_j}$, where $L_i$  is some Levi 
contained in $L_j$ (up to conjugation) and $\lam_i$ is a cuspidal regular character 
of $L_i$ (Lemma \ref{ko1}). So $\chi_j$ is a constituent of $\nu_i^H$ too, and
hence  
we have $\chi_i=\chi_j$ (as $(\chi,\nu_i^H)=1$). Thus we need not count $\nu_j$ such that 
$\overline{\chi}_{L_j}$ contains no cuspidal regular character of $L_j$, and the claim
follows.
\end{proof}

\begin{lemma}\label{rr1}
Let $H$ be as in Proposition $\ref{d34}$ 
and
$\chi$ a $Syl_p$-vanishing character of $H$. 
Suppose that $\gamma'(\overline{\chi}_L)=0$ 
for every proper Levi subgroup $L$ of $H$. 
Then every \ir constituent of $\gamma'(\chi)$
is a cuspidal character.
\end{lemma}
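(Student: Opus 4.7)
The plan is to argue by contradiction. Suppose some irreducible constituent $\tau$ of $\gamma'(\chi)$ is not cuspidal. Then there exists a proper Levi subgroup $L$ of $H$ with $\overline{\tau}_L\neq 0$, and we can pick an irreducible constituent $\lambda$ of $\overline{\tau}_L$. The goal is to produce a Gelfand-Graev character of $H$ containing $\tau$, which contradicts $\tau\in\gamma'(\chi)$.

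First I would show that $\lambda$ appears in $\overline{\chi}_L$. By Harish-Chandra reciprocity, $(\chi,\lambda^{\#H})=(\overline{\chi}_L,\lambda)$, and since $\tau$ is a constituent of $\chi$,
\[(\overline{\chi}_L,\lambda)=(\chi,\lambda^{\#H})\geq(\tau,\lambda^{\#H})=(\overline{\tau}_L,\lambda)>0.\]
Hence $\lambda$ is an irreducible constituent of $\overline{\chi}_L$. By Lemma \ref{x2a}, $\overline{\chi}_L$ is a $Syl_p$-vanishing character of $L$, so the decomposition $\overline{\chi}_L=\gamma(\overline{\chi}_L)+\gamma'(\overline{\chi}_L)$ makes sense for $L$; by hypothesis $\gamma'(\overline{\chi}_L)=0$, and therefore $\lambda$ lies in $\gamma(\overline{\chi}_L)$. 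This means $\lambda$ is a constituent of some Gelfand-Graev character of $L$.

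Next I would apply Lemma \ref{k00}: there is a Levi subgroup $K$ of $L$ such that $\lambda$ is a constituent of $\Gamma_K^{\#L}$, where $\Gamma_K$ is the non-degenerate Gelfand-Graev character of $K$. Invoking transitivity of Harish-Chandra induction (see \cite[70.10]{CR2}), we get that $\lambda^{\#H}$ is a summand of $(\Gamma_K^{\#L})^{\#H}=\Gamma_K^{\#H}$. But, again by Lemma \ref{k00}, $\Gamma_K^{\#H}$ is itself a Gelfand-Graev character of $H$. Since $\tau$ is a constituent of $\lambda^{\#H}$ (by the first step, as $(\tau,\lambda^{\#H})=(\overline{\tau}_L,\lambda)>0$), we conclude that $\tau$ is a constituent of the Gelfand-Graev character $\Gamma_K^{\#H}$, contradicting $\tau\in\gamma'(\chi)$.

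The argument is essentially a routine chain of Harish-Chandra reciprocity and transitivity; the only subtle point is making sure one uses the full strength of Lemma \ref{k00}, namely that Harish-Chandra inducing a non-degenerate Gelfand-Graev character of a Levi subgroup from $L$ to $H$ and from $H$ directly yields the same Gelfand-Graev character of $H$, so that passing from $L$ to $H$ does not move us outside the family $\{\nu^H:\nu\in\Irr U,\ \nu(1)=1\}$.
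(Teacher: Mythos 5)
Your argument is correct and follows the same overall contradiction scheme as the paper's proof: push the non-cuspidal constituent down to a proper Levi subgroup $L$, use Harish-Chandra reciprocity to place the resulting constituent inside $\overline{\chi}_L$, apply the hypothesis $\gamma'(\overline{\chi}_L)=0$, and lift back to a Gelfand-Graev character of $H$. The two proofs diverge only in the middle. The paper chooses $\mu$ to be a \emph{cuspidal} character of $L$ with $(\sigma,\mu^{\# H})>0$ (the cuspidal support of $\sigma$); once $\mu$ is known to lie in some Gelfand-Graev character of $L$, Lemma \ref{ko1} forces $\mu$ to be regular, hence a constituent of $\Gamma_L$ itself, and one induces $\Gamma_L$ directly to $H$. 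You instead take an arbitrary irreducible constituent $\lambda$ of $\overline{\tau}_L$, use the forward half of Lemma \ref{k00} (applied to $L$) to find a Levi subgroup $K$ of $L$ with $\lambda$ occurring in $\Gamma_K^{\# L}$, and then use transitivity of Harish-Chandra induction together with the converse half of Lemma \ref{k00} to see $\tau$ inside the Gelfand-Graev character $\Gamma_K^{\# H}$. Your route trades Lemma \ref{ko1} and the ``a cuspidal constituent of a Gelfand-Graev character is regular'' step for an extra intermediate Levi $K$; both versions rely implicitly on the fact that a proper Levi subgroup of $H$ again satisfies the hypotheses of Lemma \ref{k00} (connected center descends to Levi subgroups by \cite[8.1.4]{Ca}, and none of the excluded components can arise), a point worth making explicit in either write-up.
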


\begin{proof}
Suppose the contrary. Let $\si$ be an \ir constituent 
of $\gamma'(\chi)$, which is not cuspidal. Then there exists a Levi subgroup $L$ and a cuspidal character $\mu\in \Irr L$ such that
$(\si,\mu^{\# H})>0.$ Therefore, $(\overline{\si}_L,
\mu)=(\si,\mu^{\# H})>0$. So $\mu$ is an \ir constituent 
of $\overline{\si}_L$, and hence of 
$\overline{\chi}_L$. By assumption, $\gamma'(\overline{\chi}_L)=0$. 
This means that $\mu$ is a 
constituent of a  Gelfand-Graev character of $L$. By Lemma \ref{ko1}, $\mu$ is a regular character of $L$
(as $\mu$ is cuspidal). 
Therefore, $\mu$ is a constituent of $\Gamma_L$, the non-degenerate 
Gelfand-Graev character of $L$.
By Lemma \ref{ko3}, $\Gamma_L^{\#H}$ is a  Gelfand-Graev character of $H$. As $\si$ is a
constituent of  $\mu^{\#H}$,
it is also a constituent of $\Gamma_L^{\#H}$, that is, $\si$ 
is a summand of $\gamma(\chi)$, which is a contradiction. 
\end{proof}

\begin{lemma}\label{ur2}
Let $H$ be a finite reductive group and let $\tau,\si$ be distinct \ir constituents
of $\gamma(\chi)$
for a $Syl_p$-vanishing character $\chi$ of degree $|H|_p$.
Then $\tau|_U $ and $ \si|_U$ have no common constituent of degree $1$,
in particular, $\tau|_U\neq \si|_U$.
\end{lemma}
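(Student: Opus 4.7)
The plan is to derive a contradiction from Lemma \ref{gg1} by assuming a common linear constituent exists. Suppose, for contradiction, that $\lambda$ is a common linear constituent of $\tau|_U$ and $\sigma|_U$. Since $\lambda$ is a linear character of $U$, the induced character $\lambda^H$ is, by definition, a (general) Gelfand-Graev character of $H$.

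Next I would invoke Frobenius reciprocity to conclude that both $\tau$ and $\sigma$ are irreducible constituents of $\lambda^H$: indeed,
\[
(\tau,\lambda^H)=(\tau|_U,\lambda)\ge 1,\qquad (\sigma,\lambda^H)=(\sigma|_U,\lambda)\ge 1.
\]
Since $\tau,\sigma$ are also irreducible constituents of $\chi$ (they appear in $\gamma(\chi)$), they are two distinct irreducible constituents common to $\chi$ and $\lambda^H$. However, Lemma \ref{gg1}, applied with $\lambda^H$ in the role of the Gelfand-Graev character, asserts that $\chi$ and any Gelfand-Graev character share exactly one common irreducible constituent. This contradicts $\tau\neq\sigma$, proving the first assertion.

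For the ``in particular'' clause, recall that by definition of $\gamma(\chi)$ both restrictions $\tau|_U$ and $\sigma|_U$ contain at least one linear character of $U$. If $\tau|_U=\sigma|_U$, then any linear constituent of $\tau|_U$ is simultaneously a linear constituent of $\sigma|_U$, contradicting what we have just proven. Hence $\tau|_U\neq \sigma|_U$.

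I do not expect any real obstacle here: the argument is a direct combination of Frobenius reciprocity with the uniqueness statement of Lemma \ref{gg1}. The only subtle point worth spelling out is that $\lambda^H$ qualifies as a Gelfand-Graev character in the sense introduced in this paper (linear $\lambda$ on $U$), so Lemma \ref{gg1} does apply.
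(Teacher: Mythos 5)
Your proof is correct and is essentially the paper's own argument: the paper applies Frobenius reciprocity in the form $(\chi|_U,\nu)=1$ (using $\chi|_U=\rho_U^{reg}$), which is exactly the content of Lemma \ref{gg1} that you invoke, so the two routes amount to the same multiplicity count. Your handling of the ``in particular'' clause via the definition of $\gamma(\chi)$ is also the intended one.
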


\begin{proof}
Indeed, suppose the contrary. Then  $(\tau|_U, \nu)>0$ and $( \si|_U,\nu )>0 $
for some  linear character $\nu$ of $U$. As $\chi|_U=\rho_U^{reg}$, we have
 $( \chi|_U,\nu )=1 $, and the claim follows.
 \end{proof}

\begin{lemma} \label{g28}
Let $H=GL(n,q)$ or $SL(n,q)$. Then every \ir character of $H$ is a constituent of a 
 Gelfand-Graev character.
\end{lemma}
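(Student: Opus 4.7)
The plan is to invoke Lemma \ref{ko1}, which identifies the constituents of Gelfand-Graev characters of a group with connected center as precisely those irreducible characters lying in the Harish-Chandra series of some regular cuspidal character of a Levi subgroup. Since $GL(n,q)$ is with connected center and is not among the groups excluded in Lemma \ref{k00}, Lemma \ref{ko1} applies. By standard Harish-Chandra theory every $\chi\in\Irr H$ belongs to a Harish-Chandra series $\{\text{constituents of }\mu^{\# H}\}$ for some Levi $L$ of $H$ and some cuspidal $\mu\in\Irr L$. For $H=GL(n,q)$ every Levi has the shape $L=GL(n_1,q)\times\cdots\times GL(n_k,q)$, and a cuspidal character of $L$ factors as $\mu=\mu_1\otimes\cdots\otimes\mu_k$ with each $\mu_i$ cuspidal in $GL(n_i,q)$. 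Hence the $GL$ case reduces to showing that every cuspidal character of $GL(m,q)$ is regular.

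For this I would appeal to the classical description (Green): every cuspidal character of $GL(m,q)$ is of the form $\pm R^{GL(m,q)}_{T_c}(\theta)$, where $T_c$ is a Coxeter torus of order $q^m-1$ and $\theta\in\Irr T_c$ is in general position. In the Lusztig parametrisation such a character corresponds to a semisimple element $s$ of the dual group whose centralizer equals $T_c$; since this centralizer is a torus, the Jordan label is $(s,1)$, and the character is simultaneously semisimple and regular. A tensor product of regular cuspidal characters is still regular cuspidal, so every cuspidal character of every Levi of $GL(n,q)$ is regular cuspidal, and Lemma \ref{ko1} settles the case $H=GL(n,q)$.

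For $H=SL(n,q)$ I would set $G=GL(n,q)$ and exploit that $|G:H|=q-1$ is coprime to $p$, so every Sylow $p$-subgroup $U$ of $H$ is also a Sylow $p$-subgroup of $G$. Given $\tau\in\Irr H$, Clifford theory supplies $\chi\in\Irr G$ with $\chi|_H=e(\tau_1+\cdots+\tau_r)$ where $\tau$ occurs among the $\tau_i$. Applying the previous step yields a linear character $\lambda$ of $U$ such that $\chi$ is a constituent of $\lambda^G=(\lambda^H)^G$. From $(\chi|_U,\lambda)=(\chi,\lambda^G)>0$ and the decomposition of $\chi|_U$, at least one $\tau_i$ has $\lambda$ in its restriction to $U$, that is, this $\tau_i$ is a constituent of the Gelfand-Graev character $\lambda^H$ of $H$. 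If $\tau_i\neq\tau$, pick $g_i\in G$ with $\tau^{g_i^{-1}}=\tau_i$; the family of Gelfand-Graev characters of $H$ is stable under $G$-conjugation, since $G$ permutes both the Sylow $p$-subgroups of $H$ and the linear characters of them, so $(\lambda^H)^{g_i^{-1}}$ is again a Gelfand-Graev character of $H$, and it has $\tau$ as a constituent.

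The hard point is the regularity of the cuspidal characters of $GL(m,q)$; the remaining assembly is routine Harish-Chandra and Clifford bookkeeping. This regularity is a very special feature of type $A$: for other classical or exceptional types the centralizer of a dual semisimple element attached to a cuspidal character need not be a torus, so cuspidal characters of Levis need not be regular, and the present lemma genuinely fails beyond $GL$ and $SL$.
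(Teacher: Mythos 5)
Your proof is correct, but both halves take a genuinely different route from the paper's. For $GL(n,q)$ the paper simply quotes the original Gelfand--Graev paper \cite{GG}, whereas you rederive the statement internally: you combine Lemma \ref{ko1} with the type-$A$ fact that every cuspidal character of $GL(m,q)$ is $\pm R_{T_c}(\theta)$ for $\theta$ in general position on a Coxeter torus, hence regular (and semisimple), so that every cuspidal character of every Levi $GL(n_1,q)\times\cdots\times GL(n_k,q)$ is regular cuspidal and every Harish-Chandra series is covered by Lemma \ref{ko1}. This is self-contained within the machinery already set up in Section 3, at the price of invoking the classification of cuspidal characters of $GL(m,q)$; and since Lemma \ref{ko1} is itself imported from \cite{Ko}, the gain over citing \cite{GG} directly is more aesthetic than logical. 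For the descent to $SL(n,q)$ the paper applies Mackey's formula to $(\lambda^{H})|_{G}$ with $H=GL(n,q)$ and $G=SL(n,q)$, using $H=G\cdot T$ to choose double-coset representatives in $T$ and conclude that the restriction of a Gelfand--Graev character of $H$ is a sum of Gelfand--Graev characters of $G$; you instead use Clifford theory together with the Frobenius-reciprocity count $(\chi|_U,\lambda)=(\chi,\lambda^{H})>0$ (legitimate since $|H:G|=q-1$ is prime to $p$, so $U$ is a Sylow $p$-subgroup of both) and then transport the conclusion from $\tau_i$ to $\tau$ by conjugation, relying on the $H$-stability of the family of Gelfand--Graev characters of $G$. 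The two descents are essentially equivalent in content; Mackey packages the conjugation step once and for all, while your version makes explicit exactly where the Sylow-index and Clifford inputs enter. Your closing remark that the lemma fails beyond type $A$ agrees with the paper's observation that for other groups there exist irreducible characters lying in no Gelfand--Graev character.
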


\begin{proof}
For $H=GL(n,q)$ the result is stated in the original paper by Gelfand and Graev \cite{GG},
and this implies the one for $G=H'=SL(n,q)$. Indeed, let $U$ be a 
\syl of $G$.  Let $\lam\in\Irr U$ with $\lam(1)=1$.
By Mackey's lemma, $(\lambda^H)|_{G}=\sum_g (\lambda^g)^{G}$, where $g$ are the
representatives of the double 
cosets of $G$ and $U$ in $H$.
Since $H=G\cdot T$, where $T$ is a maximal torus of the Borel subgroup of $H$ containing
$U$ \cite[Lemma 24.12]{MT}, we may choose $g$ in $T$. Then $U$ is $g$-stable,  $\lam^g$ is
a linear character of $U$ and $(\lam^g)^{G}$ is a  Gelfand-Graev
character of $G$. So the result follows from that  for $GL(n,q)$, as every \ir 
character of $G$ is a constituent of the restriction to $G $ of an \ir character of
$H$.
\end{proof}

\begin{corol} \label{g12}
Let $H=GL_n(q)$ or $SL_n(q)$, and let $\chi$
be a $Syl_p$-vanishing character of $H$ of degree   $|H|_p $.
Then $\chi=\gamma(\chi)$,
and hence $\chi$ is \mult free.
\end{corol}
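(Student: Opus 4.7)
The plan is to combine Lemma \ref{g28} with Lemma \ref{k03} essentially mechanically; the corollary should be a two-line consequence.

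Recall that by definition $\chi = \gamma(\chi) + \gamma'(\chi)$, where $\gamma(\chi)$ collects the irreducible constituents of $\chi$ that appear in at least one Gelfand-Graev character of $H$, and $\gamma'(\chi)$ collects the rest. The first step is to observe that for $H = GL_n(q)$ or $SL_n(q)$, Lemma \ref{g28} tells us that \emph{every} irreducible character of $H$ is a constituent of some Gelfand-Graev character. Therefore every irreducible constituent of $\chi$ lies in $\gamma(\chi)$, so $\gamma'(\chi) = 0$ and $\chi = \gamma(\chi)$.

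The second step is to invoke Lemma \ref{k03}, which asserts that for any $Syl_p$-vanishing character of $H$ of degree $|H|_p$, the subsum $\gamma(\chi)$ is multiplicity free. (The hypotheses of Lemma \ref{k03} are satisfied since $H$ is a finite reductive group in defining characteristic $p$ and $\chi$ has degree $|H|_p$.) Combining with $\chi = \gamma(\chi)$, we conclude that $\chi$ itself is multiplicity free.

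There is essentially no obstacle here: the substantive work has already been done in Lemma \ref{g28} (the Gelfand--Graev theorem for $GL_n$, transferred to $SL_n$ via Mackey and the fact that the diagonal torus normalizes the unipotent radical) and in Lemma \ref{k03} (which in turn rests on $\chi|_U = \rho_U^{\mathrm{reg}}$ via Lemma \ref{gg1}). The corollary is just the conjunction of these two facts, exploiting the special feature of type $A$ that no cuspidal character escapes Gelfand--Graev induction.
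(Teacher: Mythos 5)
Your proof is correct and is exactly the paper's argument: the paper's own proof reads simply ``This follows from Lemmas \ref{g28} and \ref{k03}.'' Your expanded version spells out the same two steps (Lemma \ref{g28} gives $\gamma'(\chi)=0$, Lemma \ref{k03} gives multiplicity freeness of $\gamma(\chi)$) with no deviation.
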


\begin{proof}
This follows from Lemmas \ref{g28} and \ref{k03}.
\end{proof}

\section{$Syl_p$-decomposable \ir characters}\label{Syldec}

\subsection{General observations}  Let $\tau$ be an \ir character of a finite group  $G$.  
Let $U$ be a \syl of $G$. We say that $\tau$ is $Syl_p$-decomposable
if there are irreducible characters $\chi_1\ld\chi_k$ of $G$ (not necessarily distinct)
such that $\tau(u)=\chi_1(u)+\cdots 
+\chi_k(u)$ for every element $u\in U$. (Equivalently, $\tau-\chi_1-\cdots 
-\chi_k$ vanishes on $U$.) We express this by writing 
$\tau\equiv \chi_1+\cdots 
+\chi_k\pmod U$, and call this expression a $Syl_p$-{\it decomposition} of $\tau$. 
Note that some characters of $G$
can be $Syl_p$-{\it equivalent}, that is, they take the same value at every $u\in U$.

The problem of determining the
$Syl_p$-vanishing characters of degree $|U|$ discussed above for finite reductive 
groups in defining characteristic $p$  can be restated as that of  finding all $Syl_p$-decompositions of the Steinberg character of $G$.
We find out  that the knowledge of $Syl_p$-decompositions for characters $\tau$ of degree less that $|U|$ is very helpful for determining the $Syl_p$-vanishing characters, 
seems to be unavoidable for most groups of rank greater than $1$. 
Morever,
we feel that they  are of certain independent interest. Because of these we perform
an extensive computations of them, and collect the relevant results 
in the tables at the end of the paper.

We collect here some general facts on $Syl_p$-decompositions
for finite reductive groups $H$.

Recall that, if  $\phi$ is a class function on $H$ then
 $D(\phi)$ denotes the Curtis dual of $\phi$.  
If $D(\phi)=\pm \phi$ then we call $\phi$ Curtis self-dual. 
The following well known fact can be easily deduced from the definitions 
of the duality operation \cite[71.2]{CR2} and the Steinberg character 
\cite[71.18]{CR2}.

\begin{lemma}\label{a8}
Let 
 $\phi$ be a class function on $H$ constant on $U$.
Then $D(\phi)(g)=St(g)\phi(g)$ for all $g\in H$.
In particular, if $\phi(u)=0$ for all $u\in U$ then
$D(\phi)(u)=0$ for all $u\in U$.
\end{lemma}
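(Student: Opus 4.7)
The plan is to reduce the statement via linearity to the case where $\phi$ vanishes on $U$, and then exploit the definition of the Curtis dual together with the basic structure of the standard parabolics $P_J$. Writing $c=\phi(1)$ (which, since $\phi$ is constant on $U$ and $1\in U$, equals $\phi(u)$ for every $u\in U$) and decomposing $\phi=c\cdot 1_H+\psi$, I observe that $\psi$ vanishes on $U$ and, by the class-function property, on every $p$-element of $H$. The defining identity $D(1_H)=St$ from \cite[71.18]{CR2} immediately gives $D(c\cdot 1_H)=c\cdot St=St\cdot(c\cdot 1_H)$, so it suffices to establish $D(\psi)=St\cdot\psi$ for such $\psi$.

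The key ingredient is the vanishing of the truncation $\overline{\psi}_{L_J}$ on unipotent elements of each Levi $L_J$. For $l\in L_J$ unipotent and $v$ ranging over the unipotent radical $V_J$ of $P_J$, the product $lv\in P_J$ has Levi image $l$ (still unipotent), so $lv$ lies in the unipotent variety of $P_J$ and is unipotent in $H$. Being $H$-conjugate to an element of $U$, we have $\psi(lv)=0$, and averaging over $v$ gives $\overline{\psi}_{L_J}(l)=0$ for every unipotent $l\in L_J$.

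From this the ``in particular'' statement is immediate: for $g\in U$, any conjugate $x^{-1}gx$ that lies in $P_J$ is unipotent, so its image under the Levi projection is a unipotent element of $L_J$, and hence $(\overline{\psi}_{L_J})^{\#H}(g)=0$. Summing over $J$ in the definition $D(\psi)=\sum_J(-1)^{|J|}(\overline{\psi}_{L_J})^{\#H}$ of \cite[71.2]{CR2} yields $D(\psi)(g)=0=St(g)\psi(g)$, as both factors on the right vanish.

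For arbitrary $g\in H$ with Jordan decomposition $g=g_sg_u$, I would reduce to the centralizer $C_H(g_s)$: a formula of Curtis type expresses $D_H(\psi)(g)$ as $\pm D_{C_H(g_s)}(\psi|_{C_H(g_s)})(g_u)$. When $g_u\neq 1$, the argument of the preceding paragraph, applied inside the reductive group $C_H(g_s)$ (where $\psi|_{C_H(g_s)}$ still vanishes on unipotent elements), forces the right-hand side to vanish; this matches $St(g)=0$ for $p$-singular $g$. When $g_u=1$, so $g$ is $p$-regular, the same formula reduces to $\pm\psi(g)\cdot St_{C_H(g)}(1)=\pm|C_H(g)|_p\cdot\psi(g)$, which agrees with $St_H(g)\psi(g)$ via the standard identity $St_H(g)=\pm|C_H(g)|_p$. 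The part I expect to be the main obstacle is checking that the two occurrences of the sign $\pm$ in this final step are the same; the rest is direct manipulation of the definitions.
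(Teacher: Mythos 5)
Your steps (1)--(3) are correct, and they constitute essentially the whole content of the lemma as the paper actually uses it. (The paper offers no proof, saying only that the statement ``can be easily deduced from the definitions'' of $D$ and $St$.) In fact one can argue even more directly, without splitting off $c\cdot 1_H$: for unipotent $g$, every conjugate $y=x^{-1}gx$ lying in $P_J$ is unipotent, and the whole coset $yV_J$ consists of unipotent elements because the kernel $V_J$ of the Levi projection is a $p$-group; hence $\overline{\phi}_{L_J}(\pi_J(y))=\phi(g)$ whenever $\phi$ is constant on the unipotent set, and summing $\sum_J(-1)^{|J|}$ over the terms of the definition of $D$ gives $D(\phi)(g)=\phi(g)\,St(g)$ for all unipotent $g$. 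This yields the ``in particular'' clause, which is the only form of the lemma invoked later (in Lemma \ref{aa1}(4)).

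Step (4) contains a genuine gap that cannot be repaired. The identity $D_H(\psi)(g)=\pm D_{C_H(g_s)}(\psi|_{C_H(g_s)})(g_u)$ is not a valid formula for arbitrary class functions: the Curtis-type restriction formulas for the duality apply to $St=D(1_H)$ and to functions compatible with Harish-Chandra induction, not to a bare restriction to $C_H(g_s)$. Indeed the assertion $D(\phi)(g)=St(g)\phi(g)$ for \emph{all} $g\in H$ is false under the stated hypothesis. Take $H=GL(2,5)$, let $\beta$ be the quadratic character of $F_5^*$, and set $\phi=\chi_3(0,2)-St$, where $\chi_3(0,2)=(1\otimes\beta)^{\#H}$ is the principal series character of degree $6$. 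Then $\phi$ takes the constant value $1$ on $U$ (namely $6-5$ at the identity and $1-0$ elsewhere), and $D(\phi)=\chi_3(0,2)-1_H$, since characters Harish-Chandra induced from $T$ are fixed by $D$ and $D(St)=1_H$. At the central element $z=2I$ one has $\chi_3(0,2)(z)=6\beta(2)=-6$ and $St(z)=5$, so $D(\phi)(z)=-7$ whereas $St(z)\phi(z)=5\cdot(-11)=-55$; the identity also fails at the non-semisimple element with semisimple part $2I$. The hypothesis needed for the global identity is that $\phi$ be $p$-constant, i.e.\ that $\phi(g)$ depend only on the semisimple part of $g$ (then $D(\phi)=\phi\cdot D(1_H)$ follows from compatibility of such functions with truncation and Harish-Chandra induction); constancy on $U$ controls only the unipotent classes, and correspondingly only the values of $D(\phi)$ at unipotent elements are determined. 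So your argument covers every application made of the lemma, but no argument can establish the statement literally as written, and the sign-matching you flagged as the ``main obstacle'' is not where the difficulty lies.
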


\begin{lemma}\label{aa1}
Suppose $H$ is with connected center.  Let $\chi_1,\chi_2,\chi_3\in \Irr H$. Suppose that  
 $\chi_1\equiv\chi_2+\chi_3\pmod U$ is a $Syl_p$-decomposition of $\chi_1$.
\begin{itemize}
\item[(1)] At least one of the characters $\chi_i$, $1\leq i\leq 3$, is not regular. 
\item[(2)]  At least one of the characters $\chi_i$, $1\leq i\leq 3$, is not semisimple.
\item[(3)] If every $\chi_i$ is either regular or semisimple then at least one of them is 
regular and semisimple (and hence Curtis self-dual).  
\item[(4)] $D(\chi_1)-D(\chi_2)-D(\chi_3)$ is a  $Syl_p$-vanishing  generalized character of degree $0$.
Let $\chi_i'=\pm D(\chi_i)$ be a proper character, $i=1,2,3$. Then  reordering of 
the $\chi'_i$ yields a $Syl_p$-decomposition (which may coincide with that in the 
assumption).
\end{itemize}
\end{lemma}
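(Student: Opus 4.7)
The plan is to establish (4) first, since it supplies the duality tool used in (1)--(3).  Set $\phi = \chi_1 - \chi_2 - \chi_3$.  The hypothesis $\chi_1 \equiv \chi_2 + \chi_3 \pmod{U}$ says that $\phi$ vanishes on $U$; in particular $\phi(1)=0$ gives the degree identity $\chi_1(1)=\chi_2(1)+\chi_3(1)$.  Lemma~\ref{a8} then yields that $D(\phi)=D(\chi_1)-D(\chi_2)-D(\chi_3)$ also vanishes on $U$, hence is a $Syl_p$-vanishing generalized character of degree $0$.  Writing $D(\chi_i)=\epsilon_i\chi_i'$ with $\epsilon_i\in\{\pm 1\}$ and $\chi_i'\in\Irr H$, the degree relation becomes $\epsilon_1\chi_1'(1)=\epsilon_2\chi_2'(1)+\epsilon_3\chi_3'(1)$.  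Since every $\chi_i'(1)>0$, a short sign analysis (normalising $\epsilon_1=+1$, and ruling out $\epsilon_2=\epsilon_3=-1$ by positivity) shows that in each of the three remaining configurations the relation can be reorganised to put one proper character on the left-hand side and the sum of the other two on the right, yielding the reordered $Syl_p$-decomposition asserted in (4).

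For (1), I would pair both sides of $\chi_1|_U=\chi_2|_U+\chi_3|_U$ with a non-degenerate linear character $\psi$ of $U$.  Frobenius reciprocity gives $(\chi_1,\psi^H)=(\chi_2,\psi^H)+(\chi_3,\psi^H)$, and since $Z(H)$ is connected, $\psi^H$ is the unique non-degenerate Gelfand-Graev character $\Gamma_H$, which is multiplicity-free with constituents exactly the regular characters; hence $(\chi_i,\Gamma_H)\in\{0,1\}$, equalling $1$ iff $\chi_i$ is regular.  If all three $\chi_i$ were regular we would get $1=1+1$, contradicting the hypothesis, so at least one $\chi_i$ is not regular.  For (2), I apply (4): the dualised $Syl_p$-decomposition has irreducible constituents $\chi_i'$ which are all regular (as Curtis duality interchanges regular and semisimple characters), contradicting (1) applied to the dual.

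For (3), the idea is to combine the Gelfand-Graev pairings of both the original and the dualised decompositions.  Define $r_i=(\chi_i,\Gamma_H)$ and $s_i=(\chi_i',\Gamma_H)$; each is $0$ or $1$, with $r_i=1$ iff $\chi_i$ is regular and $s_i=1$ iff $\chi_i$ is semisimple.  Pairing with $\psi$ yields the two relations $r_1=r_2+r_3$ and $\epsilon_1 s_1=\epsilon_2 s_2+\epsilon_3 s_3$.  Assuming toward contradiction that each $\chi_i$ is regular or semisimple but never both gives $r_i+s_i=1$ for every $i$; a short case check on which of the $r_i$ equal $1$ forces in every case either some $\epsilon_k=0$ or the unsolvable equation $\pm 1=\epsilon+\epsilon'$ with $\epsilon,\epsilon'\in\{\pm 1\}$.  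The Curtis self-duality claim is then immediate, because a regular semisimple character is cuspidal (as noted in the remarks preceding Lemma~\ref{ko3}) and cuspidal characters satisfy $D(\chi)=\pm\chi$.  The main technical nuisance is the sign bookkeeping in (4) and in the case analysis of (3); once those are handled, the remaining assertions reduce cleanly to Gelfand-Graev reciprocity.
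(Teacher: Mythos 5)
Your proof is correct and follows essentially the same route as the paper's: part (4) via Lemma \ref{a8}, then parts (1)--(3) by pairing the original and the dualised decompositions with the non-degenerate Gelfand--Graev character, your $r_i,s_i$ bookkeeping in (3) being just a more systematic version of the paper's verbal case analysis (the paper likewise disposes of the ``no regular constituent'' case by appealing to (2)). The only soft spot is the final parenthetical: you derive Curtis self-duality from the paper's remark before Lemma \ref{ko3} that a regular character is cuspidal iff semisimple, an equivalence that is actually false in the non-cuspidal direction (e.g.\ principal series characters of $GL(2,q)$ in general position are regular and semisimple but not cuspidal); the self-duality itself is nevertheless true because a regular semisimple character is $\pm R_{T}^{\theta}$, and the paper's own proof does not address this parenthetical at all.
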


\begin{proof}
Note that $\chi_1-\chi_2-\chi_3$ is a $Syl_p$-vanishing generalized character of degree 0.
By Lemma \ref{a8}, so is $D(\chi_1-\chi_2-\chi_3)$, which implies (4).
 Let $\Gamma $ be the non-degenerate Gelfand-Graev character of $H$.  

(1) If all $\chi_i$ are regular, then, taking the
 inner product of $\chi_1-\chi_2-\chi_3$ with $\Gamma$, we get $1-1-1=0 $, which is absurd. 

(2) Suppose all of them are semisimple. 
As $\pm D(\chi_i)$ is a regular character for $i=1,2,3$, 
the inner product with  $\Gamma$ yields  
again $\pm 1\pm 1\pm 1=0 $, a contradiction. 

(3) Suppose that none of the $\chi_i$'s is 
regular and semisimple simultaneously. If one of the characters is regular 
and the other two are not regular, the inner product with  $\Gamma$ yields  a
contradiction. So there are two regular characters among them, and they are not semisimple. 
Then $D(\chi_1)-D(\chi_2)-D(\chi_3)$ is $Syl_p$-vanishing, and two characters 
among $ D(\chi_1), - D(\chi_2), - D(\chi_3)$ are semisimple and not regular, and one is
regular. Again, the inner product with  $\Gamma$ yields  a contradiction.
\end{proof}

\begin{lemma}\label{cc1}
Suppose $H$ is with connected center. 
Let $\chi$ be a $Syl_p$-vanishing character of $H$ of degree $|H|_p$, and let
 $\si$ be a 
  regular character of $H$ that occurs as a 
 constituent of $\gamma(\chi)$. Let $\tau=\sum m_i\tau_i$ be 
a reducible character such that $\si-\tau$ is $Syl_p$-vanishing of degree $0$. 
\begin{itemize}
\item[(1)] There is a unique $i$ such that $\tau_i $ is regular;
\item[(2)] Suppose $H$ has no component of type 
$ B_n(2)$, $C_n(2)$, ${}^2B_2(2)$, $F_4(2),{}^2F_4(2)$, $G_2(2),
G_2(3),{}^2G_2(3)$. 
If  $ \si$ is cuspidal then $\gamma(\tau)=\tau_i$, that is, 
$\tau_j$ does not belong to any  Gelfand-Graev character for $j\neq i$; 
in addition, if $H=GL(n,q)$ then $\tau$ is \ir and hence $Syl_p$-equivalent to
$\si;$ 
\item[(3)] $\gamma(\tau)$ is 
\mult free, that is, $m_i=1$ if $\tau_i$ is a constituent of some  Gelfand-Graev character. 
\end{itemize}
\end{lemma}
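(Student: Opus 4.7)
The plan is to derive all three parts from one master identity. Since $\sigma-\tau$ is $Syl_p$-vanishing of degree $0$, Lemma~\ref{b1} applied to any $\lambda\in\Irr U$ yields $(\sigma-\tau,\lambda^H)=0$, so
\[(\sigma,\lambda^H)=(\tau,\lambda^H)\quad\text{for every }\lambda\in\Irr U.\]
For (1), I would take $\lambda$ with $\lambda^H=\Gamma$, the unique non-degenerate Gelfand-Graev character of $H$ (available because $Z(H)$ is connected). Regular characters of $H$ are precisely the irreducible constituents of $\Gamma$, each occurring with multiplicity $1$, so $(\sigma,\Gamma)=1$ and the identity becomes $\sum_{i:\tau_i\text{ regular}}m_i=1$. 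Since every $m_i\geq 1$, there is exactly one regular $\tau_i$ and its multiplicity equals $1$.

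For (3) the plan is to substitute $\sigma$ by $\tau$ inside $\chi$. Set $\chi':=\chi-\sigma+\tau$. As $\sigma$ and $\tau$ agree on every element of $U$, $\chi'|_U=\chi|_U=\rho_U^{\mathrm{reg}}$, so $\chi'$ is $Syl_p$-vanishing of degree $|H|_p$. Lemma~\ref{k03} applied to $\chi$ gives $(\chi,\sigma)=1$, so $\chi-\sigma$ is a proper character and hence so is $\chi'=(\chi-\sigma)+\tau$. Applying Lemma~\ref{k03} to $\chi'$ shows that $\gamma(\chi')$ is multiplicity-free. Every constituent $\tau_i$ of $\gamma(\tau)$ is a constituent of $\chi'$ lying in some Gelfand-Graev character, hence of $\gamma(\chi')$; unwinding the definition of $\chi'$ then yields $m_i=(\tau,\tau_i)\leq 1$ in both cases $\tau_i\neq\sigma$ and $\tau_i=\sigma$.

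For (2) assume $\sigma$ is cuspidal. Then $\overline{\sigma}_L=0$ for every proper Levi subgroup $L$, and Harish-Chandra reciprocity gives $(\sigma,\Gamma_L^{\#H})=(\overline{\sigma}_L,\Gamma_L)=0$. Under the hypotheses on $H$, Lemma~\ref{k00} says that every Gelfand-Graev character of $H$ is of the form $\Gamma_L^{\#H}$ for some Levi $L$, so the master identity forces $(\tau,\Gamma_L^{\#H})=0$ for every proper Levi $L$. The only constituents of $\tau$ that can occur in some Gelfand-Graev character are therefore those appearing in $\Gamma_H^{\#H}=\Gamma$, namely the regular ones; combined with (1) this gives $\gamma(\tau)=\tau_i$, a single irreducible summand of multiplicity $1$. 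In the special case $H=GL(n,q)$, Lemma~\ref{g28} ensures that every irreducible character of $H$ lies in some Gelfand-Graev character, so $\gamma(\tau)=\tau$; thus $\tau=\tau_i$ is irreducible and then $Syl_p$-equivalent to $\sigma$ by hypothesis.

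The main obstacle I anticipate is not any hard computation but a bookkeeping pitfall in (3): before invoking Lemma~\ref{k03} for $\chi'$ one must first check that $\chi'$ is a \emph{proper} (not merely virtual) character, which is precisely where the input $(\chi,\sigma)=1$, supplied by Lemma~\ref{k03} applied to $\chi$, becomes essential.
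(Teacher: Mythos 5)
Your proof is correct and follows essentially the same route as the paper: all three parts rest on the identity $(\sigma,\lambda^H)=(\tau,\lambda^H)$ coming from Lemma \ref{b1}, with (1) obtained by pairing against the non-degenerate Gelfand--Graev character and (2) by combining this identity with Lemma \ref{k00} and the cuspidality of $\sigma$. The only cosmetic difference is in (3), where you substitute $\tau$ for $\sigma$ inside $\chi$ and invoke Lemma \ref{k03} for $\chi'=\chi-\sigma+\tau$ (correctly noting that $(\chi,\sigma)=1$ makes $\chi'$ a proper character), whereas the paper chains $(\tau,\nu^H)\geq m_i$, $(\sigma,\nu^H)=(\tau,\nu^H)$ and $(\chi,\nu^H)\geq(\sigma,\nu^H)$ to contradict Lemma \ref{gg1} directly; both are the same computation in different packaging.
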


\begin{proof}
(1) follows by taking the inner product of $\si-\tau$ with the non-degenerate Gelfand-Graev 
character of $H$. 

(2) By the way of contradiction, suppose that $\tau_j$ for $j\neq i$ is a 
constituent of a 
Gelfand-Graev character 
 $\Gamma$ of $H$. By Lemma \ref{k00}, there is a Levi subgroup $L$  of $H$ 
such that
 $\Gamma=\Gamma_L^{\#H}$. 
Note that $L\neq H$,
as  $(\tau,\Gamma)= (\tau_i,\Gamma)=1$. Since  $(\tau_j, \Gamma_L^{\#H})>0$, we have
$(\si,\Gamma_L^{\#H})>0$, which is false as $\si$ is cuspidal. 

To prove the additional statement, recall that every \ir character 
of $H=GL(n,q)$ is a constituent of some Gelfand-Graev character (Lemma \ref{g28}). 
This implies $\gamma(\tau)=\tau $. 

(3) Let $\tau_i$ is a constituent of a Gelfand-Graev character $\nu^H$.
Suppose that $m_i>1$. Then $(\tau,\nu^H)\geq m_i>1$, and hence $(\si,\nu^H)> 1$. 
Then  $(\chi,\nu^H)>1$, which contradicts Lemma \ref{gg1}.
\end{proof}

\begin{lemma} 
Let  $\chi,\chi_1\ld \chi_k\in \Irr H$, and $\chi(1)< |H|_p$. Suppose that 
$\chi(g)=\chi_1(g)+\cdots+  \chi_k(g)$ for every $p$-singular element $g\in H$. 
  Then all characters $\chi,\chi_1\ld \chi_k $ 
belong to the same $p$-block.
\end{lemma}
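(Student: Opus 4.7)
The natural plan is to leverage the fact that $\theta := \chi - \chi_1 - \cdots - \chi_k$ is, by hypothesis, a $p$-vanishing generalized character of $H$, and then to decompose it by $p$-block. By the same ingredient underlying the proof of Lemma~\ref{bb1} (\cite[Ch.~IV, Cor.~3.13]{Fe}, or equivalently \cite[Thm.~18.26]{CR1}), $\theta$ is an integer-linear combination of the characters of the projective indecomposable modules (PIMs) of $H$ in characteristic $p$. Since every PIM lies in a unique $p$-block, the decomposition $\theta = \sum_B \theta_B$ by block has each $\theta_B$ itself a $\mathbb{Z}$-combination of PIMs in $B$, and hence $p$-vanishing.

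On the other hand, reading off the \ir constituents of $\theta$ shows that, with $B_0$ the block of $\chi$, one has $\theta_{B_0} = \chi - \sum_{\chi_i \in B_0}\chi_i$ and $\theta_B = -\sum_{\chi_i \in B}\chi_i$ for every $B \neq B_0$. For any such $B$, the class function $\Psi_B := -\theta_B$ is thus a \emph{proper} character of $H$ vanishing on all $p$-singular elements, and in particular on $U\setminus\{1\}$ for $U$ a Sylow $p$-subgroup. Since any character of $U$ vanishing off the identity is a nonneg integer multiple of $\rho_U^{\rm reg}$, we obtain $\Psi_B(1) \in |H|_p \cdot \mathbb{Z}_{\geq 0}$; moreover, whenever some $\chi_i$ lies in $B$, then $\Psi_B(1) \geq |H|_p$.

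The final step uses $\chi(1) < |H|_p$ to rule out every $\chi_i$ outside $B_0$. The same Sylow-restriction applied to $\theta_{B_0}$ forces $\chi(1) - \sum_{\chi_i \in B_0}\chi_i(1)$ to be a multiple of $|H|_p$; combined with $\chi(1) < |H|_p$, this pins the multiple to a value $\leq 0$, so that the $B_0$-contribution to $\theta(1)$ is tightly bounded. The main obstacle I anticipate is the final balancing: each block $B \neq B_0$ meeting $\{\chi_1,\ldots,\chi_k\}$ contributes at least $|H|_p$ to the excess $\sum_i \chi_i(1) - \chi(1)$, while the $B_0$-contribution is constrained by $\chi(1) < |H|_p$; pooling the signed degree identity $\theta(1) = \sum_B \theta_B(1)$ with these inequalities should show that no block $B \neq B_0$ meets $\{\chi_1,\ldots,\chi_k\}$, whence all of $\chi,\chi_1,\ldots,\chi_k$ lie in $B_0$.
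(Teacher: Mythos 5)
Your first two steps are sound and coincide with the paper's own strategy: by Feit's lemma (\cite[Ch. IV, Lemma 3.14]{Fe}) each block component $\theta_B$ of $\theta=\chi-\chi_1-\cdots-\chi_k$ vanishes on the $p$-singular elements, so for $B\neq B_0$ the proper character $\Psi_B=\sum_{\chi_i\in B}\chi_i$ restricts to a Sylow $p$-subgroup as a multiple of the regular character and has degree in $|H|_p\cdot\ZZ_{\geq 0}$. The gap is exactly where you flag it: the ``final balancing'' cannot be carried out from the stated hypotheses. The identity element is not $p$-singular, so the hypothesis places no constraint on $\theta(1)=\chi(1)-\sum_i\chi_i(1)$; the identity $\theta(1)=\sum_B\theta_B(1)$ is then vacuous, since $\theta_{B_0}(1)$ is merely some non-positive multiple of $|H|_p$ (being a multiple of $|H|_p$ that is at most $\chi(1)<|H|_p$) and each $-\Psi_B(1)$ is likewise a non-positive multiple of $|H|_p$, and these facts are mutually consistent for any values. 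Indeed the statement as written is false: take $\chi=\chi_1=1_H$ and let $\chi_2$ be any irreducible character of defect zero (for instance the Steinberg character of a Chevalley group). Then $\chi_2$ vanishes at every $p$-singular element, so $\chi(g)=\chi_1(g)+\chi_2(g)$ for all $p$-singular $g$ and $\chi(1)=1<|H|_p$, yet $\chi_2$ lies in its own block. So no argument can close the step you left open.

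For what it is worth, the paper's own proof makes the identical unjustified leap, passing from ``$\sigma(1)$ is a multiple of $|H|_p$'' to ``$\sigma(1)=0$'' with no supporting inequality. What is missing in both arguments is the degree equation. If one additionally assumes $\chi(1)=\chi_1(1)+\cdots+\chi_k(1)$, i.e. requires the identity $\chi(g)=\sum_i\chi_i(g)$ at $g=1$ as well (which is how such decompositions are actually used elsewhere in the paper), then your balancing closes at once: $\theta(1)=0$ gives $\theta_{B_0}(1)=\sum_{B\neq B_0}\Psi_B(1)\geq 0$, while $\theta_{B_0}(1)\leq\chi(1)<|H|_p$ together with $|H|_p\mid\theta_{B_0}(1)$ forces $\theta_{B_0}(1)=0$; hence every $\Psi_B$ vanishes and all $\chi_i$ lie in $B_0$.
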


\begin{proof}
Let $B$ be the block $\chi$ belongs to. By reordering the $\chi_i$'s we can 
assume that $\chi_1\ld \chi_m\in B$, and $\chi_{m+1}\ld \chi_k\notin B$. By Feit
\cite[Ch. IV, Lemma 3.14]{Fe}, it follows that $\si:=\chi_{m+1}+\cdots+ \chi_k $
is a $p$-vanishing character, and hence $\si(1)$ is a multiple of $|H|_p$. 
So $\si(1)=0$, and hence $\si=0$, as claimed. 
\end{proof}


\subsection{The computational aspect} 
In Sections \ref{rk1}, \ref{rk2} and \ref{rk3} we compute the $Syl_p$-vanishing characters 
$\psi$ of degree $|G|_p$ for some 
Chevalley groups $G$, whose character tables are known. 
Our main reference here is the algebra package \textsc{chevie} \cite{chevie}, in particular,
 its \textsc{maple} part. 
Recall that \textsc{chevie} 
consists of a library of data and programs to deal with the generic character table of certain finite groups $G$ of Lie type of small rank.
If $G$ is a such group, defined over the field $F_q$,  the generic character table of $G$ depends on this parameter $q$. 
So, all the information stored in \textsc{chevie}  is expressed in the form of  functions of $q$: 
for instance, the degrees and the number of the irreducible characters as well as the number and the representatives of the conjugacy 
classes of $G$. 

We keep 
the notation of the \textsc{chevie}  package for the irreducible characters and the conjugacy classes. 
In particular, 
\textsc{chevie} partitions   $\Irr G$ in sets, which we call {\it \textsc{chevie} 
sets} and denote by  ${\bf X}_h=\{\chi_h(k)\mid k \in I_h\}$ in this paper. Here 
$h$ is a natural number
and $I_h$ is some parameter set,
called the character parameter group 
 (see the \textsc{chevie} manual  \cite{chevie}).
Furthermore, \textsc{chevie} 
partitions the conjugacy classes of $G$ in sets ${\bf C}_h=\{C_h(k)\mid
k \in I_h\},$
where, as before, $I_h$ is the corresponding parameter set. Note the classes belonging
to the same set ${\bf C}_h$ have the same order.

To avoid misunderstanding, we emphasize that \textsc{chevie} notation for parameters
$k\in I_h$ for different groups is often not homogeneous, but we are forced to follow
\textsc{chevie}. For instance, the set  $I_4$ of $U(3,q)$  is parametrised by $(u,v)$, and
the set $I_{10}$ of $SU(3,q)$, with $3\mid (q+1)$, is parametrised by $(n,m)$.

It is essential for us that the characters 
in the same \textsc{chevie} set ${\bf X}_h$ are $Syl_p$-equivalent 
(but the converse is not true).
This allows us, in performing computations with $Syl_p$-vanishing characters,  to ignore 
the parameter $k$ that determines an individual character 
$\chi_h(k)\in {\bf X}_h$. By this reason we  write $\chi_h$
to denote any character of ${\bf X}_h$ (to simplify the notation).

Our strategy involves three steps, that we describe as follows.
 
\begin{proc}\label{procedure}
In order to determine the set of all  $Syl_p$-vanishing characters $\psi$ of
a group $G$ with $\psi(1)=|G|_p$, we
apply the following procedure:
\begin{itemize}
\item[(1)]  Compute
some $Syl_p$-decompositions of the irreducible characters $\chi_h$ of $G$ 
(the results are given in the tables of Section \ref{tables}).
 For $Syl_p$-decompositions we choose to compact the notation by writing 
$\mathbf{v}=(j_1,\cdots ,j_t)$, especially in the tables.
The case where $t=1$ means that $\chi_h$ and $\chi_{j_1}$ are $Syl_p$-equivalent.
(This is reflected in the tables, see, for instance Table \ref{TU3}.3-A.)
Note that we do not guarantee that our 
tables contain all $Syl_p$-decomposable \ir characters, this is not necessary for our
purposes. To simplify the notation, we denote by $\Delta_G$ the set of the indices $h$
corresponding to
the irreducible characters $\chi_h$ of $G$ that have  no known $Syl_p$-decomposition.

\item[(2)] Determine all $Syl_p$-vanishing characters of degree $|G|_p$ whose \ir constituents 
$\chi_j$ are chosen with $j\in \Delta_G$.
Note that by  Lemma \ref{gg1}, every character $\psi$ 
has at least one
regular character constituent (exactly one if $G$ has
a unique non-degenerate Gelfand-Graev character).
We use \textsc{chevie} to 
identify the regular characters of $G$ in our list of $Syl_p$-decompositions.
This
allows us to assume that every regular  constituent of $\psi$ has no known  
 $Syl_p$-decomposition.

\item[(3)] If $\sum \chi_i$ is a subcharacter of a character $\psi$ obtained in step (2)
and this subcharacter is also a $Syl_p$-decomposition of an irreducible character $\tau$
of $G$ (obtained in step (1)), then the substitution  with $\tau$ in place of
$\sum \chi_i$ yields a new $Syl_p$-vanishing character with a lower number of
constituents. Repeating this, we arrive at $Syl_p$-vanishing characters that does not
contain such subcharacters. In this way, we are able to list all the $Syl_p$-vanishing
characters of $G$ of degree $|G|_p$.
\end{itemize}
\end{proc}

Afterwards, we use the character table of $G$ in order to detect and rule out 
those $Syl_p$-vanishing characters that are not $p$-vanishing.
In most cases 
we do this for groups with connected center.
However, in the last step we try to only use 
elements that belong
to the derived subgroup   $G'$ of $G$. In view of Proposition \ref{lu6},
this yields the result for groups
 with disconnected center. 

 Note that we have found  
 quite a lot of $Syl_p$-decomposable characters,
 including regular and semisimple, and this  hints that the use of them
 can organise $Syl_p$-vanishing characters in a more conceptual way.

Also, observe that every character $\chi_h\in {\bf X}_h$ occurs with the same 
\mult in a Gelfand-Graev 
character $\nu^G$ for a fixed $\nu\in\Irr U$. So, by Lemma \ref{b1}, 
we have:

\begin{lemma}\label{mu0} 
Suppose that $G$ is with connected center. If some character
 $\chi_h\in {\bf X}_h$ is regular then so are all 
 characters in $ {\bf X}_h$.
In addition, if $\chi$ is a $Syl_p$-vanishing character of $G$ of 
degree $|G|_p$ then for any fixed $h$ there is at most one 
regular character from ${\bf X}_h$ 
occurring 
in $\chi$.   
\end{lemma}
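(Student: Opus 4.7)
The plan is to reduce both claims to the standard characterization that, when $Z(G)$ is connected, there is a unique non-degenerate Gelfand-Graev character $\Gamma$ of $G$, and an \ir character $\tau$ of $G$ is regular precisely when $(\tau,\Gamma)=1$ (otherwise $(\tau,\Gamma)=0$). Writing $\Gamma=\nu^G$ for the appropriate linear character $\nu$ of $U$, the computation of $(\cdot,\Gamma)$ reduces by Frobenius reciprocity to a computation on $U$, which is the source of both assertions.

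For the first statement, any two characters $\chi_h,\chi_h'\in{\bf X}_h$ are $Syl_p$-equivalent by definition of a \textsc{chevie} set, so $\chi_h|_U=\chi_h'|_U$. Frobenius reciprocity then gives
\[
(\chi_h,\Gamma)=(\chi_h|_U,\nu)=(\chi_h'|_U,\nu)=(\chi_h',\Gamma),
\]
so either every character of ${\bf X}_h$ is regular, or none is. This is essentially the observation made in the paragraph preceding the lemma, reinterpreted as a statement about regularity.

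For the second statement, I apply Lemma \ref{b1} to the linear character $\nu$: since $\chi$ is $Syl_p$-vanishing with $\chi(1)=|G|_p=|U|$, it yields $(\chi,\Gamma)=\chi(1)\cdot 1/|U|=1$. If $\chi$ contained two distinct regular constituents $\tau_1,\tau_2\in{\bf X}_h$, each would contribute $(\tau_j,\Gamma)=1$ to this inner product (since both are regular), forcing $(\chi,\Gamma)\ge 2$, a contradiction. The only non-trivial point is to quote the characterization of regular characters in the connected-center case; once this is in place, both parts are immediate consequences of Frobenius reciprocity together with Lemma \ref{b1}.
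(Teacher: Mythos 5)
Your proposal is correct and follows essentially the same route as the paper, which derives the lemma from the observation that all characters in a fixed ${\bf X}_h$ have the same restriction to $U$ and hence the same multiplicity in the (unique, by connectedness of the center) non-degenerate Gelfand--Graev character, combined with Lemma \ref{b1}. Your write-up merely makes explicit the characterization of regular characters as the multiplicity-one constituents of $\Gamma$, which the paper uses implicitly throughout.
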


In the next sections we often use the \f simple observation. Let $H$ be a finite
reductive group and $G$ the subgroup
generated by unipotent elements. Then $H/G$ is abelian. Suppose that $G$ is quasi-simple. Let $\chi$
be a $p$-vanishing character of $H$ of degree $|H|_p$. Then $\chi$
contains a one-dimensional constituent \ii $\chi|_G$ contains
$1_G$. Therefore, $\chi\cdot \nu$ contains $1_H$ for some one-dimensional character $\nu$ of $H$.

\section{Chevalley groups of BN-pair rank $1$}\label{rk1}

Now we start dealing with Chevalley groups of rank $1$: $SL(2,q)$,  $SU(3,q)$, ${}^2B_2(q^2)=
Sz(q^2)$  with $q^2 = 2^{2m+1}$, and ${}^2G_2(q^2)$ with $q^2= 3^{2m+1}$.
 
In this section, as well as in Sections \ref{rk2} and \ref{rk3}, we shall use the \f 
notation for certain complex numbers:
$$\left. 
\begin{array}{llll}
\omega=\exp(\frac{2\pi i}{3}),\; &
\zeta_1=\exp(\frac{2\pi i}{q-1}),\;&
\xi_1=\exp(\frac{2\pi i}{q+1}),\; &
\xi_2=\exp(\frac{2\pi i}{q^2+1}),\; \\ 
\xi_3=\exp(\frac{2\pi i}{q^3+1}),\; & 
\varphi_8'=\exp(\frac{2\pi i}{q^2+\sqrt{2}q+1}), \; &
\varphi_{12}''=\exp(\frac{2\pi i}{q^2+\sqrt{3}q+1}). \\
\end{array}\right.$$

\subsection{Groups $SL(2,q)$ and $GL(2,q)$}

One observes that all irreducible characters of $GL(2,q)$ of the same degree are
$Syl_p$-equivalent.
Inspection of the character table of $G=SL(2,q)$ yields:

\begin{lemma}\label{s21} 
Let $G=SL(2,q)$ and  $\psi$ be a reducible character of degree $q$. Then $\psi$ is $Syl_p$-vanishing \ii one
of the \f holds:
\begin{itemize}
\item[(1)] $\psi-1_G$ is an \ir character;
\item[(2)] $\psi-1_G=\eta_1+\eta_2$, where
$\eta_1,\eta_2$ are distinct characters of degree $(q-1)/2$;
\item[(3)] $\psi=\tau+\eta$, where $\tau(1)=(q+1)/2$ and  $\eta(1)=(q-1)/2$  and $\tau,\eta$ belong to distinct Weil \reps of $G$.
\end{itemize}
In addition, $\psi$ is $p$-vanishing \ii $\psi$ is
$Syl_p$-vanishing and $\psi(z)=\psi(1)$ for all $z\in Z(G)$.
\end{lemma}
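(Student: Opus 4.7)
The plan is to enumerate directly from the character table of $G=SL(2,q)$, using the characterisation (cf.~Lemma~\ref{er1}) that a character $\psi$ of degree $|G|_p=q$ is $Syl_p$-vanishing \ii $\psi|_U=\rho_U^{\mathrm{reg}}$, where $U$ is a \syl of $G$; equivalently, $\psi$ vanishes at every non-identity unipotent element of $G$. Since $\psi$ is assumed reducible of degree $q$, the Steinberg character cannot be a constituent of $\psi$. The \ir character degrees of $G$ are $1,\, q-1,\, q,\, q+1$ and, when $q$ is odd, also $(q\pm 1)/2$. Listing all multisets of such degrees that sum to $q$, involve at least two summands, and avoid $q$ itself, a short arithmetic check shows that the only options are $q=1+(q-1)$, $q=1+(q-1)/2+(q-1)/2$ and $q=(q+1)/2+(q-1)/2$, the latter two requiring $q$ odd; candidates such as $(q+1)+\mathrm{anything}$ or $2\cdot (q+1)/2$ are immediately ruled out by degree.

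For each of these options I would then read off the character table the values on the one or two non-identity unipotent classes: a discrete series character (degree $q-1$) takes value $-1$, a principal series character (degree $q+1$) takes value $1$, and the two half-Weil families take values of the shape $(\pm 1\pm\sqrt{\epsilon q})/2$ with $\epsilon=\pm 1$ depending on $q \bmod 4$. This shows: $1_G+\tau$ is $Syl_p$-vanishing for \emph{every} discrete series $\tau$ (case (1)); $1_G+\eta_1+\eta_2$ is $Syl_p$-vanishing \ii $\eta_1,\eta_2$ are the two distinct \ir characters of degree $(q-1)/2$, since this is the only way to cancel the $\pm\sqrt{\epsilon q}/2$ contributions on both unipotent classes (case (2)); and $\tau+\eta$ with $\tau(1)=(q+1)/2$ and $\eta(1)=(q-1)/2$ is $Syl_p$-vanishing \ii the two surd contributions cancel simultaneously on both unipotent classes, which happens exactly when $\tau,\eta$ come from distinct Weil \reps of $G$ (case (3)).

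For the $p$-vanishing addendum, the $p$-singular elements of $G$ are precisely the elements of the form $zu$ with $z\in Z(G)$ and $u$ a non-identity unipotent (for $q$ even, $Z(G)=1$, so $p$-vanishing coincides with $Syl_p$-vanishing and the addendum is automatic). Assuming $\psi$ is already $Syl_p$-vanishing, for each \ir constituent $\chi_i$ of $\psi$ and each $z\in Z(G)$ one has $\chi_i(zu)=\omega_{\chi_i}(z)\chi_i(u)$ with $\omega_{\chi_i}(z)=\pm 1$, so $\psi(zu)$ is the $\omega$-signed sum of the $\chi_i(u)$. Since the unsigned sum $\psi(u)$ is already zero, $\psi(zu)=0$ for every such pair \ii $\omega_{\chi_i}(z)=+1$ for every constituent, which is precisely the condition $\psi(z)=\psi(1)$.

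The main obstacle I anticipate is the sign bookkeeping in case (3), where the pairing between the two half-Weil families must be chosen so that the values on \emph{both} non-identity unipotent classes cancel simultaneously. This requires either distinguishing the residue class of $q$ modulo $4$ (which determines whether $\epsilon=1$ or $-1$) or invoking a uniform description of the Weil characters of $SL(2,q)$, and it is the only nontrivial piece of the argument; the remaining cases follow by direct substitution into the character table.
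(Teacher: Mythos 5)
Your proposal is correct and follows essentially the same route as the paper, which offers no argument for this lemma beyond ``Inspection of the character table of $G=SL(2,q)$ yields''; your enumeration of the admissible degree decompositions and of the values at the two non-identity unipotent classes is exactly that inspection made explicit. The one point to tighten is the ``only if'' direction of the $p$-vanishing addendum: the vanishing of the signed sum $\psi(zu)$ does not by itself force $\omega_{\chi_i}(z)=+1$ for every constituent, but combining $\psi(u)=0$ with $\psi(zu)=0$ shows that the subsum of constituents with $\omega_{\chi_i}(z)=-1$ vanishes on $U\setminus\{1\}$ and hence has degree $0$ or $q$, and the latter is excluded because in each of cases (1)--(3) some constituent ($1_G$, respectively one of $\tau,\eta$) has central character $+1$ at $z$, which then gives $\psi(z)=\psi(1)$.
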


\begin{remar}
(i) The character  $\psi-1_G$ in (1) and $\eta_1,\eta_2$ are  cuspidal.
(ii) The cases (2), (3) occur only for $q$ odd. (iii) The cases (3) 
cannot occur for $G=PSL(2,q)$. 
(iv) In (3) the characters $\tau,\eta$ are in distinct blocks. 
\end{remar}

\begin{lemma}\label{s31}
Let $H=GL(2,q)$. A reducible character $\psi$ of $H$ of degree $q$ is $Syl_p$-vanishing 
\ii $\psi=\lam+\eta$, where $\lam(1)=1$, and $\eta$ is an \ir cuspidal character of degree
$\eta(1)=q-1$.
In this case, $\psi$ is $p$-vanishing \ii ${\rm Ker}\,(\psi)=Z(H)$.
\end{lemma}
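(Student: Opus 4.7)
The plan is to classify the reducible decompositions of $\psi$ by the degrees of irreducible constituents, distinguish them by restriction to a Sylow $p$-subgroup, and then analyse the $p$-singular conjugacy classes.

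The irreducible characters of $H=GL(2,q)$ have degrees $1$, $q-1$ (cuspidal), $q$ (Steinberg twisted by a linear character) and $q+1$ (principal series). A reducible $\psi$ of degree $q$ must decompose into irreducibles whose degrees sum to $q$, and the only compatible options are $\psi=\lambda_1+\cdots+\lambda_q$ with each $\lambda_i$ linear, or $\psi=\lambda+\eta$ with $\lambda$ linear and $\eta$ cuspidal of degree $q-1$.

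I would then restrict to a Sylow $p$-subgroup $U$. Every linear character of $H$ factors through $\det$ and is trivial on $U\subseteq[H,H]=SL(2,q)$, so a sum of $q$ linear characters restricts to $q\cdot 1_U\ne\rho_U^{\mathrm{reg}}$. For a cuspidal $\eta$ of degree $q-1$ we have $(\eta|_U,1_U)=0$ (no $U$-invariants) and $\eta$ is regular, which forces $\eta|_U=\rho_U^{\mathrm{reg}}-1_U$; hence $\psi|_U=\rho_U^{\mathrm{reg}}$. Therefore $\psi$ is $Syl_p$-vanishing exactly when $\psi=\lambda+\eta$ as stated.

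Assuming this decomposition, the relation $\psi|_U=\rho_U^{\mathrm{reg}}$ gives $\eta(u)=-1$ for every $u\in U\setminus\{1\}$. The $p$-singular classes of $H$ are represented by $g_a=aI\cdot u$ with $a\in F_q^*$; since $aI$ is central and $\lambda(u)=1$,
\[\psi(g_a)=\lambda(aI)+\omega_\eta(aI)\,\eta(u)=\lambda(aI)-\omega_\eta(aI),\]
where $\omega_\eta$ is the central character of $\eta$. So $\psi$ is $p$-vanishing iff $\lambda|_{Z(H)}=\omega_\eta$.

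Finally, I would translate this into the kernel condition. Because $\eta$ is irreducible of dimension $q-1>1$ and the proper normal subgroups of $GL(2,q)$ not containing $SL(2,q)$ all lie in $Z(H)$, $\ker(\eta)\subseteq Z(H)$; combined with $\ker(\lambda)\supseteq SL(2,q)$ this gives $\ker(\psi)=\ker(\lambda)\cap\ker(\eta)\subseteq Z(H)$. Hence $\ker(\psi)=Z(H)$ reduces to $Z(H)\subseteq\ker(\psi)$, i.e.\ $\lambda|_{Z(H)}=1$ and $\omega_\eta=1$. The main obstacle is reconciling this with the $p$-vanishing condition $\lambda|_{Z(H)}=\omega_\eta$; the plan is to exploit that twisting $\psi$ by $\lambda^{-1}$ preserves the property of being $p$-vanishing and reduces to the case $\lambda=1$, at which point the central character of the cuspidal constituent must be trivial and the kernel exhausts $Z(H)$.
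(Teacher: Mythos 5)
Your argument is a genuine proof where the paper offers none: like Lemma \ref{s21}, Lemma \ref{s31} is justified in the paper only by inspection of the (generic) character table, so your structural route --- classifying the possible degree patterns, pinning down $\eta|_U$ from cuspidality ($(\eta|_U,1_U)=0$) plus regularity ($(\eta|_U,\nu)=1$ for every nontrivial linear $\nu$ of $U$) and the degree count, then evaluating on the classes $aI\cdot u$ --- is a different and more conceptual approach, uniform in $q$. The first half of your argument is correct and complete: the only degree patterns are $q$ linear constituents or (linear)$+$(cuspidal), the former restricts to $q\cdot 1_U$, and the latter restricts to $\rho_U^{\mathrm{reg}}$. (Only the degenerate case $q=2$, where the cuspidal character is itself linear and does not factor through $\det$, needs a separate word.)

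The gap is exactly where you flag it, and it cannot be closed by your twisting step. The criterion you derive, $\lambda|_{Z(H)}=\omega_\eta$, is correct, but it is \emph{not} equivalent to $\ker\psi=Z(H)$: twisting by $\lambda^{-1}$ preserves $p$-vanishing while changing the kernel, so it cannot reconcile the two conditions. Concretely, for $q=5$ take $\lambda=\epsilon\circ\det$ with $\epsilon$ of order $4$ and $\eta=\pi_\theta$ cuspidal with $\theta|_{F_q^*}=\epsilon^2\neq 1$ (among the $q+1$ characters of $F_{q^2}^*$ with this restriction, at most two factor through the norm, so such $\theta$ exist); then $\psi(aIu)=\epsilon(a)^2-\theta(a)=0$ on every $p$-singular class, so $\psi$ is $p$-vanishing, yet $\lambda$ is nontrivial on $Z(H)$ and $\ker\psi\cap Z(H)$ has index $2$ in $Z(H)$. (At $q=3$ your auxiliary claim $\ker\eta\subseteq Z(H)$ also fails: the cuspidal character inflated from $GL(2,3)/Q_8\cong S_3$ gives a $p$-vanishing $1_H+\eta$ with $\ker\psi=Q_8\supsetneq Z(H)$.) What your computation actually establishes is: $\psi$ is $p$-vanishing iff $\lambda|_{Z(H)}=\omega_\eta$, equivalently iff $\psi=\lambda\cdot(1_H+\eta')$ with $\eta'$ cuspidal of trivial central character --- which is precisely the \textsc{chevie} parametrization $\chi_1(k_1)\cdot(1_H+\chi_4(k_4))$ with $(q-1)\mid k_4$ recorded after the lemma and in Table \ref{TGL2}-D. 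The kernel formulation is an equivalent criterion only after normalizing $(\psi,1_H)=1$, which is the situation in which the lemma is used downstream (cf.\ Proposition \ref{pt1} and Corollary \ref{ke1}); as an unrestricted ``if and only if'' it does not follow from your plan, so you should either impose that normalization or state the conclusion as the central-character condition.
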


In \textsc{chevie} notation, $\psi=\chi_1(k_1)\cdot (1_H+\chi_4(k_4))$, where 
$\chi_1(k_1)$ $(k_1\in I_1)$ is a linear character of $H$ 
and $k_4\in I_4$ with $k_4=(q-1)k_4'$ (see Tables \ref{TGL2}-A,B,C,D).

\subsection{The unitary groups $H=U(3,q)$ and $G=SU(3,q)$.}

We first consider the groups $H=U(3,q)$.
According to \textsc{chevie}, we  
partition  $\Irr H$ into $8$ sets ${\bf X}_h$, 
and the characters in ${\bf X}_h$ are  parametrized by a corresponding set 
$I_h$ of parameters (see Table \ref{TU3}.1-A).
Step (1) of Procedure \ref{procedure} yields  the 
$Syl_p$-decompositions
of Table \ref{TU3}.1-B and  $\Delta_H=\{1,2,6\}$.
On  step (2) we obtain a unique character 
$\psi_1= \chi_{1} + \chi_{2} + \chi_{2}' + \chi_{6}$, where $\chi_2,\chi'_{2}\in {\bf
X}_{2}$ are not necessarily
distinct. 
Finally, in step (3) we get  all  $Syl_p$-vanishing characters of $H$ of degree $|H|_p$ 
(see Table \ref{TU3}.1-C).
To determine the $p$-vanishing characters of $H$ of degree $|H|_p$,
we use the character table of $H$.

\begin{lemma}\label{LU3}
Let $H=U(3,q)$. Then $H$ admits a reducible $p$-vanishing character 
$\psi$ of degree $|H|_p$ if and only if $3\mid (q+1)$. In this case, there are exactly
$q+1$ characters $\psi$, all of type
$$\psi=\chi_{1}(u)\cdot (1_H+ \chi_{2}(a) + \chi_{2}(2a) + \chi_{6}(a,2a,3a)),$$
where $a=(q+1)/3$ and $\chi_1(u)$ is a linear character of $H$.
\end{lemma}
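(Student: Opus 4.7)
The plan is to refine the classification of $Syl_p$-vanishing characters of degree $|H|_p$ obtained from Procedure \ref{procedure} and summarised in Table \ref{TU3}.1-C, by imposing the extra vanishing condition on the non-unipotent $p$-singular elements of $H=U(3,q)$. A general such $Syl_p$-vanishing character already has the shape
$$\psi=\chi_1(u)\cdot\bigl(1_H+\chi_2(k)+\chi_2(k')+\chi_6(l_1,l_2,l_3)\bigr),$$
with the parameters $u,k,k',l_i$ constrained by the compatibility conditions recorded in the table. The task reduces to identifying, among these candidates, those that also vanish at the remaining $p$-singular classes.

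First, I would enumerate the non-unipotent $p$-singular classes of $H$. Since $H=U(3,q)$ has $Z(H)$ cyclic of order $q+1$ and a unipotent element has centraliser of the form $U\cdot Z(H)$, every non-unipotent $p$-singular element is conjugate to $xz$ with $x$ a representative of one of the two non-trivial unipotent classes and $z\in Z(H)$ of order $>1$ dividing $q+1$. I would read the \textsc{chevie} labels for these classes off the character table so as to write down explicit formulas for $\chi_2(k)(xz)$ and $\chi_6(l_1,l_2,l_3)(xz)$ as sums of powers of the roots of unity $\xi_1^{k},\xi_1^{l_i}$.

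Next, I would impose $\psi(xz)=0$ for each such class. Because $\chi_1(u)$ is a linear character, it does not affect vanishing, so without loss of generality I can set $u=1$ and recover the $q+1$ twists at the end. The vanishing conditions amount to a small system of linear relations among roots of unity indexed by $k,k',l_1,l_2,l_3$ modulo $q+1$. The standard identity $1+\omega+\omega^2=0$ (for $\omega$ a primitive cube root of unity in $\CC$) is the only mechanism by which a sum of at most four such terms can vanish identically in $z$; this is where the divisibility $3\mid(q+1)$ enters, since $Z(H)$ contains an element of order $3$ precisely then. Solving the system shows that either no non-trivial solution exists, or else $3\mid(q+1)$ and the parameters must be $k=a$, $k'=2a$ and $(l_1,l_2,l_3)=(a,2a,3a)$ with $a=(q+1)/3$.

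Finally, I would check that the character so obtained is genuinely $p$-vanishing (not just vanishing on the chosen class representatives) by confirming that the two unipotent classes, together with their products with the order-$3$ central elements, exhaust the $p$-singular classes of $H$, and that every class where a summand of $\psi$ is non-zero has been accounted for. Twisting by the $q+1$ linear characters $\chi_1(u)$ of $H$ produces the full list and completes the proof. The main obstacle is the bookkeeping for the character values of $\chi_2$ and $\chi_6$ on the mixed classes $xz$ and the verification that the cube-root-of-unity relation is the \emph{only} source of vanishing, so that the solution set is exactly the one claimed.
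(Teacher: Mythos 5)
Your overall strategy (start from the $Syl_p$-vanishing classification of Table \ref{TU3}.1-C and then impose vanishing on the remaining $p$-singular classes) is the same as the paper's, but your enumeration of those classes contains a genuine error that breaks the argument. You assert that every non-unipotent $p$-singular element of $H=U(3,q)$ is conjugate to $xz$ with $x$ unipotent and $z\in Z(H)$, on the grounds that the centraliser of a unipotent element is $U\cdot Z(H)$. That is false for the non-regular unipotent class: a semisimple element $s$ with eigenvalues $(\alpha,\alpha,\beta)$, $\alpha\neq\beta$, has centraliser of type $GU(2,q)\times GU(1,q)$, which contains non-trivial unipotent elements, so $su$ is a $p$-singular element whose semisimple part is \emph{not} central. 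These are exactly the classes $C_5(k,l)$ with $k\neq l$ that the paper's proof evaluates, and they carry all the arithmetic content: the vanishing condition there becomes an identity among the roots of unity $\xi_1^{(k-l)u_2}$, $\xi_1^{(k-l)u_6}$, etc., valid for every nonzero $k-l$ modulo $q+1$, and it is this family of identities that forces $3\mid(q+1)$ and pins the parameters to $(a,2a)$ and $(a,2a,3a)$.

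By contrast, the classes $xz$ with $z$ central that you propose to use yield almost nothing: each irreducible constituent $\chi$ satisfies $\chi(xz)=\zeta_\chi(z)\chi(x)$ for its central character $\zeta_\chi$, so the resulting conditions only constrain the central characters of the constituents (equivalently, they recover the requirement that all constituents lie in one block, which the paper gets directly from Lemma \ref{bb1}). They cannot produce the cube-root-of-unity relations you need, because the relevant exponents enter only through the difference $k-l$ of the two distinct eigenvalue parameters, which is identically zero on central semisimple parts. Your own closing ``verification'' step --- checking that the listed classes exhaust the $p$-singular classes --- is precisely where this would surface: that check fails, and the proof does not go through without redoing the computation on the mixed classes with non-central semisimple part (and also disposing of the other candidates $(2,4,6)$ and $(2,5)$ from Table \ref{TU3}.1-C, which the paper rules out on the same classes).
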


\begin{proof}
Let $\psi$ be a $Syl_p$-vanishing character of $H$ of degree 
$|H|_p=q^3$ and suppose that $\psi$ is also $p$-vanishing.
By Lemma \ref{bb1}, all the constituents of $\psi$ belong to the same block. 
 So it suffices to look at the classes $C_5(k,l)$, where 
$(k,l)\in \ZZ_{q+1}\times \ZZ_{q+1}$ and $k\neq l$.
 We have one of the following cases, modulo linear
characters of $H$:

\begin{itemize}
\item[(1)] Take $\psi_1= 1_H + \chi_{2}(u_2) + \chi_{2}(u_2') + 
\chi_{6}(u_6,v_6,w_6)$. As all the summands are in the same block,
 it follows that $q+1$ divides   $3u_2$, $3u_2'$ and $u_6+v_6+w_6$. 
We obtain  
$$1+\xi_1^{-(k-l)u_2}+\xi_1^{-(k-l)u_2'}=\xi_1^{(k-l)(u_6+v_6)}+
\xi_1^{-(k-l)v_6}+\xi_1^{-(k-l)u_6}.$$
This holds for every $k,l$ with $k\neq l$.
This implies
that $3$ divides $q+1$, and, by symmetry, 
$u_2=a$, $u_2'=2a$ and $(u_6,v_6,w_6)=(a,2a,3a)$, with $a=(q+1)/3$.
\item[(2)] Take $\psi_2= \chi_{2}(0) + \chi_{4}(u_4,v_4) + \chi_{6}(u_6,v_6,w_6)$, with $q+1$ dividing both  $2u_4+v_4$ and $u_6+v_6+w_6$. We obtain $$1+\xi_1^{-(k-l)u_4} +\xi_1^{2(k-l)u_4} = \xi_1^{(k-l)(u_6+v_6)}+\xi_1^{-(k-l)v_6} + \xi_1^{-(k-l)u_6}.$$
We have the following  possibilities:
\begin{itemize}
\item[(a)] Either $u_6=0$ or $v_6=0$: this implies $u_4=0$ and $v_4=0$, a contradiction with the conditions on the parameters.
\item [(b)] $u_6+v_6=0$: again we obtain $u_4=v_4=0$.
\end{itemize}
\item[(3)] Take $\psi_3= \chi_{2}(0) + \chi_{5}(u_5,v_5)$:  we obtain $\xi_1^{k(u_5+v_5)+lu_5}=1$, which implies $u_5=v_5=0$, a contradiction with the conditions on the parameters.
\end{itemize}
\end{proof}

Next, let $G=SU(3,q)$, where $3\nmid (q+1)$. This case is very similar to the case
$U(3,q)$. We list the the relevant informations in Tables \ref{TU3}.2-A,B,C. Thus, it can
be easily proved the following.

\begin{lemma}\label{LU3.n2}
Let $G=SU(3,q)$, where $3\nmid (q+1)$. Then $G$  admit no reducible $p$-vanishing character of degree $|G|_p$.
\end{lemma}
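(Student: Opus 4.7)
The plan is to mimic the proof of Lemma \ref{LU3}, applying Procedure \ref{procedure} to $G=SU(3,q)$ with $3\nmid(q+1)$ and then eliminating the resulting candidate $Syl_p$-vanishing characters by testing $p$-singular semisimple elements. The starting point is Tables \ref{TU3}.2-A,B,C, which record the \textsc{chevie} sets ${\bf X}_h$, the $Syl_p$-decompositions, and the set $\Delta_G$ of indices whose characters have no known $Syl_p$-decomposition. Step (2) of the procedure provides a short list of candidate $Syl_p$-vanishing characters $\psi$ of degree $|G|_p=q^3$ whose constituents are indexed by $\Delta_G$; by Lemma \ref{gg1} each such $\psi$ contains at least one regular character of $G$, and by Lemma \ref{bb1} all constituents of $\psi$ must lie in a common $p$-block.

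Next, for every candidate $\psi$ obtained in this way, I would evaluate $\psi$ on the non-unipotent $p$-singular classes (primarily the semisimple classes of type $C_5(k,l)$ or their $SU(3,q)$ analogues indexed over $\ZZ_{q+1}$) and impose $\psi(g)=0$. As in the proof of Lemma \ref{LU3}, this yields equations of the form
\[
1+\xi_1^{-(k-l)a}+\xi_1^{-(k-l)b}=\xi_1^{(k-l)c}+\xi_1^{(k-l)d}+\xi_1^{(k-l)e}
\]
(or the appropriate analogues for the other cases (2),(3) considered there), where the parameters are constrained by the block condition. The crucial observation is that in the cases where a nontrivial solution existed in Lemma \ref{LU3}, it forced $3\mid(q+1)$: the only way to balance three roots of unity (including $1$) against three other roots of unity is to make both sides equal to the sum $1+\omega+\omega^2=0$, which requires $3\mid(q+1)$. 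Under the present hypothesis $3\nmid(q+1)$ this is impossible, so in case (1) there is no solution at all. Cases (2) and (3) are ruled out exactly as in the proof of Lemma \ref{LU3}, since there the analysis did not depend on whether $3$ divides $q+1$.

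The main obstacle is bookkeeping: one must make sure that the \textsc{chevie} parameter groups $I_h$ for $SU(3,q)$ with $3\nmid(q+1)$ are correctly identified (the partitioning of $\Irr G$ is slightly different from that of $U(3,q)$, with certain sets merging or splitting depending on the divisibility of $q+1$ by $3$), so that no candidate $\psi$ arising from $\Delta_G$ is overlooked. Once that verification is in place, the elimination is purely formal and the conclusion follows: no reducible $p$-vanishing character of degree $|G|_p$ exists for $SU(3,q)$ when $3\nmid(q+1)$.
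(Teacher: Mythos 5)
Your proposal follows the paper's own route: the paper disposes of this lemma precisely by running Procedure \ref{procedure} on the data of Tables \ref{TU3}.2-A,B,C and noting that the elimination from the $U(3,q)$ case (Lemma \ref{LU3}) carries over verbatim, with case (1) impossible because balancing the roots of unity forces $3\mid(q+1)$, and cases (2) and (3) ruled out independently of that divisibility. The only slip is terminological: the classes you evaluate on are the mixed ($p$-singular but non-unipotent) classes of type $C_5(k,l)$, not semisimple ones --- semisimple elements are $p$-regular and impose no vanishing condition on a $p$-vanishing character.
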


Finally, let $G=SU(3,q)$ with $3\mid (q+1)$. 
Applying Procedure \ref{procedure}  we obtain on step  (1) the
$Syl_p$-decompositions of Table
\ref{TU3}.3-B  and  $\Delta_G=\{1, 2, 7, 8, 9 \}$. 
Step (2) yields only one  character
 $\psi_1=\chi_1+2\chi_2+\chi_7+\chi_8+\chi_9$.
With step (3) we obtain all  $Syl_p$-vanishing characters of $G$ of degree
$|G|_p$ (see Table \ref{TU3}.3-C).

\begin{lemma}\label{LU3.2}
Let $G=SU(3,q)$, with $3 \mid (q+1)$. Then $G$ admits a unique reducible
$p$-vanishing character $\psi$ of degree $|G|_p$, namely
$\psi=1_G+ 2\chi_{2} + \chi_7+\chi_8+\chi_9$.
\end{lemma}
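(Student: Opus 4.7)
The plan is to follow essentially the same template as in the proof of Lemma~\ref{LU3} for $H=U(3,q)$, working from the complete list of $Syl_p$-vanishing characters of degree $|G|_p$ obtained via Procedure~\ref{procedure} and displayed in Table~\ref{TU3}.3-C. Step~(2) of that procedure already furnished the unique candidate $\psi_1=\chi_1+2\chi_2+\chi_7+\chi_8+\chi_9$, and step~(3) supplements it with further candidates obtained by substituting known $Syl_p$-decompositions (from Table~\ref{TU3}.3-B) for subsums of constituents. Thus our task is to sift through this explicit list and determine which candidates actually vanish at every $p$-singular element of $G$.

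First I would observe that since each candidate is $Syl_p$-vanishing, it automatically vanishes at every unipotent element; hence the only additional conditions to check are at the \emph{mixed} classes, i.e.\ the classes of elements of order divisible by $p$ that are not unipotent. In \textsc{chevie} parametrisation these are the analogues of the classes $C_5(k,l)$ used in the proof of Lemma~\ref{LU3}, namely those parameterised by pairs $(k,l)\in\ZZ_{q+1}\times\ZZ_{q+1}$ (with the additional subscripts forced by $3\mid(q+1)$, since now $Z(G)$ has order $3$ and acts on each class). For the candidate $\psi=1_G+2\chi_2+\chi_7+\chi_8+\chi_9$, I would evaluate using the tabulated character values and verify that the sum of values at every such mixed class is identically zero in $\xi_1$. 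Since $\chi_7,\chi_8,\chi_9$ correspond to the three constituents into which a single cuspidal-type character of $U(3,q)$ splits upon restriction to $G$, their values at mixed classes are cyclic shifts of one another, and their sum cancels cleanly with the $2\chi_2$ contribution; combined with the constant contribution of $1_G$, this gives vanishing in precisely the same way that $1_H+\chi_2(a)+\chi_2(2a)+\chi_6(a,2a,3a)$ vanishes in the proof of Lemma~\ref{LU3}.

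Next, for each remaining candidate appearing in Table~\ref{TU3}.3-C, I would mimic the three-case analysis of Lemma~\ref{LU3}: write out the required vanishing identity at an arbitrary mixed class, translate it into a system of relations among roots of unity $\xi_1^{k}$, and show that the parameter constraints forced by Procedure~\ref{procedure} (together with those from Lemma~\ref{bb1} forcing all constituents into the same $p$-block) are incompatible with this identity. As in Lemma~\ref{LU3}, the obstruction typically manifests as a parameter being forced to zero, which contradicts the restriction on the parameter set of the corresponding \textsc{chevie} set ${\bf X}_h$. Finally, Corollary~\ref{bb2} applied to $Z(G)$ of order $3$ forces $Z(G)$ to lie in the kernel of any minimal $p$-vanishing character of degree $|G|_p=q^3$ coprime to $3$, which discards any candidate that would have required a nontrivial twist by a linear character of $G$ (recall that $G$ is perfect, so its only linear character is $1_G$); this explains why the final answer contains $1_G$ rather than an arbitrary $\chi_1$.

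The main obstacle I expect is the bookkeeping in step two of this sifting: Table~\ref{TU3}.3-C contains several parameterised families of candidates, and one has to keep track of which combinations of indices $(u_h,v_h,\ldots)$ from distinct \textsc{chevie} sets are simultaneously compatible with (a) belonging to the same $p$-block, (b) vanishing on every mixed class, and (c) satisfying the parameter admissibility in $I_h$. The cyclotomic cancellations to be checked are more numerous than in the $U(3,q)$ case because of the splitting of characters induced by $3\mid(q+1)$, but each individual check is a direct computation in $\ZZ[\xi_1]$ and, by arguments of the same shape as in cases (1)--(3) of Lemma~\ref{LU3}, every alternative to $\psi=1_G+2\chi_2+\chi_7+\chi_8+\chi_9$ is eliminated.
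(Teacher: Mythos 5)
Your plan matches the paper's proof essentially exactly: the paper also starts from the candidate list of Table \ref{TU3}.3-C, uses Lemma \ref{bb1} to discard all candidates whose constituents lie in different blocks (leaving five cases up to parameters), and then eliminates each surviving alternative by evaluating on the mixed classes $C_7(a)$ and deriving contradictory relations among powers of $\xi_1$ against the admissibility conditions on the parameter sets $I_h$. The only point you omit is that the cyclotomic arguments require $q\geq 11$ (so that enough distinct classes $C_7(a)$ are available), and the paper disposes of $q=2,5,8$ by a direct \textsc{magma} computation.
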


\begin{proof}
The cases $q=2,5,8$ can be easily dealt with \textsc{magma}. So, assume $q\geq 11$.
Applying Lemma \ref{bb1}, we  easily rule out all the options  where  the \ir constituents
of $\psi$  are not in the same block. So, we have to study only the following five cases:

\begin{itemize}
\item[(1)] $\psi_1= \chi_{1} + 2\chi_{2}  + \chi_{7} + \chi_{8} + \chi_{9}$: this
character is $p$-vanishing.
\item[(2)] $\psi_2= \chi_{1} +2 \chi_{2} +  \chi_{10}(n,m)$: on the
class $C_7(1)$ we obtain  
$\xi_1^{n-2m}+\xi_1^{m-2n}+\xi_1^{n+m}=3$. So,  $\xi_1^{n-2m}=\xi_1^{m-2n}=\xi_1^{n+m}=1$
and hence  
$(m,n)=(\frac{q+1}{3},\frac{2(q+1)}{3}),$ $(\frac{2(q+1)}{3},\frac{q+1}{3})$. But, for
this choice of the parameters,
$\chi_{10}=\chi_7+\chi_8+\chi_9$.
\item[(3)] $\psi_{21}= \chi_{2} + \chi_{4}(k) + \chi_{7} + \chi_{8} + \chi_{9}$: 
on the class $C_7(1)$ we obtain $\xi_1^{k}+\xi^{-2k}=2$ and so $\xi_1^k=1$, a
contradiction.
\item[(4)]  $\psi_{22}= \chi_{2} + \chi_{4}(k) + \chi_{10}(n,m)$:  
on the classes $C_7(a)$ ($a=1,2,3$) we obtain
$1+\xi_1^{ak}+\xi_1^{-2ak}=\xi_1^{a(n-2m)}+\xi_1^{a(m-2n)}+\xi_1^{a(n+m)}$. It follows
that $(m,n)=(\frac{q+1}{3},\frac{2(q+1)}{3}), (\frac{2(q+1)}{3},\frac{q+1}{3})$. So
$(q+1)\mid k$, a contradiction.
\item[(5)]  $\psi_{23}= \chi_{2} + \chi_{5}(n)$: on the class 
$C_7(1)$  we obtain $\xi_1^n=1$, i.e. $(q+1)\mid n$, a contradiction 
with the conditions on the parameter set $I_5$.
\end{itemize}
Thus, $\psi_1$ is the only  reducible $p$-vanishing character of $G$. 
\end{proof}

\subsection{Groups ${}^2B_2(q^2)$ and ${}^2G_2(q^2)$}


\begin{lemma}\label{L2B2}
Let $G={}^2B_2(q^2)$,  $q^2=2^{2m+1}$. Then $G$ admits exactly 
$2^{m-1}(2^m+1)$ reducible $p$-vanishing characters $\psi$ of degree $|G|_p$, all of type
$$\psi= 1_G + \chi_{2} + \chi_{3} + \chi_{6}(k).$$
\end{lemma}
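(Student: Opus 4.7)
The plan is to follow Procedure~\ref{procedure}, as in the preceding treatments of $U(3,q)$ and $SU(3,q)$. First, I would use \textsc{chevie} to compute $Syl_p$-decompositions of the irreducible characters of $G = {}^2B_2(q^2)$ and to identify the \textsc{chevie} sets ${\bf X}_h$; these data are collected in Table~\ref{T2B2}-A, whose distinctive feature (already flagged in the introduction) is that several of the $Syl_p$-decompositions involve more than three constituents. Reading off $\Delta_G$ and carrying out Step~(2) of the procedure then produces a finite initial list of $Syl_p$-vanishing characters of degree $|G|_p = q^4$ whose constituents are indexed in $\Delta_G$, each featuring at least one regular constituent by Lemma~\ref{gg1}.

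Second, I would execute Step~(3): whenever a candidate character contains a subsum equal to a known $Syl_p$-decomposition of a single irreducible character of $G$, I substitute the latter. Iterating this reduction should collapse the list down to the single shape $\psi = 1_G + \chi_2 + \chi_3 + \chi_6(k)$, with $k$ the free parameter in $I_6$; a degree check forces $\chi_6$ into the cuspidal family with $|I_6| = 2^{m-1}(2^m+1)$. Alternative collapsed forms should be ruled out by Lemma~\ref{bb1}, which requires all constituents of a minimal $p$-vanishing character to lie in a common block and so eliminates most spurious combinations of discrete-series characters of incompatible degrees.

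Third, I need to promote every such $\psi$ from $Syl_p$-vanishing to $p$-vanishing. Here I would use the classical fact that in ${}^2B_2(q^2)$ the Sylow $2$-subgroup is self-centralising, so no element of $G$ has both a non-trivial $2$-part and a non-trivial $2'$-part; hence every $2$-singular element is conjugate to a non-identity element of a Sylow $2$-subgroup, and $p$-vanishing coincides with $Syl_p$-vanishing on $G$. Each $\psi$ from Step~(3) is therefore automatically $p$-vanishing, and the count is simply $|I_6| = 2^{m-1}(2^m+1)$. The main obstacle I anticipate is Step~(3): one must ensure that Table~\ref{T2B2}-A contains enough $Syl_p$-decompositions for every $Syl_p$-vanishing character of degree $q^4$ to reduce to the single claimed shape, and the unusually long $Syl_p$-decompositions peculiar to the Suzuki groups mean this case analysis will require genuine computer verification rather than clean manual bookkeeping.
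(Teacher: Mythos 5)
Your proposal follows the paper's argument essentially verbatim: Procedure~\ref{procedure} via \textsc{chevie} yields the unique reducible $Syl_p$-vanishing shape $1_G+\chi_2+\chi_3+\chi_6(k)$, and the passage from $Syl_p$-vanishing to $p$-vanishing is exactly the paper's observation that every element of ${}^2B_2(q^2)$ is semisimple or unipotent (your self-centralising Sylow argument is the standard justification of this fact). The only cosmetic slip is writing $|I_6|=2^{m-1}(2^m+1)$ for what is really the number of \emph{distinct} characters in ${\bf X}_6$ (the parameter set itself has $q^2+\sqrt{2}q$ elements, falling into orbits of size $4$); this does not affect the count.
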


\begin{proof} 
We apply Procedure \ref{procedure}.
In step (1) we obtain the
$Syl_p$-decompositions of Table \ref{T2B2}-B and $\Delta_G=\{1,2,3,6\}$.
Step (2) produces a unique character
$\psi_1= \chi_{1} + \chi_{2} + \chi_{3} + \chi_{6}$. 
Finally, with step (3) we are able to list all the
$Syl_p$-vanishing characters of $G$ of degree $|G|_p$ (see Table \ref{T2B2}-C).
As every element of $G$ is either semisimple or unipotent,
every $Syl_p$-vanishing character is $p$-vanishing. \end{proof}


Let $G={}^2G_2(q^2)$, with $q^2=3^{2m+1}$, $m\geq 0$.
In order to reduce the number of the characters 
$\chi_h$  we have to consider, it is easier to work first only with the classes 
$C_1, C_2, C_5$. 
We obtain the decompositions listed in Table \ref{T2G2}-B. 
These decompositions allow us to work only with the characters $\chi_h$,
 where $h\in \{ 1, 4, 5, 7, 9, 12, 13, 14 \}$. Working only with these 
constituents, we obtain a unique character of degree 
$|G|_p$ vanishing on the classes $C_2 $ and $ C_5$: 
$\theta=\chi_{1} + 2\chi_{5} + 2\chi_{7} + \chi_{14}$. 
Next we apply the decompositions of Table  \ref{T2G2}-B 
to the character $\theta$, to verify if the resulting 
character is $Syl_p$-vanishing. Finally, we are 
able to list all  $Syl_p$-vanishing characters of $G$ of 
degree $|G|_p$ (see Table \ref{T2G2}-C).

\begin{lemma}\label{L2G2}
Let $G={}^2G_2(q^2)$, with $q^2=3^{2m+1}$. Then $G$ admits exactly $3^m(3^m+1)/2 $ reducible $p$-vanishing characters $\psi$ of degree $|G|_p$, all of type
$$\psi=  1_G + \chi_{3} + \chi_{5} + \chi_{6} + \chi_{7} + \chi_{14}(k).$$
\end{lemma}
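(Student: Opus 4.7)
The plan is to follow Procedure \ref{procedure} along the same lines as for the Suzuki groups in Lemma \ref{L2B2}, but with the extra complication that $G={}^2G_2(q^2)$ contains elements whose order is divisible by $3$ that are neither unipotent nor semisimple; hence, unlike in the Suzuki case, a $Syl_p$-vanishing character of $G$ need not be $p$-vanishing, and a separate check on the mixed $3$-singular classes is required.

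For step (1) I would compute the values of the \ir characters $\chi_h$ on the classes $C_1, C_2, C_5$ and read off the linear identities that are recorded in Table \ref{T2G2}-B; these are the $Syl_p$-decompositions of the characters in question. As explained in the paragraph preceding the lemma, once these substitutions are available the only \ir characters left without a known $Syl_p$-decomposition are those indexed by $h\in\{1,4,5,7,9,12,13,14\}$. For step (2) I would then search among combinations of these remaining constituents for a character of degree $|G|_p$ that vanishes on $C_2$ and $C_5$; using Lemma \ref{gg1} to control the regular constituent (which must occur exactly once per Gelfand-Graev character) together with Lemma \ref{mu0}, the unique candidate is
$$\theta=\chi_1+2\chi_5+2\chi_7+\chi_{14}.$$
Step (3) consists in substituting constituents of $\theta$ by their decompositions from Table \ref{T2G2}-B in all possible ways; the resulting list of $Syl_p$-vanishing characters of $G$ of degree $|G|_p$ is exactly the content of Table \ref{T2G2}-C.

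The last and most delicate stage is to decide which of the characters in Table \ref{T2G2}-C are genuinely $p$-vanishing. For each remaining $3$-singular class of $G$, namely the classes of mixed elements $su=us$ with $s$ semisimple non-trivial and $u$ unipotent non-trivial of order $3$, I would evaluate the candidate $\psi$ using the character table of $G$, using also Lemma \ref{bb1} to discard any combination whose \ir constituents do not all belong to a common $3$-block. I expect this block argument to eliminate most of the candidates immediately, leaving only the family
$$\psi=1_G+\chi_3+\chi_5+\chi_6+\chi_7+\chi_{14}(k),$$
with $k$ ranging over a well-defined subset of the parameter set $I_{14}$.

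The main obstacle is precisely this last check: the character values of $\chi_{14}(k)$ on the mixed classes involve roots of unity $\varphi'_8$ and $\varphi''_{12}$ coming from the tori of orders $q^2\pm\sqrt{3}\,q+1$, and one must verify that $\psi$ vanishes on every such class exactly when $k$ lies in the admissible subset, and is non-zero there otherwise. Once the admissible $k$ are characterised, the count $3^m(3^m+1)/2$ follows from enumerating them modulo the equivalence $\chi_{14}(k)=\chi_{14}(k')$ induced by the action of the Weyl group on the parameters.
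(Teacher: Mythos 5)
Your plan follows the paper's own route essentially verbatim: the same preliminary restriction to the classes $C_1,C_2,C_5$, the same intermediate character $\theta=\chi_1+2\chi_5+2\chi_7+\chi_{14}$, the same substitution step producing Table \ref{T2G2}-C, and the same final check on the mixed $3$-singular classes (the paper's entire proof of the lemma is that direct inspection of the character table shows every $Syl_p$-vanishing character of degree $|G|_p$ is $p$-vanishing). The one miscalibrated expectation is that the block/mixed-class check would eliminate most candidates and cut $I_{14}$ down to a proper admissible subset: in fact Table \ref{T2G2}-C already contains a single reducible family, every $k\in I_{14}$ survives the $p$-vanishing test, and the count $3^m(3^m+1)/2=(q^2+\sqrt{3}q)/6$ is simply $|I_{14}|$ divided by the size of the Weyl-group orbit on the parameters (also, the relevant root of unity is $\varphi_{12}''$, attached to the torus of order $q^2+\sqrt{3}q+1$, not $\varphi_8'$, which belongs to ${}^2B_2$ and ${}^2F_4$).
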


\begin{proof}
Direct inspection of the character table shows that  every $Syl_p$-vanishing character of
$G$ of degree $|G|_p$ is $p$-vanishing. 
\end{proof}

Observe that the group $G={}^2G_2(3)'$ has a unique reducible $3$-vanishing character of
degree $|G|_3$: $\phi=\chi_1+\chi_6$ (with the notation of \cite{Atl}). 
On the other hand, it has $16$ reducible $3$-vanishing characters of degree $3|G|_3$. 
\begin{enumerate}
\item Ten characters of shape $\phi_1=a\chi_7+b\chi_8+c\chi_9$, where $0\leq a\leq b\leq
c\leq 3$ and $a+b+c=3$.
\item Six characters of shape $\phi_2=\chi_1+\chi_6+d\chi_7+e\chi_8+f\chi_9$, where $0\leq
d\leq e\leq f\leq 2$ and $d+e+f=2$.
\end{enumerate}

This can be easily proved looking at the character table of $G$.

\section{$Syl_p$-vanishing characters and Levi subgroups}
 
 Let ${\mathbf H}$ be a reductive group with Frobenius endomorphism
$Fr$ and ${\mathbf G}$ the maximal semisimple subgroup of ${\mathbf
H}$. Set $H={\mathbf H}^{Fr}$ and $G={\mathbf G}^{Fr}$.

\begin{lemma}[{\cite[Proposition p. 162]{Lu}}] \label{lu5} 
Let $\tau$
be an \ir character of $H$. Then $\tau|_G$ is \mult free.
\end{lemma}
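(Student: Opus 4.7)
The strategy is Clifford theory applied to the normal inclusion of $G$ in $H$. The first step is to verify that $H/G$ is abelian. Since $\mathbf{G}=[\mathbf{H},\mathbf{H}]$ is a closed connected normal subgroup of $\mathbf{H}$, the quotient $\mathbf{H}/\mathbf{G}$ is a torus; applying the Lang--Steinberg theorem to the connected group $\mathbf{H}/\mathbf{G}$ shows that the natural map $H\to (\mathbf{H}/\mathbf{G})^{Fr}$ is surjective with kernel $G$, so $H/G\cong (\mathbf{H}/\mathbf{G})^{Fr}$ is abelian.

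Next, I would pick an irreducible constituent $\sigma$ of $\tau|_G$ and let $I=I_H(\sigma)$ be its inertia group in $H$. Clifford theory then yields a decomposition $\tau|_G = e\sum_{h\in H/I}\sigma^h$ together with a Clifford correspondent $\hat{\sigma}\in\Irr I$ lying above $\sigma$ with $\hat{\sigma}(1)=e\,\sigma(1)$ and $\hat{\sigma}^{\,H}=\tau$. The claim $e=1$ is therefore equivalent to saying that $\sigma$ extends to an honest (linear) character of $I$, i.e., to the vanishing of the Schur obstruction in $H^{2}(I/G,\mathbb{C}^{\times})$ associated to $\sigma$. Once such an extension is in hand, Gallagher's theorem identifies $\Irr(H\mid\sigma)$ with $\Irr(I/G)$ and forces every such character of $H$ to restrict multiplicity-freely to the $H$-orbit $\{\sigma^{h}\}$, yielding the lemma.

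To produce the extension, I would invoke the structural identity $\mathbf{H}=Z^{\circ}(\mathbf{H})\cdot\mathbf{G}$, valid in any connected reductive group: elements of $Z^{\circ}(\mathbf{H})^{Fr}$ centralize $G$ and therefore act as scalars on the $G$-module affording $\sigma$; after lifting representatives of the abelian group $I/G$ to the central torus via Lang--Steinberg, these scalar actions assemble into a well-defined extension of $\sigma$, trivializing the cocycle. The main obstacle is precisely this cohomological bookkeeping: $H/G$ is abelian but in general non-cyclic, so Gallagher's extension is not automatic and genuinely uses the reductive-group structure (it is the matching between $Fr$-fixed points of $Z^{\circ}(\mathbf{H})\cdot\mathbf{G}=\mathbf{H}$ and the quotient $H/G$ that requires care, and is the substance of the reference \cite[Proposition, p.~162]{Lu}).
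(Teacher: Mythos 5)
The paper offers no proof of this lemma: it is quoted verbatim from Lusztig \cite[Proposition p.~162]{Lu}, so your attempt has to be measured against Lusztig's argument rather than anything in the text. Your Clifford-theoretic reduction is sound as far as it goes: $H/G$ is indeed abelian, and since $I/G$ is abelian the multiplicity $e$ equals $1$ for one (equivalently, every) $\tau$ over $\sigma$ exactly when $\sigma$ extends to its inertia group $I$, i.e.\ when the class in $H^2(I/G,\CC^\times)$ vanishes. The gap is in the final step, where you claim to trivialize this class using $\mathbf{H}=Z^{\circ}(\mathbf{H})\cdot\mathbf{G}$. That identity holds for the algebraic groups but does \emph{not} descend to $Fr$-fixed points: in general $Z^{\circ}(\mathbf{H})^{Fr}\cdot G$ is a proper subgroup of $H$. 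Already for $\mathbf{H}=GL_2$, $\mathbf{G}=SL_2$ and $q$ odd, the determinant of $Z^{\circ}(\mathbf{H})^{Fr}\cdot G$ is the group of squares in $F_q^\times$, so an element such as ${\rm diag}(\alpha,1)$ with $\alpha$ a non-square cannot be written as $zg$ with $z$ an $F_q$-rational scalar and $g\in SL_2(q)$. Writing $h=zg$ with $z\in Z^{\circ}(\mathbf{H})$ and $g\in\mathbf{G}$ merely rational over $\overline{F}_q$ does not help: $\sigma$ is a module for the finite group $G$ only, and neither $z$ nor $g$ acts on it. Your construction therefore extends $\sigma$ only to the subgroup $(Z^{\circ}(\mathbf{H})^{Fr}G)\cap I$ of $I$, and the remaining obstruction on $I/\bigl((Z^{\circ}(\mathbf{H})^{Fr}G)\cap I\bigr)$ is untouched.

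That residual obstruction is precisely the substance of Lusztig's proposition. His proof does not proceed by exhibiting an extension of $\sigma$; it establishes the equivalent identity $(\tau|_G,\tau|_G)_G=|\{\lambda\in\Irr(H/G):\tau\otimes\lambda=\tau\}|=|H:I|$ by means of the Jordan decomposition of characters and the parametrization of $\Irr(H)$ in terms of the dual group, i.e.\ it genuinely uses the classification of irreducible characters of finite reductive groups. So the statement you are trying to prove by formal Clifford theory is exactly the part that requires Deligne--Lusztig theory; the cases where $H/G$ is cyclic (where extension is automatic) or where every element of $I$ lifts to $Z^{\circ}(\mathbf{H})^{Fr}G$ are the easy ones, and your argument does not go beyond them.
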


\begin{propo}\label{lu6}
Let $G,H$ be as above.
Let $\chi$ be a $p$-vanishing
character of $G$. Then $\chi$ is $H$-invariant and extends to a $Syl_p$-vanishing
character $\chi'$ of $H$ such that $\chi'|_G=\chi$.
\end{propo}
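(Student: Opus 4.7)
The claim splits naturally into three parts: (i) $\chi$ is $H$-invariant, (ii) any $H$-invariant $p$-vanishing $\chi$ extends to a proper character $\chi'$ of $H$, and (iii) $\chi'$ is $Syl_p$-vanishing. Only (i) uses the $p$-vanishing hypothesis in an essential way; (ii) is Clifford theory combined with Lemma~\ref{lu5}, and (iii) is essentially automatic.

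For (i), I would invoke Feit's theorem (\cite[Ch.~IV, Corollary 3.13]{Fe}, \cite[18.26]{CR1}) to express $\chi$ as a $\mathbb{Z}$-linear combination of characters of projective indecomposable $kG$-modules, where $k$ is an algebraic closure of $F_p$. Because $\chi$ is a proper character the coefficients are non-negative integers (they count the composition multiplicities of the corresponding simple modules in a mod-$p$ reduction of $\chi$). It therefore suffices to show that every projective indecomposable $kG$-module, equivalently every simple $kG$-module, is $H$-invariant. This is known: by the Curtis--Steinberg description every simple $kG$-module is the restriction of a rational simple $\mathbf{G}$-module, and because $\mathbf{H}=\mathbf{G}\cdot Z(\mathbf{H})^\circ$ with $Z(\mathbf{H})^\circ$ a central torus, after a suitable character twist such a module extends to a rational $\mathbf{H}$-module. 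Its restriction to $H$ realizes the original simple $kG$-module as the restriction of an $H$-module, so the simple is $H$-stable. Applying this to each summand in the Feit decomposition yields $\chi^h = \chi$ for all $h\in H$.

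For (ii), group the irreducible constituents of $\chi$ into $H$-orbits and write $\chi = \sum_{\mathcal{O}} m_\mathcal{O}\sigma_\mathcal{O}$, where $\sigma_\mathcal{O}$ is the sum of the characters in the orbit $\mathcal{O}$ and $m_\mathcal{O}$ is the common multiplicity, well-defined by (i). For each orbit fix $\psi \in \mathcal{O}$, let $H_\psi$ be its stabilizer in $H$, and pick an irreducible constituent $\tau$ of $\psi^H$. By Clifford theory $\tau = \mathrm{Ind}_{H_\psi}^H \tilde\psi$ for some $\tilde\psi \in \Irr H_\psi$ with $\tilde\psi|_G = e\psi$, and hence $\tau|_G = e\sigma_\mathcal{O}$; Lemma~\ref{lu5} then forces $e=1$, making $\tilde\psi$ a genuine extension of $\psi$ and giving $\tau|_G = \sigma_\mathcal{O}$. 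Setting $\chi' := \sum_\mathcal{O} m_\mathcal{O}\,\mathrm{Ind}_{H_\psi}^H \tilde\psi$ produces a proper character of $H$ satisfying $\chi'|_G = \chi$.

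For (iii), $\mathbf{H}/\mathbf{G}$ is a torus, so $|H:G|$ divides the order of the finite torus $(\mathbf{H}/\mathbf{G})^{Fr}$, which is coprime to $p$. Hence every Sylow $p$-subgroup $U$ of $H$ already lies in $G$, and for $u \in U\setminus\{1\}$ one has $\chi'(u) = \chi(u) = 0$ by the $p$-vanishing of $\chi$. The main obstacle is step (i): the $H$-invariance of simple $kG$-modules rests on the central-torus description of $\mathbf{H}$ and the extendability of restricted rational $\mathbf{G}$-modules to $\mathbf{H}$, and this is precisely where the hypothesis that $\chi$ is $p$-vanishing (rather than an arbitrary character) is used, via Feit's representation of $\chi$ in terms of PIM characters.
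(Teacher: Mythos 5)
Your argument is correct and reaches the same conclusion, but it is organized differently from the paper's proof. The paper obtains both the $H$-invariance and an extension in one stroke by citing \cite[Lemma 3.14]{Z}: every projective indecomposable character of $G$ extends to $H$, so the integral combination $\chi=\sum n_\phi\Phi_\phi$ extends to a \emph{generalized} $Syl_p$-vanishing character $\phi$ of $H$; the real work then lies in converting $\phi$ into a proper character, which is done by grouping the $\tau\in\Irr H$ according to the orbit sum $\tilde\sigma=\tau|_G$ (Lemma \ref{lu5} giving $\tau|_G=\tilde\sigma$ exactly) and replacing each signed sub-sum by $(a_{\tilde\sigma}-b_{\tilde\sigma})\cdot\tau'$. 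You instead use the PIM decomposition only to get $H$-invariance -- re-deriving the $H$-stability of the simple $kG$-modules on the algebraic-group side, which is essentially the content of the lemma the paper outsources to \cite{Z} -- and then build the extension directly by Clifford theory: $\tau=\mathrm{Ind}_{H_\psi}^H\tilde\psi$ restricts to $e\,\sigma_{\mathcal O}$ and Lemma \ref{lu5} forces $e=1$. This is arguably cleaner, avoiding the detour through a generalized character and the positivity bookkeeping; the price is that your step (i) leans on loosely quoted facts about extending restricted simple modules from $\mathbf G$ to $\mathbf H$ (where one should also be a little careful if $\mathbf G$ is not simply connected).

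One parenthetical claim in your step (i) is false, though harmless: the coefficients $n_\phi$ in $\chi=\sum n_\phi\Phi_\phi$ are \emph{not} in general non-negative for a proper $p$-vanishing character, nor are they the composition multiplicities of the simple modules in a mod-$p$ reduction of $\chi$ (those are the coefficients of $\chi|_{p\textrm{-regular}}$ in the irreducible Brauer characters, related to the $n_\phi$ by the Cartan matrix). Non-negativity of all the $n_\phi$ would say that every proper $p$-vanishing character is the character of a projective module, which the introduction of the paper explicitly notes fails outside the $p$-solvable case. Fortunately your invariance argument only needs the $n_\phi$ to be integers, so nothing breaks.
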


\begin{proof}
Note that $\chi$ is an integral linear combination of the
characters of projective indecomposable modules, and every of them 
 extends to $H$ 
(see \cite[Lemma 3.14]{Z}). It follows that $\chi$ extends to a
generalized $p$-vanishing  character $\phi$ of $H$ such that $\phi|_G=\chi$. This implies
that $\chi$ is $H$-invariant.
Next, we shall modify $\phi$ to obtain an
ordinary $Syl_p$-vanishing character.

For $\si\in\Irr G$ denote by $\tilde \si$ the sum of the distinct
$H$-conjugates of  $\si$. As $\chi$ is $H$-invariant, we can write
$\chi=\sum m(\tilde \si)\tilde \si$ for some integers $m(\tilde
\si)\geq 0$, with the sum over the minimal $H$-invariant
characters of $G$.

Let $\tau\in \Irr H$. By Clifford's theorem,
$\tau|_G=d_{\tau}\tilde \si$ for some integer $d_{\tau}>0 $. By
Lemma \ref{lu5}, $d_{\tau}=1$. Set $\Irr_{\tilde \si}H=\{\tau \in
\Irr H:\tau|_G=\tilde \si\}$. Note that if $\tau_1,\tau_2\in \Irr_{\tilde \si}H$
then $\tau_1|_G=\tau_2|_G=\tilde \si.$ Then we have

$$\phi=\sum_{\tilde \si} ~\sum_{\tau\in\Irr_{\tilde \si}H} m(\tau)\tau,$$ 
where the first sum runs over the minimal
$H$-invariant characters of $G$. Therefore, the \mult of $\tilde
\si$ in $\phi$ equals $m(\tilde\si)=\sum_{\tau\in\Irr_{\tilde \si}H} m( 
\tau) $.
Let
$a_{\tilde \si}$ (resp., $b_{\tilde \si}$) be the sum of all
positive (resp., negative) coefficients $m(\tau)$
occurring in this sum. Then $a_{\tilde \si}\geq b_{\tilde \si}$ as $m(\tilde \si)\geq0$.
Therefore, $\sum_{\tau\in\Irr_{\tilde \si}H} m( \tau)\tau|_G=
(a_{\tilde \si}-b_{\tilde \si})\tau'|_G $ for any
$\tau'\in\Irr_{\tilde \si}H$, and $(a_{\tilde \si}-b_{\tilde
\si})\cdot \tau'$ is an ordinary character of $H$. So we
replace every generalized character $\sum_{\tau\in\Irr_{\tilde \si}H}
m( \tau)\tau$ by $(a_{\tilde \si}-b_{\tilde \si})\cdot
\tau'$ to obtain a required modification $\chi'$ of $\phi$.
\end{proof}

\begin{remar}
 Proposition \ref{lu6} is not true for $Syl_p$-vanishing
characters. Indeed, the characters of $G=SL(2,q)$ in the item (3)
of Lemma \ref{s21} do not extend to $GL(2,q)$. 
\end{remar}

Note that $H$ is a group with split
BN-pair, see \cite[24.10]{MT} or \cite[p. 50]{Ca}. So the Steinberg character can be
defined for $H$ as in \cite[66.35 and 71.2]{CR2}. This is of
degree  $|H|_p$, and hence of defect 0.

\begin{lemma}\label{cc5}
Let $\chi$ be a $Syl_p$-vanishing character of $H$. Let $L$ be a Levi subgroup of some parabolic subgroup $P$ of
$H$. 
\begin{itemize}
\item[(1)] If $\chi\in\Irr H$ then $\chi(1)=|H|_p$, and $\chi|_G$ is the Steinberg character of $G$.
\item[(2)] Suppose that the truncation $\overline{\chi}_L$ is \irt Then
$\chi(1)=|H|_p$.
\item[(3)] If $\chi\in\Irr H$ is of defect $0$ then the truncation
$\overline{\chi}_L$ is \ir of defect $0$.
\end{itemize}
\end{lemma}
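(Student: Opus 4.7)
The plan is to deduce all three parts from one classical input: in defining characteristic $p$, every irreducible defect-$0$ character of a finite reductive group $H$ has the form $St_H\otimes\lambda$ for some linear character $\lambda$ of $H$, and in particular has degree exactly $|H|_p$. Two further well-known ingredients will be invoked: $(St_H)|_G=St_G$ (derivable from the alternating-sum description of the Steinberg character, together with $H=G\cdot T$ and Mackey's formula) and $\overline{St_H}_L=St_L$ for every Levi subgroup $L$ of $H$ (a standard compatibility of Steinberg with Harish-Chandra restriction).

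For part (1), an irreducible $Syl_p$-vanishing character of $H$ is automatically $p$-vanishing (an equivalence for irreducible characters already recorded at the start of Section~2), and hence of $p$-defect $0$. The input classification then gives $\chi=St_H\otimes\lambda$, so $\chi(1)=|H|_p$. Since $H/G$ is a $p'$-group (being a quotient of the torus $\mathbf{H}/\mathbf{G}$), the linear character $\lambda$ has $p'$-order and is therefore trivial on every $p$-element of $H$; as $G$ is generated by its unipotent elements, this forces $\lambda|_G=1_G$, and consequently $\chi|_G=(St_H)|_G=St_G$.

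For part (2), Lemma~\ref{x2a}(1) ensures that $\overline{\chi}_L$ is a $Syl_p$-vanishing character of the reductive group $L$ with $c_{\overline{\chi}_L}=c_\chi$. Since $\overline{\chi}_L$ is irreducible by hypothesis, applying part~(1) to it (with $L$ in place of $H$) gives $\overline{\chi}_L(1)=|L|_p$; hence $c_\chi=c_{\overline{\chi}_L}=1$ and so $\chi(1)=|H|_p$.

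For part (3), an irreducible defect-$0$ character $\chi$ of $H$ is $p$-vanishing, and in particular $Syl_p$-vanishing, so part~(1) applies and yields $\chi=St_H\otimes\lambda$ with $c_\chi=1$. Because $\lambda$ has $p'$-order, it is trivial on the $p$-group $O_p(P)$, so truncation over $O_p(P)$ commutes with the twist by $\lambda$: we obtain $\overline{\chi}_L=\overline{St_H}_L\otimes\lambda|_L=St_L\otimes\lambda|_L$, which is irreducible of degree $|L|_p$, i.e.\ of defect $0$. The only genuinely non-trivial step in the whole argument is the initial classification of irreducible defect-$0$ characters in defining characteristic; this is a standard consequence, via Brauer theory, of the uniqueness (up to twisting by linear characters of $H/G$) of the Steinberg module as a simple projective $\overline{\mathbb{F}}_pH$-module, and all remaining manipulations amount to bookkeeping with the abelian $p'$-quotient $H/G$ and the two identities $(St_H)|_G=St_G$ and $\overline{St_H}_L=St_L$.
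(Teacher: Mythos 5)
Your proposal is correct, but part (3) takes a genuinely different route from the paper. For (1) and (2) you are essentially doing what the paper does: both arguments reduce (1) to the facts that a defect-zero character reduces irreducibly mod $p$, that simple $p$-modular representations of $H$ restrict irreducibly to $G$, and that simple $kG$-modules have dimension at most $|G|_p$; your extra packaging of this as ``every defect-zero character is $St_H\otimes\lambda$ with $\lambda$ inflated from the $p'$-group $H/G$'' is an equivalent reformulation (via Clifford--Gallagher) rather than new input, and your proof of (2) is word-for-word the paper's (apply (1) to $L$, then use Lemma~\ref{x2a}). The real divergence is in (3): the paper stays at the modular level and invokes Smith's theorem ([CR2, 72.32]) to conclude that the Harish-Chandra restriction of an irreducible projective module is again irreducible and projective, whereas you work entirely with ordinary characters, combining the twist description $\chi=St_H\otimes\lambda$ with the classical identity $\overline{St_H}_L=St_L$ (the ``only if'' direction of Lemma~\ref{d0d}, which the paper also uses) and the elementary observation that truncation over $O_p(P)$ commutes with multiplication by a linear character trivial on $O_p(P)$. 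Your route buys a proof of (3) that avoids modular Harish-Chandra theory at that step, at the cost of leaning harder on the classification of defect-zero characters, which itself needs the same modular ingredients as (1); the paper's route via Smith's theorem is more robust in that it never needs to identify $\chi$ explicitly. One small point to keep in mind: the claim that the linear character $\lambda$ is trivial on all unipotent elements requires choosing $\lambda$ to factor through $H/G$ (which your Clifford-theoretic derivation does supply); an arbitrary linear character of $H$ can be nontrivial on $U$ in a few small exceptional groups, a caveat the paper itself records in the proof of Lemma~\ref{d0d}.
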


\begin{proof}
(1) is well known.
(Indeed,  $\chi(1)$ is divisible by
$|H|_p$, so $\chi$ is of defect 0, and hence $\chi\pmod p$ is
irreducible. The $p$-modular \ir characters of $H$ remain
irreducible under restriction to $G$ (cf. \cite[3.14]{Z}), and
hence have degree at most $|G|_p=|H|_p$ (see \cite[72.19]{CR2} or
\cite[Corollary of Theorem 46]{St}). This implies (1).)

(2) As $\overline{\chi}_L$ is irreducible, it is  of degree
$|L|_p$ by (1). As $|H :P|$ is coprime to $p$, the claim follows
from Lemma \ref{x2a}.

(3) As $\chi$ is of defect 0, the reduction of $\chi$ modulo $p$
is \ir and the corresponding module $M$ is projective.  By Smith's
theorem \cite[72.32]{CR2}, the Harish-Chandra restriction
$\overline{M}_L$ of $M$ to $L$ is \ir and, moreover, projective.
It is known that $\overline{\chi}_L$ is the character of
$\overline{M}_L$, cf. \cite[Lemma 3.3]{Z}. \itf
$\overline{\chi}_L$ is \ir of defect 0.
\end{proof}

\begin{lemma}\label{d0d}
 Let $\phi$ be a defect $0$ \ir character of $H$, and $L$ a Levi subgroup
 of $H$. Then $\phi=St$ \ii
$\overline{\phi}_L=St_L$.
\end{lemma}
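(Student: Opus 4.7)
The forward direction is a routine and well-known property of the Steinberg character: Harish-Chandra restriction of $St_H$ to any Levi subgroup $L$ yields $St_L$. (This also falls out of Lemma \ref{cc5}(3) since $St_H$ has defect $0$, together with the fact that $\overline{St_H}_L(1)=|L|_p=St_L(1)$ and that $St_L$ is the unique irreducible character of $L$ of this degree.)

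For the backward direction, suppose $\overline{\phi}_L=St_L$. Since $\phi$ has defect $0$ it is $p$-vanishing and in particular $Syl_p$-vanishing, so Lemma \ref{cc5}(1) gives $\phi(1)=|H|_p$ and $\phi|_G=St_G$, where $G=\mathbf{G}^{Fr}$ is the maximal semisimple subgroup (the one generated by the unipotent elements of $H$). Since $St_H|_G=St_G$ as well and $G\triangleleft H$ with $H/G$ abelian, Clifford theory together with Lemma \ref{lu5} forces $\phi=St_H\cdot\lambda$ for some linear character $\lambda$ of $H$ with $\lambda|_G=1_G$.

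The next step is to compute $\overline{\phi}_L$. Write $P=L\ltimes N$ with $N=O_p(P)$. Since $N$ is generated by unipotent elements we have $N\leq G$, hence $\lambda|_N=1$, and the truncation formula gives
\[
\overline{\phi}_L(l)=\frac{1}{|N|}\sum_{n\in N}St_H(ln)\lambda(ln)=\lambda(l)\cdot\frac{1}{|N|}\sum_{n\in N}St_H(ln)=\lambda(l)\cdot St_L(l)
\]
for every $l\in L$. Thus $\overline{\phi}_L=\lambda|_L\cdot St_L$, and the hypothesis $\overline{\phi}_L=St_L$ combined with $St_L(1)\neq 0$ yields $\lambda|_L=1_L$.

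It remains to upgrade $\lambda|_L=1_L$ and $\lambda|_G=1_G$ to $\lambda=1_H$; this is the only real point of the proof. The Levi subgroup $L$ contains a maximal torus $T$ of $H$, and for reductive groups we have $H=G\cdot T$ (the identity used e.g.\ in the proof of Lemma \ref{g28}). Therefore $H=G\cdot L$, and a linear character of $H$ vanishing on both factors is trivial. Hence $\lambda=1_H$ and $\phi=St_H$, completing the proof.
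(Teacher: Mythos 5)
Your overall strategy is essentially the paper's: both proofs reduce the ``if'' direction to showing that the linear character by which $\phi$ twists $St_H$ is trivial. The paper first reduces to the case $L=T$ by transitivity of truncation and then works inside $B$, concluding $\mu|_B=1_B$ and then $\mu=1_H$ from $H=H_0B$; you instead work with a general $L$ and finish via $H=G\cdot L$. Your derivation of $\phi=St_H\cdot\lambda$ from Clifford/Gallagher theory and Lemma \ref{lu5} is a clean substitute for the paper's ``multiplicative function'' argument, and the truncation computation $\overline{\phi}_L=\lambda|_L\cdot St_L$ is correct, since $O_p(P)$ consists of unipotent elements and hence lies in $G\le\ker\lambda$.

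There is, however, a genuine gap in the step ``$\overline{\phi}_L=St_L$ combined with $St_L(1)\neq 0$ yields $\lambda|_L=1_L$.'' Evaluating $\lambda(l)St_L(l)=St_L(l)$ at $l=1$ gives only $\lambda(1)=1$; more generally this identity forces $\lambda(l)=1$ only on the support of $St_L$, and $St_L$ vanishes on every non-semisimple element of $L$, so nothing is obtained there. (For a linear character $\mu$ and an irreducible character $\chi$, the equality $\mu\chi=\chi$ does not in general imply $\mu=1$.) The repair is easy and uses only ingredients you already have: for $t\in T\subseteq L$ the element $t$ is semisimple, so $St_L(t)=\pm|C_L(t)|_p\neq 0$ and hence $\lambda(t)=1$; thus $\lambda|_T=1_T$. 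Combined with $\lambda|_G=1_G$ and $H=G\cdot T$ this already gives $\lambda=1_H$, so you do not even need the full statement $\lambda|_L=1_L$. With that correction your argument is complete and somewhat more streamlined than the paper's.
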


\begin{proof}
The ``only if'' part is well known, see for instance \cite[p. 72]{DM}. 
For the ``if`` part suppose that $\overline{\phi}_L=St_L$. 
First we prove the lemma in the case where
$L=T\subset B$.

Recall that $\phi(1)=|H|_p$ \cite[72.20]{CR2}, equivalently,
 $\overline{\phi}_T(1)=1$ (Lemma \ref{x2a}).
Suppose that $(\phi,1_B)=1$. By Lemma \ref{cc5}, 
 $St_H|_G=St_G$  and $\phi|_G=St_G$. For every $h\in H$ we have $\phi(h)=St_H(h)\cdot \mu(h)$ for some complex number $\mu(h)$. \itf the function
 $\mu$ is multiplicative (view  $\phi$
 and $St_H$ as \reps rather than characters).
 As $H$ is a finite group, $\mu$ is a representation. Therefore, $\phi=St_H\cdot \mu$.
As $U\subset G$, we have $\mu(U)=1$.   (Usually $U$ is in the
kernel of every linear character of $H$ but there are a few
exceptions.)   Let $\phi=St\cdot \mu$. Then $1=(\phi|_B,
1_B)=(St|_B\cdot \mu|_B, 1_B)=(St|_B, \mu^*|_B\cdot 1_B )=(St|_B,
\mu^*|_B)$, where $\mu^*$ is the complex conjugate to  $\mu$. By
the above, $\mu^*|_B=1_B$. We show that $\mu^*=1_H$.

Let $H_0$ be the subgroup generated by the unipotent elements in
$H$. Then $H/H_0$ is a $p'$-group. As $H_0B$ contains $B$, this is
a parabolic subgroup of $H$, and  $H_0B=H$ as every parabolic
subgroup coincides with its normalizer \cite[65.19]{CR2}.  \itf
the mapping $\mu\ra \mu|_B$ yields an injective mapping from $\Irr
H/H_0$ to $\Irr B/B'$, where $B'$ is a derived subgroup of $B$.
Therefore, $\mu|_B=1_B$ implies $\mu=1_{H}$.

 Let $L\neq T$ and let $\eta=\overline{\phi}_L$. By
transitivity of truncation, $\overline{\phi}_T=\overline{\eta}_T$.
By the above, $\phi=St$ is equivalent to $\overline{\phi}_T=1_T$, and
$\overline{\eta}_T=1_T$ is equivalent to $\eta=St_L$. So the
result follows. 
\end{proof}

We keep the notation $H$, $G$ as in the beginning of the section, and assume that 
${\bf G}$ is simply connected.
  In what follows we normally 
 view $H$ and $G$ as  groups
with a split $BN$-pair defined by a Borel subgroup $B$ and a subgroup
$N$ satisfying certain conditions, see \cite[\S 65]{CR2}.  We set
$T=N\cap B$ and $W=N/T$ (which is called the Weyl group). We denote by $\ell$ the
BN-pair rank of $H$. (This is not the same as the rank
of ${\mathbf H}$.) Set $I_\ell=\{1,\ldots,\ell\}$. The standard
parabolic
subgroups of $H$ are those containing $B$.  They 
 form a lattice isomorphic to the
lattice of the subsets of $I_\ell$. We denote by $P_J$ the
standard parabolic subgroup corresponding  to a subset $J$ of
$I_\ell$ (and $B$ corresponds to the empty set). Let
$U_J:=O_p(P_J)$ be the unipotent radical of $P_J$. Then
$P_J=U_J\rtimes L_J$ for some subgroup $L_J$ called the Levi
subgroup of $P_J$. The Levi subgroups are conjugate in $P_J$, and
it is possible to choose $L_J$ to contain $T$. (This is always
assumed below.) If $K\subseteq J$, then $P_K\cap L_J$ is a
standard parabolic subgroup of $L_J$ containing the Borel subgroup
$B\cap L_J$ of $L_J$. Furthermore, $O_p(P_K\cap L_J)=U_K\cap L_J$,
$P_K\cap L_J=(U_K\cap L_J)\rtimes L_K$ and $U_K=U_J(U_K\cap L_J)$.
In turn, $L_J$ is a group with BN-pair whose Weyl group  $W_J$ is
a subgroup of $W$. The Weyl group $W$ has a unique (up to
equivalence) \irr $\rho$ of degree $\ell$ called the reflection
representation, and $\ep:=\det \circ \rho$ is called the sign
character of $W$.

Let $P_J=L_JU_J$, where $L_J$ is a  Levi subgroup of $P_J$. Denote
by  $G_J$  the normal subgroup of $L_J$ generated by the unipotent
elements of $L_J$. 
Then $L_J=G_JT$.  
Set $B_J=B\cap G_J$ and $T_J=T_0\cap G_J$. Then $B_J$ is a Borel
subgroup of $G_J$ and $B_J=(U\cap L_J)\rtimes T_J$. (Note that
$B_J$ differs from $B\cap L_J$, which is a Borel subgroup of
$L_J$.)

Recall that we call $G$  a Chevalley group if ${\mathbf G}$ is
simple and simply connected.

 In Section 5 we have obtained the following: 

\begin{propo}\label{r1g}
Let $G$ be  of
BN-pair rank $1$. Then $G$ admits a reducible $p$-vanishing character $\chi$ of degree $|G|_p$, 
unless  $G=SU(3,q)$ with 
$(q+1,3)=1$. Furthermore, $(\chi,1_G)=1$, and some constituent of $\chi$ is a cuspidal character of $G$.
\end{propo}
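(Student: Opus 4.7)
The plan is to prove the proposition by case analysis across the four families of rank~1 Chevalley groups, namely $SL(2,q)$, $SU(3,q)$, ${}^2B_2(q^2)$, and ${}^2G_2(q^2)$, appealing to the explicit classifications already obtained in this section.

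First, I would exhibit a reducible $p$-vanishing character $\chi$ of degree $|G|_p$ in each non-excluded case. For $SL(2,q)$, take $\chi=1_G+\tau$ from case~(1) of Lemma~\ref{s21} (with $\tau$ irreducible of degree $q-1$); for $SU(3,q)$ with $3\mid(q+1)$, the unique character supplied by Lemma~\ref{LU3.2}; for ${}^2B_2(q^2)$, any of the characters of Lemma~\ref{L2B2}; and for ${}^2G_2(q^2)$, any of the characters of Lemma~\ref{L2G2}. In every one of these explicit decompositions $1_G$ occurs with multiplicity exactly one, so $(\chi,1_G)=1$. The exceptional case $SU(3,q)$ with $3\nmid(q+1)$ is handled by Lemma~\ref{LU3.n2}, which asserts the non-existence of reducible $p$-vanishing characters of degree $|G|_p$, matching the exception in the statement.

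For the cuspidality assertion, I would give a single uniform argument via Harish-Chandra truncation, rather than inspecting each case individually. Since $G$ has BN-pair rank~$1$, the unique proper standard Levi subgroup (up to conjugacy) is the maximal torus $T$ contained in the Borel subgroup, and $|T|_p=1$. Applying Lemma~\ref{x2a}(1), the truncation $\overline{\chi}_T$ is a $p$-vanishing character of $T$ with $c_{\overline{\chi}_T}=c_\chi=1$, hence a linear character of $T$. Writing $\chi=1_G+\sum_i n_i\chi_i$ with the $\chi_i$ distinct irreducible characters of $G$ different from $1_G$ and $n_i\geq 1$, and observing that $\overline{1_G}_T=1_T$, one obtains the identity $\sum_i n_i\,\overline{\chi_i}_T=0$ of class functions on $T$. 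But each $\overline{\chi_i}_T$ is itself a proper character of $T$ and the coefficients $n_i$ are positive, so every $\overline{\chi_i}_T$ must be the zero character; equivalently, every nontrivial constituent of $\chi$ is cuspidal. Reducibility of $\chi$ ensures at least one such $\chi_i$ exists, yielding the cuspidal constituent.

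The main obstacle is largely bookkeeping: confirming that the $\chi$ selected from each lemma genuinely satisfies $(\chi,1_G)=1$. For $SL(2,q)$ this requires choosing among the three options of Lemma~\ref{s21} and avoiding case~(3), where no trivial summand appears; for the other three families the explicit formulas already include $1_G$ as a summand with multiplicity one. Once the existential part is settled, the uniform cuspidality argument is short and applies without modification to all four families.
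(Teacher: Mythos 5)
Your proof is correct, and the existence part coincides with the paper's: the paper's entire proof of this proposition is the one\--line citation of Lemmas \ref{s21}, \ref{LU3.n2}, \ref{LU3.2}, \ref{L2B2} and \ref{L2G2}, exactly the lemmas you invoke, with $(\chi,1_G)=1$ and the cuspidal constituent read off from the explicit decompositions (for $SL(2,q)$ the cuspidality is recorded in the Remark after Lemma \ref{s21}; for the other families it is implicit in the identification of the relevant \textsc{chevie} characters as cuspidal). Where you genuinely differ is the uniform truncation argument for cuspidality, and it is sound: since the only proper Levi subgroup of a rank\--$1$ group is the $p'$-torus $T$, Lemma \ref{x2a}(1) gives $\overline{\chi}_T(1)=c_\chi|T|_p=1$, the summand $\overline{1_G}_T=1_T$ already exhausts this degree, and as each $\overline{\chi_i}_T$ is a proper (hence non\--negative) character, all of them must vanish. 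This buys you more than the statement asks for --- \emph{every} nontrivial constituent is cuspidal, and the multiplicity of $1_G$ is automatically $1$ once $1_G$ occurs at all --- and it spares you from verifying cuspidality table by table, which the paper leaves to the reader. One small caveat on the existence step: for $SL(2,3)$ option (1) of Lemma \ref{s21} does not actually produce a $p$-vanishing character (no cuspidal character of degree $q-1=2$ has $-I$ in its kernel, so the condition $\psi(z)=\psi(1)$ fails), and one must fall back on option (2); since the classification lemmas guarantee existence in every case this costs nothing, but your phrase ``take $\chi=1_G+\tau$ from case (1)'' should be read with that adjustment.
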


\begin{proof}
The result follows from Lemmas \ref{s21}, \ref{LU3.n2}, \ref{LU3.2}, \ref{L2B2} and \ref{L2G2}. 
\end{proof}

In order to deal with groups of higher rank, we establish some properties of $Syl_p$-vanishing characters.

\begin{lemma}\label{dj2}
Suppose that $G=H$.
Let $I_\ell=J\cup J'$ be the  union of two subsets $J,J'$. Then
$T=T_JT_{J'}$.
\end{lemma}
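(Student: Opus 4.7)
The plan is to exploit the simply connected structure to get an explicit description of $T$ in terms of one-parameter coroot subgroups, and then observe that $J\cup J'=I_\ell$ guarantees that every such subgroup lies in either $T_J$ or $T_{J'}$.

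First I would recall that, since $\mathbf{G}$ is simply connected and $G=H$, the group $H$ is a Chevalley group and its maximal torus $T$ admits the following description: for each simple root $\alpha_i$ ($i\in I_\ell$) there is a one-parameter subgroup $T_i:=\{h_{\alpha_i}(t):t\in F_q^\ast\}\subseteq T$, and the map
\[
\prod_{i\in I_\ell}T_i\longrightarrow T,\qquad (h_1(t_1),\dots,h_\ell(t_\ell))\mapsto h_1(t_1)\cdots h_\ell(t_\ell),
\]
is a bijection (this is where simple connectedness is used; cf.\ Steinberg's presentation). In particular, $T$ is generated by the $T_i$, $i\in I_\ell$, and is abelian.

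Next I would identify $T_J$ with $\prod_{i\in J}T_i$. By definition $G_J$ is generated by the root subgroups $X_{\pm\alpha_i}$ for $i\in J$, so $G_J$ is itself a simply connected semisimple group for the subsystem spanned by $\{\alpha_i:i\in J\}$, and its maximal torus is exactly $\prod_{i\in J}T_i$. Since each $T_i\subseteq G_J$ for $i\in J$ and $T_i\subseteq T$, we get $\prod_{i\in J}T_i\subseteq T\cap G_J=T_J$; the reverse inclusion follows from the fact that in the simply connected case $T_J$ is itself the maximal torus of $G_J$ described above. (This is the step I would expect to be the main obstacle, as it requires the explicit coroot description rather than only abstract BN-pair machinery; without simple connectedness the analogous statement can fail because of isogeny kernels.)

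Finally I would conclude: since $I_\ell=J\cup J'$, every $T_i$ is contained in $T_J$ or in $T_{J'}$, hence $T=\langle T_i:i\in I_\ell\rangle$ is contained in the subgroup $\langle T_J,T_{J'}\rangle$. Because $T$ is abelian, $\langle T_J,T_{J'}\rangle=T_JT_{J'}$, and the reverse inclusion $T_JT_{J'}\subseteq T$ is immediate since both factors lie in $T$. Therefore $T=T_JT_{J'}$, as claimed.
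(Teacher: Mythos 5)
Your proposal is correct and is essentially the argument the paper relies on: the paper's proof is a one-line citation of \cite[Theorem 2.4.7(a)]{GLS}, which is exactly the direct-product decomposition $T=\prod_{i\in I_\ell}T_i$ into rank-one coroot subgroups (valid because ${\mathbf G}$ is simply connected) that you extract from Steinberg's presentation, together with the identification $T_J=\prod_{i\in J}T_i$. The only point to watch is that for the twisted groups the factors $T_i$ are indexed by the $Fr$-orbits of simple roots rather than by individual simple roots, but the cited theorem covers that case and your argument goes through verbatim with that reading of $I_\ell$.
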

\begin{proof}
The result  follows from \cite[Theorem 2.4.7(a)]{GLS}.
\end{proof}

\begin{lemma}\label{dk3}
Let $\chi$ be an arbitrary character of $H$, and let
$\beta_J$ be the truncation of $\overline{\chi}_{L_J}$ to  $T$
(so $\beta_J$ is a character of $T)$.
\begin{itemize}
\item[(1)] $\beta_J=\overline{\chi}_{T}$.
\item[(2)] Set $\lam=\overline{\chi}_{L_J}|_{G_J}$. Then
$\overline{\lam}_{T_J}=\beta_J|_{T_J}$.
\end{itemize}
\end{lemma}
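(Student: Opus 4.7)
The plan is to reduce both statements to the transitivity of truncation, combined with the standard structure of parabolic subgroups recalled before Lemma \ref{dj2}. For (1), the key ingredient is the factorization $U=(U\cap L_J)\cdot U_J$ with $U_J\trianglelefteq U$ and trivial intersection; this is just the specialization of the identity $U_K=U_J(U_K\cap L_J)$ from the preamble to $K=\emptyset$, together with $P_J=U_J\rtimes L_J$. For (2), the decisive observation will be that the unipotent radical of $B_J$ coincides with $U\cap L_J$, so that averaging over the unipotent radical of $B_J$ inside $G_J$ is literally the same as averaging over $U\cap L_J$ inside $L_J$.

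For part (1), I would unfold both averages in the definition of $\beta_J$. For $t\in T$,
\[
\beta_J(t)=\frac{1}{|U\cap L_J|}\sum_{v\in U\cap L_J}\overline{\chi}_{L_J}(tv)
=\frac{1}{|U\cap L_J|\cdot|U_J|}\sum_{\substack{v\in U\cap L_J\\ u\in U_J}}\chi(tvu),
\]
where the inner truncation over $U_J$ is legitimate since $tv\in L_J$. By the factorization above the map $(v,u)\mapsto vu$ is a bijection $(U\cap L_J)\times U_J\to U$, and $|U|=|U\cap L_J|\cdot|U_J|$; collapsing the double sum therefore yields $\beta_J(t)=\tfrac{1}{|U|}\sum_{w\in U}\chi(tw)=\overline{\chi}_T(t)$, as required.

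For part (2), the main point is that $U\cap L_J=U\cap G_J$: every element of $U\cap L_J$ is unipotent, hence lies in $G_J$ (since $G_J$ is generated by the unipotent elements of $L_J$), while the reverse inclusion is immediate from $G_J\subseteq L_J$. Combined with the hypothesis $B_J=(U\cap L_J)\rtimes T_J$, this identifies $U\cap L_J$ as the unipotent radical of the Borel subgroup $B_J$ of $G_J$. Hence, for $t\in T_J$, the definition of truncation inside $G_J$ together with $\lambda=\overline{\chi}_{L_J}|_{G_J}$ gives
\[
\overline{\lambda}_{T_J}(t)=\frac{1}{|U\cap L_J|}\sum_{v\in U\cap L_J}\lambda(tv)
=\frac{1}{|U\cap L_J|}\sum_{v\in U\cap L_J}\overline{\chi}_{L_J}(tv)=\beta_J(t),
\]
the last equality being the very definition of $\beta_J$ on $T$. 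Restricting to $T_J$ yields $\overline{\lambda}_{T_J}=\beta_J|_{T_J}$.

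There is no conceptual obstacle: this is a bookkeeping lemma whose only subtlety is to verify that the normalizing factors combine correctly (which is exactly the content of the semidirect product $U=(U\cap L_J)\ltimes U_J$ in (1)) and that the intermediate evaluation $\overline{\chi}_{L_J}(tv)$ makes sense because $tv\in L_J$, so that the inner average over $U_J$ computes the truncation from $P_J$ to $L_J$ honestly.
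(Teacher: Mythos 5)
Your proof is correct and follows essentially the same route as the paper: part (1) is the transitivity of truncation made explicit via the factorization $U=U_J(U\cap L_J)$, and part (2) is the same bookkeeping with the Borel subgroup $B_J=(U\cap L_J)\rtimes T_J$ of $G_J$. The only (harmless) difference is that in (2) you compare the two averages directly at the level of $\overline{\chi}_{L_J}$ on $G_J$, whereas the paper unfolds everything down to $\chi$ and sums over all of $U$, thereby invoking part (1); your version is marginally more economical.
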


\begin{proof}
(1) follows from the transitivity of  truncation.

  (2) By the definition of  truncation, for all $t\in T$ and $g\in G_J$ we have
$$\beta_J(t)=\frac{1}{|U|}\sum_{u\in U} \chi(ut)~~
\textrm{ and } ~~
\lambda(g)=\frac{1}{|U_J|}\sum_{v\in U_J} \chi(vg).$$
So, for all $x \in T_J$,
$$
(\Tr{T_J}{\lambda})(x) = \frac{1}{|U\cap L_J|}\sum_{y\in U\cap L_J} \lambda(yx)= $$
$$ = \frac{1}{|U\cap L_J|\cdot |U_J|}\sum_{y\in U\cap L_J} \sum_{v \in U_J} \chi(vyx)\, =
\, \frac{1}{|U|}\sum_{u\in U} \chi(ux) \,=\, \beta_J(x),$$
using the fact that  $U=U_J(U\cap L_J)$.
\end{proof}

\begin{lemma}\label{dm4} 
Assume $G=H$.
Let $I_\ell=J\cup J'$ be the  union 
of two subsets $J,J'$.
Let $\chi$ be a $Syl_p$-vanishing character of $G$. 
Suppose that $\Tr{L_J}{\chi}$ and $\Tr{L_{J'}}{\chi}$ are \irt Then $\Tr{T}{\chi}=1_T$.
\end{lemma}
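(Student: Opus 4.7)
The plan is to exploit the irreducibility hypotheses to pin down $\overline{\chi}_{L_J}|_{G_J}$ as the Steinberg character of $G_J$, then push the truncation all the way down to the torus $T$, and finally combine the $J$ and $J'$ information via the factorisation $T = T_JT_{J'}$ coming from Lemma \ref{dj2}. The main leverage is Lemma \ref{cc5}, which forces an irreducible $Syl_p$-vanishing character to restrict to the semisimple part as the Steinberg character.

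First I would use Lemma \ref{cc5}(2) to extract $\chi(1) = |H|_p$ from the irreducibility of $\overline{\chi}_{L_J}$, and then Lemma \ref{x2a}(1) applied with $L = T$ to get $\overline{\chi}_T(1) = |T|_p = 1$. Since $T$ is abelian, this makes $\overline{\chi}_T$ a single linear character, i.e.\ a homomorphism $T \to \CC^{\times}$. Next I would apply Lemma \ref{cc5}(1) to the finite reductive group $L_J$ (with semisimple subgroup $G_J$) and to its irreducible $Syl_p$-vanishing character $\overline{\chi}_{L_J}$, to obtain $\overline{\chi}_{L_J}|_{G_J} = St_{G_J}$; the analogous statement holds with $J'$ in place of $J$.

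Next I would chase truncations: by Lemma \ref{dk3}(1) the character $\overline{\chi}_T$ coincides with $\beta_J$, and by Lemma \ref{dk3}(2) applied with $\lambda = St_{G_J}$ we would have $\beta_J|_{T_J} = \overline{St_{G_J}}_{T_J}$; Lemma \ref{d0d} applied with the minimal Levi $T_J$ of $G_J$ (where $St_{T_J} = 1_{T_J}$ since $T_J$ has no unipotent elements) would then yield $\overline{St_{G_J}}_{T_J} = 1_{T_J}$. Hence $\overline{\chi}_T|_{T_J} = 1_{T_J}$, and by symmetry $\overline{\chi}_T|_{T_{J'}} = 1_{T_{J'}}$. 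Since $T = T_JT_{J'}$ by Lemma \ref{dj2} and $\overline{\chi}_T$ is a homomorphism trivial on each factor, multiplicativity would force $\overline{\chi}_T = 1_T$.

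The argument should be quite smooth once the relevant lemmas are chained together in the right order; the only subtlety worth flagging is the identification $\overline{St_{G_J}}_{T_J} = 1_{T_J}$, which I would handle cleanly via Lemma \ref{d0d} in its easy direction (with the maximal torus playing the role of the minimal Levi) rather than by any direct Steinberg-character computation.
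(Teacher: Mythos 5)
Your proposal is correct and follows essentially the same route as the paper: Lemma \ref{cc5}(1),(2) to identify $(\overline{\chi}_{L_J})|_{G_J}$ with $St_{G_J}$, Lemma \ref{d0d} to see its truncation to $T_J$ is $1_{T_J}$, Lemma \ref{dk3} to transfer this to $\overline{\chi}_T|_{T_J}$, and Lemma \ref{dj2} ($T=T_JT_{J'}$) to conclude. Your explicit remark that $\overline{\chi}_T$ is a linear character (hence multiplicative on the product $T_JT_{J'}$) is a small point left implicit in the paper, but otherwise the arguments coincide.
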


\begin{proof}
Let $\eta =(\Tr{L_J}{\chi})|_{G_J}$ and $\eta'=(\Tr{L_{J'}}{\chi})|_{G_{J'}}$. Then $\eta=St_{G_J}$ and $\eta'=St_{G_{J'}}$ 
(Lemma \ref{cc5}(1) and (2)). By Lemma \ref{d0d}, the truncation of $\eta$ to $T_{J}$ is $1_{T_{J}}$ and the truncation of $\eta'$ 
to $T_{J'}$ is $1_{T_{J'}}$.
By Lemma \ref{dk3}, $\Tr{T} {\chi}|_{T_J}=1_{T_{J}}$ and
$\Tr{T} {\chi}|_{T_{J'}}=1_{T_{J'}}$. Now Lemma \ref{dj2} yields the result.
\end{proof}

The \f result is our primary target; this allows us to use
classical results (quoted in Lemmas \ref{bb5} and \ref{bc5}) on \ir constituents of 
of $1_B^G$. 

\begin{propo}\label{pt1}
Let $G=H$ be 
and $\chi$ be a $p$-vanishing character of $G$ of degree $|G|_p$. 
Then $(\chi,1_B^G)=(1_B, \chi|_B)=1$ (equivalently, $\overline{\chi}_T=1_T$).
\end{propo}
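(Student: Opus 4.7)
The plan is to induct on the BN-pair rank $\ell$ of $G$.

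For the base case $\ell = 1$, the only proper Levi is $T$ itself (corresponding to $J=\emptyset$). The $p$-vanishing characters $\chi$ of degree $|G|_p$ have been classified explicitly in Section \ref{rk1} (Lemmas \ref{s21}, \ref{LU3.n2}, \ref{LU3.2}, \ref{L2B2}, \ref{L2G2}). For each such $\chi$ I would take the known decomposition into irreducibles and compute $\overline{\chi}_T$ term by term, using the basic facts that $\overline{\mu}_T = 0$ for every cuspidal $\mu$, that $\overline{1_G}_T = 1_T$, and that $\overline{\chi}_T(1) = \chi(1)/|G|_p = 1$. Combined with the additional condition distinguishing $p$-vanishing from $Syl_p$-vanishing (the constraint on central values), this should pin down $\overline{\chi}_T = 1_T$ in all cases.

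For the inductive step $\ell \geq 2$, fix $i \in I_\ell$ and put $J_i = I_\ell \setminus \{i\}$. By Lemma \ref{x2a}, $\overline{\chi}_{L_{J_i}}$ is a $p$-vanishing character of $L_{J_i}$ of degree $|L_{J_i}|_p$. Since $L_{J_i} = G_{J_i}\cdot T$ and $T$ is a $p'$-group, the restriction $\lambda_i := \overline{\chi}_{L_{J_i}}\big|_{G_{J_i}}$ is a $p$-vanishing character of the semisimple group $G_{J_i}$ of degree $|G_{J_i}|_p$. As $G_{J_i}$ has BN-pair rank $\ell-1$, the inductive hypothesis gives $\overline{\lambda_i}_{T_{J_i}} = 1_{T_{J_i}}$. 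By Lemma \ref{dk3}(1)--(2), $\overline{\lambda_i}_{T_{J_i}} = (\overline{\chi}_T)\big|_{T_{J_i}}$, so $\overline{\chi}_T$ is trivial on each $T_{J_i}$.

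To finish, observe that $\overline{\chi}_T(1)=1$, so $\overline{\chi}_T$ is a linear character of the abelian group $T$, hence a group homomorphism. Choosing $i \neq j$ (possible since $\ell \geq 2$), Lemma \ref{dj2} applied to $J_i \cup J_j = I_\ell$ gives $T = T_{J_i} T_{J_j}$, and since $\overline{\chi}_T$ is trivial on both factors it is trivial on their product, i.e.\ $\overline{\chi}_T = 1_T$.

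The main obstacle is the base case. The Steinberg character and the characters of shape $1_G + \mu$ with $\mu$ cuspidal are essentially immediate, but the more intricate configurations (notably case (3) of Lemma \ref{s21} for $SL(2,q)$, and the longer decompositions for $SU(3,q)$, ${}^2B_2(q^2)$, ${}^2G_2(q^2)$) require identifying which constituents are cuspidal and computing the truncations of the non-cuspidal pieces individually; here one must carefully use the $p$-vanishing hypothesis (and not merely $Syl_p$-vanishing) to rule out configurations whose truncation would be a nontrivial linear character.
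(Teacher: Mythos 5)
Your proposal is correct and follows essentially the same route as the paper: induction on the BN-pair rank, with the base case resting on the explicit rank-$1$ classification of Section \ref{rk1} (packaged in the paper as Proposition \ref{r1g}, which gives $(\chi,1_G)=1$ and hence $\overline{\chi}_T=1_T$ since $\overline{\chi}_T$ has degree $1$), and the inductive step combining Lemma \ref{x2a}, Lemma \ref{dk3} and the factorization $T=T_JT_{J'}$ of Lemma \ref{dj2}. Your choice of the two complementary subsets $I_\ell\setminus\{i\}$ and $I_\ell\setminus\{j\}$ is just a particular instance of the paper's covering $I_\ell=J\cup J'$, and your explicit remark that $\overline{\chi}_T$ is a linear character (hence multiplicative on the product $T_{J_i}T_{J_j}$) only makes explicit what the paper leaves implicit.
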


\begin{proof}
The first equality is the Frobenius reciprocity. The second
equality is equivalent to saying that $\Tr{T}{\chi}=1_{T}$. We
use induction on the BN-pair rank of $G$. By Proposition \ref{r1g}, the proposition 
is true for groups of rank $1$. Let
$I_\ell=J\cup J'$ be the union of two subsets $J,J'$. Then
$\eta:=\Tr{L_J}{\chi}|_{G_J}$ (resp.,
$\eta':=\Tr{L_{J'}}{\chi}|_{G_{J'}}$) is a $p$-vanishing character of
$G_J$ (resp., $G_{J'}$) of degree $|G_J|_p$ (resp., $|G_{J'}|_p$), see Lemma \ref{x2a}.  
By induction, the proposition is true for $G_J$
and $G_{J'}$, so $\Tr{T_J}{\eta}=1_{T_J}$ and
$\Tr{T_{J'}}{\eta'}=1_{T_{J'}}$. By Lemma \ref{dk3},  $\Tr{T_J}{\eta}$ coincides
with $\Tr{T}{\chi}|_{T_J}$ (as $L$ contains $T$). Therefore,
$\Tr{T}{\chi}|_{T_J}=1_{T_J}$ and $\Tr{T}{\chi}|_{T_{J'}}=1_{T_{J'}}.$
Now the result follows from Lemma \ref{dj2}. 
\end{proof}

\begin{corol}\label{nn7}
Assume $G=H$. 
Let $\chi$ be a $Syl_p$-vanishing character of $G$ such that 
$\overline{\chi}_T=1_T$, and $L$ a Levi subgroup of $G$. 
\begin{itemize}
\item[(1)] If $\overline{\chi}_L$ is \ir of defect $0$, then $\overline{\chi}_L=St_L$.
\item[(2)] If $L$ is not solvable and $\overline{\chi}_L$ 
has a $1$-dimensional constituent $\nu$, then $\nu=1_L$.
\end{itemize}
\end{corol}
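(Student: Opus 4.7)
The plan is to derive both parts from a single observation. By transitivity of Harish-Chandra truncation (Lemma \ref{dk3}(1)), the hypothesis $\overline{\chi}_T = 1_T$ propagates through the Levi $L$, giving
\[
\overline{(\overline{\chi}_L)}_T \;=\; \overline{\chi}_T \;=\; 1_T.
\]

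For part (1), I view $T$ as the (toral) minimal Levi subgroup of $L$, so that $St_T = 1_T$. Since $\overline{\chi}_L$ is by hypothesis an irreducible defect-zero character of $L$, applying Lemma \ref{d0d} inside $L$ yields $\overline{\chi}_L = St_L$ if and only if $\overline{(\overline{\chi}_L)}_T = 1_T$, which we have just established. Hence $\overline{\chi}_L = St_L$, as claimed.

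For part (2), let $\nu$ be a one-dimensional constituent of $\overline{\chi}_L$. Since truncation is additive and sends proper characters to proper characters, $\overline{\nu}_T$ is a subcharacter of $\overline{(\overline{\chi}_L)}_T = 1_T$, so $\overline{\nu}_T \in \{0, 1_T\}$. Using multiplicativity of the linear character $\nu$, a direct computation gives
\[
\overline{\nu}_T(t) \;=\; \frac{1}{|U\cap L|}\sum_{u\in U\cap L}\nu(ut) \;=\; \nu(t)\,(\nu|_{U\cap L},\, 1_{U\cap L}),
\]
so $\overline{\nu}_T$ equals $\nu|_T$ if $\nu|_{U\cap L}$ is trivial, and equals $0$ otherwise. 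The structural input I invoke is that, since $L$ is non-solvable, each root subgroup $U_\alpha$ of $L$ lies in $L'$: the torus of $L$ acts on $U_\alpha$ through the non-trivial root character $\alpha$, and non-solvability guarantees the existence of $t$ with $\alpha(t)\neq 1$, forcing $U_\alpha = [t, U_\alpha] \subseteq L'$. Hence $U\cap L \subseteq L'$ and $G_L \subseteq L'$, so $\nu|_{U\cap L} = 1$ and consequently $\overline{\nu}_T = \nu|_T$ is nonzero, which forces $\overline{\nu}_T = 1_T$, i.e.\ $\nu|_T = 1_T$. Combined with $\nu|_{G_L} = 1$ and the factorization $L = G_L\cdot T$, we conclude $\nu = 1_L$.

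The main obstacle is not the formal truncation argument but cleanly justifying the structural claim $G_L \subseteq L'$ for non-solvable $L$. This relies on standard structure theory of finite reductive Levi subgroups---in particular, on the interaction between root subgroups and the torus $T_J$---and uses non-solvability to rule out degenerate small-field configurations in which some root torus could act trivially on the corresponding root subgroup.
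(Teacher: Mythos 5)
Your part (1) is exactly the paper's proof: transitivity of truncation gives $\overline{(\overline{\chi}_L)}_T=\overline{\chi}_T=1_T$, and Lemma \ref{d0d} applied with $L$ in place of $G$ and $T$ in place of $L$ finishes. Your part (2) also follows the paper's skeleton (the formula $\overline{\nu}_T=(\nu|_{U\cap L},1_{U\cap L})\cdot\nu|_T$, triviality of $\nu$ on $T$ and on $L_u$, and $L=L_uT$), and in fact you order the deductions more carefully than the paper does, since you establish $\nu|_{U\cap L}=1_{U\cap L}$ before using the formula to get $\nu|_T=1_T$.

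The genuine gap is in your structural claim. You assert that non-solvability of $L$ guarantees, for \emph{each} root $\alpha$ of $L$, a torus element $t$ with $\alpha(t)\neq 1$, whence $U_\alpha=[t,U_\alpha]\subseteq L'$. Non-solvability is a global property of $L$ and says nothing about an individual root: a root can belong to a small solvable component of $L_u$ while another component makes $L$ non-solvable. Concretely, for $G=Sp(8,3)$ and the standard Levi with components of types $A_2$ and (long-root) $A_1$, the long root $\alpha=2e_4$ satisfies $\alpha(t)=t_4^2=1$ for every $t$ in the $\mathbb{F}_3$-torus, yet $L$ is non-solvable because of the $SL(3,3)$ component; for $q=2$ the split torus is trivial and no root admits such a $t$ at all. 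So the commutator argument cannot produce $U_\alpha\subseteq L'$ in these cases, and indeed $U\cap L\subseteq L'$ can fail there (the unipotents of an $SL(2,3)$ or $SL(2,2)$ component do not lie in its derived subgroup). The paper avoids the per-root analysis entirely: it invokes the identification $L'=L_u$ of the derived subgroup with the subgroup generated by unipotent elements, so that linearity of $\nu$ immediately gives $\nu|_{L_u}=1_{L_u}$. You should either cite that structural fact directly (it is the intended input, and is where the non-solvability hypothesis is meant to enter) or restrict your commutator argument to the fields and root lengths where it actually applies; as written, the step ``non-solvability guarantees the existence of $t$ with $\alpha(t)\neq 1$'' is false.
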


\begin{proof}
$(1)$ By transitivity of Harish-Chandra restriction,
the truncation of $\overline{\chi}_L$ to $T$ coincides with $1_T$. 
So the claim follows from Lemma \ref{d0d} applied to $L$ in place of $G$ and $T$
in place of $L$.

(2) Since the Borel subgroup of $L$ is $(U\cap L )T$, by the definition of truncation, 
$\overline{\nu}_T=(\nu|_{U\cap L},1_{U\cap L})\cdot \nu|_T$. Hence, $\nu|_T=1_T$. 
On the other hand, let $L_u$ be the subgroup of $L$ generated by the unipotent elements of $L$. 
Then, one easily observes that $L=L_u\cdot T$.  
Since $\nu$ is a linear character of $L$, the derived subgroup  of $L$ coincides with 
$L_u$. It follows that $\nu|_{L_u}=1_{L_u}$, so the result follows.
\end{proof}

It is known that the blocks of non-zero defect of a finite reductive
group $H$ are in bijection with the elements of $Z(H)$, see
\cite[\S 8.7]{Hub}; this is deduced from the fact that every such
block has defect $|U|$ and  $C_H(U)=Z(H)$. The \f result shows that if
a $p$-vanishing character of degree $|G|_p$ is in the principal $p$-block
of $G$, then the Harish-Chandra reduction  $\overline{\chi}_L$ of $\chi$ to every
Levi subgroup $L$ belongs to the principal $p$-block
of $L$.

\smallskip
Observe that Proposition \ref{pt1} is a special case (for $L=T$) of the  \f result.

\begin{corol}\label{ke1} 
Let $G=H$ 
and $L$ a Levi subgroup of $G$. Let $\chi$ be
a $p$-vanishing character of $G$ of degree $|G|_p$.  Then $Z(L)$ is
in the kernel of  $\overline{\chi}_L$.
\end{corol}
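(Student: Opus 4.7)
The plan is to reduce the corollary to Proposition \ref{pt1}, which handles the case $L=T$, via transitivity of Harish-Chandra restriction. I would set $\eta:=\overline{\chi}_L$ and note by Lemma \ref{x2a} that $\eta$ is a $p$-vanishing character of $L$ with $\eta(1)=|L|_p$. Transitivity of truncation along the chain $T\subseteq L\subseteq G$ then yields $\overline{\eta}_T=\overline{\chi}_T$, and the right-hand side equals $1_T$ by Proposition \ref{pt1}. So the problem becomes: deduce $Z(L)\subseteq\ker\eta$ from the knowledge that $\overline{\eta}_T=1_T$.

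Next I would establish the inclusion $Z(L)\subseteq T$. Any $z\in Z(L)$ centralizes $T\subseteq L$; since $T$ is Zariski dense in $\mathbf{T}$, the element $z$ centralizes $\mathbf{T}$, so $z\in C_{\mathbf{L}}(\mathbf{T})=\mathbf{T}$, and thus $z\in\mathbf{T}^{Fr}=T$. In particular every element of $Z(L)$ is semisimple.

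Now for the main calculation: for $z\in Z(L)$, the equality $\overline{\eta}_T(z)=1$ unpacks to
$$1=\frac{1}{|U\cap L|}\sum_{v\in U\cap L}\eta(vz).$$
Because $z$ is central in $L$, each $v\in U\cap L$ commutes with $z$; since $v$ is unipotent and $z$ semisimple, this is the Jordan decomposition of $vz$. For $v\neq 1$, therefore, $vz$ is $p$-singular, so $\eta(vz)=0$ by the $p$-vanishing property. Only the term $v=1$ survives, giving $\eta(z)=|U\cap L|=|L|_p=\eta(1)$. A standard character-theoretic fact then finishes the job: for a proper character $\eta$ and an element of finite order, $\eta(z)=\eta(1)$ forces $z$ to act as the identity in every irreducible constituent (the eigenvalues are roots of unity whose sum equals the dimension, so they all equal $1$), hence $z\in\ker\eta$.

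I do not anticipate a genuine obstacle. The only point that warrants explicit justification is the inclusion $Z(L)\subseteq T$; everything else is a direct consequence of Proposition \ref{pt1} combined with the transitivity of Harish-Chandra restriction and the definition of truncation.
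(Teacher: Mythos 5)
Your main argument is correct and takes a genuinely different route from the paper's. Both proofs start the same way: reduce to $\overline{\chi}_T=1_T$ via Proposition \ref{pt1} and transitivity of truncation, and use the inclusion $Z(L)\subseteq T$. From there the paper argues block-theoretically: the equality $\overline{\chi}_T=1_T$ produces one \ir constituent $\tau$ of $\overline{\chi}_L$ (lying over $1_{B_L}$) with $Z(L)\subseteq\ker\tau$, and then Lemma \ref{bb1} (all constituents of a minimal $p$-vanishing character lie in a single block), together with the fact that $Z(L)$ is a central $p'$-group, forces every other constituent to have the same central character on $Z(L)$, hence the same kernel there. You instead evaluate the truncation formula at $z\in Z(L)$ directly: since $z$ is a semisimple $p'$-element commuting with every $v\in U\cap L$, each product $vz$ with $v\neq 1$ is $p$-singular, so the $p$-vanishing of $\overline{\chi}_L$ (Lemma \ref{x2a}) kills all terms except $v=1$ and yields $\overline{\chi}_L(z)=|U\cap L|=|L|_p=\overline{\chi}_L(1)$, whence $z\in\ker\overline{\chi}_L$. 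This is more elementary --- it uses no block theory at all and treats the whole character in one stroke rather than constituent by constituent --- while the paper's route has the advantage of reusing machinery (Lemma \ref{bb1}) already in place.

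The one genuine defect is your justification of $Z(L)\subseteq T$. You assert that $T=\mathbf{T}^{Fr}$ is Zariski dense in $\mathbf{T}$; it is not: $T$ is finite, hence Zariski closed, while $\mathbf{T}$ has positive dimension, so the closure of $T$ is $T$ itself. Consequently ``$z$ centralizes $T$'' does not imply ``$z$ centralizes $\mathbf{T}$'' (for small $q$ the implication can fail outright, as $T$ may be very small or even trivial). The inclusion itself is true, and the paper establishes it at the level of finite groups: $Z(L)$ normalizes, hence lies in, a Borel subgroup $B_L$ of $L$ (Borel subgroups are self-normalizing), and since its elements are semisimple they lie in a maximal torus of $B_L$, which may be taken to be $T$. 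Replace your density argument by an argument of this kind and your proof is complete.
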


\begin{proof}
As $Z(L)$ is contained in a Borel subgroup $B_L$ of $L$, it
is contained in a maximal torus $T_L$ of $B_L$. However, $T_L$
coincides with  a maximal torus $T$ of a Borel subgroup $B$ of $G$. So $Z(L)\subset T$. By Proposition \ref{pt1}, $1_{Z(L)}$ is a constituent of
  $\overline{\chi}_L |_{Z(L)}$. Therefore, there is an \ir
  constituent $\tau$, say, of $\overline{\chi}_L $ such that $
\tau|_{Z(L)} =\tau(1)\cdot 1_{Z(L)}  $. In other words,
$Z(L)$ is in the kernel of $\tau$. 

Suppose that the lemma is false. Then  $Z(L)$ is not in the kernel
of some  \ir constituent $\tau'$, say, of $\overline{\chi}_L$.
Therefore,  $\tau'$ and $\tau$ belong to distinct $p$-blocks. 
As $Z(L)\subset T$, it follows that $(|Z(L)|,p)=1.$ This is
not the case by Lemma \ref{bb1}, as $\overline{\chi}_L $ is $p$-vanishing. 
\end{proof}

The \f lemma is one of  standard results on \reps of groups with
$BN$-pair, see  \cite[68.24, pp. 683 - 684]{CR2}. Recall that the
Weyl group $W$ of a group $H$ with BN-pair has a natural
representation as a linear group generated by reflections
\cite[64.29]{CR2}. The  determinant  mapping
$w\rightarrow \det w$ ($w\in W$) of this \rep yields a one-dimensional  \rep of $W$ called
the sign \rept In the statement below the character of the sign \rep  is
denoted by $\ep$. For a subgroup
$X$ of $W$ set $\ep_X=\ep|_X$.

\begin{lemma}\label{bb5}
Let $H$ be a finite reductive  group viewed as a 
group with $BN$-pair of rank $\ell$,
$B$ a Borel subgroup of $H$ and $W$ the Weyl group of the BN-pair.

\begin{itemize}
\item[(1)] There exists a bijection $\lambda\mapsto \chi_\lambda$
between $\Irr W$ and the set $\{\chi \in \Irr H: (\chi,1_B^H)>
0\}$ such that $\chi_{1_{W}}=1_H$ and $\chi_\ep=St$. In addition, 
$(\chi_\lam, 1_B^H)=\lam(1)$.

\item[(2)] For each subset $J$ of $I_\ell =\{1,\ldots, \ell\}$, let
$W_J$ and $P_J$ be the corresponding  subgroups of $W$ and $H$,
respectively. Then $$(\chi_\lambda, 1_{P_J}^{H})=(\lambda,
1_{W_J}^{W})=(\lam|_{W_J}, 1_{W_J}).$$ In particular,
$1_B^H=\sum_{\lam\in\Irr W}\lam(1)\cdot \chi_\lam$.

\item[(3)] {\rm \cite[70.24]{CR2}} $(\chi_\lam, St^{\#H}_{L_J})=(\lam,
\ep_{W_J}^{W})=(\lam|_{W_J}, \ep_{W_J})$.
\end{itemize}
\end{lemma}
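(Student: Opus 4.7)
The strategy I would adopt rests on identifying $\mathrm{End}_H(1_B^H)$ with the Iwahori--Hecke algebra $\mathcal{H}(W,q)$, arising from the double coset decomposition $H = \bigsqcup_{w \in W} BwB$. Over $\mathbb{C}$ the Tits deformation argument yields a canonical $\mathbb{C}$-algebra isomorphism $\mathcal{H}(W,q) \cong \mathbb{C}W$, so by general Wedderburn theory applied to the endomorphism algebra of a semisimple module, the \ir constituents of $1_B^H$ are in natural bijection with $\Irr W$, and the \mult of $\chi_\lambda$ in $1_B^H$ equals $\lambda(1)$. This simultaneously establishes the bijection in part (1) and the final identity of part (2).

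For the identifications $\chi_{1_W} = 1_H$ and $\chi_\ep = St$, the first is immediate since $1_H$ occurs once in $1_B^H$ and corresponds to the trivial character of $W$ under the Hecke-algebra identification. For the Steinberg identification, I would apply the Solomon--Tits alternating sum $St = \sum_{J \subseteq I_\ell} (-1)^{|J|}\, 1_{P_J}^H$ combined with the formula of part (2) to compute
$(St, \chi_\lambda) = \sum_J (-1)^{|J|}(\lambda|_{W_J}, 1_{W_J}) = (\lambda, \ep_W)$,
where the last equality is a standard character identity on Coxeter groups. Hence $St = \chi_\ep$.

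Parts (2) and (3) rest on the compatibility of Harish-Chandra induction with the parabolic subalgebra structure: $\mathrm{End}_{L_J}(1_{B \cap L_J}^{L_J})$ embeds into $\mathrm{End}_H(1_B^H)$ matching $\mathcal{H}(W_J, q) \subseteq \mathcal{H}(W, q)$, and correspondingly $\mathbb{C}W_J \subseteq \mathbb{C}W$ under the Tits isomorphism. By Frobenius reciprocity, $(\chi_\lambda, 1_{P_J}^H) = (\Tr{L_J}{\chi_\lambda}, 1_{L_J})$; the parabolic compatibility translates this into $(\lambda|_{W_J}, 1_{W_J}) = (\lambda, 1_{W_J}^W)$. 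Part (3) follows in the same manner, replacing $1_{L_J}$ by $St_{L_J}$ and noting that the bijection of part (1), applied now to $L_J$ in place of $H$, sends $\ep_{W_J}$ to $St_{L_J}$.

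The main obstacle is proving, with adequate generality and in a form usable at the parabolic level, the Tits deformation isomorphism $\mathcal{H}(W,q) \cong \mathbb{C}W$ together with its compatibility with induction and restriction between standard parabolic subalgebras; this is the technical heart of the argument and forms the content of Howlett--Lehrer theory on Hecke algebras. Since the paper flags the statement as standard and refers to \cite[68.24]{CR2} and \cite[70.24]{CR2}, the natural course is to cite these references rather than reconstruct the full Hecke-algebra machinery.
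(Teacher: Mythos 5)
Your proposal is correct, and it matches the paper's treatment: the paper offers no independent proof of this lemma but simply quotes it as a standard result from Curtis--Reiner (\cite[68.24]{CR2} for parts (1)--(2) and \cite[70.24]{CR2} for part (3)), and the Iwahori--Hecke algebra / Tits deformation argument you sketch, together with the Solomon alternating-sum identification of the Steinberg character, is precisely the content of those cited results. Your closing remark that one should cite the references rather than reconstruct the machinery is exactly what the authors do.
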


\begin{lemma}[{\cite[(68.26)]{CR2}}]\label{bc5}
Let $W$ be in Lemma $\ref{bb5}$; assume that $W$ is not a direct product of proper subgroups.
 Then each irreducible character $\lambda$ of $\Irr W$
 is uniquely determined by the multiplicities
 $\{(\lambda, 1_{W_J}^W): J\subseteq I_\ell\}$, with the following exceptions:
\begin{itemize}
\item[(1)] the characters of degree $2$ if $W$ is the dihedral group of order $12$ or $16$;
\item[(2)] the two characters of degree $2^9$ if $W$ is of type $E_7$;
\item[(3)] the four characters of degree $2^{12}$ if $W$ is of type $E_8$.
\end{itemize}
\end{lemma}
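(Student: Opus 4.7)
The plan is to reduce to the classification of irreducible finite Coxeter groups and verify the statement type by type. Since $W$ is not a direct product of proper (standard parabolic) subgroups, its Coxeter system is irreducible, so $W$ is one of the types $A_n$, $B_n$, $D_n$, $E_6$, $E_7$, $E_8$, $F_4$, $G_2$, $H_3$, $H_4$, or $I_2(m)$. By Frobenius reciprocity, $(\lambda, 1_{W_J}^{W}) = \dim \lambda^{W_J}$, so the data in question is the sequence of fixed-point dimensions of $\lambda$ on the standard parabolic subgroups $W_J$, a genuine isomorphism invariant of the representation.

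For the classical types $A_\ell$, $B_\ell$, $D_\ell$, a combinatorial argument suffices. In type $A_\ell$ one has $W = S_{\ell+1}$, $\Irr W$ is indexed by partitions $\mu \vdash \ell+1$, and standard parabolic subgroups are Young subgroups $S_\alpha$. Young's rule gives
\[
1_{S_\alpha}^{W} = \sum_{\mu} K_{\mu,\alpha}\, \chi^\mu,
\]
where $K_{\mu,\alpha}$ is the Kostka number; as $\alpha$ ranges over partitions, the matrix $(K_{\mu,\alpha})$ is unitriangular in the dominance order, which forces $\mu$ to be determined by the vector $\bigl((\chi^\mu, 1_{W_J}^W)\bigr)_{J}$. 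A parallel argument with bipartitions and Kostka-type numbers for symbols handles $B_\ell$ and $D_\ell$.

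For the remaining types one inspects the character tables of $W$ together with the tables of parabolic inductions $1_{W_J}^{W}$; both are available in Carter's book, in Geck--Pfeiffer, or via the \textsc{chevie} package. In each case one forms the matrix with rows $\bigl((\lambda, 1_{W_J}^W)\bigr)_{J \subseteq I_\ell}$ and checks that the rows are pairwise distinct outside the exceptional pairs or quadruples of (1)--(3). For the dihedral groups $I_2(6)$ and $I_2(8)$, the two characters of degree $2$ each take the value $0$ on every rank-$1$ parabolic subgroup and the value $2$ on the trivial parabolic, so they share every $W_J$-fixed dimension; one must then verify directly that no other pair of characters coincides in these small groups. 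For $W(E_7)$ and $W(E_8)$, the coincident characters of degree $2^9$ and $2^{12}$ are exactly the pairs and quadruples grouped together in Lusztig's partition of $\Irr W$ into families.

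The main obstacle is the exceptional case analysis for $E_7$ and $E_8$, where the number of irreducible characters and of conjugacy classes of standard parabolic subgroups is large, so the verification is inherently computational. A more conceptual route would use Lusztig's $j$-induction, which explains why the exceptional characters cannot be separated by parabolic fixed-point dimensions; but isolating the precise list of exceptions still requires explicit table work in those two types.
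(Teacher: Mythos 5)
The paper does not prove this lemma at all: it is imported verbatim from \cite[(68.26)]{CR2} (the result goes back to Benson and Curtis), so there is no internal argument to compare yours against. Your outline is, in substance, the standard proof found there: reduce to a connected Coxeter diagram, handle type $A_\ell$ by Young's rule and the unitriangularity of the Kostka matrix in dominance order (from $(K_{\mu,\alpha})_\alpha=(K_{\nu,\alpha})_\alpha$ one gets $\nu\trianglerighteq\mu$ and $\mu\trianglerighteq\nu$ by evaluating at $\alpha=\mu$ and $\alpha=\nu$), and settle the remaining types by inspecting the induction tables. Two remarks on the hypotheses and one on the case analysis. First, your reading of ``not a direct product of proper subgroups'' as irreducibility of the Coxeter system is the only tenable one, since e.g.\ $W(G_2)\cong S_3\times\ZZ_2$ is abstractly decomposable yet appears in exception (1). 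Second, in type $D_n$ the split pairs $\chi^{(\mu,\mu)}_{\pm}$ are the delicate point; they are separated by the two non-conjugate standard parabolics of type $A_{n-1}$, which your one-line appeal to ``Kostka-type numbers for symbols'' silently assumes.

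There is one concrete numerical slip in your dihedral discussion. For a rank-one parabolic $W_J=\{1,s\}$ and a degree-$2$ character $\lambda$ of $I_2(m)$ one has $(\lambda,1_{W_J}^W)=\tfrac12\bigl(\lambda(1)+\lambda(s)\bigr)=\tfrac12(2+0)=1$, not $0$; it is the character value $\lambda(s)$ that vanishes, not the multiplicity. The conclusion is unaffected — every degree-$2$ character of $I_2(6)$ or $I_2(8)$ has multiplicity vector $(2,1,1,0)$ over $J=\emptyset,\{1\},\{2\},I_2$, so they are pairwise indistinguishable — but note that the dihedral group of order $16$ has \emph{three} characters of degree $2$, not two, all of which must be (and are) covered by exception (1). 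Finally, your closing remark that the exceptional characters in $E_7$ and $E_8$ are ``exactly'' Lusztig's families is not accurate as stated ($512_a$ and $512_a'$ in $W(E_7)$ lie in distinct families); the family structure motivates but does not by itself deliver the list of exceptions, which, as you say, requires the explicit tables.
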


\begin{corol}\label{bd6}
Keep notation of Lemma $\ref{bb5}$ and assumption of Lemma \emph{\ref{bc5}}.
\begin{itemize}
\item[(1)]  
 Each irreducible character
$\lambda$ of $\Irr W$
 is uniquely determined by the multiplicities
 $\{(\lambda, \ep_{W_J}^W): J\subseteq I_\ell\}$,
with the exceptions $(1),(2),(3)$ in Lemma $\ref{bc5}$.
\item[(2)] The character $\chi_\lam$ is uniquely determined by the
multiplicities $\{(\chi_\lambda, 1_{P_J}^{H}): J\subseteq
I_\ell\}$ as well as the multiplicities $\{(\chi_\lambda,
St_{L_J}^{\# H}): J\subseteq I_\ell\}$, unless $\lam$ is listed in
items $(1),(2),(3)$ of Lemma $\ref{bc5}$.
\end{itemize}
\end{corol}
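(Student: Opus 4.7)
The plan is to reduce both parts of the corollary to Lemma \ref{bc5}, using two ingredients: the Harish-Chandra reciprocity formulas packaged in Lemma \ref{bb5}, which convert inner products with parabolically induced characters of $H$ into inner products with induced characters of the Weyl group $W$; and the involution $\lam\mapsto \ep\cdot\lam$ on $\Irr W$, which swaps the roles of $1_{W_J}^{W}$ and $\ep_{W_J}^{W}$.

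For part (1), I would first establish the identity
\[
(\lam,\ep_{W_J}^{W}) \;=\; (\lam|_{W_J},\ep_{W_J}) \;=\; ((\ep\lam)|_{W_J},1_{W_J}) \;=\; (\ep\lam,1_{W_J}^{W}),
\]
which uses Frobenius reciprocity together with the fact that $\ep$ takes values in $\{\pm 1\}$, so $\ep=\overline{\ep}$ and $\lam(w)\ep(w)=(\ep\lam)(w)$. Setting $\mu:=\ep\lam$, the family $\{(\lam,\ep_{W_J}^{W})\}_{J\subseteq I_\ell}$ then coincides with $\{(\mu,1_{W_J}^{W})\}_{J\subseteq I_\ell}$; by Lemma \ref{bc5}, this family determines $\mu$, and hence $\lam=\ep\mu$, outside the listed exceptional sets.

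For part (2), I would transport both uniqueness statements from $W$ to $H$ via the bijection $\lam\mapsto \chi_\lam$ of Lemma \ref{bb5}(1). Lemma \ref{bb5}(2) yields $(\chi_\lam,1_{P_J}^{H})=(\lam,1_{W_J}^{W})$, so uniqueness of $\chi_\lam$ given the multiplicities $\{(\chi_\lam,1_{P_J}^{H})\}_{J}$ is equivalent to uniqueness of $\lam$ given $\{(\lam,1_{W_J}^{W})\}_{J}$, which is Lemma \ref{bc5} directly. Similarly, Lemma \ref{bb5}(3) gives $(\chi_\lam,St_{L_J}^{\#H})=(\lam,\ep_{W_J}^{W})$, reducing the second uniqueness assertion to part (1) of the present corollary.

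The only step that demands any care is verifying that each exceptional set of Lemma \ref{bc5} is closed under the $\ep$-twist used in part (1), so that the same list of exceptions appears with $\ep_{W_J}^W$ as with $1_{W_J}^W$. Since those sets are characterised purely by degree (degree $2$ in the dihedral cases, $2^9$ for $E_7$, $2^{12}$ for $E_8$), and multiplication by a linear character preserves degree, this is immediate. I therefore expect no genuine obstacle beyond this bit of bookkeeping.
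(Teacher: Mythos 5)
Your proposal is correct and follows essentially the same route as the paper: part (1) is the observation that $(\lambda,\ep_{W_J}^W)=(\lambda\cdot\ep,1_{W_J}^W)$ (the paper phrases this via the projection formula rather than restriction plus Frobenius reciprocity, but it is the same computation), and part (2) transfers the statement through Lemma \ref{bb5}(2),(3). Your additional check that the exceptional sets of Lemma \ref{bc5} are stable under twisting by $\ep$ is a small point the paper leaves implicit, and it is indeed immediate since those sets are defined by degree.
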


\begin{proof}
(1) We have $(\lambda, \ep_{W_J}^W)=(\lambda,
(\ep|_{W_J})^W)=(\lambda, 1_{W_J}^W\cdot \ep)=
(\lambda\cdot \ep, 1_{W_J}^W )$. By  Lemma \ref{bc5},
$\lambda\cdot \ep$ is determined by the multiplicities
$\{(\lambda\cdot\ep , 1_{W_J}^W): J\subseteq I_\ell\}$, so
$\lambda$ is determined by $\{(\lambda, \ep_{W_J}^W): J\subseteq
I_\ell \} $.

(2)  follows from Lemmas \ref{bb5} and \ref{bc5}, and statement
(1) of this corollary.  
\end{proof}

The \f lemma is a next important step in our analysis, as this
settles a crucial special case, by showing that there is a Levi
 subgroup of $G$ such that  the Harish-Chandra restriction of
$\chi$ to $L$ is not irreducible.

\begin{propo}\label{tt8} 
Assume $G=H$.
Let $I_\ell=J\cup K$ be the  union of two subsets $J,K$. Let
$\chi$ be a 
$Syl_p$-vanishing character of $G$.
\begin{itemize}
\item[(1)] Suppose that $\overline{\chi}_{L_J}$ and $\overline{\chi}_{L_K}$ are irreducible. Then $\chi=St$.
\item[(2)] Suppose that $\overline{\chi}_T$ is irreducible and
$(\overline{\chi}_{L_J},1_{L_J})=(\overline{\chi}_{L_K},1_{L_K})=1$. 
Then $(\chi,1_G)=1$.
\end{itemize}
\end{propo}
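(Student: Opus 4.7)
For part (1), I would first apply Lemma \ref{dm4} to obtain $\overline{\chi}_T = 1_T$, exploiting the Harish-Chandra restriction to pave the way for identifying the constituents of $\chi$. The irreducibility of $\overline{\chi}_{L_J}$ together with Lemma \ref{cc5}(2) forces $\chi(1) = |G|_p$, so $\overline{\chi}_{L_J}$ is an irreducible defect-$0$ character of $L_J$ of degree $|L_J|_p$. By transitivity, its further truncation to $T$ coincides with $\overline{\chi}_T = 1_T$, and Lemma \ref{d0d} applied inside $L_J$ then identifies $\overline{\chi}_{L_J} = St_{L_J}$; analogously $\overline{\chi}_{L_K} = St_{L_K}$. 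The degenerate case $L_J = G$ or $L_K = G$ is immediate from Lemma \ref{cc5}(1), so I assume both Levi subgroups are proper.

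The next step is to decompose $\chi = \sum n_i \chi_i$ with $\chi_i \in \Irr G$ and $n_i \geq 1$: since each $\overline{\chi_i}_T$ is a non-negative character of $T$ and $\sum n_i \overline{\chi_i}_T = 1_T$, exactly one $\chi_i = \chi_\nu$ satisfies $\overline{\chi_\nu}_T = 1_T$ with $n_\nu = 1$, where $\nu \in \Irr W$ must be linear, and the remaining constituents are cuspidal. Writing $\chi = \chi_\nu + \eta$ with $\eta$ the sum of cuspidals, and using that cuspidals truncate to zero on proper Levi subgroups, one gets $\overline{\chi_\nu}_{L_J} = \overline{\chi}_{L_J} = St_{L_J}$. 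Harish-Chandra reciprocity combined with Lemma \ref{bb5}(3) then yields $(\nu|_{W_J}, \ep_{W_J}) = 1$, and linearity of $\nu$ forces $\nu|_{W_J} = \ep|_{W_J}$; analogously $\nu|_{W_K} = \ep|_{W_K}$. Since $J \cup K = I_\ell$, the subgroups $W_J, W_K$ together contain all simple reflections, so $W = \langle W_J, W_K \rangle$ and hence $\nu = \ep$. Therefore $\chi_\nu = St$, and the degree identity $\chi(1) = St(1) = |G|_p$ forces $\eta = 0$.

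For part (2), the hypothesis $(\overline{\chi}_{L_J}, 1_{L_J}) = 1$ together with $\overline{1_{L_J}}_T = 1_T$ and transitivity shows $(\overline{\chi}_T, 1_T) \geq 1$. Since $\overline{\chi}_T$ is assumed irreducible and $T$ is abelian, $\overline{\chi}_T$ is a linear character of $T$, forcing $\overline{\chi}_T = 1_T$ and hence $\chi(1) = |G|_p$. The decomposition from part (1) then yields $\chi = \chi_\nu + \eta$ with $\nu$ a linear character of $W$ and $\eta$ cuspidal. From $(\overline{\chi_\nu}_{L_J}, 1_{L_J}) = 1$, reciprocity and Lemma \ref{bb5}(2) give $(\nu|_{W_J}, 1_{W_J}) = 1$, forcing $\nu|_{W_J} = 1_{W_J}$ by linearity; analogously $\nu|_{W_K} = 1_{W_K}$, so $\nu = 1_W$ and $\chi_\nu = 1_G$. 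Hence $(\chi, 1_G) = 1$.

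The main technical obstacle is the collapse of $\sum n_i \overline{\chi_i}_T$ to a single summand, which relies crucially on truncations being honest (non-negative) characters together with the degree-$1$ nature of the right-hand side; once this is established, the remainder amounts to combining Harish-Chandra reciprocity with the fact that $W = \langle W_J, W_K \rangle$ when $J \cup K = I_\ell$.
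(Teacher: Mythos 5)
Your overall strategy is the paper's: Lemmas \ref{cc5}, \ref{dm4} and \ref{d0d} give $\chi(1)=|G|_p$, $\overline{\chi}_T=1_T$ and $\overline{\chi}_{L_J}=St_{L_J}$, $\overline{\chi}_{L_K}=St_{L_K}$, and the unique principal-series constituent $\chi_\nu$ of $\chi$ is then pinned down by Lemma \ref{bb5}(2),(3) together with $W=\langle W_J,W_K\rangle$. The one step that does not hold as stated is the assertion that every constituent of $\chi$ other than $\chi_\nu$ is cuspidal. The collapse of $\sum n_i\,\overline{(\chi_i)}_T=1_T$ only shows that the other constituents have zero Harish-Chandra restriction to $T$, i.e.\ lie outside the principal series; such a character need not be cuspidal --- it may belong to the Harish-Chandra series of a cuspidal character of a proper Levi subgroup strictly containing $T$, in which case its restriction to that Levi (and possibly to $L_J$) is nonzero. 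Since you invoke cuspidality precisely to conclude $\overline{(\chi_\nu)}_{L_J}=\overline{\chi}_{L_J}=St_{L_J}$, this is a genuine gap in part (1).

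The conclusion survives, and can be reached in two ways. Either argue as the paper does: $(\chi,St_{L_J}^{\#G})=(\overline{\chi}_{L_J},St_{L_J})=1$, and every irreducible constituent of $St_{L_J}^{\#G}$ lies in $1_B^G$ (because $St_{L_J}$ is a constituent of $1_{B\cap L_J}^{L_J}$ and Harish-Chandra induction is transitive), so the unique common constituent of $\chi$ and $St_{L_J}^{\#G}$ must be the principal-series constituent $\chi_\nu$, giving $(\chi_\nu,St_{L_J}^{\#G})=1$ directly. Alternatively, note that $\overline{\chi}_{L_J}=\sum n_i\,\overline{(\chi_i)}_{L_J}$ is irreducible while each summand is a genuine (non-negative) character, so exactly one constituent has nonzero restriction to $L_J$, and by transitivity of truncation (using $\overline{(\chi_\nu)}_T=1_T\neq 0$) that constituent is $\chi_\nu$. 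With either repair the rest of your argument, including part (2) --- where the cuspidality claim is carried along but never actually used, since there you only need that $\chi_\nu$ is the unique constituent occurring in $1_{P_J}^{G}$ --- goes through and coincides with the paper's proof.
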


\begin{proof}
(1)  As $\overline{\chi}_{L_J}$ and $\overline{\chi}_{L_{K}}$
are \ir and $p$-vanishing, they are of defect $0$. By Lemma \ref{cc5}(2),
$\overline{\chi}_{L_J}(1)=|L_J|_{p}$,  and hence   $\chi(1)=|G|_p$ by 
Lemma \ref{x2a}. Now,  $\overline{\chi}_{T}=1_{T}$ by Lemma \ref{dm4}, and  
$\overline{\chi}_{L_J}=St_{L_J}$ by Lemma \ref{d0d}. 
Similarly, $\overline{\chi}_{L_{K}}=St_{L_{K}}$. The equality $\overline{\chi}_{T}=1_{T}$
also means that
$(1_B^G, \chi)=1$. Let $\tau\in\Irr G$ occur both in $\chi$ and
$1_B^G$ (so $(\tau,1_B^G)=1$).  
By  Harish-Chandra reciprocity, $(
\overline{\chi}_{L_J}, St_{L_J})= (\chi, St_{L_J}^{\#G})=1$ and
$(\overline{\chi}_{L_K},St_{L_K})= (\chi, St_{L_K}^{\#G})=1$.
So $\tau$ occurs with \mult $1$ in $1_B^G$, $St_{L_J}^{\#G}$ and $St_{L_K}^{\#G}$. Let $\tau$ correspond to the linear character 
$\lam\in \Irr W$ according to Lemma \ref{bb5}. By Lemma \ref{bb5}(3), $(\tau,St_{L_J}^{\#G})=(\lam|_{W_J}, \ep_{W_J})$. 
Similarly, $(\tau,St_{L_K}^{\#G})=(\lam|_{W_K}, \ep_{W_K})$. This is only possible if $\lam=\ep$, and hence $\tau=St$.

(2) Since $\overline{\chi}_T$ is irreducible and $(\overline{\chi}_{L_J},1_{L_J})=1$, we
have $\overline{\chi}_T=1_T$. 
We proceed as  in item (1). 
Let $\tau\in\Irr G$ occur both in $\chi$ and
$1_B^G$ (so $\tau$ corresponds to the linear character $\lam\in \Irr W$ according to Lemma
\ref{bb5}). By Lemma \ref{bb5}(2),  $1=(\tau,1_{P_J}^G)=(\lam|_{W_J},1_{W_J})$. Similarly,
$(\lam|_{W_K},1_{W_K})=1$. This is only possible if $\lam=1_W$, and hence $(\chi,1_G)=1$.
\end{proof}

\section{Chevalley groups of BN-pair rank $2$}\label{rk2}

In this section, we deal with Chevalley groups of BN-pair rank $2$. 
These are $SL(3,q)$,  $Sp(4,q)$, $SU(4,q)$
$SU(5,q)$, $G_2(q)$, ${}^3D_4(q)$  and ${}^2F_4(q^2)$.

In view of  Proposition \ref{pt1} we state the following: 
 
\begin{lemma} 
Let $G$ be of BN-pair rank $2$ and let $\nu$ be 
a \mult $1$ \ir constituent of
$1_B^G$. Then, either $\nu=1_G$, or $\nu=St$ 
or of degree written below:

$G={}^2F_4(q^2)$:  $\frac{q^2(q^6+1)(q^{12}+1)}{(q^4+1)(q^2+1)}$ and
$\frac{q^{10}(q^6+1)(q^{12}+1)}{(q^4+1)(q^2+1)}$;
\med

$G={}^3D_4(q)$: $q^7(q^4-q^2+1)$ and $q(q^4-q^2+1)$;

$G=G_2(q)$: $2$ characters of degree $q(q^4+q^2+1)/3$;

$G=Sp(4,q)$: $2$ characters of degree $q(q^2+1)/2$;
$G=SU(4,q)$: $q(q^2-q+1)$ and $q^3(q^2-q+1)$;
$G=SU(5,q)$: $q^2(q^5+1)(q+1)$ and $q^4(q^5+1)(q+1)$.
\end{lemma}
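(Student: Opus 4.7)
The plan is to reduce the problem to an analysis of the linear characters of the (relative) Weyl group $W$ of $G$, via the bijection $\lambda \mapsto \chi_\lambda$ of Lemma \ref{bb5}(1). Indeed, that lemma gives $(\chi_\lambda,1_B^G)=\lambda(1)$, so the multiplicity-one constituents of $1_B^G$ correspond bijectively to the linear characters $\lambda\in\Irr W$. Moreover $\lambda=1_W$ yields $\nu=1_G$ and $\lambda=\ep$ yields $\nu=St$, so the task reduces to identifying the remaining linear characters of $W$ for each rank-$2$ group, and reading off the degree of $\chi_\lambda$ from the known principal series.

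First I would identify, for each group in the list, the relevant Weyl group: for $SL(3,q)$ it is $W=S_3$ (type $A_2$), which has only the two linear characters $1_W$ and $\ep$; hence in this case no extra constituents arise and the lemma is vacuous (the group is not even listed). For $Sp(4,q)$, $SU(4,q)$ and $SU(5,q)$ the (relative) Weyl group is the dihedral group of order $8$ (type $B_2=C_2$); for $G_2(q)$ it is the dihedral group of order $12$ (type $G_2$); for ${}^3D_4(q)$ the relative Weyl group (fixed points of the triality automorphism on $W(D_4)$) is again of type $G_2$; and for ${}^2F_4(q^2)$ the relative Weyl group (fixed points of the graph automorphism on $W(F_4)$ which swaps long and short roots) is the dihedral group of order $16$. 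In every case other than $A_2$, the Weyl group is dihedral of order $2n$ with $n$ even, so it has exactly four linear characters: the trivial $1_W$, the sign $\ep$, and two further linear characters $\lambda_1,\lambda_2$ with $\lambda_i(s_j)=\pm 1$, not both $+1$ and not both $-1$.

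Next I would compute the degree of $\chi_{\lambda_i}$ in each case. Since the decomposition of $1_B^G$ into irreducibles is governed by the Hecke algebra of $(G,B)$, and $\lambda_i$ corresponds to a one-dimensional representation of the Hecke algebra, the degree $\chi_{\lambda_i}(1)$ is the corresponding generic degree. For the untwisted rank-$2$ groups these degrees are classical (see e.g.\ Carter \cite{Ca}); in particular, for $Sp(4,q)$ both $\chi_{\lambda_i}$ have degree $q(q^2+1)/2$, for $G_2(q)$ they have degree $q(q^4+q^2+1)/3$, and for the unitary groups $SU(4,q)$ and $SU(5,q)$ the two degrees are $q(q^2-q+1)$, $q^3(q^2-q+1)$ and $q^2(q^5+1)(q+1)$, $q^4(q^5+1)(q+1)$ respectively. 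For ${}^3D_4(q)$ the generic degrees of the two unipotent principal series characters corresponding to $\lambda_1,\lambda_2$ are $q(q^4-q^2+1)$ and $q^7(q^4-q^2+1)$; for ${}^2F_4(q^2)$ they are the two values displayed in the statement. All of these are directly verified by reading Lusztig's tables of unipotent degrees, or equivalently by evaluating the Hecke-algebra generic degrees with the appropriate parameters.

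The main obstacle is the twisted cases ${}^3D_4(q)$ and ${}^2F_4(q^2)$: one must use the correct relative Weyl group in the sense of the twisted BN-pair, and for ${}^2F_4$ the two parameters of the Hecke algebra are unequal (one equals $q^2$ and the other is a square root of $q^6$), so the two non-trivial linear characters do not produce the same degree and the explicit generic degrees must be computed with care. Once the four linear characters of $W$ have been enumerated and the corresponding generic degrees read off, pairing $1_W\leftrightarrow 1_G$, $\ep\leftrightarrow St$ and $\lambda_1,\lambda_2$ with the displayed degrees finishes the proof.
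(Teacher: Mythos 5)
Your proposal is correct and is essentially the paper's own argument made explicit: the paper disposes of this lemma in one line by citing Curtis--Iwahori--Kilmoyer \cite[pp.~114--115]{CIK}, and the content of that reference is precisely your reduction via Lemma \ref{bb5}(1) to the linear characters of the (dihedral) relative Weyl group together with the computation of the corresponding Hecke-algebra generic degrees. One small inaccuracy in a parenthetical aside: for ${}^2F_4(q^2)$ the two index parameters are $q^2$ and $q^4$ (coming from the rank-one Levi factors $SL_2(q^2)$ and ${}^2B_2(q^2)$), not $q^2$ and $\sqrt{q^6}=q^3$; this does not affect the displayed degrees, which are correct.
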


\begin{proof}
These degrees are given in \cite[pp. 114 - 115]{CIK}.
\end{proof}

\subsection{Groups $SL(3,q)$ and $GL(3,q)$}

Note that $SL(3,q)$ is the only quasi-simple group of rank $2$ where we can avoid
substantial computations.

\begin{propo}\label{sg3}
Let $H=GL(3,q)$ and
 let $\chi$ be a reducible $Syl_p$-vanishing character of $H$ of degree $q^3$. Then
$\chi=\mu+\si^{\#H}$, where $\mu$ is a linear character of  $H$ and
$\si$ is a cuspidal \ir character of a proper Levi subgroup $L\neq T$
(and hence $\si(1)=q-1$). In addition, $\si^{\#H}$ is irreducible.

Let $\si$ be a cuspidal character of $L$. Then the  character $1_H+\si^{\#H}$ is $p$-vanishing \ii 
$Z(L)$ is the kernel of $\si$.
\end{propo}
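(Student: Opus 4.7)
My plan is to use the Gelfand--Graev machinery developed above to pin down the structure of $\chi$, and then to analyze the values of $\sigma^{\# H}$ on $p$-singular elements.

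For the first statement, Corollary \ref{g12} gives that $\chi = \gamma(\chi)$ is multiplicity-free. By Proposition \ref{d34}, the number of irreducible constituents of $\chi$ equals the number of conjugacy classes of Levi subgroups $L'$ of $H$ for which $\overline{\chi}_{L'}$ contains a cuspidal regular character of $L'$. In $H = GL(3,q)$ there are exactly three such conjugacy classes: $T$, $L = GL(2,q) \times GL(1,q)$, and $H$ itself. First I would observe that the torus $T$ always contributes, since $T$ is abelian (so every linear character is cuspidal and regular) and $\overline{\chi}_T$ is $Syl_p$-vanishing of degree $|T|_p = 1$ by Lemma \ref{x2a}, hence a linear character. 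Next I would rule out the contribution of $H$ itself: a cuspidal character of $H$ has degree $(q-1)(q^2-1) = q^3 - q^2 - q + 1$, which would leave complementary degree $q^2 + q - 1$, a value that is not the degree of any irreducible character of $GL(3,q)$. Since $\chi$ is reducible, this forces the middle Levi $L$ to contribute.

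By Proposition \ref{d34}, the $L$-constituent is an irreducible constituent of $\sigma^{\# H}$, where $\sigma = \eta \otimes \nu$ is a cuspidal regular character of $L$ with $\eta$ cuspidal of $GL(2,q)$ (so $\eta(1) = q-1$) and $\nu$ a linear character of $GL(1,q)$. Since $W_J = S_2$ is self-normalizing in $W = S_3$, the Howlett--Lehrer ramification group $W_\sigma$ is trivial; via the bijection recalled after Lemma \ref{ko1}, $\sigma^{\# H}$ is irreducible of degree $[H:P]\sigma(1) = (q^2+q+1)(q-1) = q^3 - 1$. Consequently $\mu := \chi - \sigma^{\# H}$ has degree $1$ and is a linear character of $H$.

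For the second statement, it suffices (by tensoring with $\mu^{-1}$) to treat $\chi = 1_H + \sigma^{\# H}$. For necessity, Harish-Chandra restricting the $p$-vanishing character $\chi$ to $L$ yields $1_L + \sigma$, of degree $q = |L|_p$; since neither constituent alone has degree a multiple of $|L|_p$, this is minimal $p$-vanishing, so by Lemma \ref{bb1} both constituents lie in the principal $p$-block of $L$. Since $Z(L) \subset T$ is a $p'$-group, the central character of $\sigma$ on $Z(L)$ must agree with that of $1_L$, giving $Z(L) \subseteq \ker \sigma$. For sufficiency, assume $Z(L) \subseteq \ker \sigma$ and evaluate $\sigma^{\# H}(g)$ for a $p$-singular $g = su$ (with $s$ semisimple, $u \neq 1$ unipotent, $su = us$) via the Harish-Chandra induction formula
$$\sigma^{\# H}(g) = \frac{1}{|P|}\sum_{\substack{x \in H\\ x^{-1}gx \in P}} \tilde\sigma(x^{-1}gx),$$
where $\tilde\sigma$ is the inflation of $\sigma$ from $L = P/U_P$ to $P$. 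The hard part will be the case analysis over the $H$-conjugacy type of the semisimple part $s$: in each case one must verify that the kernel hypothesis $Z(L) \subseteq \ker \sigma$ causes the sum to reduce to $-1$, so that $\chi(g) = 1 + (-1) = 0$.
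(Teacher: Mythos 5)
Your first statement and the necessity half of the second are handled correctly, and by a genuinely different route from the paper's. For the decomposition $\chi=\mu+\si^{\#H}$ the paper truncates to $L$, invokes Proposition \ref{tt8} to see that $\overline{\chi}_L$ is reducible, uses the degree list of $L/Z(L)\cong PGL(2,q)$ to force $\overline{\chi}_L=\mu|_L+\si$ with $\si$ cuspidal of degree $q-1$, and then induces back; you instead work entirely at the level of $H$ via Corollary \ref{g12} and the counting statement of Proposition \ref{d34}, ruling out a cuspidal constituent of $H$ by degrees. Your approach buys a cleaner conceptual picture (one constituent per contributing Levi class), at the price of one step you should make explicit: when you say the complementary degree $q^2+q-1$ ``is not the degree of any irreducible character,'' you are tacitly using that the complement is a \emph{single} irreducible; this does follow from Proposition \ref{d34} (the only other possible contributors are $T$, giving one principal-series constituent, and $L$, whose constituent would have degree $q^3-1>q^2+q-1$), but it needs to be said. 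Your necessity argument via minimality, Lemma \ref{bb1} and the agreement of central characters on the $p'$-group $Z(L)$ is a clean substitute for the paper's appeal to Corollary \ref{ke1}.

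The genuine gap is the sufficiency direction of the second statement. You write down the Harish-Chandra induction formula and then state that ``the hard part will be the case analysis \dots in each case one must verify'' that the sum reduces to $-1$; that verification is the entire content of this half of the proposition, and you have not supplied it. What is needed (and what the paper supplies, albeit tersely) is: (i) the computation $\si^{\#H}(u)=-1$ for every unipotent $u\neq 1$, which already shows $\mu+\si^{\#H}$ is $Syl_p$-vanishing for \emph{every} cuspidal $\si$, independently of the kernel hypothesis; (ii) for a mixed $p$-singular element $g=su$ with $s\neq 1$, a reduction showing that the semisimple part is absorbed by the hypothesis $Z(L)\subseteq\ker\si$, so that the value again collapses to $-1$; and (iii) the observation that the induced character formula degenerates because the relevant conjugates of $g$ inside $P$ form a single $P$-class on which the inflation of $\si$ is constant. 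Without carrying out at least the representative computations (note in particular the delicate case where $s$ is central in $H$ and $u$ is regular unipotent, where $g$ is \emph{not} conjugate into $L$ and the value must be computed directly from the cuspidal character table of $GL(2,q)$), the ``if'' direction remains unproved.
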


\begin{proof}
Let $L$ be a Levi subgroup of a parabolic subgroup $P$ of $H$, where $P$ 
 is not a Borel subgroup. Then $L=XZ(L)$, where  
$X\cong  GL(2,q)$.

By Proposition \ref{tt8}, 
$\overline{\chi}_{L}=\mu|_L+\si$, where $\mu(1)=1$ and $\si$ is cuspidal. 
(Indeed, as $L/Z(L)\cong PGL(2,q)$, it follows that 
$L$ has no \ir character of degree $d$ with  $1<d\leq (q+1)/2$ for $q\neq 3$. In the
latter case $ (q-1)/2$ equals $1$.)
By Harish-Chandra reciprocity, $(\chi,\si^{\#H})=1$. In addition,
as $C_W(\si)=1$, the Harish-Chandra series of $\si$ consists of a single character, see
\cite[p. 678]{CR2}. 
(Note that in our case the ramification group defined in 
\cite[p. 678]{CR2} is trivial as $N_H(L)=L$.)
 Therefore, $\si^{\#H}$ is irreducible, and  
$\si^{\#H}(1)=q^3-1$. As $\chi(1)=q^3$, the first statement of the lemma  follows. 

Let $\si$ be any cuspidal character of $L$. 
It is easy to compute that $\si^{\# H}(u)=-1$ for every unipotent element $u\neq 1$. So 
$\mu+\si^{ \# H}$ is $Syl_p$-vanishing for every linear character $\mu$ of $H$.
Suppose that $\chi=1_H+\si^{\#H}$ is $p$-vanishing. Then, 
by Corollary \ref{ke1}, $\si$ is trivial on $Z(L)$. 
Conversely, let $g=su\in H$, where $s\neq 1$ is semisimple, 
$u$ is unipotent and $su=us$.
Then $g$ is conjugate to an element of $L$, and if $g\in L$ then  $s\in Z(L)$.
Therefore, $\si (su)=\si(u)=-1$. Let $C$ be the conjugacy class of $su$ in $H$.
One observes that $C\cap P$ forms a conjugacy class in  $P$. Let $\tau$ be the 
inflation of $\si$ to $P$. Then $\tau^H=\si^{\#H}$ and $\tau(su)=-1$. By the induced character 
formula, we have $\tau^H(su)=-1$, which proves the second statement. 
\end{proof}

\begin{corol}\label{s23} 
Let $G=SL(3,q)$,  and let $\chi$ be a
$p$-vanishing character of degree $q^3$. Then either $\chi=St$ or
$(\chi, 1_G)=1$. In addition, no \ir constituent of $\chi$ is cuspidal.
\end{corol}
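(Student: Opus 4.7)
The plan is to reduce to the case of $H=GL(3,q)$ handled by Proposition \ref{sg3}. If $\chi$ is irreducible, then $\chi(1)=q^3=|G|_p$ makes $\chi$ a defect-zero character, and the Steinberg character is the unique irreducible character of $G$ of this degree, so $\chi=St$.

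So assume $\chi$ is reducible. By Proposition \ref{lu6}, $\chi$ extends to a $Syl_p$-vanishing character $\chi'$ of $H$ with $\chi'|_G=\chi$ and $\chi'(1)=q^3$. If $\chi'$ were irreducible, Lemma \ref{cc5}(1) would force $\chi'|_G=St_G$, contradicting the reducibility of $\chi$. Hence $\chi'$ is reducible and Proposition \ref{sg3} applies to give $\chi'=\mu+\sigma^{\#H}$, where $\mu$ is a linear character of $H$, $\sigma$ is an irreducible cuspidal character of a proper Levi subgroup $L\neq T$ of $H$, and $\sigma^{\#H}$ is irreducible of degree $q^3-1$. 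Since $G=[H,H]$, every linear character of $H$ restricts to $1_G$, so $\chi=1_G+\sigma^{\#H}|_G$.

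To obtain $(\chi,1_G)=1$ it suffices to show that $1_G$ does not occur in $\sigma^{\#H}|_G$. By Frobenius reciprocity,
\[(\sigma^{\#H}|_G,1_G)_G=(\sigma^{\#H},1_G^H)_H=\sum_{\nu\in\Irr(H/G)}(\sigma^{\#H},\nu)=0,\]
since $\sigma^{\#H}$ is irreducible of degree $q^3-1>1$ and hence orthogonal to every linear character of $H$.

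For the cuspidal claim, observe that $H=GP$, because the diagonal torus $T\subseteq P$ already surjects onto $H/G\cong F_q^*$ via the determinant. Writing $\sigma^{\#H}=\tilde\sigma^H$ with $\tilde\sigma$ the inflation of $\sigma$ to $P$, Mackey's formula applied to the single double coset $G\backslash H/P$ yields
\[\sigma^{\#H}|_G=(\tilde\sigma|_{P\cap G})^G=(\sigma|_{L\cap G})^{\#G},\]
a Harish-Chandra induced character from the proper Levi subgroup $L\cap G$ of $G$. Every irreducible constituent of $\sigma^{\#H}|_G$ therefore belongs to a Harish-Chandra series attached to a proper parabolic of $G$, and is not cuspidal. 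The main subtlety is this Mackey identification, which depends essentially on the equality $H=GP$ that holds only because $L\neq T$ gives us a Levi containing $T$ and the torus already generates $H/G$.
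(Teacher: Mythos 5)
Your proof is correct and follows the route the paper intends: the corollary is stated without a separate proof immediately after Proposition \ref{sg3}, and is meant to be deduced exactly as you do, by extending $\chi$ to $GL(3,q)$ via Proposition \ref{lu6}, invoking Proposition \ref{sg3} to write the extension as $\mu+\sigma^{\#H}$, and restricting back to $SL(3,q)$. Your Frobenius-reciprocity computation for $(\chi,1_G)=1$ and the Mackey/Harish-Chandra identification of $\sigma^{\#H}|_G$ as induced from the proper Levi $L\cap G$ (whence no constituent is cuspidal) correctly supply the details the paper leaves implicit.
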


Proposition \ref{sg3} can be compared with the results  
obtained with use of 
\textsc{chevie}. 
Below  are some details.

Let $H=GL(3,q)$. Following  \textsc{chevie}, we partition $\Irr H$ into 
$8$ sets ${\bf X}_h$, $1\leq h\leq 8$, see Table \ref{TGL3}.1-A. 
Applying procedure \ref{procedure} we obtain in step (1) the $Syl_p$-decompositions 
of Table \ref{TGL3}.1-B and $\Delta_H= \{  1, 2, 7, 8  \}$. In step (2) we get a unique
character
$\psi_1=\chi_{1}+\chi_{7}$.  Now, in step (3) we obtain all  $Syl_p$-vanishing characters
of $H$ of degree 
$|H|_p$ (see Table \ref{TGL3}.1-C). In particular, every such character has a one-dimensional \ir constituent.

\begin{lemma}\label{LGL3}
Let $H=GL(3,q)$. Then $H$ admits exactly $(q-1)(q-\delta )/2 $ reducible 
$p$-vanishing characters $\psi$ of degree $|H|_p$,  all of type
$$\psi=\chi_1(n_1)\cdot (1_H+\chi_7(q-1,n_7)),$$
where $\delta=\frac{1-(-1)^q}{2}$, $n_1\in \ZZ_{q-1}$ and 
$n_7=(q-1) n'_7$, with $ n'_7 \in \ZZ_{q+1}$ and $q+1$ does not divide $ n'_7$.
\end{lemma}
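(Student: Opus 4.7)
The plan is to combine Proposition \ref{sg3} (which pins down the shape of such characters) with an explicit enumeration of the admissible data. By Proposition \ref{sg3}, every reducible $p$-vanishing character $\psi$ of $H=GL(3,q)$ of degree $q^3$ has the form $\psi=\mu+\sigma^{\#H}$, where $\mu\in\Irr H$ is linear and $\sigma$ is a cuspidal \ir character of the unique (up to conjugacy) proper non-Borel Levi subgroup $L\cong GL(2,q)\times GL(1,q)$, with $\sigma^{\#H}$ irreducible of degree $q^3-1$. Tensoring with $\mu^{-1}$ and using the projection formula $\mu^{-1}\cdot\sigma^{\#H}=(\mu^{-1}|_L\cdot\sigma)^{\#H}$, together with the second half of Proposition \ref{sg3}, the $p$-vanishing condition becomes $Z(L)\subseteq\ker(\mu^{-1}|_L\cdot\sigma)$.

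Next I would count the valid pairs $(\mu,\sigma)$. Such a pair is uniquely recovered from $\psi$: $\mu$ is the unique one-dimensional constituent of $\psi$, and since $N_H(L)=L$, the Harish-Chandra induction map is injective on cuspidal characters of $L$, so $\sigma$ is determined by $\sigma^{\#H}=\psi-\mu$. There are $q-1$ linear characters of $H$, indexed by $n_1\in\ZZ_{q-1}$ via $\mu=(\det)^{n_1}$. For each $\mu$, write $\sigma=\sigma_1\otimes\sigma_2$ with $\sigma_1$ cuspidal on $GL(2,q)$ and $\sigma_2\in\Irr F_q^*$; the central-character condition on $Z(L)=F_q^*\times F_q^*$ determines $\sigma_2$ uniquely and fixes the central character of $\sigma_1$, and by twisting $\sigma_1$ with a linear character of $GL(2,q)$ we reduce to counting cuspidal characters of $GL(2,q)$ with trivial central character. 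Parametrizing cuspidal characters of $GL(2,q)$ by Frobenius orbits of characters $\alpha$ of $F_{q^2}^*$ with $\alpha\neq\alpha^q$, those with trivial central character correspond to $\alpha$ trivial on $F_q^*$, i.e.\ to elements of $\widehat{F_{q^2}^*/F_q^*}\cong\ZZ_{q+1}$ with $\alpha^2\neq 1$; a short count of $\ZZ_{q+1}$-orbits under $\alpha\sim\alpha^{-1}$ gives $(q-1)/2$ orbits for $q$ odd and $q/2$ for $q$ even, i.e.\ $(q-\delta)/2$ in both cases. Multiplying by the $q-1$ choices of $\mu$ yields the claimed count $(q-1)(q-\delta)/2$.

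Finally, I would translate this enumeration into the \textsc{chevie} indexing. The set ${\bf X}_7$ lists the \ir characters of degree $q^3-1$, which up to multiplication by a linear character of $H$ are exactly the irreducible Harish-Chandra inductions $\sigma^{\#H}$ of cuspidal characters of $L$; the constraint $u=q-1$ in $\chi_7(u,n_7)$ encodes the requirement fixed by the central-character condition on the $GL(1,q)$-factor, and the substitution $n_7=(q-1)n'_7$ with $n'_7\in\ZZ_{q+1}$ parametrizes the characters $\alpha$ of order dividing $q+1$, with the exclusion $q+1\nmid n'_7$ ruling out the trivial $\alpha$ (for which $\sigma_1$ would fail to be cuspidal). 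The main obstacle is book-keeping the \textsc{chevie} conventions — in particular checking that the Frobenius identification $\chi_7(u,v)=\chi_7(u,qv)$ built into the notation, combined with the excluded parameters, produces exactly the $(q-1)(q-\delta)/2$ distinct characters predicted by the enumeration — but this reduces to a routine comparison against Table \ref{TGL3}.1.
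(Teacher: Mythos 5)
Your proof is correct, but it takes a genuinely different route from the paper's. The paper's own argument for this lemma is computational: starting from the \textsc{chevie}-derived list in Table \ref{TGL3}.1-C (every reducible $Syl_p$-vanishing character of degree $q^3$ is $\chi_1+\chi_7$), it normalizes to $\psi=1_H+\chi_7(m,n)$, uses the block condition to get $(q-1)\mid(m+n)$, and then evaluates on the semisimple classes $C_5(a,b)$ to force $\zeta_1^{m(b-a)}=1$, hence $m=q-1$ and $n=(q-1)n'$. You instead exploit Proposition \ref{sg3} — which the paper proves precisely as the non-computational counterpart of this lemma but does not use for the enumeration — together with the classical parametrization of cuspidal characters of $GL(2,q)$ by Frobenius orbits of characters of $F_{q^2}^*$. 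Your approach makes the count $(q-1)(q-\delta)/2$ conceptually transparent (it is the number of orbits $\{\alpha,\alpha^{-1}\}$ on $\widehat{F_{q^2}^*/F_q^*}\setminus\{\alpha:\alpha^2=1\}$ times the $q-1$ twists), and it is independent of the explicit character table except for the final translation into \textsc{chevie} coordinates; the paper's approach stays entirely inside the table and directly produces the parameters $(q-1,n_7)$. Two small points you should make explicit. First, in the twisting step the prescribed central character of $\sigma_1$ is $\nu^2$ where $\mu=\nu\circ\det$, hence automatically a square in $\widehat{F_q^*}$; this is what guarantees a linear twist reducing to trivial central character exists, and it matters, since for $q$ odd the number of cuspidal characters of $GL(2,q)$ with a prescribed \emph{non-square} central character is $(q+1)/2$, not $(q-1)/2$. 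Second, in the final book-keeping note that for $q$ odd the value $n'_7=(q+1)/2$ (i.e.\ $\alpha^2=1$, $\alpha\neq 1$) must also be excluded; this is exactly what the exception ``$(q+1)\mid n$'' in Table \ref{TGL3}.1-A accomplishes, and it is needed to land on $(q-\delta)/2$ rather than $(q+1)/2$ choices of $n_7$.
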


\begin{proof}
Modulo linear characters of $H$, take $\psi_1=1_H + \chi_{7}(m,n)$. 
Then, $(q-1)\mid (m+n)$ and  it suffices to look at the classes $C_5(a,b)$,
 where we have $\zeta_1^{m(b-a)}=1$. 
Thus, $m=q-1$ and $n=(q-1) n'$, where $ n' \in \ZZ_{q+1}$ and $q+1$ does not divide $ n'$.
\end{proof}

Now, let $G=SL(3,q)$. We have to distinguish two cases, depending on whether $3$ 
divides $q-1$ or not. However, the computations remain very similar to those for
$GL(3,q)$. 
 We provide the relevant  informations in Tables \ref{TGL3}.2-A,B,C if $3\mid (q-1)$,
and in 
Tables  \ref{TGL3}.3-A,B,C otherwise.
Thus, it can be easily proved the following

\begin{lemma}\label{LSL3}
Let $G=SL(3,q)$ and $\delta=\frac{1-(-1)^q}{2}$. Then, $G$ admits exactly $(q-\delta )/2 $ reducible $p$-vanishing characters $\psi$ of degree $|G|_p$,  all of type
$$\psi=1_G+\chi_i(a),$$
where $a=(q+1) a'$,  $i=10$ if $3\mid (q-1)$, and $i=7$,  otherwise.
\end{lemma}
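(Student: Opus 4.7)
The plan is to reduce the lemma to the corresponding result for $H=GL(3,q)$ (Lemma \ref{LGL3}) via the extension theorem of Proposition \ref{lu6}. Let $\chi$ be a reducible $p$-vanishing character of $G=SL(3,q)$ of degree $|G|_p=q^3$. By Proposition \ref{lu6}, $\chi$ extends to a $Syl_p$-vanishing character $\chi'$ of $H$ of degree $|H|_p=q^3$. If $\chi'$ were irreducible then by Lemma \ref{cc5}(1) we would have $\chi'=St_H$ and hence $\chi=\chi'|_G=St_G$, contradicting reducibility. Therefore $\chi'$ is reducible, and by Lemma \ref{LGL3} it has the form $\chi'=\chi_1(n_1)\cdot(1_H+\chi_7(q-1,n'_7))$. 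Restricting to $G$, where the linear twist $\chi_1(n_1)$ becomes trivial, yields $\chi=1_G+\chi_7(q-1,n'_7)|_G$.

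The next step is to determine how $\chi_7(q-1,n'_7)|_G$ sits inside $\Irr G$. Using the CHEVIE parametrisations in Tables \ref{TGL3}.2-A and \ref{TGL3}.3-A, this restriction is irreducible and corresponds to a character $\chi_i(a)$ of $G$, with $i=10$ when $3\mid(q-1)$ and $i=7$ otherwise; the parameter $a$ is inherited from $n'_7$ via the identification of parameter sets. This confirms that $\chi=1_G+\chi_i(a)$ has exactly two irreducible components in both cases, matching the shape claimed in the statement. The alternative step~(3) reductions from Procedure~\ref{procedure} applied directly to $G$ (using the $Syl_p$-decompositions of Tables \ref{TGL3}.2-B and \ref{TGL3}.3-B) provide a second route to the same list of candidates and serve as a consistency check.

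It then remains to decide which parameters $a$ make $\psi=1_G+\chi_i(a)$ genuinely $p$-vanishing rather than merely $Syl_p$-vanishing, and to count them. Since every non-trivial element of $G$ is unipotent, semisimple, or a commuting product $g=su$ with $s\neq 1$ semisimple and $u\neq 1$ unipotent, the $p$-vanishing condition reduces to the mixed classes. Evaluating $\psi$ at representatives of those classes via the CHEVIE character values yields, by a character-sum calculation analogous to the one performed in the proof of Lemma \ref{LU3}, the divisibility condition $(q+1)\mid a$, that is, $a=(q+1)a'$; this is also consistent with Corollary \ref{ke1}, which forces $Z(L)$ to lie in the kernel of the truncation of $\chi$ to a Levi subgroup $L$. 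Finally, counting the admissible values of $a'$ modulo the Weyl-group identifications on ${\bf X}_i$ gives exactly $(q-\delta)/2$ distinct characters.

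The principal obstacle is the counting in the split case $3\mid(q-1)$, where the parameter set $I_{10}$ for $\chi_{10}$ in $SL(3,q)$ is genuinely richer than $I_7$ in the non-split case: the Weyl group acts non-trivially on the triple parameter, and one must verify that the combination of the $(q+1)$-divisibility constraint with these identifications produces the same uniform count $(q-\delta)/2$ in both cases. A parallel subtlety distinguishes $q$ even from $q$ odd and is encoded in the factor $\delta$; keeping these identifications straight is the only part of the argument that is not mechanical.
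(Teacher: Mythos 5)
Your argument reaches the right conclusion but by a genuinely different route from the paper's. The paper proves this lemma by running Procedure \ref{procedure} directly on $SL(3,q)$, using the \textsc{chevie} data for $SL(3,q)$ itself (Tables \ref{TGL3}.2-A,B,C when $3\mid(q-1)$ and \ref{TGL3}.3-A,B,C otherwise) to list the $Syl_p$-vanishing characters of degree $|G|_p$ --- namely $\mathbf{v}=(1,10)$ resp.\ $(1,7)$, besides the Steinberg character --- and then imposing the $p$-vanishing condition on the mixed classes exactly as in the proof of Lemma \ref{LGL3}; the text states that the computations ``remain very similar'' to the $GL(3,q)$ case. You instead lift $\chi$ to $H=GL(3,q)$ via Proposition \ref{lu6} and pull the classification down to $G$. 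This reduction is legitimate and is the mechanism the paper itself uses elsewhere (e.g.\ to pass from $U(n,q)$ to its derived group); what it buys is the reuse of the $GL(3,q)$ analysis, and what it costs is that the mixed-class computation must still be redone on $G$, since the admissible parameters are not inherited from $H$.

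One step needs repair. Proposition \ref{lu6} only yields a \emph{$Syl_p$-vanishing} extension $\chi'$ of $\chi$, so you are not entitled to invoke Lemma \ref{LGL3}, which classifies the \emph{$p$-vanishing} characters of $H$ and in particular already carries the parameter constraints $m=q-1$, $n_7=(q-1)n_7'$. The correct reference at that point is the classification of reducible $Syl_p$-vanishing characters of $GL(3,q)$ of degree $q^3$ (Proposition \ref{sg3}, equivalently Table \ref{TGL3}.1-C), which gives only the shape $\chi'=\chi_1(n_1)+\chi_7(m,n)$ with $(m,n)\in I_7$ unrestricted. Since your subsequent steps re-derive the parameter condition by evaluating $\psi$ on the classes of non-semisimple, non-unipotent elements of $G$, the final statement is unaffected, but as written that intermediate claim is unjustified. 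The remaining ingredients --- irreducibility of $\chi_7(m,n)|_G$ (which follows from Lemma \ref{lu5} and the fact that neither Table \ref{TGL3}.2-A nor \ref{TGL3}.3-A contains an irreducible character of degree $(q^3-1)/3$) and the count $(q-\delta)/2$ after the identifications on the parameter set --- are asserted rather than carried out, but this matches the level of detail the paper itself supplies for this lemma.
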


\subsection{Groups $U(4,q)$ and $U(5,q)$}

First, apply Procedure \ref{procedure} to the group $H=U(4,q)$.
In step (1) we obtain  the $Syl_p$-decompositions of Table \ref{TU4}-B and 
$\Delta_H=\{ 2, 5, 7,  8, 12, 19, 23  \}$.
Note that $\chi_{19}$ and $\chi_{23}$ are
the only regular characters of this set. 
In step (2), let $\psi$ be a $Syl_p$-vanishing character of $H$ of degree $|H|_p$, whose
constituents lie in $\Delta_H$.
Since the degree of $\chi_{23}$ is greater than $|H|_p$, it follows  
that $\chi_{19}$ is a constituent of $\psi$. 
So the other constituents of $\psi$ are 
 $\{\chi_h \mid h =  2, 5, 7, 8, 12 \}$. 
Using  the classes $C_3(q+1)$ and $C_5(q+1)$, we observe  that  $\psi$ must have at least
$2$ constituents 
of type $\chi_7$ and a constituent of type $\chi_8$. Now, the character 
values at the class $C_4(q+1)$ imply that either $\chi_2$ is a constituent of $\psi$,
 or both $\chi_8$ and $\chi_{12}$ are other constituents of $\psi$. In the latter case
 we get a contradiction on the class $C_3(q+1)$. Therefore,  we get a unique
character  $\psi_1=\chi_2+\chi_7+\chi_7'+\chi_8
+\chi_{19}$,  where $\chi_7,\chi'_{7}\in {\bf X}_{7}$ are not necessarily
distinct. 
Finally, in step (3) we determine all 
$Syl_p$-vanishing characters of 
$H$ of degree $|H|_p$ (see Table \ref{TU4}-C).

\begin{lemma}\label{LU4}
Let $H=U(4,q)$. Then neither $H$ nor $G=H'$ has a reducible $p$-vanishing character of
degree $|H|_p$.
\end{lemma}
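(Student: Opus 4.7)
The plan is to take the complete list, obtained in step~(3) of Procedure~\ref{procedure} and tabulated in Table~\ref{TU4}-C, of all $Syl_p$-vanishing characters $\psi$ of $H=U(4,q)$ of degree $|H|_p$, and to show that each such $\psi$ fails to vanish on some $p$-singular class of $H$ lying in $G=H'$. The starting point, from step~(2), is $\psi_1=\chi_2+2\chi_7+\chi_8+\chi_{19}$; every character from Table~\ref{TU4}-C is obtained from $\psi_1$ by applying the $Syl_p$-decompositions of Table~\ref{TU4}-B, possibly after twisting by a linear character. First I would invoke Lemma~\ref{bb1}: every $p$-vanishing character of $H$ is, block by block, a nonnegative combination of projective indecomposables, hence has all its irreducible constituents in the same $p$-block. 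Since the nonzero-defect blocks of $H$ are parametrized by $Z(H)$, any candidate mixing different central characters is discarded immediately.

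For the candidates that survive the block test, the strategy mimics the proofs of Lemmas~\ref{LU3} and~\ref{LU3.2}. For each surviving $\psi$, I would pick a $p$-singular conjugacy class $C$ of $H$ whose semisimple part is a non-trivial element of the standard maximal torus commuting with a non-identity unipotent element --- natural candidates are the mixed classes $C_h(k)$ with $h$ referring to the torus classes $C_3, C_4, C_5$ used already in step~(2) --- and write the equation $\psi(C)=0$ as a system of roots-of-unity identities in the free parameters of $\chi_7$, $\chi_{12}$, etc. Since the ``semisimple rank $2$'' torus data already forced, in step~(2), the constituents $\chi_8$ and $\chi_{19}$ with fixed parameters, the resulting identities are rigid enough that no admissible parameter choice satisfies them simultaneously. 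To be uniform in $q$, I would separate the constraints into $(q+1)$-divisibility conditions and genuine character-value cancellations, exactly as in cases~(2),(3) of the proof of Lemma~\ref{LU3}.

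Finally, the passage from $H=U(4,q)$ to $G=H'=SU(4,q)$ is handled by Proposition~\ref{lu6}: every $p$-vanishing character of $G$ of degree $|G|_p=|H|_p$ extends to a $Syl_p$-vanishing character of $H$ of the same degree, hence is already in our list. As long as the witnessing $p$-singular elements above are chosen inside $G$ (which is possible, since the relevant mixed classes $C_h(k)$ have representatives in the commutator subgroup $G$), the same calculation rules out $p$-vanishing characters on $G$ at once. The main obstacle I anticipate is the bookkeeping in step~(3): Table~\ref{TU4}-C likely produces a substantial number of substitution variants parametrized by $(q+1)$-indices, and each must be killed by a single $p$-singular element without a case split depending on the divisibility of $q+1$ by small primes, in contrast to the $SU(3,q)$ situation where such a divisibility actually produced a genuine $p$-vanishing character.
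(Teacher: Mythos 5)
Your proposal follows essentially the same route as the paper: enumerate the $Syl_p$-vanishing characters of degree $|H|_p$ from Table \ref{TU4}-C, kill each reducible candidate by evaluating it at a suitable mixed ($p$-singular) conjugacy class with representative in $G=H'$ to obtain an impossible roots-of-unity identity in the parameters, and transfer the conclusion to $G$ via Proposition \ref{lu6}. The only cosmetic differences are that the paper does not need the preliminary block filter via Lemma \ref{bb1} for $U(4,q)$ (all four cases are dispatched directly on the classes $C_7(1,q-2)$, $C_8(1,q-2)$, $C_{16}(q+1,i_2,-i_2)$ and $C_{18}(1,2)$, with a separate small-$q$ check), and that no case split on divisors of $q+1$ arises, exactly as you anticipated.
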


\begin{proof}
Suppose the contrary and assume $q\geq 4$ (the cases $q=2,3$ can be solved with
\textsc{magma}).
In view of Table \ref{TU4}-C, up to linear characters of $H$, 
we have to consider the \f four cases:
\begin{itemize}
\item[(1)]  $\psi_{1}=\chi_{2}(0) + \chi_{7}(k_{7,1},k_{7,2}) +
\chi_{7}(k_{7,1}',k_{7,2}') + \chi_{8}(k_{8,1},k_{8,2}) + 
\chi_{19}(k_{19,1},k_{19,2},$ $k_{19,3}, k_{19,4})$. 
On the class $C_{18}(1,2)$ we obtain $\xi_1^{k_{8,1}-k_{8,2}}=1$. This implies that $q+1$
divides $k_{8,1}-k_{8,2}$, a contradiction.
\item[(2)]  $\psi_{2}=\chi_{2}(0) + \chi_{7}(k_{7,1},k_{7,2}) +
\chi_{16}(k_{16,1},k_{16,2},k_{16,3}) + 
\chi_{19}(k_{19,1},k_{19,2}, k_{19,3}, k_{19,4})$.
On the classes $C_{16}(q+1,i_2,-i_2)$ we have 
$$2+Re(\xi_1^{(k_{7,1}-k_{7,2})i_2})+Re(\xi_1^{(k_{16,1}-k_{16,2})i_2})+Re(\xi_1^{(k_{16,1
}-k_{16,3})i_2})+$$
$$+Re(\xi_1^{(k_{16,2}-k_{16,3})i_2})=
Re(\xi_1^{(k_{19,1}-k_{19,2})i_2})+Re(\xi_1^{(k_{19,1}-k_{19,3})i_2})+$$
$$+Re(\xi_1^{(k_{19,1}-k_{19,4})i_2})+
+Re(\xi_1^{(k_{19,2}-k_{19,3})i_2})+Re(\xi_1^{(k_{19,2}-k_{19,4})i_2})+$$
$$+Re(\xi_1^{(k_{19,3}-k_{19,4})i_2}).$$
This holds for every $i_2\in \ZZ_{q+1}$ such that $q+1$ does not divide $2i_1$. If $q\geq
13$, this implies that $q+1$ divides $k_{19,i}-k_{19,j}$ for some $i\neq j$, a
contradiction.
For $4\leq q\leq 11$, it can be proved directly that this condition for $i_1=1,\ldots,
\lfloor(q-1)/2\rfloor$ is never satisfied.
\item[(3)] $\psi_{3}=\chi_{2}(0) + \chi_{7}(k_{7,1},k_{7,2}) +
\chi_{15}(k_{15,1},k_{15,2},k_{15,3})$. On the class $C_8(1,q-2)$ we have
$\xi_1^{k_{7,1}-k_{7,2}}=\xi_1^{ 2k_{15,1}-k_{15,2}-k_{15,3}}$  and on $C_{18}(1,2)$ we
have
$1=\xi_1^{2k_{15,1}-k_{15,2}-k_{15,3}}.$
So $\xi_1^{k_{7,1}-k_{7,2}}=1$, and hence $(q+1)\mid (k_{7,1}-k_{7,2})$, a contradiction.
\item[(4)]  $\psi_4=\chi_2(0)+\chi_6(k_1,k_2)$. Using the class $C_7(1,q-2)$ we have
$\xi_1^{k_2-k_1}=1$. It follows that $(q+1)\mid (k_2-k_1)$, a contradiction with the
conditions on the parameters.
\end{itemize} 
Since all the conjugacy classes we have considered belong to $H'$, it follows from
Proposition \ref{lu6} that also $H'$  has no $p$-vanishing character of degree $|H|_p$.
\end{proof}

Next, apply Procedure \ref{procedure} to $H=U(5,q)$.
In step (1) we obtain the $Syl_p$-decompositions of Table \ref{TU5}-B  and $\Delta_H=\{ 2,
5, 6, 7, 16, 20, 21, 37, 42, 43   \}$. 
Note that $\chi_{37}$, $\chi_{42}$ and
$\chi_{43}$ are the only regular characters in this set. 
In step (2), let $\psi$ be a $Syl_p$-vanishing character of $H$ of degree $|H|_p$, whose
constituents lie in $\Delta_H$. Observe that $\chi_{43}$ is ruled
out by degree reasons. 
\begin{itemize}
\item[(1)] Suppose that $\chi_{37}$ is a constituent of $\psi$. Then, looking at the
classes $C_3(q+1)$ and $C_7(q+1)$, 
the character $\psi$ must have at least one constituent of type $\chi_{21}$ and three
constituents of type $\chi_{20}$. With some little efforts, we get a unique
$Syl_p$-vanishing character
$\psi_1=\chi_2+\chi_5+\chi_{16}+\chi_{20}+\chi'_{20}+\chi''_{20}+\chi_{21}+\chi_{37}$,
where $\chi_{20},\chi'_{20},\chi''_{20}\in {\bf X}_{20}$ are not necessarily distinct. 
\item[(2)] Let $\chi_{42}$ be a constituent of $\psi$. Then, by degree reasons, 
the other constituents of $\psi$ must belong to the set $\{\chi_h \mid h = 5, 6, 7, 16, 21
\}$. Then, looking at the classes $C_5(q+1)$ and $C_7(q+1)$, the character $\psi$ must
have one
constituent of type $\chi_{7}$ and at least one constituent of type $\chi_{16}$. This
leads to a contradiction on the class $C_6(q+1)$.
\end{itemize}
With step (3)
we get  all  $Syl_p$-vanishing characters of $H$ of degree $|H|_p$ (see Table
\ref{TU5}-C).

\begin{lemma}\label{LeU5}
Let $H=U(5,q)$. Then, neither $H$ nor $G=H'$  admits any reducible $p$-vanishing character of degree $|H|_p$.
\end{lemma}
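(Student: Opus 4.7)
The plan is to argue exactly as in the proof of Lemma \ref{LU4}, taking as input the complete list of reducible $Syl_p$-vanishing characters of $H=U(5,q)$ of degree $|H|_p$ produced by Procedure \ref{procedure} and recorded in Table \ref{TU5}-C. For each candidate $\psi$ on that list I would exhibit a conjugacy class of a $p$-singular (non-semisimple) element on which $\psi$ takes a nonzero value, which rules out the possibility that $\psi$ is $p$-vanishing. The small cases $q=2,3$ can be handled directly by \textsc{magma}, so I may assume $q\geq 4$ throughout. Since multiplication by a linear character of $H$ preserves both the $Syl_p$-vanishing and the $p$-vanishing properties, I only need to consider the shapes in Table \ref{TU5}-C modulo linear characters.

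Before entering the case analysis I would prune the list using Lemma \ref{bb1}: every minimal $p$-vanishing character has all its \ir constituents in a single $p$-block, so most mixed-block combinations in Table \ref{TU5}-C are eliminated immediately. For each surviving shape I evaluate $\psi$ on classes $C_k(\cdot)$ whose representatives are products of a nontrivial semisimple element by a unipotent one — the natural choices mimic those used for $U(4,q)$ (classes analogous to $C_8$, $C_{16}$, $C_{18}$ and their rank-$2$ analogues in $U(5,q)$). After writing out the inner sums as linear combinations of roots of unity in $\xi_1$, the vanishing conditions translate into divisibility requirements of the form $(q+1)\mid (k_{h,i}-k_{h,j})$ on the parameters; each such requirement will contradict the defining constraints of the parameter set $I_h$.

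The main obstacle is the distinguished shape
$$\psi_1 = \chi_2+\chi_5+\chi_{16}+\chi_{20}+\chi_{20}'+\chi_{20}''+\chi_{21}+\chi_{37},$$
which involves three (possibly equal) characters drawn from ${\bf X}_{20}$ together with the regular character $\chi_{37}$, so the resulting character sums carry many free parameters. To pin them down I would vary the index in the classes $C_k(i,\ldots)$ throughout $1\leq i\leq \lfloor (q-1)/2\rfloor$ and apply a Vandermonde-type argument analogous to case (2) in the proof of Lemma \ref{LU4}: for $q$ large enough this forces two of the $\chi_{20}$-parameters to coincide, contradicting the conditions on $I_{20}$; the few small values of $q$ that the Vandermonde estimate does not cover are then verified by direct numerical evaluation of the finite character sums. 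The remaining shapes are simpler and each collapses after testing on one or two classes.

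To transfer the conclusion from $H=U(5,q)$ to $G=H'=SU(5,q)$ I invoke Proposition \ref{lu6}: any reducible $p$-vanishing character of $G$ of degree $|G|_p=|H|_p$ extends to a $Syl_p$-vanishing character of $H$ of the same degree, hence to one of the characters listed in Table \ref{TU5}-C. Because every class used in the case analysis is represented by an element of $H'$ (products of semisimple and unipotent elements of $SU(5,q)$), the nonzero values that obstruct $p$-vanishing on $H$ simultaneously obstruct $p$-vanishing on $G$, completing the proof.
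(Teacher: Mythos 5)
Your plan coincides with the paper's proof in all essentials: both take the list of $Syl_p$-vanishing characters in Table \ref{TU5}-C as input, dispose of $q=2,3$ by \textsc{magma}, work modulo linear characters, derive divisibility contradictions on the character parameters by evaluating on non-semisimple classes, and pass to $G=H'$ via Proposition \ref{lu6} together with the fact that the obstruction classes lie in $H'$. The only difference is one of economy of execution: the paper treats the single exceptional shape $\chi_2+\chi_8$ on the class $C_9(1,q-3)$ and then kills all remaining shapes uniformly with the one family $C_{34}(q+1,i_2,i_2)$ for $i_2\leq 3$ (checking $q=4$ directly), rather than running a shape-by-shape Vandermonde argument over $1\leq i\leq \lfloor (q-1)/2\rfloor$.
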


\begin{proof}
Suppose the contrary. The cases $q=2,3$ can be solved using \textsc{magma}. Assume $q> 3$.
First, take $\psi_{14}=\chi_2(k_{2,1})+\chi_8(k_{8,1},k_{8,2})$. Using the class
$C_9(1,q-3)$ we have $\xi_1^{k_{8,2}-k_{8,1}}=1$, a contradiction. In all the other cases,
it suffices to consider the classes $C_{34}(q+1,i_2,i_2)$. We obtain one of the two
following conditions:
\begin{itemize}
\item[(a)]  $1+\xi_1^{(k_{a,2}-k_{a,1})i_2} =\xi_1^{(k_{b,1}-k_{b,2})i_2} +\xi_1^{(k_{b,1}-k_{b,3})i_2}$, $i_2=1,2$;
\item[(b)]  $2+\xi_1^{(k_{a,2}-k_{a,1})i_2} =\xi_1^{(k_{b,1}-k_{b,2})i_2} +\xi_1^{(k_{b,1}-k_{b,3})i_2}+\xi_1^{(k_{b,1}-k_{b,4})i_2}$, $i_2=1,2,3$.
\end{itemize}
In  both the cases, this implies that $q+1$ divides $k_{b,1}-k_{b,j}$ for
some $j\neq 1$, a contradiction with the conditions on the parameters. (The case (b) for
$q=4$ needs to be done  directly.)

Since all the conjugacy classes used above 
belong to $H'=G$, if follows from
Proposition \ref{lu6} that also $G$  has no $p$-vanishing characters of degree $|G|_p$.
\end{proof}

\subsection{Groups $G=Sp(4,q)$}
Let $G=Sp(4,q)$ with $q$ even and apply Procedure \ref{procedure}.
In step (1) we get the $Syl_p$-decompositions of Table \ref{TSp4}.1-B and 
$\Delta_G=\{ 1,
2, 3, 4, 5, 9, 13, 18,
19 \}$. Note that the
regular characters belonging to this set are $\chi_{18}$ and $\chi_{19}$.
In step (2), let $\psi$ be a $Syl_p$-vanishing character of $G$ of degree
$|G|_p$, whose
constituents lie in $\Delta_G$.
\begin{itemize}
\item[(1)] Let $\chi_{18}$ be a constituent of $\psi$. Then, by degree reasons, the other
constituents of $\psi$ must belong to the set $\{\chi_h \mid h =  1, 5  \}$. In this case
we easily obtain a contradiction.
\item[(2)] Let $\chi_{19}$ be a constituent of $\psi$. Then, by degree reasons, the other
constituents of $\psi$ must belong to the set $\{\chi_h \mid h =  1, 2, 3, 4, 5, 9, 13
\}$. Looking at the class $C_5$, either $\chi_9$ or $\chi_{13}$ is a constituent of
$\psi$. In the former case, we obtain the $Syl_p$-vanishing characters
$\psi_1=\chi_1+\chi_9+\chi_{13}+\chi_{19}$ and $\psi_2=\chi_3+\chi_5+\chi_9+\chi_{19}$. In
the latter case, we obtain the characters $\psi_1$ and
$\psi_3=\chi_4+\chi_5+\chi_{13}+\chi_{19}$.
\end{itemize}
With step (3) we are able to list all the $Syl_p$-vanishing characters of
$G$ of degree $|G|_p$ (see Table \ref{TSp4}-C).

\begin{lemma}\label{LSp4.0}
Let $G=Sp(4,q)$,  $q$ even. Then $G$  has no reducible $p$-vanishing character 
of degree $|G|_p$.
\end{lemma}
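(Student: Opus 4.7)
The plan is to rule out, one at a time, each of the three $Syl_p$-vanishing characters $\psi_1,\psi_2,\psi_3$ of degree $|G|_p$ listed in Table \ref{TSp4}-C by exhibiting a $2$-singular element on which the character does not vanish. Since these $\psi_i$ already vanish on the Sylow $2$-subgroup (being $Syl_p$-vanishing), it suffices to test them on mixed classes $C_h(k)$ consisting of elements $su=us$ with $s\neq 1$ semisimple and $u\neq 1$ unipotent. In view of Proposition \ref{lu6} we may work entirely inside $G=Sp(4,q)$ itself, since $G$ is with connected center when $q$ is even, so there is no extra twisting by linear characters to consider (the only linear character is $1_G$).

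First I would read off from the \textsc{chevie} character table of $Sp(4,q)$, $q$ even, the values of the individual constituents $\chi_h$ appearing in $\psi_1=\chi_1+\chi_9+\chi_{13}+\chi_{19}$, $\psi_2=\chi_3+\chi_5+\chi_9+\chi_{19}$, and $\psi_3=\chi_4+\chi_5+\chi_{13}+\chi_{19}$ on the mixed $2$-singular classes of $G$, paying particular attention to classes whose semisimple part is a fixed non-trivial element of a maximal split torus $T$ and whose unipotent part is a long or short root element. The key combinatorial point is that the regular character $\chi_{19}$ (the only regular constituent common to all three candidates) takes predictable values on such mixed classes that can be compared with the much smaller contributions from the remaining constituents.

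Concretely, for $\psi_1$ I would test on a class of the form $C=C_h(k)$ with $su$, where $s$ is a regular semisimple element of a split torus and $u$ is a root element. The values of $\chi_1$ and $\chi_{13}$ there should be small (a root of unity or $\pm 1$), while $\chi_{19}$ contributes a term of size roughly $q$; the resulting sum is then manifestly nonzero for $q\geq 4$, and the cases $q=2$ can be handled by a direct \textsc{magma} computation. The same strategy applied to $\psi_2$ and $\psi_3$ should produce mixed classes (possibly involving a short-root unipotent versus a long-root unipotent to distinguish the two) on which at least one of $\chi_3+\chi_5$, respectively $\chi_4+\chi_5$, fails to cancel the corresponding contribution of $\chi_9$ or $\chi_{13}$ against $\chi_{19}$.

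The main obstacle is not conceptual but bookkeeping: one has to pick, for each $\psi_i$, a mixed class on which the sum of character values is provably nonzero \emph{uniformly in~$q$}, so that no case distinction on $q$ (beyond the standing $q$ even) is required. Parameter-dependent cancellations among the $\xi_1$-powers that appear in values of the $\chi_h$ (for $h=9,13$ in particular, which vary with a parameter $k\in\mathbb{Z}_{q-1}$) must be avoided by choosing the class so that at most one $\chi_h$ contributes a genuinely $k$-dependent value; then the remaining constant contributions from $\chi_{19}$ and the other constituents dominate and the total is nonzero. Small-$q$ exceptions (here only $q=2$) are settled directly with \textsc{magma}, exactly as in the proofs of Lemmas \ref{LU4} and \ref{LeU5}.
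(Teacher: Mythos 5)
Your general strategy (test each surviving $Syl_p$-vanishing candidate on mixed classes $su$ with $s\neq 1$ semisimple and $u\neq 1$ unipotent) is the same as the paper's, but there are two concrete gaps. First, you only address the three characters $\psi_1,\psi_2,\psi_3$ produced in step (2) of Procedure \ref{procedure}; Table \ref{TSp4}-C actually lists \emph{seven} reducible $Syl_p$-vanishing characters of degree $|G|_p$, because step (3) generates four further candidates by substituting irreducible characters for subsums via the $Syl_p$-decompositions $\chi_{14}\equiv\chi_{13}+\chi_{19}$ and $\chi_{10}\equiv\chi_9+\chi_{19}$ (namely $1_G+\chi_9+\chi_{14}$, $\chi_1+\chi_{10}+\chi_{13}$, $\chi_3+\chi_5+\chi_{10}$ and $\chi_4+\chi_5+\chi_{14}$). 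These must also be ruled out; the paper does so on the classes $C_{10}(1)$ and $C_{14}(1)$, and your proof as written simply omits them.

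Second, and more seriously, your method for eliminating $\psi_1=1_G+\chi_9+\chi_{13}+\chi_{19}$ would not work. On the mixed classes $C_{10}(i)$ and $C_{14}(i)$ every one of $\chi_9,\chi_{13},\chi_{19}$ contributes a parameter-dependent value of the form $Re(\xi_1^{ik})$, each bounded in absolute value by $1$; there is no class on which $\chi_{19}$ "contributes a term of size roughly $q$" that dominates, and no class on which only one constituent carries a genuinely $k$-dependent value. Indeed genuine cancellation among these roots of unity \emph{is} possible: for $q$ odd with $7\mid(q+1)$ exactly this configuration yields a bona fide reducible $p$-vanishing character (Lemma \ref{LSp4.1}). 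The paper's actual argument combines the equations coming from the two families of classes $C_{10}(i)$ and $C_{14}(i)$ ($i=1,2$) into a polynomial system in $A=Re(\xi_1^{k_{19}})$, $B=Re(\xi_1^{l_{19}})$, $C=Re(\xi_1^{k_{13}})$, $D=Re(\xi_1^{k_9})$, reduces it to the cubic $C^3+\tfrac12 C^2-\tfrac12 C-\tfrac18=0$ whose roots are $\cos(2t\pi/7)$, and concludes that a solution exists if and only if $7\mid(q+1)$ --- which is impossible for $q$ even because powers of $2$ are congruent to $1,2,4\pmod 7$ and never to $-1$. This arithmetic obstruction, not a size estimate, is the essential point, and your proposal does not supply it.
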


\begin{proof} 
Suppose the contrary. Then we have to consider the \f cases. 

\begin{itemize}
 \item[(1)]  $\psi_1=1_G+\chi_9(k_9)+\chi_{13}(k_{13})+\chi_{19}(k_{19},l_{19})$. For
$i=1,2$, on the classes $C_{10}(i)$ we have $Re(\xi_1^{ik_{19}})+Re(\xi_1^{il_{19}})=
Re(\xi_1^{ik_{13}})+Re(\xi_1^{ik_9})$ and on the classes $C_{14}(i)$ we have
$Re(\xi_1^{(k_{19}+l_{19})i})+Re(\xi_1^{(k_{19}-l_{19})i})=Re(\xi_1^{2ik_{13}})+Re(\xi_1^{
ik_9})$. 
 Set $A= Re(\xi_1^{k_{19}})$, $B=Re(\xi_1^{l_{19}})$,  $C= Re(\xi_1^{k_{13}})$ and $D=
Re(\xi_1^{k_{9}})$:
$$\left\{\begin{array}{l} 2AB= 2C^2-1 +D \\ (2AB)^2-2(A^2+B^2)+1=4C^4-4C^2+ D^2\\
(A,B)=(C,D),(D,C)\\ 
 \end{array}\right. . $$
If $C=\frac{1}{2}$, then $2AB=D-\frac{1}{2}$. This implies that either $A=C=\frac{1}{2}$
and $B=D=D-1/2$  or  $B=C=\frac{1}{2}$ and $A=D=D-\frac{1}{2}$. So, in both the cases we
have an absurd.
Hence, we may assume $C\neq \frac{1}{2}$ and $D=\frac{2C^2-1}{2C-1}$. We get the equation
$$8(C-1)^2\Big(C^3+\frac{1}{2}C^2-\frac{1}{2}C-\frac{1}{8}\Big)=0.$$
Now, since $C\neq 1$: 
\begin{equation}\label{q7}
0=C^3+\frac{1}{2}C^2-\frac{1}{2}C-\frac{1}{8}=\Big(C-\cos(\frac{2\pi}{7}
)\Big)\Big(C-\cos(\frac{4\pi}{7})\Big)
\Big(C-\cos(\frac{6\pi}{7})\Big). 
\end{equation}
Thus, we have a solution if and only if $7$ divides $q+1$. However, this does not happen
since $q$ is even.

\item[(2)]  $\psi_2=1_G+\chi_9(k_9)+\chi_{14}(k_{14})$. On the class $C_{14}(1)$ we have
$Re(\xi_1^{k_9})=1$, a  contradiction.
\item[(3)]   $\psi_{3}= \chi_{1} + \chi_{10}(k_{10}) + \chi_{13}(k_{13})$. On the class
$C_{10}(1)$ we have $Re(\xi_1^{k_{13}})=1$, a contradiction.
\item[(4)]  Take $\psi_{4}= \chi_{3} + \chi_{5} + \chi_{9}(k_9) +
\chi_{19}(k_{19},l_{19})$. 
 On the classes $C_{14}(i)$ ($i=1,2$) we have 
$1+Re(\xi_1^{2ik_{9}})=Re(\xi_1^{i(k_{19}+l_{19})})+Re(\xi_1^{i(k_{19}-l_{19})})$, which
implies that $q+1$ divides either $k_{19}+l_{19}$ or $k_{19}-l_{19}$. In both cases, we
get a contradiction.

\item[(5)]   $\psi_{5}= \chi_{3} + \chi_{5} + \chi_{10}(k)$. On the class $C_{14}(1)$ we
have $Re(\xi_1^k)=1$, a contradiction.

\item[(6)]   $\psi_{6}= \chi_{4} + \chi_{5} + \chi_{13}(k_{13}) +
\chi_{19}(k_{19},l_{19})$. On the classes $C_{14}(i)$ ($i=1,2$) we have
$1+Re(\xi_1^{2ik_{13}})=Re(\xi_1^{i(k_{19}+l_{19})})+Re(\xi_1^{i(k_{19}-l_{19})})$, which
implies that $q+1$ divides either $k_{19}+l_{19}$ or $k_{19}-l_{19}$. In both cases, we
get a contradiction.

\item[(7)]  $\psi_{7}= \chi_{4} + \chi_{5} + \chi_{14}(k)$. On the class $C_{10}(1)$ we
have $Re(\xi_1^k)=1$, a contradiction.
\end{itemize} 
\end{proof}

Let $G=Sp(4,q)$, $q$ odd. The character table of this group is not implemented in
\textsc{chevie},
but it was described in papers by Srinivasan \cite{Sri}, Przygocki \cite{Prz} and Bleher  \cite{Ble}.
We keep the notation of \cite{Sri} for the conjugacy classes. We partition $\Irr G$ into
$49$ sets ${\bf X}_h$ as done in Table \ref{TSp4}.2-A. (Note
that this is not taken from \textsc{chevie} but again all characters in every set are
$Syl_p$-equivalent.) 

Applying Procedure \ref{procedure} we obtain in step (1) the $Syl_p$-decompositions of
Table \ref{TSp4}.2-B and $\Delta_G=\{ 6, 16, 17, 37, 38, 40, 42, 44 \}$. 
When we consider step (2), we note that the only regular characters in $\Delta_G$ are
$\chi_{16}$ and $\chi_{17}$. By
Lemma \ref{gg1}, at least one of them is a constituent of a $Syl_p$-vanishing character
$\psi$ of degree
$|G|_p$ whose
constituents lie in $\Delta_G$. Inspecting \cite{Sh},
we
observe that $\chi_{16}$ and $\chi_{17}$  are $CSp(4,q)$-conjugate. By Proposition
\ref{lu6}, both of them are constituents of $\psi$. (\itf they are constituents of two
distinct non-degenerate Gelfand-Graev characters.)
Working only with these constituents, we obtain only  two $Syl_p$-vanishing characters of degree $|G|_p$:
 $$\psi_1=\chi_6+\chi_{16}+\chi_{17}+\chi_{37}+\chi_{38}+\chi_{40}+\chi_{40}'+
 \chi_{44}$$ and $$\psi_2= \chi_{16}+ \chi_{17}+ \chi_{37}+ \chi_{38}+ \chi_{40}+
\chi_{40}'+ \chi_{40}''+ \chi_{42}.$$
Now step (3) produces all  $Syl_p$-vanishing characters of $G$ of degree $|G|_p$ (see
Table \ref{TSp4}.2-C).

\begin{lemma}\label{LSp4.1}
Let $G=Sp(4,q)$,  $q$ odd. Then $G$ admits reducible $p$-vanishing characters $\psi$ of degree $|G|_p$ if and only if $7\mid (q+1)$. In this case, there are exactly $3$ characters $\psi$, all of type
$$\psi=1_G+\chi_4(a,b)+\chi_6(c)+\chi_{10}(d),$$
where
$$(a,b,c,d)  \in   \{ (T, 3T, T, 3T ),  (T,2T,2T,T), (2T,3T,3T,2T)\}
$$
and $T=(q+1)/7$.
\end{lemma}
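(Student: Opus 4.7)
The plan is to replicate, with appropriate modifications, the argument used in the proof of Lemma \ref{LSp4.0}. Table \ref{TSp4}.2-C lists the $Syl_p$-vanishing characters of $G$ of degree $|G|_p$, and the task is to identify those which in addition vanish on every semisimple $p$-singular element---that is, those which are $p$-vanishing in the strict sense. A first pass uses Lemma \ref{bb1} to force all irreducible constituents of a given $\psi$ to lie in a common $p$-block, which immediately pins down divisibilities among the parameters indexing the characters $\chi_h(\cdot)$ and eliminates many combinations.

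For each remaining candidate, I would evaluate $\psi$ on a short list of representatives of the semisimple classes, using the tables of Srinivasan \cite{Sri} (with the complements of Przygocki \cite{Prz} and Bleher \cite{Ble}). This produces a system of equations in the roots of unity $\zeta_1 = \exp(2\pi i/(q-1))$ and $\xi_1 = \exp(2\pi i/(q+1))$. As in case (1) of the proof of Lemma \ref{LSp4.0}, these equations can be reduced systematically; in the key case where the candidate is compatible, the condition collapses to the cubic
$$C^3+\tfrac{1}{2}C^2-\tfrac{1}{2}C-\tfrac{1}{8}=0,$$
already analysed in \eqref{q7}, whose real roots are $\cos(2\pi j/7)$ for $j=1,2,3$. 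Such a value of $C = \mathrm{Re}(\xi_1^k)$ arises if and only if $7 \mid (q+1)$, which for $q$ odd is now a genuine possibility (it is precisely the condition that was excluded in Lemma \ref{LSp4.0}).

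Assuming $7 \mid (q+1)$ and setting $T = (q+1)/7$, the three roots of the cubic correspond to the parameter values $k \in \{T, 2T, 3T\}$, and the remaining equations from other semisimple classes determine the parameters of $\chi_4, \chi_6, \chi_{10}$ uniquely. Careful matching gives the three triples $(a,b,c,d)$ listed in the statement, and no more. Characters from Table \ref{TSp4}.2-C that involve $\chi_{16}, \chi_{17}, \chi_{37}, \chi_{38}, \chi_{40}, \chi_{42}, \chi_{44}$, or combinations of other $\chi_h$, either reduce via the $Syl_p$-decompositions in Table \ref{TSp4}.2-B to the canonical form $1_G + \chi_4 + \chi_6 + \chi_{10}$, or are ruled out by a contradiction on a suitable semisimple class; Proposition \ref{lu6} together with Lemma \ref{aa1}(4) will help collapse pairs related by Curtis duality or by the action of $CSp(4,q)/G$.

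The principal difficulty will be the bookkeeping: the number of candidate $Syl_p$-vanishing characters to be checked is non-trivial, and for each one the character values at semisimple classes involve both $\zeta_1$- and $\xi_1$-monomials whose compatibility must be tracked simultaneously. The reduction to the single cubic in $C$, as in the $q$ even case, is the crucial simplification that makes the casework manageable; verifying that exactly three parameter triples---neither more nor fewer---yield $p$-vanishing characters is the most delicate step.
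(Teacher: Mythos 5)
Your proposal follows essentially the same route as the paper: run through the candidates in Table \ref{TSp4}.2-C, evaluate at semisimple classes from Srinivasan's table, and reduce the surviving case $1_G+\chi_4+\chi_6+\chi_{10}$ to the cubic of \eqref{q7}, whose roots $\cos(2\pi j/7)$ force $7\mid(q+1)$ and yield exactly the three parameter triples after accounting for the symmetry $\chi_4(a,b)=\chi_4(b,a)$. The paper's elimination of the other candidates likewise proceeds by direct contradictions at classes such as $C_{21}(1)$, $C_{22}(1)$ and $B_7(i)$, so your plan matches the published argument in both structure and the decisive computation.
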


\begin{proof}
Most of the $Syl_p$-vanishing characters of $G$ can be easily ruled out. We present here some examples for the most interesting cases.

\begin{itemize}
\item[(1)] Let $\psi_1=1_G+\chi_4(k_4,l_4)+\chi_6(k_6)+\chi_{10}(k_{10})$. On the class $D_{31}$ we have $(-1)^{k_4}+(-1)^{l_4}=(-1)^{k_6}+(-1)^{k_{10}}$
On the class $D_{21}$ we have
$(q-1)(-1)^{k_6} +q=(q-1)((-1)^{k_4}+(-1)^{l_4} )+(-1)^{k_{10}}$.
Thus, $k_{10}$ is even. 
On the class $B_7(i)$ we have 
$Re(\xi_1^{(k_4+l_4)i})+Re(\xi_1^{(k_4-l_4)i})=Re(\xi_1^{2ik_6})+Re(\xi_1^{ik_{10}})$
and on the class $C_{21}(i)$:
$Re(\xi_1^{k_4i})+Re(\xi_1^{l_4i})=Re(\xi_1^{ik_6})+Re(\xi_1^{ik_{10}})$.

We set $A= Re(\xi_1^{k_4})$, $B=Re(\xi_1^{l_4})$,  $C= Re(\xi_1^{k_{6}})$ and $D= Re(\xi_1^{k_{10}})$, obtaining
$$\left\{\begin{array}{l} 2AB= 2C^2-1 +D \\ (2AB)^2-2(A^2+B^2)+1=4C^4-4C^2+ D^2\\
(A,B)=(C,D),(D,C)\\ 
 \end{array}\right. . $$
If $C=\frac{1}{2}$, then $2AB=D-\frac{1}{2}$. This implies that either $A=C=\frac{1}{2}$ and $B=D=D-1/2$  or  $B=C=\frac{1}{2}$ and $A=D=D-\frac{1}{2}$. So, in both the cases we have an absurd.
Hence, we may assume $C\neq \frac{1}{2}$ and $D=\frac{2C^2-1}{2C-1}$. We get the equation
$$8(C-1)^2\Big(C^3+\frac{1}{2}C^2-\frac{1}{2}C-\frac{1}{8}\Big)=0,$$
and so Equation \eqref{q7} which implies $C=\cos(2t\pi/7)$, $t=1,2,3$, and
$D=\cos(6t\pi/7)$. Thus,
\begin{eqnarray*}
(A,B,C,D)& \in   &\big\{
\Big(\cos\frac{2\pi}{7},\cos\frac{6\pi}{7},\cos\frac{2\pi}{7},\cos\frac{6\pi}{7}
\Big),\\ {} &  &
\Big(\cos\frac{6\pi}{7},\cos\frac{2\pi}{7},\cos\frac{2\pi}{7},\cos\frac{6\pi}{7}
\Big),\\ {} &  &
\Big(\cos\frac{4\pi}{7},\cos\frac{2\pi}{7},\cos\frac{4\pi}{7},\cos\frac{2\pi}{7}
\Big),\\
{} &  &
\Big(\cos\frac{2\pi}{7},\cos\frac{4\pi}{7},\cos\frac{4\pi}{7},\cos\frac{2\pi}{7}
\Big),\\ 
{} &  &
\Big(\cos\frac{6\pi}{7},\cos\frac{4\pi}{7},\cos\frac{6\pi}{7},\cos\frac{4\pi}{7}
\Big),\\ {} &  &
\Big(\cos\frac{4\pi}{7},\cos\frac{6\pi}{7},\cos\frac{6\pi}{7},\cos\frac{4\pi}{7}
\Big)\big\}.\\
\end{eqnarray*}
This means, denoting $T=\frac{q+1}{7}$, that
\begin{eqnarray*}
(k_4, l_4, k_6,k_{10} )&\in&   \{(T, 3T, T, 3T), (3T, T, T, 3T), (2T, T,2T,T)\\ {}
&  & (T,2T,2T,T), (3T,2T,3T,2T), (2T,3T,3T,2T)\}.
\end{eqnarray*}
It is clear now that $Re(\xi_1^{(k_4+l_4)i})+Re(\xi_1^{(k_4-l_4)i})=Re(\xi_1^{2ik_6})+Re(\xi_1^{ik_{10}})$ and $Re(\xi_1^{i k_4})+
Re(\xi_1^{i l_4})= Re(\xi_1^{ik_6})+Re(\xi_1^{ik_{10}})$ for all $i$. \itf  $\psi$ is a $p$-vanishing character if and only if $7$ divides
$q+1$. In this case, since $\chi_4(a,b)=\chi_4(b,a)$, $G$ admits exactly three $p$-vanishing characters of degree $|G|_p$ as described in the statement.

\item[(2)] Take 
$\psi_{16}=1_G +\chi_6(k_6) +\chi_{10}(k_{10})+\chi_{16}(k_{16})+\chi_{17}(k_{17})$.
Looking at classes $C_{21}(1)$ and $C_{22}(1)$ we 
obtain $Re(\xi_1^{k_{16}} )=Re(\xi_1^{k_{17}})$. On the classes $B_7(i)$ ($i=1,2)$ we get
$Re(\xi_1^{2ik_{6}})+Re(\xi_1^{ik_{10}})=2(-1)^iRe(\xi_1^{ik_{16}})$. Hence $Re(\xi_1^{k_{10}})=-1$, a contradiction.

\item[(3)]   $\psi_{29}=1_G+\chi_6(k_6)+\chi_{16}(k_{16})+\chi_{23}+\chi_{24}$. On the classes $C_{21}(1)$ and $C_{22}(1)$ we obtain $Re(\xi_1^{k_{16}})=1$, a contradiction.
\end{itemize}
\end{proof}

\begin{remar}
It can be shown that the $p$-vanishing characters $\psi_1$ of $Sp(4,q)$, described above,
can be extended to $p$-vanishing characters of $CSp(4,q)$. In Tables \ref{TSp4}.3-A,B,C we
report information about irreducible characters, $Syl_p$-decompositions and
$Syl_p$-vanishing characters of $CSp(4,q)$, $q$ odd. 
\end{remar}

\subsection{Groups ${}^3D_4(q)$, $G_2(q)$ and ${}^2F_4(q^2)$}

Let $G={}^3D_4(q)$. The character table of $G$ depends on the parity of $q$.  Here we
describe in detail only the case $q$ odd; the case $q$ even is very similar (see Tables
\ref{T3D4}-1.A,B,C).

So, let $q$ be odd and apply Procedure \ref{procedure}.
In step (1) we obtain the $Syl_p$-decompositions of Table \ref{T3D4}.2-B  and
$\Delta_G=\{
 1, 2, 3, 4, 5, 6, 7, 9, 15, 16, 21, 24, 25, 30,$ $33  \}$.
 Note that, for these $h$, the only regular characters $\chi_h$ are with $h=30,33$.
 In step (2), let $\psi$ be a $Syl_p$-vanishing character of $G$ of degree $|G|_p$, whose
constituents lie in $\Delta_G$.
First, some computation  shows that $(\chi_{30},\psi)=0$.

Next, assume that $\chi_{33}$ is a constituent of $\psi$.
Looking at the values of $\chi_h$ at the class $C_7$, we observe that 
$\chi_{21}$ or $\chi_{24}$ is a constituent of  $\psi$. 
In the former case, we get the character $\psi_1= \chi_{1} + \chi_{21} + \chi_{24} +
\chi_{25} + \chi_{33}$. 
In the latter case, we obtain  the characters $\psi_{2}= \chi_{5} + \chi_{6} + \chi_{7} +
\chi_{21} + \chi_{33}$
and $\psi_{3}= \chi_{2} + \chi_{5} + \chi_{6} + \chi_{24} + \chi_{25} + \chi_{33}$.
With step (3) we determine all  $Syl_p$-vanishing characters of 
$G$ of degree $|G|_p$ (see Table \ref{T3D4}.2-C).

\begin{lemma}\label{L3D4}
The group $G={}^3D_4(q)$ has no reducible $p$-vanishing character of degree $|G|_p$.
\end{lemma}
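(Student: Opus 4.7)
The plan is to rule out, one by one, each of the reducible $Syl_p$-vanishing characters of degree $|G|_p$ produced by step (3) of Procedure \ref{procedure} and listed in Table \ref{T3D4}.2-C, by exhibiting in each case a $p$-singular element (with non-trivial semisimple part) at which the character value is non-zero.

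First, I would trim the list using Lemma \ref{bb1}: in a minimal $p$-vanishing character, all \ir constituents sit in a common $p$-block. Matching block-theoretic data for the constituents $\chi_h$ appearing in $\psi_1,\psi_2,\psi_3$ (and the refinements obtained after step (3)) eliminates most parameter choices outright, leaving only a small collection of candidate characters whose constituents genuinely share a block.

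For the survivors, the strategy is the same as in the $Sp(4,q)$ case treated in Lemma \ref{LSp4.1}: take a mixed class $C=su^G$ where $s\neq 1$ is semisimple, $u\neq 1$ is unipotent, and $su=us$. Using \textsc{chevie}, evaluate each irreducible constituent of $\psi$ at such a class, producing relations of the form
\[
\sum_i \xi_1^{a_i k_i}\ =\ \sum_j \xi_2^{b_j l_j}\ \text{etc.,}
\]
where the parameters $k_i,l_j$ range over the \textsc{chevie} indexing sets. The aim is to show that the system forced by demanding vanishing on several such classes simultaneously admits no solution within the prescribed parameter ranges. Curtis self-duality (Lemma \ref{a8} and Lemma \ref{aa1}) can be used to halve the work whenever $\psi$ or parts of it are Curtis self-dual, and Corollary \ref{ke1} controls the behaviour on $Z(L)$ for each maximal Levi $L$, giving further parameter constraints.

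The main obstacle, as in the symplectic case, is computational bookkeeping rather than conceptual difficulty: we must check that, unlike $Sp(4,q)$ where an arithmetic coincidence $7\mid q+1$ produced genuine solutions, for ${}^3D_4(q)$ no such coincidence occurs. Concretely, after reducing to trigonometric equations in $\cos(2\pi k/(q\pm 1))$ arising from the tori attached to the various classes, the polynomial identities that would have to hold have no roots compatible with the parameter ranges. Since all conjugacy classes used in this analysis are visibly contained in $G={}^3D_4(q)$ (the group has connected centre, so no passage through Proposition \ref{lu6} is needed), the conclusion that no reducible $p$-vanishing character of degree $|G|_p$ exists follows for every $q$.
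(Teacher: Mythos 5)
Your proposal follows essentially the same route as the paper: starting from the list of $Syl_p$-vanishing candidates in Table \ref{T3D4}.2-C (resp.\ \ref{T3D4}.1-C), one evaluates each candidate at suitable mixed classes (the paper uses $C_{22}(a)$, $C_{25}(1)$, $C_{26}(1)$, $C_{28}(a)$) and shows the resulting relations among roots of unity force a parameter outside the admissible range $I_h$, with no arithmetic coincidence analogous to $7\mid(q+1)$ for $Sp(4,q)$. The auxiliary tools you mention (block-theoretic trimming via Lemma \ref{bb1}, Curtis duality, Corollary \ref{ke1}) are not needed in the paper's case-by-case verification, but they do not change the substance of the argument.
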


\begin{proof} Suppose the contrary. Assume that $q$ is odd (the case of $q$ even is
similar).

\begin{itemize}

\item[(1)] Take $\psi_{1}= 1_G + \chi_{21}(k_{21}) + 
\chi_{24}(k_{24}) + \chi_{25}(k_{25}) + 
\chi_{33}(k_{33},l_{33})$. On the classes $C_{22}(a)$ ($a=1,2,3$) we have
$$1+Re(\xi_1^{ak_{21}})+Re(\xi_1^{2ak_{21}})
=Re(\xi_1^{al_{33}}) +Re(\xi_1^{a(k_{33}+l_{33} )})+Re(\xi_1^{a(k_{33}+2l_{33}) }).$$ 
This implies that $q+1$ divides at least one of the numbers $l_{33}$, $k_{33}+l_{33}$,
$k_{33}+2l_{33}$, which violates the description of the parameter set $I_h$.

\item[(2)] Take $\psi_{2}= \chi_{5} + \chi_{6} + \chi_{7} + \chi_{21}(k_{21}) +
\chi_{33}(k_{33},l_{33})$. In this case, we have the same contradiction as in the previous
case.

\item[(3)] Take $\psi_{3}= \chi_{2} + \chi_{5} + \chi_{6} + \chi_{24}(k_{24}) +
\chi_{25}(k_{25}) + \chi_{33}(k_{33},l_{33})$. 
On the class $C_{22}(1)$  we obtain
$Re(\xi_1^{l_{33}})+Re(\xi_1^{k_{33}+l_{33}})+
Re(\xi_1^{k_{33}+2l_{33}})=3$. This leads to a contradiction.

\item[(4)] Take $\psi_{4}= 1_G + \chi_{21}(k_{21}) + \chi_{27}(k_{27}) +
\chi_{33}(k_{33},l_{33})$. On the class $C_{22}(a)$ we get
$$Re(\xi_1^{ak_{21}})+Re(\xi_1^{2ak_{21}})+Re(\xi_1^{ak_{27}})
=Re(\xi_1^{al_{33}}) +Re(\xi_1^{a(k_{33}+l_{33} )})+Re(\xi_1^{a(k_{33}+2l_{33}) }).$$ 
Using  the classes $C_{25}(1)$ and $C_{26}(1)$ we get
$Re(\varphi_6^{k_{27}})=Re(\varphi_6^{k_{33}})=1$.
Furthermore, setting $k_{27}=(q^2-q+1) k'_{27}$ and   
$k_{33}=(q^2-q+1) k'_{33}$, on the classes $C_{28}(a)$ we have
$$Re(\xi_1^{ak_{21}})+Re(\xi_1^{a k'_{27}})+Re(\xi_1^{2a k'_{27}})= Re(\xi_1^{a
k'_{33}})+Re(\xi_1^{a(k'_{33}+l_{33})})+Re(\xi_1^{a(2 k'_{33}+l_{33} )}).$$
With some work, this yields a contradiction.

\item[(5)] Take  $\psi_{5}= \chi_{1} + \chi_{21}(k_{21}) + \chi_{28}(k_{28})$. On the
classes $C_{22}(a)$ ($a=1,2$) we obtain
$1+Re(\xi_1^{ak_{28}})=Re(\xi_1^{ak_{21}}) +Re(\xi_1^{2ak_{21}})$, 
which implies $(q+1)/2 \mid k_{21}$, a contradiction.

\item[(6)] Take $\psi_{6}= 1_G + \chi_{22}(k_{22}) + \chi_{24}(k_{24}) +
\chi_{25}(k_{25})$. On the class $C_{22}(1)$ we obtain the equality
$Re(\xi_1^{k_{22}})=1$, which is a contradiction.

\item[(7)] Take $\psi_{7}=1_G + \chi_{22}(k_{22}) + 
\chi_{27}(k_{27})$. On the class $C_{22}(1)$ we obtain
$Re(\xi_1^{k_{22}})=Re(\xi_1^{k_{27}})$ and on the class $C_{26}(1)$ we get
$Re(\varphi_6^{k_{27}})=1$. On the class $C_{28}(1)$ we have $Re(\xi_3^{q(q-1)k_{27}})=1$.
This implies that $(q^3+1)/2$ divides $k_{27}$, which is a contradiction.

\item[(8)] Take $\psi_{8}= \chi_{2} + \chi_{5} + \chi_{6} + 
\chi_{27}(k_{27}) + \chi_{33}(k_{33})$.
On the classes $C_{22}(a)$ ($a=1,2,3$) we get
$$2+Re(\xi_1^{ak_{27}})=
Re(\xi_1^{al_{33}})+Re(\xi_1^{a(k_{33}+l_{33})})+
Re(\xi_1^{a(k_{33}+2l_{33})}).$$
This implies that $q+1$ divides at least one of the numbers $l_{33}$, $k_{33}+l_{33}$,
$k_{33}+2l_{33}$, which is 
a contradiction.

\item[(9)] Take  $\psi_{9}= \chi_{2} + \chi_{5} + \chi_{6} + \chi_{28}(k_{28})$. 
On the class $C_{22}(1)$ we have $Re(\xi_1^{k_{28}})=1$, a contradiction.

\item[(10)] Take $\psi_{10}= \chi_{5} + \chi_{6} + \chi_{7} + \chi_{22}(k_{22})$. On the
class $C_{22}(1)$ we obtain $Re(\xi_1^{k_{22}})=1$, a contradiction.
\end{itemize}
\end{proof}

Now, let $G=G_2(q)$. The character table of $G$ provided by \textsc{chevie} 
distinguishes five cases, depending on the parity of $q$ and on the congruence class of
$q$ modulo $3$. All the cases are very similar, so we  provide   details only for
the case with $q$ odd,  $q \equiv -1\pmod 3$, which requires more work.

So, let $q$ be odd with $3 \mid (q+1)$ and apply Procedure \ref{procedure}.
With step (1) we obtain the $Syl_p$-decompositions of Table \ref{TG2}.5-B and $\Delta_G=\{
1, 2, 3, 5, 6, 7, 9, 11, 15, 22, 27$, $30  \}$.
Note that $\chi_{27}$ and $\chi_{30}$ are the only regular characters of this set. 

In step (2), let $\psi$ be a $Syl_p$-vanishing character of $G$ of degree $|G|_p$, whose
constituents lie in $\Delta_G$. First, assume that $\chi_{27}$ is a constituent of
$\psi$. Using  the class $C_7$, we conclude that either $\chi_{15}$ or $\chi_{22}$  occurs
as a constituent of $\psi$. 
In the former  case, we get $\psi_1= \chi_{5} + \chi_{6} + \chi_{6} + \chi_{7} + \chi_{7}
+ \chi_{9} + \chi_{15} + \chi_{27}$. In the latter case, we get 
 $\psi_2=\chi_{1} + \chi_{6} + \chi_{7} + \chi_{9} + \chi_{
15} + \chi_{22} + \chi_{27}$.
Next, assume that $\chi_{30}$ is a constituent of $\psi$. Then, with some work we obtain a
contradiction.

 Finally, in step (3) we  determine all  $Syl_p$-vanishing characters of 
$G$ of degree $|G|_p$ (see Table \ref{TG2}.5-C).

\begin{lemma}\label{LG2}
The group $G=G_2(q)$ has no reducible $p$-vanishing character of degree $|G|_p$.
\end{lemma}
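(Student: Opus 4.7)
My plan is to follow exactly the pattern used in the proofs of Lemmas \ref{LSp4.0}, \ref{LSp4.1}, and \ref{L3D4}: apply Procedure \ref{procedure} to reduce to a short finite list of candidate $Syl_p$-vanishing characters $\psi$ of degree $|G|_p$, and then rule each candidate out by evaluating $\psi$ on a carefully chosen $p$-singular class. By Proposition \ref{lu6}, it suffices to work in whichever convenient group ($G_2(q)$ itself, since $Z(G_2(q))=1$) and use only classes that lie in $G$.

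First I would organise the work into the five \textsc{chevie} cases (according to the parity of $q$ and the residue $q \pmod 3$). For each case, step (1) of Procedure \ref{procedure} yields a table of $Syl_p$-decompositions that restricts attention to a small set $\Delta_G$ of indices, and step (2) singles out a handful of candidate $Syl_p$-vanishing characters $\psi$ of degree $|G|_p$. By Lemma \ref{gg1}, every candidate must contain at least one regular constituent; since the only regular characters in each $\Delta_G$ are explicit (for the detailed case worked out in the excerpt, $\chi_{27}$ and $\chi_{30}$), this produces the finite list recorded in Table \ref{TG2}.5-C (and the corresponding tables for the other four cases).

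Next, for each candidate $\psi = \chi_{h_1}(k_1)+\cdots+\chi_{h_r}(k_r)$, I would evaluate $\psi$ on suitable classes $C_j(a)$ representing $p$-singular elements (typically of mixed semisimple/unipotent type, parametrised by a character of a maximal torus). The equation $\psi(g)=0$ then translates into a sum of real parts of roots of unity in $\xi_1$, $\xi_2$, $\xi_3$, $\varphi_6$, etc. For the cases with $\chi_{27}$ a constituent of $\psi$ (so $\psi=\psi_1$ or $\psi_2$ of the excerpt), I would evaluate on the classes $C_{22}(a)$ for $a=1,\dots,\lfloor (q-1)/2\rfloor$ (or the analogous torus classes in the other congruence cases). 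Expressing each $\chi_h(k_h)$ on these classes gives a system of equations in the character parameters $k_h$, and one shows that these equations force divisibility of some $k_h$ by $q+1$ (or by $q-1$, $q^2+q+1$, $q^2-q+1$, depending on the class), which violates the \textsc{chevie} parameter constraints on $I_h$; this mimics the contradictions obtained in (1)--(10) of the proof of Lemma \ref{L3D4}.

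The main obstacle is bookkeeping: there are five congruence cases and in each of them several candidate decompositions, plus a few sporadic candidates that do not obviously fit the pattern, and each must be ruled out by selecting the right class. To manage this I would group candidates by which regular constituent they contain, handle the cases involving $\chi_{30}$ first (these tend to die quickly on a single class in the cyclotomic torus, as in the $\chi_{33}$--$\chi_{22}$ interaction in Lemma \ref{L3D4}), and then treat the $\chi_{27}$-candidates by the $C_{22}(a)$ computation above. For the even-characteristic and the $3\nmid(q^2-1)$ subcases, the arguments are almost verbatim since only the cardinalities of the parameter sets and the relevant tori change; the cyclotomic divisibility argument is the same. At no point do we need to engage any new Harish-Chandra machinery beyond what Lemma \ref{bb1}, Corollary \ref{ke1}, and Proposition \ref{lu6} already provide.
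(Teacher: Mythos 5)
Your proposal follows essentially the same route as the paper: apply Procedure \ref{procedure} in each of the five congruence cases to obtain the finite candidate list of Table \ref{TG2}.5-C (with $\chi_{27}$ and $\chi_{30}$ as the only regular constituents in the detailed case), then kill each candidate by evaluating on mixed-class representatives and extracting cyclotomic divisibility contradictions on the parameters. The only cosmetic difference is that the paper uses the classes $C_{14}$, $C_{23}(i)$ and $C_{25}(i)$ of $G_2(q)$ rather than the $C_{22}(a)$ labels you carried over from the ${}^3D_4(q)$ argument, and it does not need Proposition \ref{lu6} or Corollary \ref{ke1} since $G_2(q)$ has trivial center; neither point affects the correctness of your plan.
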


\begin{proof}
Suppose the contrary. As before, we consider here only the case with $q$ odd and $3 \mid
(q+1)$. The proof for the other cases is very similar.
\begin{itemize}
\item[(1)] Take $\psi_{1}, \psi_2, \psi_3,\psi_4,\psi_5,\psi_{13}$. All these characters
do not vanish on the class $C_{14}$.

\item[(2)] Take $\psi_{6}= 1_G + \chi_{15} + \chi_{16} + \chi_{22}(k_{22}) +
\chi_{27}(k_{27},l_{27})$.
On the classes $C_{23}(i)$ ($i=1,2,3$), we have
$$1+Re(\xi_1^{ik_{22}})+Re(\xi_1^{2ik_{22}})=Re(\xi_1^{i(k_{27}+l_{27})})+Re(\xi_1^{i(k_{
27}-2l_{27})}) +Re(\xi_1^{i(2k_{27}-l_{27})}).$$
This implies that $q+1$ divides at least one of the integers $k_{27}+l_{27}$,
$k_{27}-2l_{27}$, $2k_{27}-l_{27}$, 
a contradiction.

\item[(3)] Take $\psi_{7}= 1_G+ \chi_{15} + \chi_{16} + \chi_{23}(k_{23})$. On the class
$C_{23}(1)$ we have $Re(\xi_1^{k_{23}})=1$, a contradiction.

\item[(4)] Take $\psi_{8}= 1_G + \chi_{22}(k_{22}) + \chi_{24}(k_{24}) +
\chi_{27}(k_{27},l_{27})$.
On the classes $C_{23}(i)$ we have
$$Re(\xi_1^{ik_{22}})+Re(\xi_1^{2ik_{22}})+Re(\xi_1^{3ik_{24}})=Re(\xi_1^{i(k_{27}+l_{27})
})+Re(\xi_1^{i(k_{27}-2l_{27})}) +Re(\xi_1^{i(2k_{27}-l_{27})})$$
and on the classes $C_{25}(i)$ we have
$$Re(\xi_1^{ik_{22}})+ Re(\xi_1^{ik_{24}})+
Re(\xi_1^{2ik_{24}})=
Re(\xi_1^{ik_{27}})+ Re(\xi_1^{il_{27}}) +Re(\xi_1^{i(k_{27}-l_{27})}).$$
These conditions lead to a contradiction.

\item[(5)] Take $\psi_{9}= 1_G + \chi_{22}(k_{22}) + \chi_{25}(k_{25})$. On the classes
$C_{23}(i)$ ($i=1,2$) we have
$1+Re(\xi_1^{3ik_{25}})=Re(\xi_1^{ik_{22}})+Re(\xi_1^{2ik_{22}})$. This implies that
$(q+1)/2$ divides $k_{22}$, which is  a contradiction.

\item[(6)]  Take $\psi_{10}= 1_G + \chi_{23}(k_{23}) + \chi_{24}(k_{24})$. On the classes
$C_{25}(i)$ ($i=1,2$) we obtain
$1+Re(\xi_1^{ik_{23}})=Re(\xi_1^{ik_{24}})+Re(\xi_1^{2ik_{24}})$. This implies that
$(q+1)/2$ divides $k_{24}$, a contradiction.

\item[(7)] Take $\psi_{11}= \chi_{4} + \chi_{6} + \chi_{7} + \chi_{22}(k_{22}) +
\chi_{27}(k_{27},l_{27})$.
On the classes $C_{23}(i)$ ($i=1,2,3$) we obtain
$$1+Re(\xi_1^{ik_{22}})+Re(\xi_1^{2ik_{22}})
=Re(\xi_1^{i(k_{27}+l_{27})})+Re(\xi_1^{i(k_{27}-2l_{27} )})+Re(\xi_1^{i(2k_{27}+l_{27})
}).$$ 
This implies that $q+1$ divides at least one of the numbers $k_{27}+l_{27}$,
$k_{27}-2l_{27}$, $2k_{27}-l_{27}$, which 
 contradicts the description of the parameter sets $I_h$.

\item[(8)]  Take $\psi_{12}= \chi_{4} + \chi_{6} + \chi_{7} + \chi_{23}(k_{23})$. On the
class $C_{23}(1)$ we obtain $Re(\xi_1^{k_{23}})=1$, a contradiction.

\item[(9)] Take $\psi_{14}= \chi_{5} + \chi_{6} + \chi_{7} + \chi_{15} + \chi_{16} +
\chi_{27}(k_{27},l_{27})$.
On the classes $C_{23}(1)$  we obtain
$Re(\xi_1^{k_{27}+l_{27}})+Re(\xi_1^{k_{27}-2l_{27} }) +
Re(\xi_1^{2k_{27}+l_{27} })=3$, which implies that $(q+1)$ divides $(k_{27}+l_{27})$, a
contradiction.

\item[(10)] Take $\psi_{15}= \chi_{5} + \chi_{6} + \chi_{7} + \chi_{24}(k_{24}) +
\chi_{27}(k_{27},l_{27})$. On the classes $C_{23}(i)$ ($i=1,2,3$) we obtain
$$Re(\xi_1^{i(k_{27}+l_{27})})+Re(\xi_1^{i(k_{27}-2l_{27} )})+Re(\xi_1^{i(2k_{27}-l_{27})
})=2+Re(\xi_1^{3ik_{24}}),$$
which implies that $q+1$ divides at least one of the numbers $k_{27}+l_{27}$,
$k_{27}-2l_{27}$, $2k_{27}-l_{27}$, which is  
a contradiction.

\item[(11)] Take  $\psi_{16}= \chi_{5} + \chi_{6} + \chi_{7} + \chi_{25}(k_{25})$. On the
class $C_{23}(1)$ we have $Re(\xi_1^{3k_{25}})=1$, a contradiction.
\end{itemize}
\end{proof}

Observe that the group $G=G_2(2)'$ has no reducible $Syl_2$-vanishing characters of degree
$|G|_2$. 
On the other hand, $G$ has six reducible $Syl_2$-vanishing characters of degree $2|G|_2$.
In the notation of \cite{Atl}:
\begin{itemize}
\item[(1)] $\phi_1=2\chi_{13}$;
\item[(2)] $\phi_2=2\chi_{14}$;
\item[(3)] $\phi_3=\chi_{13}+\chi_{14}$;
\item[(4)] $\phi_4=\chi_1+\chi_3+\chi_4+\chi_9+\chi_{11}$;
\item[(5)] $\phi_5=\chi_1+\chi_3+\chi_5+\chi_8+\chi_{12}$;
\item[(6)] $\phi_6=\chi_1+\chi_3+\chi_6+\chi_8+\chi_9$.
\end{itemize}
The three characters $\phi_1,\phi_2,\phi_3$ are $2$-vanishing, whereas the other three
characters are not. This can be easily proved
using  the character table of $G$.

\medskip

Finally, let $G={}^2F_4(q^2)$, with $q^2=2^{2m+1}$ and $m\geq 0$. The groups
${}^2F_4(q^2)$ require a much larger amount of computations. Note that the
official distribution of \textsc{chevie} contains only a partial character table of these
groups. In this case, our computations are based on the character table kindly provided by
F. Himstedt (constructed for \cite{HH}). This table is available in \textsc{chevie} format
and so we may apply the same argument as in the previous cases.

Applying Procedure \ref{procedure}, in step (1) we obtain the $Syl_p$-decompositions of
Table \ref{T2F4}-B and $\Delta_G=\{1, 2, 3, 4, 5, 6, 7, 8, 9, 10, 11, 12, 13, 14,
15, 16, 17,
18, 19, 20,   29, 30,$ $38, 40, 41, 42, 43, 44,  46, 47, 48, 51\}$.
Considering step (2),  let $\psi$ be a $Syl_p$-vanishing character of degree $|G|_p$ whose
constituents lie in $\Delta_G$. If $(\chi_{51},\psi)>0$, we obtain two characters $\psi$:
\begin{eqnarray*}
\psi_1 & =&  \chi_{2} + \chi_{4} + \chi_{10} + \chi_{11} + \chi_{13} + 
\chi_{17} + \chi_{20} + \chi_{46} + \chi_{47} + \chi_{47} + \chi_{48} + \chi_{51};\\ 
\psi_2 & =&  \chi_{3} + \chi_{4} + \chi_{10} + \chi_{12} + \chi_{14} + \chi_{17} +
\chi_{19} + \chi_{46} + \chi_{47} + \chi_{48} + \chi_{48} + \chi_{51}.
\end{eqnarray*}
On the other side, if  $\chi_{38}$, $\chi_{40}$ or $\chi_{41}$ is a constituent of $\psi$,
we get a
contradiction.
With step (3)  we  determine all  $Syl_p$-vanishing characters of $G$ of degree $|G|_p$
(see Table \ref{T2F4}-C).

\begin{lemma}\label{L2F4}
The group $G={}^2F_4(q^2)$ has no reducible $p$-vanishing character of degree $|G|_p$.
\end{lemma}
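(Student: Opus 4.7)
The plan is to verify, case by case, that none of the reducible $Syl_p$-vanishing characters $\psi$ of degree $|G|_p$ enumerated in Table \ref{T2F4}-C is $p$-vanishing, by exhibiting in each instance a mixed (semisimple-times-non-trivial-unipotent) element at which $\psi$ is non-zero. This follows exactly the template used for $G_2(q)$, ${}^3D_4(q)$ and $Sp(4,q)$ in Lemmas \ref{LG2}, \ref{L3D4} and \ref{LSp4.0}.

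First I would apply Lemma \ref{bb1} to discard every candidate $\psi$ whose irreducible constituents fail to belong to a common $p$-block: as in the rank-$2$ groups already treated, this is expected to trim a substantial portion of the list produced by Procedure \ref{procedure}. The surviving candidates are then handled in groups, according to which \textsc{chevie} sets ${\bf X}_h$ their constituents come from. For each surviving candidate, I would pick a family of $p$-singular classes $C_k(i)$ whose values encode the relevant cyclotomic parameters: one uses classes whose character values involve $\xi_1$ when the constituents are parametrized by $\ZZ_{q^2-1}$, classes involving $\xi_2$ when they are parametrized by $\ZZ_{q^2+1}$, and classes involving $\varphi_8'$ or $\varphi_{12}''$ when the parameters come from the cyclotomic factors $q^4-q^2+1$, $q^4+1$ of $|G|$. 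Taking real parts and letting $i$ vary, the vanishing condition $\psi|_{C_k(i)} = 0$ becomes a system
\[
\sum_j \mathrm{Re}(\eta^{a_j i}) \;=\; \sum_\ell \mathrm{Re}(\eta^{b_\ell i}) \qquad (i=1,2,\ldots),
\]
which in each case forces one of the parameters to satisfy a divisibility relation of the shape $(q^2+1)\mid a_j$ or $(q^2\pm\sqrt{2}q+1)\mid a_j$ that is explicitly excluded by the description of the parameter set $I_h$.

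The bookkeeping can be compressed using Lemma \ref{aa1}: the Curtis duality pairs up constituents and allows us to handle only one representative from each dual pair, so that the two ``leading'' candidates
\[
\psi_1 = \chi_2 + \chi_4 + \chi_{10} + \chi_{11} + \chi_{13} + \chi_{17} + \chi_{20} + \chi_{46} + \chi_{47} + \chi_{47} + \chi_{48} + \chi_{51}
\]
and $\psi_2$ are reduced to a single calculation. Moreover, since $G={}^2F_4(q^2)$ is its own derived subgroup, every $p$-singular class lies in $G'=G$, so Proposition \ref{lu6} imposes no further restriction on the choice of test classes.

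The principal obstacle is the sheer volume of cases: the \textsc{chevie} partition of $\Irr G$ for ${}^2F_4(q^2)$ is large, and Table \ref{T2F4}-C lists many $Syl_p$-vanishing candidates, each with several parameter components drawn from distinct cyclotomic groups. Selecting, for each candidate, a class family that simultaneously separates all its parameters is the combinatorially delicate step. This difficulty is mitigated by the detailed generic character table provided by Himstedt, which expresses every mixed-class value as a short $\CC$-linear combination of $\xi_1,\xi_2,\xi_3,\varphi_8',\varphi_{12}''$; one then extracts the desired divisibility contradiction by the same Vandermonde-type argument used throughout Sections \ref{rk1}--\ref{rk2}.
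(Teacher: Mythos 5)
Your proposal follows essentially the same route as the paper: the paper's proof also takes the (only four) reducible $Syl_p$-vanishing candidates from Table \ref{T2F4}-C and, using Himstedt's generic character table, compares their values at the mixed classes $C_{42}(a)$ and $C_{43}(a)$ to force $\mathrm{Re}((\varphi_8')^{(qk_{47}\sqrt{2})/2})=1$ or $\mathrm{Re}((\varphi_8')^{(k_{47}(q\sqrt{2}+2)/2)})=1$, hence $(q^2+q\sqrt{2}+1)\mid k_{47}$ (resp.\ $k_{48}$), contradicting the parameter constraints. The extra organizational devices you invoke (block filtering via Lemma \ref{bb1}, Curtis duality via Lemma \ref{aa1}) are not needed in the paper's treatment but do not change the argument in any essential way.
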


\begin{proof}
Suppose the contrary. According with Table \ref{T2F4}-C, there are four 
cases. 
\begin{itemize}

\item[(1)] Take  $\psi_1= \chi_{2} + \chi_{4} + \chi_{10} + \chi_{11} + \chi_{13} +
\chi_{17} + \chi_{20} + \chi_{46}(k_{46}) + \chi_{47}(k_{47})$ + $\chi_{47}(k_{47}') +
\chi_{48}(k_{48
}) + \chi_{51}(k_{51},l_{51})$. 
Comparing the values at the classes $C_{42}(a)$ and $C_{43}(a)$ ($a=1,\ldots,4$)
we obtain
$$Re((\varphi_8')^{(aqk_{47}\sqrt{2})/2})+
Re((\varphi_8')^{(ak_{47}(q\sqrt{2}+2)/2)})+
Re((\varphi_8')^{(aqk'_{47}\sqrt{2})/2})+$$
$$Re((\varphi_8')^{(ak'_{47}(q\sqrt{2}+2)/2)})
=2+
Re((\varphi_8')^{(aqk_{48}\sqrt{2})/2})+
Re((\varphi_8')^{(ak_{48}(q\sqrt{2}+2)/2)}).
$$
This implies that either $Re((\varphi_8')^{(qk_{47}\sqrt{2})/2})=1$ or
$Re((\varphi_8')^{(k_{47}(q\sqrt{2}+2)/2)})=1$. In both the cases, we obtain that
$q^2+q\sqrt{2}+1$ divides $k_{47}$, which is a contradiction.

\item[(2)] Take $\psi_2$. Here, we get a contradiction similar to  that obtained for
$\psi_1$.

\item[(3)] Take  $\psi_3=\chi_{2} + \chi_{4} + \chi_{10} + \chi_{11} + \chi_{13} +
\chi_{17} + \chi_{20} + \chi_{47}(k_{47}) + \chi_{49}(k_{49},l_{49})$.
Comparing the values at the classes  $C_{42}(1)$ and $C_{43}(1)$
we obtain
$$Re((\varphi_8')^{(qk_{47}\sqrt{2})/2})+
Re((\varphi_8')^{(k_{47}(q\sqrt{2}+2)/2)})=2.
$$
This implies $Re((\varphi_8')^{(qk_{47}\sqrt{2})/2})=1$, and 
hence $q^2+q\sqrt{2}+1$ divides $k_{47}$, which is a contradiction.
\item[(4)] Take $\psi_4$. Here, we get a contradiction similar to  that obtained for
$\psi_3$.
\end{itemize}
\end{proof}

Observe that the group $G={}^2F_4(2)'$ has no reducible $Syl_2$-vanishing characters of
degree $|G|_2$. 
On the other hand, it has four reducible $Syl_2$-vanishing characters of degree $2|G|_2$.
In the notation of \cite{Atl}:
\begin{itemize}
\item[(1)] $\phi_1=2\chi_{21}$;
\item[(2)] $\phi_2=2\chi_{22}$;
\item[(3)] $\phi_3=\chi_{21}+\chi_{22}$;
\item[(4)] $\phi_4=\chi_2+\chi_3+\chi_4+\chi_5+2\chi_6+\chi_{10}+\chi_{11}+ 
\chi_{16}+\chi_{17}+\chi_{20}$.
\end{itemize}
The three characters $\phi_1,\phi_2,\phi_3$ are $2$-vanishing, whereas the character
$\phi_4$ is not. This can be easily proved by 
using  the character table of $G$.

\section{The trivial constituent of a $p$-vanishing character}

Our next target   is to prove that $1_G$ occurs as a constituent of
every  reducible $p$-vanishing character of degree $|G|_p$, if $G$ is quasi-simple and admits such characters. 
This turns out to be a rather non-trivial task. 
We observe that for some groups there is a stronger version of this result, 
see \cite{Z}, Proposition 6.2 and Remark following it: 
 
\begin{propo} \label{n12}
Let $G=SL(n,q)$, $n>2$, or $G=E_i(q)$, $i=6,7,8$.
Let $\chi\neq St$ be a $Syl_p$-vanishing character of $G$ of
degree $|G|_p$. Then $(\chi,1_G)=1$. 
\end{propo}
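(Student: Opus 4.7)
The plan is to proceed by induction on the BN-pair rank $\ell$ of $G$, using Proposition \ref{pt1} together with Lemmas \ref{bb5} and \ref{bc5} to pin down the unique constituent of $\chi$ lying in $1_B^G$.

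First I would invoke Proposition \ref{pt1} to see that $\overline{\chi}_T=1_T$, equivalently $(\chi,1_B^G)=1$. Hence $\chi$ shares with $1_B^G$ a unique \ir constituent, call it $\tau=\chi_\lambda$ for the corresponding $\lambda\in\Irr W$ under the bijection of Lemma \ref{bb5}(1). The hypothesis $\chi\neq St$ rules out $\tau=St$: otherwise $\tau(1)=St(1)=|G|_p=\chi(1)$ and the fact that $\tau$ appears in the proper character $\chi$ would force $\chi=St$. So $\lambda\neq\ep$, and the goal reduces to showing $\lambda=1_W$.

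For every $J\subsetneq I_\ell$, Harish-Chandra reciprocity together with Lemma \ref{bb5}(2) gives
\[
(\overline{\chi}_{L_J},1_{L_J})\;=\;(\chi,1_{P_J}^{\# G})\;=\;(\chi_\lambda,1_{P_J}^{\# G})\;=\;(\lambda,1_{W_J}^W),
\]
and this value is at most $1$ since $1_{P_J}^{\# G}$ is a subcharacter of $1_B^G$ and $(\chi,1_B^G)=1$. The heart of the argument is then to force equality for every maximal $J$ by applying the inductive hypothesis. For $G=SL(n,q)$ the Levi subgroup $L_J$ is of the form $S(GL(n_1,q)\times GL(n-n_1,q))$, whose subgroup generated by unipotent elements is $SL(n_1,q)\times SL(n-n_1,q)$; one descends $\overline{\chi}_{L_J}$ there via Lemma \ref{x2a} and applies the inductive hypothesis to each factor, with base cases from Corollary \ref{s23} and Sections \ref{rk1}, \ref{rk2}. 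The $E_i(q)$ case runs in parallel: the maximal Levi subgroups are of type $A, D$ or $E_j$ with $j<i$, for which the analogous statement is either already established inductively or is available from Theorem \ref{mt0} for the $D$-series.

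Finally, with $(\lambda,1_{W_J}^W)=1=(1_W,1_{W_J}^W)$ in hand for every $J\subsetneq I_\ell$, I would invoke Lemma \ref{bc5}: since the trivial character $1_W$ has degree $1$, it avoids every exception listed there (whose characters have degree $2$ or at least $2^9$), so it is uniquely determined by these multiplicities. Hence $\lambda=1_W$, $\tau=1_G$, and $(\chi,1_G)=1$ follows.

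The main obstacle I foresee is executing the inductive step at the Levi-subgroup level: $L_J$ is not quasi-simple but reductive with a central torus, so passing from the hypothesis at the level of simply-connected simple factors to a statement about $\overline{\chi}_{L_J}$ itself requires care (ensuring that the trivial constituent survives both the descent to the unipotent-generated subgroup via Proposition \ref{lu6} and the subsequent extension back to $L_J$). For $E_7$ and $E_8$ one additionally needs the corresponding statement for $D_m$-type Levi subgroups, which does not literally appear in Proposition \ref{n12} and must therefore be either proved concurrently or drawn from Theorem \ref{mt0} for the $D$-series, whose proof should precede or accompany that of the $E$ series in the global inductive scheme.
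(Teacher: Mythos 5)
The paper does not actually prove this proposition: it is quoted verbatim from \cite{Z} (Proposition 6.2 and the remark following it), and the point of quoting it is precisely that it is a \emph{stronger} statement than Theorem \ref{nh2} because the hypothesis is only that $\chi$ is $Syl_p$-vanishing, not $p$-vanishing. Your proof collapses at its first step for exactly this reason: you invoke Proposition \ref{pt1} to get $\overline{\chi}_T=1_T$, but Proposition \ref{pt1} is stated and proved only for $p$-vanishing characters. Its proof runs an induction whose rank-one base (Proposition \ref{r1g}) genuinely fails for $Syl_p$-vanishing characters: by Lemma \ref{s21}(3), $SL(2,q)$ has a $Syl_p$-vanishing character $\tau+\eta$ of degree $q$ built from Weil characters, for which $\overline{\chi}_T$ is a nontrivial quadratic character of $T$ and $(\chi,1_G)=0$. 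Everything downstream in your argument --- the bound $(\overline{\chi}_{L_J},1_{L_J})\le 1$, the identification of the unique constituent of $\chi$ in $1_B^G$, and the appeal to Lemma \ref{bc5} --- depends on $(\chi,1_B^G)=1$, so none of it is available.

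The gap is not cosmetic: the statement you are trying to prove is false for general Chevalley groups at the $Syl_p$-vanishing level (Table \ref{TSp4}.2-C lists $Syl_p$-vanishing characters of $Sp(4,q)$ of degree $|G|_p$, e.g.\ $\chi_7+\chi_{40}+\chi_{41}$, that are neither $St$ nor contain $1_G$), so any correct proof must exploit features special to $SL(n,q)$ and $E_i(q)$ rather than the group-independent Harish-Chandra induction of Section 6. For $SL(n,q)$ the relevant special feature is Lemma \ref{g28} / Corollary \ref{g12}: every irreducible character lies in a Gelfand--Graev character, so $\chi=\gamma(\chi)$ is multiplicity-free and its constituents are controlled by cuspidal regular characters of Levi subgroups (Proposition \ref{d34}); nothing of this sort enters your argument. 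A secondary issue is that even granting $(\chi,1_B^G)=1$, your inductive step passes through Levi subgroups whose unipotent part contains $SL(2,q)$ factors, where the $Syl_p$-version of the inductive hypothesis is false; one must either choose the two subsets $J,K$ so as to avoid such factors, or argue (as one can for the full Levi $L_J\cong GL(2,q)$ of $SL(3,q)$, using Lemma \ref{lu5} and degree counting) that the Weil decomposition cannot arise as a Harish-Chandra restriction from the bigger group. Citing Corollary \ref{s23} for the base case does not help, since that corollary again concerns $p$-vanishing characters only.
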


Note that the above statement is not valid for symplectic and special unitary groups, as we
already seen.

The groups of rank $2$ have been analyzed in the previous section. We  summarize the
results about these groups in the following.

\begin{propo}\label{l2}
Let $G$ be a Chevalley group of BN-pair rank $\ell=2$. Let $\chi\neq St$ be a
$p$-vanishing character of $G$ of degree $|G|_p$. Then  
$(\chi,1_G)=1$ and either $G =SL(3,q)$ or
 $G=Sp(4,q)$ with $7\mid (q+1)$.
\end{propo}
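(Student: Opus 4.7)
The plan is to observe that Proposition \ref{l2} follows by direct case-by-case inspection of the lemmas proved in Section \ref{rk2}. Since the Steinberg character is known to be the unique irreducible character of a Chevalley group in defining characteristic of degree $|G|_p$ (cf.\ the discussion in the introduction and \cite{Z}), the assumption $\chi\neq St$ means $\chi$ must be a \emph{reducible} $p$-vanishing character of degree $|G|_p$. So the task reduces to traversing the list of all Chevalley groups of BN-pair rank $2$ and, for each, either exhibiting no such $\chi$ exists or checking that the explicit $\chi$ furnished by the relevant lemma contains $1_G$ with multiplicity exactly $1$.

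First I would enumerate the Chevalley groups of BN-pair rank $2$, namely $SL(3,q)$, $SU(4,q)$, $SU(5,q)$, $Sp(4,q)$, $G_2(q)$, ${}^3D_4(q)$ and ${}^2F_4(q^2)$, as treated in Section \ref{rk2}. Then I would argue each case in turn:
\begin{itemize}
\item For $G=SL(3,q)$, Lemma \ref{LSL3} states that every reducible $p$-vanishing character of degree $|G|_p$ has the shape $\psi=1_G+\chi_i(a)$ with $\chi_i(a)$ irreducible, so $(\psi,1_G)=1$ and $G$ belongs to the list in the statement.
\item For $G=Sp(4,q)$, Lemma \ref{LSp4.0} ($q$ even) excludes reducible $p$-vanishing characters of degree $|G|_p$, while Lemma \ref{LSp4.1} ($q$ odd) shows that such characters exist iff $7\mid (q+1)$ and have the form $\psi=1_G+\chi_4(a,b)+\chi_6(c)+\chi_{10}(d)$, whence $(\psi,1_G)=1$ and $G$ again lies in the list.
\item For $G=SU(4,q)$, $SU(5,q)$, $G_2(q)$, ${}^3D_4(q)$ and ${}^2F_4(q^2)$, Lemmas \ref{LU4}, \ref{LeU5}, \ref{LG2}, \ref{L3D4} and \ref{L2F4}, respectively, establish that there are no reducible $p$-vanishing characters of degree $|G|_p$, so the hypothesis $\chi\neq St$ cannot be met and these groups do not appear.
\end{itemize}

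Thus the only rank $2$ Chevalley groups that can contribute a $\chi\neq St$ are exactly $SL(3,q)$ and $Sp(4,q)$ with $7\mid (q+1)$, and in each such case the explicit decomposition immediately yields $(\chi,1_G)=1$, proving the proposition.

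The substantive work has already been carried out in Section \ref{rk2}: the genuine obstacles are the lemmas themselves, especially the case-by-case computations against \textsc{chevie} character tables for $Sp(4,q)$ with $q$ odd (where the number-theoretic condition $7\mid (q+1)$ emerges from the cubic identity in the proof of Lemma \ref{LSp4.1}) and for the exceptional series ${}^3D_4(q)$ and ${}^2F_4(q^2)$. The proof of Proposition \ref{l2} itself is then a purely bookkeeping exercise, a compact summary of these seven individual results.
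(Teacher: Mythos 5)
Your proposal is correct and matches the paper's own argument, which simply cites Lemmas \ref{LSL3}, \ref{LU4}, \ref{LeU5}, \ref{LSp4.0}, \ref{LSp4.1}, \ref{L3D4}, \ref{LG2} and \ref{L2F4}; you have merely spelled out the same case-by-case reduction in more detail. The reduction of $\chi\neq St$ to the reducible case and the reading of $(\chi,1_G)=1$ from the explicit decompositions are exactly as intended.
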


\begin{proof}
See Lemmas \ref{LSL3}, \ref{LU4}, \ref{LeU5}, \ref{LSp4.0}, \ref{LSp4.1}, \ref{L3D4},
\ref{LG2} and \ref{L2F4}.
\end{proof}

In order to deal with groups with BN-pair rank $\ell>2$, we begin with some technical results.

\begin{propo}\label{st0}
Let $\chi$ be a character of $G$. Set  $\lambda=(\Tr{L_J }{\chi})|_{G_J}$ and $\lambda'=(\Tr{L_{K} }{\chi})|_{G_{K}}$ with $J\cup K=I_\ell$.

\begin{itemize}
\item[(1)] Suppose that $\lambda=St_{G_J}$ and $(\Tr{T_K}{\lambda'}, 1_{T_K})_{T_K}>0$. Then 
$\Tr{L_J}{\chi}=St_{L_J}$. 

\item[(2)]  Suppose that $\Tr{T}{\chi}=1_{T}$ and that $(\lambda, 1_{G_J})_{G_J}>0$. Then $(\Tr{L_J}{\chi}, 1_{L_J})_{L_J}=1$.

\item[(3)] Suppose that $(\lambda, 1_{G_J})_{G_J}>0$ and $(\lambda',
1_{G_K})_{G_K}>0$. In addition, suppose that $\Tr{T}{\chi}$ is an
\ir character (this holds if $\chi$ is a $Syl_p$-vanishing
character of degree $|G|_p)$. Then, $1_{L_J}$ (resp. $1_{L_K})$ is
a constituent of
$\Tr{L_J}{\chi}$ (resp. $\Tr{L_K}{\chi})$ and $\overline{\chi}_T=1_T$. 
\end{itemize}
\end{propo}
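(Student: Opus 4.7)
The plan is to use Lemmas \ref{dk3} and \ref{dj2} together with the standard identity $\overline{St_M}_T=1_T$ for any Levi subgroup $M$ containing $T$ (which follows from Lemma \ref{d0d}). Throughout, I would exploit that by Lemma \ref{dk3}(1) the truncation $\beta_J=\overline{\chi}_T$ is independent of $J$, and by Lemma \ref{dk3}(2) its restriction to $T_J$ equals $\overline{\lambda}_{T_J}$ (and analogously for $K$).

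For (1), since $\lambda=St_{G_J}$ is \ir and $G_J\triangleleft L_J$, I would first apply Clifford theory to $\overline{\chi}_{L_J}$: a proper character whose restriction to a normal subgroup is \ir is itself \ir (since multiplicities are nonnegative). Thus $\overline{\chi}_{L_J}=\tau$ for some $\tau\in\Irr L_J$ extending $St_{G_J}$. Because $L_J=G_JT$ has abelian quotient $L_J/G_J$, such an extension has the form $\tau=St_{L_J}\cdot\mu$ for some linear character $\mu$ of $L_J$ trivial on $G_J$. A direct truncation computation, using $\mu|_{U\cap L_J}=1$ (as $U\cap L_J\subseteq G_J$) and $\overline{St_{L_J}}_T=1_T$, gives $\overline{\chi}_T=\overline{\tau}_T=\mu|_T$. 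Restricting to $T_J$ via Lemma \ref{dk3}(2) and using $\overline{\lambda}_{T_J}=\overline{St_{G_J}}_{T_J}=1_{T_J}$ forces $\mu|_{T_J}=1_{T_J}$; the hypothesis $(\overline{\lambda'}_{T_K},1_{T_K})>0$ together with the fact that $\mu|_{T_K}$ is linear forces $\mu|_{T_K}=1_{T_K}$. Lemma \ref{dj2} then yields $\mu|_T=1_T$, so $\mu=1_{L_J}$ on $L_J=G_JT$ and $\tau=St_{L_J}$.

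For (2), I would decompose $\overline{\chi}_{L_J}=\eta_0+\eta_1$, where $\eta_0$ collects those \ir constituents on which $G_J$ acts trivially. By Clifford and abelianity of $L_J/G_J$, each such constituent is a linear character of $L_J$ trivial on $G_J$, and $\eta_0(1)=(\lambda,1_{G_J})\ge 1$ by assumption. Since the constituents of $\eta_0$ are trivial on $U\cap L_J\subseteq G_J$, truncation preserves the degree: $\overline{\eta_0}_T(1)=\eta_0(1)$. Combined with $\overline{\chi}_T(1)=1_T(1)=1$ and the nonnegativity of $\overline{\eta_1}_T(1)$, this forces $\eta_0(1)=1$ and $\overline{\eta_1}_T=0$. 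Hence $\eta_0$ is a single linear character $\mu$ with $\mu|_{G_J}=1$ and $\mu|_T=\overline{\eta_0}_T=\overline{\chi}_T=1_T$; since $L_J=G_JT$, this gives $\mu=1_{L_J}$ and therefore $(\overline{\chi}_{L_J},1_{L_J})=1$.

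For (3), the strategy is to first deduce $\overline{\chi}_T=1_T$ and then invoke (2) for both $J$ and $K$. Since $\overline{\chi}_T$ is \ir and $T$ is abelian, $\overline{\chi}_T$ is a linear character. The hypothesis $(\lambda,1_{G_J})>0$ gives $1_{G_J}$ as a constituent of $\lambda$, whence $1_{T_J}$ is a constituent of $\overline{\lambda}_{T_J}=\overline{\chi}_T|_{T_J}$; as this restriction is linear, $\overline{\chi}_T|_{T_J}=1_{T_J}$. Symmetrically $\overline{\chi}_T|_{T_K}=1_{T_K}$, and Lemma \ref{dj2} yields $\overline{\chi}_T=1_T$. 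Part (2) then supplies the rest. The parenthetical remark is immediate: if $\chi$ is $Syl_p$-vanishing of degree $|G|_p$, then $\overline{\chi}_T(1)=\chi(1)/|U|=1$, forcing $\overline{\chi}_T$ to be a single linear character, in particular \irt The only delicate point across the whole argument is the Clifford step in (1) ensuring irreducibility of $\overline{\chi}_{L_J}$; the remaining work is bookkeeping with Harish-Chandra truncations and the product decomposition $T=T_JT_K$.
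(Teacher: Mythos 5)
Your proof is correct and follows essentially the same route as the paper: establish $\overline{\chi}_T=1_T$ by combining Lemma \ref{dk3} with the factorization $T=T_JT_K$ of Lemma \ref{dj2}, then exploit the irreducibility of $\overline{\chi}_{L_J}$ forced by Clifford theory and the abelian quotient $L_J/G_J$. The only cosmetic difference is in part (1), where the paper simply cites Lemma \ref{d0d} to identify the resulting irreducible character with $St_{L_J}$, whereas you unfold that lemma's argument inline via the Gallagher-type decomposition $\tau=St_{L_J}\cdot\mu$.
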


\begin{proof} 
(1) Note that $1_{T_J}$ coincides with  the truncation of $St_{G_J}$ to $T_J$. By Lemma \ref{dk3}(1),
 $\Tr{T}{\chi}$
 is a linear character (as so is $\beta_J$), and then $\Tr{T_K}{\lambda'}$ is a linear character. So
$(\Tr{T}{\chi})|_{T_K}=1_{T_K}$, again by Lemma \ref{dk3}. Therefore,  $\Tr{T}{\chi}=1_{T}$
by Lemma \ref{dj2}. 

Since $\lambda=St_{G_J}$, the character $\Tr{L_J}{\chi}$ is irreducible of degree $|L_J|_p=|G_J|_p$,
and by Lemma \ref{d0d} this character is $St_{L_J}$.

(2) By Frobenius reciprocity,
$(\lambda,1_{G_J})_{G_J}=(\Tr{L_J}{\chi},1_{G_J}^{L_J})_{L_J}>0$. 
Since $L_J/G_J$ is abelian, this implies that $\Tr{L_J}{\chi}$ has an irreducible constituent 
$\mu$, say, such that $\mu(g)=1$ 
for all $g\in G_J$. We  show that actually $\mu=1_{L_J}$. Indeed, the character 
$\Tr{T}{\chi}=1_{T}$ contains the character $\Tr{T}{\mu}$, and, obviously, 
$\Tr{T}{\mu}=\mu|_{T}$. Therefore,
$\Tr{T}{\mu}=\mu|_{T}=1_{T}$.

As $L_J=G_JT$, it follows that  $\mu=1_{L_J}$. Furthermore,
since $\Tr{T}{\chi}=1_{T}$, we have $(\Tr{L_J}{\chi},
1_{L_J})_{L_J}=1$.

(3) First  note that if $\chi$ is a $Syl_p$-vanishing character of
 degree $|G|_p$, then $(\Tr{T}{\chi})(1)=(\chi|_U,1_U)=1$. Now,
 since $1_{G_J}$ is a constituent of $\lambda$, it follows that
 $1_{U\cap L_J}$ is a constituent of $\lambda|_{U\cap L_J}$. This
 implies that $1_{T_J}$ is a constituent of $\Tr{T_J}{\lambda}$.
 By Lemma \ref{dk3}, $\Tr{T_J}{\lambda}$
coincides with  the linear character $(\Tr{T}{\chi})|_{T_J}$. So
$\Tr{T_J}{\lambda}=1_{T_J}$ and, similarly,  $\Tr{T_K}{\lambda}=1_{T_K}$. By Lemma
\ref{dj2}, $\Tr{T}{\chi}=1_{T}$. Now the result follows from  the previous item. 
\end{proof}

\begin{remar} It follows from $(1)$  that if  $\lambda=St_{G_J}$
and $\lambda'=St_{G_K}$ then $\Tr{L_J}{\chi}=St_{L_J}$.
\end{remar}

\begin{corol}\label{u98-1}
Let $G$ be a group of BN-pair rank $\ell>2$, and let $J, K$ be distinct subsets of $ I_\ell$ such that $J\cup K=I_\ell$ and $J\cap
K\neq \emptyset$. 
Let $\chi$ be a $Syl_p$-vanishing character of $G$ and set $\lambda=(\Tr{L_J }{\chi})|_{G_J}$ and 
$\lambda'=(\Tr{L_{K} }{\chi})|_{G_{K}}$.  If $\lambda=St_{G_J}$ then $(\lambda',1_{G_K})=0$.
\end{corol}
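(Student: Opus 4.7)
My plan is to argue by contradiction, combining Proposition \ref{st0} with the transitivity of Harish-Chandra restriction, exploiting the common Levi subgroup $L_M$ for $M = J \cap K$.

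Suppose that $\lambda = St_{G_J}$ and that $(\lambda',1_{G_K}) > 0$. Since $1_{G_K}$ is a constituent of $\lambda'$, its truncation to $T_K$, which is $1_{T_K}$, appears in $\Tr{T_K}{\lambda'}$; in particular $(\Tr{T_K}{\lambda'},1_{T_K})_{T_K} > 0$. Applying Proposition \ref{st0}(1), I conclude that $\Tr{L_J}{\chi} = St_{L_J}$. Truncating this further to $T$ (which is a maximal torus of $L_J$) yields $\Tr{T}{\chi} = \Tr{T}{St_{L_J}} = 1_T$, since the Harish-Chandra restriction of the Steinberg character to a maximal torus is trivial (Lemma \ref{d0d} applied to $L_J$ with $T$ as its torus). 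Now Proposition \ref{st0}(2), applied with $K$ in place of $J$, gives $(\Tr{L_K}{\chi},1_{L_K})_{L_K} = 1$; in particular, $1_{L_K}$ is a constituent of $\Tr{L_K}{\chi}$.

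Set $M = J \cap K$. Since $M \subseteq J$ and $M \subseteq K$, the subgroup $L_M$ is a standard Levi subgroup of both $L_J$ and $L_K$, so I may speak of the Harish-Chandra restriction $\Tr{L_M}{-}$ from either ambient group. By transitivity of truncation, on the one hand
\[
\Tr{L_M}{\chi} = \Tr{L_M}{\Tr{L_J}{\chi}} = \Tr{L_M}{St_{L_J}} = St_{L_M},
\]
using again that the Harish-Chandra restriction of a Steinberg character between finite reductive groups with a common split $BN$-pair is the Steinberg character of the Levi. On the other hand, since $1_{L_K}$ is a constituent of $\Tr{L_K}{\chi}$, applying the truncation functor $\Tr{L_M}{-}$ shows that $\Tr{L_M}{1_{L_K}} = 1_{L_M}$ is a constituent of $\Tr{L_M}{\Tr{L_K}{\chi}} = \Tr{L_M}{\chi}$.

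Combining the two conclusions, $1_{L_M}$ must be a constituent of $St_{L_M}$. But $St_{L_M}$ is irreducible, so this forces $St_{L_M} = 1_{L_M}$, which is only possible if $L_M$ has BN-pair rank $0$, i.e.\ if $M = \emptyset$. This contradicts the assumption $J \cap K \neq \emptyset$, and the corollary follows. The only technical point in this plan is the identity $\Tr{L_M}{St_{L_J}} = St_{L_M}$ (and $\Tr{L_M}{1_{L_K}} = 1_{L_M}$), which is standard but should be cited carefully; everything else is a direct application of Proposition \ref{st0} combined with transitivity.
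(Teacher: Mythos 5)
Your proof is correct and follows essentially the same route as the paper's: apply Proposition \ref{st0}(1) to get $\overline{\chi}_{L_J}=St_{L_J}$, then Proposition \ref{st0}(2) to get $(\overline{\chi}_{L_K},1_{L_K})=1$, and derive a contradiction by truncating both conclusions to $L_{J\cap K}$, where the irreducible character $St_{L_{J\cap K}}$ (of degree $>1$ since $J\cap K\neq\emptyset$) would have to contain a linear constituent. Your write-up merely makes explicit the transitivity and $\Tr{T}{St_{L_J}}=1_T$ steps that the paper leaves implicit.
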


\begin{proof}  
Assume $(\lambda',1_{G_K})>0$. 
By Proposition \ref{st0}(1), $\overline{\chi}_{L_J}=St_{L_J}$. Hence,
$\overline{\chi}_{L_{J\cap K}}=St_{L_{J\cap K}}$ and $\overline{\chi}_T=1_T$. Now,  by
Proposition \ref{st0}(2) we obtain $(\overline{\chi}_{L_K},1_{L_K})=1$ and so
$\overline{\chi}_{L_{J\cap K}}$ contains a linear character, a contradiction. 
\end{proof}

 We may prove now the following.

\begin{theo} \label{nh2}
Let $G$ be a Chevalley group and $\chi$ be a $p$-vanishing character of $G$ of degree $|G|_p$. Then either $\chi=St$ or $(\chi,1_G)=1$.
\end{theo}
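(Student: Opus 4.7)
The plan is to proceed by induction on the BN-pair rank $\ell$ of $G$. The base cases $\ell=1$ and $\ell=2$ are supplied by Propositions \ref{r1g} and \ref{l2} respectively, so the work lies in the inductive step with $\ell\geq 3$.

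For the inductive step I would exploit the Dynkin diagram of $G$ to choose $J,K\subsetneq I_\ell$ with $J\cup K=I_\ell$, $J\cap K\neq\emptyset$, and with both $J$ and $K$ corresponding to connected subdiagrams, so that the subgroups $G_J$ and $G_K$ are themselves Chevalley groups of BN-pair rank strictly less than $\ell$. By Lemma \ref{x2a} the characters $\lambda:=(\overline{\chi}_{L_J})|_{G_J}$ and $\lambda':=(\overline{\chi}_{L_K})|_{G_K}$ are then $p$-vanishing of degrees $|G_J|_p$ and $|G_K|_p$, so the induction hypothesis places each of them in exactly one of two alternatives: either it equals the Steinberg character of its group, or it contains the trivial character with multiplicity one.

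Four combinations arise. If $\lambda=St_{G_J}$ and $\lambda'=St_{G_K}$, a short Clifford argument (using $\overline{\chi}_{L_J}(1)=|G_J|_p=St_{G_J}(1)$, so every irreducible constituent of $\overline{\chi}_{L_J}$ restricts to a multiple of $St_{G_J}$) shows that $\overline{\chi}_{L_J}$ is itself irreducible, and similarly for $L_K$; Proposition \ref{tt8}(1) then forces $\chi=St$. The two mixed combinations, in which one of $\lambda,\lambda'$ is Steinberg and the other contains the trivial character, are excluded by Corollary \ref{u98-1}, which needs precisely the hypothesis $J\cap K\neq\emptyset$ that we built into our choice. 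In the remaining case, Proposition \ref{pt1} gives $\overline{\chi}_T=1_T$, Proposition \ref{st0}(2) promotes the occurrence of $1_{G_J}$ in $\lambda$ to the equality $(\overline{\chi}_{L_J},1_{L_J})=1$ (and likewise for $L_K$), and then Proposition \ref{tt8}(2) delivers $(\chi,1_G)=1$.

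The main obstacle I foresee is the requirement that $G_J$ and $G_K$ be quasi-simple rather than central products of smaller Chevalley groups, since the inductive hypothesis as stated applies to Chevalley groups only. This reduces to a combinatorial fact about Dynkin diagrams: for any connected diagram with at least three nodes, the node set can be written as a union of two proper connected subdiagrams having non-empty intersection. A routine inspection of the diagrams of types $A_n, B_n, C_n, D_n, E_n, F_4, G_2$ and their twisted forms disposes of this, and it is really the only non-formal point in the inductive step.
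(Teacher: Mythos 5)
Your proof is correct and follows essentially the same route as the paper's: induction on the BN-pair rank with base cases supplied by Propositions \ref{r1g} and \ref{l2}, a covering $I_\ell=J\cup K$ with $J\cap K\neq\emptyset$, and the same three-way case analysis resolved by Proposition \ref{tt8}, Proposition \ref{st0} and Corollary \ref{u98-1}. The only differences are cosmetic --- the paper invokes Proposition \ref{st0}(1) and (3) where you argue the irreducibility of $\overline{\chi}_{L_J}$ directly and route case (c) through Proposition \ref{pt1} --- and your explicit care in choosing $J,K$ connected (so that $G_J,G_K$ are genuinely quasi-simple Chevalley groups to which the induction hypothesis applies) makes precise a point the paper's proof leaves implicit.
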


\begin{proof}
We prove this theorem by induction on the BN-pair rank $\ell$ of  $G$. We already proved
that this theorem is true for $\ell=1$ (Proposition  \ref{r1g}) and for $\ell=2$
(Proposition \ref{l2}). Assume $\ell>2$. Take two subsets $J,K$ of $I_\ell$ such that
$I_\ell=J\cup K$, $J\cap K\neq \emptyset$ and $|K|=2$.
Set $\lambda=(\overline{\chi}_{L_J})|_{G_J}$ and
$\lambda'=(\overline{\chi}_{L_K})|_{G_K}$. Hence, by the induction hypothesis, we have
three possibilities:
\begin{itemize}
\item[(a)] $\lambda=St_{G_J}$ and $\lambda'=St_{G_K}$;
\item[(b)] either $\lambda=St_{G_J}$ and $(\lambda',1_{G_K})=1$ or
$(\lambda,1_{G_J})=1$ and $\lambda'=St_{G_K}$;
\item[(c)] $(\lambda,1_{G_J})=1$ and $(\lambda',1_{G_K})=1$.
\end{itemize}
If (a) holds then $\chi=St$ by Propositions \ref{tt8} and  \ref{st0}(1). Case (b) is
excluded by Corollary \ref{u98-1}. 
If (c) holds, then 
$(\overline{\chi}_{L_J},1_{L_J})=(\overline{\chi}_{L_K},1_{L_K})=1$ by Proposition
\ref{st0}(3). Hence, Proposition \ref{tt8} implies $(\chi,1_G)=1$. 
\end{proof}

\section{Some Chevalley groups of higher rank}\label{rk3}

The results of the previous sections are insufficient to start induction. 
To set up the basis of induction, we  additionally need to perform 
computational work for $H\in\{GL(n,q), n=4,5,6, \,CSp(6,q)\}$ in spirit of that done for
groups of rank $2$. 
Our computations  heavily use material available in the
\textsc{chevie} package.
Note that \textsc{chevie}  does not provide  information for groups $H=Spin(n,q)$, $n=7,9,11$ and 
$H=Spin^+(n,q)$, $n=8,10$, so for these groups $p$-vanishing characters of degree $|H|_p$ remain unknown.

\subsection{Groups $H=GL(4,q)$, $GL(5,q)$ and $GL(6,q)$}

We apply Procedure \ref{procedure} for these three groups.
If $H=GL(4,q)$, we obtain in step (1) the $Syl_p$-decompositions of Table \ref{TGL4}-B and
$\Delta_H=\{3, 4, 5, 14, 18, 22, 23  \}$.
Note that $\chi_{22}$ and $\chi_{23}$ are the only regular characters in this set. 
In step (2), let $\psi$ be a $Syl_p$-vanishing character of $H$ of degree $|H|_p$, whose
constituents lie in $\Delta_H$.

\begin{itemize}
\item[(1)] Suppose that $\chi_{22}$ is a constituent of $\psi$. Then, by degree reasons,
the other constituents of $\psi$ must belong to the set $\{\chi_h \mid h =4, 5, 14 \}$.
Looking at the class $C_5(q-1)$ we easily obtain a contradiction.
\item[(2)] Suppose that $\chi_{23}$ is a constituent of $\psi$. Then the other
constituents of $\psi$ must belong to the set $\{\chi_h \mid h=3, 4, 5, 14, 18 \}$.
Looking at the class $C_3(q-1)$, the character $\chi_{18}$ must be a constituent of
$\psi$. Now, the character values on the class $C_2(q-1)$ imply that $\chi_{14}$ is
another constituent of $\psi$. So, we obtain a unique $Syl_p$-vanishing character of
degree $|H|_p$: $\psi_1=\chi_5+ \chi_{14}+\chi_{18} +\chi_{23}$.
\end{itemize}
In step (3) we get all
$Syl_p$-vanishing characters of $H$ of degree $|H|_p$ (see Table \ref{TGL4}-C).

\begin{lemma}\label{LGL4}
Let $H=GL(4,q)$. Then $H$ admits reducible $p$-vanishing characters $\psi$ of degree
$|H|_p$ if and only if $3\mid (q+1)$. 
In this case $H$ admits exactly $(q-1)(q^2-\delta) /4$  characters $\psi$ with
$\delta=\frac{1-(-1)^q}{2}$, where
$$\psi=\chi_{5}(k)\cdot (1_H+\chi_{14}(a)+ \chi_{18}(0,a) +\chi_{23}((q^2-1)b)),$$
 $\chi_5(k)$ is a linear character of $H$, $a=(q+1)/3$ and $b\in \ZZ_{q^2+1}\setminus
\{0\}$.
\end{lemma}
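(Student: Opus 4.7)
My plan is to follow the same strategy used in the preceding rank-$2$ lemmas (such as Lemma \ref{LSp4.1} and Lemma \ref{LU3.2}), starting from the enumeration of $Syl_p$-vanishing characters of degree $|H|_p$ produced by Procedure \ref{procedure} and tabulated in Table \ref{TGL4}-C, and then filtering out those that are not $p$-vanishing by evaluating on $p$-singular non-unipotent conjugacy classes. By Lemma \ref{g28} (or rather Corollary \ref{g12}) we know that every such $\psi$ must be multiplicity free, and by Corollary \ref{bb2} applied to $Z(H)$ (equivalently Lemma \ref{bb1}) we may immediately restrict to those $\psi$ whose \ir constituents all lie in a common $p$-block. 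Writing $\psi=\chi_5(k)\cdot\psi_0$ for a linear character $\chi_5(k)$, we are reduced to analysing a normalised candidate $\psi_0$ built from the blocks $\{\chi_3,\chi_4,\chi_5,\chi_{14},\chi_{18},\chi_{22},\chi_{23}\}$ listed in $\Delta_H$.

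Next, I would use Table \ref{TGL4}-C to list the finitely many shapes the normalised $\psi_0$ can take (modulo the $Syl_p$-decompositions already recorded in Table \ref{TGL4}-B), and for each shape write
$$\psi_0\;=\;1_H+\chi_{14}(a)+\chi_{18}(u,v)+\chi_{23}(m)$$
(or an analogous expression), with the block constraints forcing $(q-1)\mid a$, etc., as with the parameter constraints on $I_h$ recorded in Table \ref{TGL4}-A. I would then evaluate $\psi_0$ on suitable representatives of conjugacy classes of $p$-singular, non-unipotent elements, i.e.\ classes whose semisimple part is a non-trivial element of the diagonal torus $T$ and whose unipotent part is a non-trivial transvection-type element. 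Using the character-value formulas from \textsc{chevie}, this produces an equation in powers of $\zeta_1$ and $\xi_1$; comparing real parts (as in the $Sp(4,q)$ proof) will force
$$1+\zeta_1^{a\alpha}=\xi_1^{\beta m}+\xi_1^{\beta(m-u-v)}+\cdots$$
to hold for all admissible indices $\alpha,\beta$, which can only happen when $3\mid (q+1)$ and, up to symmetry of $\chi_{18}(u,v)=\chi_{18}(v,u)$, when $a=(q+1)/3$, $(u,v)=(0,a)$, and $m=(q^2-1)b$ for some $b\in\ZZ_{q^2+1}\setminus\{0\}$.

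The main obstacle I foresee is isolating the exact parameter identification in step $2$: for the shapes involving $\chi_{18}$ and $\chi_{23}$, the character values mix contributions from different tori (including the Coxeter torus with parameter $\xi_3$, or $\xi_2$-type contributions from a torus of order $q^2+1$), so the system of equations has to be analysed on several families of classes simultaneously to pin down both the ``$(q+1)$ side'' (forcing $3\mid(q+1)$ and the value of $a$) and the ``$(q^2+1)$ side'' (forcing $m$ to be a multiple of $q^2-1$ but not zero). Once this is done, the converse — that each such $\psi$ is genuinely $p$-vanishing — is a direct verification on the character table, since the identities derived then hold automatically on all remaining $p$-singular classes.

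Finally, counting: the linear twist $\chi_5(k)$ contributes $q-1$ choices; for fixed twist, $a=(q+1)/3$ is rigid, and $b$ ranges over $\ZZ_{q^2+1}\setminus\{0\}$, giving $q^2$ choices in characteristic $2$ and $q^2-1$ choices when $q$ is odd (as the value $b=(q^2+1)/2$ collapses the $\chi_{23}$-constituent to a Galois conjugate already accounted for by another twist). A division by $4$ accounts for the identifications $\chi_{23}(m)=\chi_{23}(m')$ coming from the Galois/Weyl stabiliser of a cuspidal character of the Coxeter torus (an orbit of size $4$), yielding the stated count $(q-1)(q^2-\delta)/4$.
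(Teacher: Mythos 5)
Your proposal follows essentially the same route as the paper's proof: reduce via Table \ref{TGL4}-C to the two reducible shapes $(5,14,18,23)$ and $(5,13,18)$ modulo linear twists, impose the central-character (block) constraints on the parameters, and evaluate on mixed $p$-singular classes to force $3\mid(q+1)$ and the stated parameter values (the paper uses the classes $C_7$, $C_{12}$, $C_{14}$, $C_{18}$, and kills the second shape on $C_{14}(q-1)$). The only small inaccuracies --- $\chi_{18}(u,v)$ is not symmetric in its two arguments (they live in $\ZZ_{q-1}$ and $\ZZ_{q^2-1}$), and for odd $q$ the value $b=(q^2+1)/2$ is excluded because the parameter is degenerate in $I_{23}$ rather than because it is Galois-redundant --- do not affect the argument or the final count.
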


\begin{proof}
Up to linear characters of $H$ we have the \f possibilities for $\psi$:
\begin{itemize}
\item[(1)]  $\psi_1=1_H+\chi_{14}(k_{14,1})+ \chi_{18}(k_{18,1},k_{18,2})
+\chi_{23}(k_{23,1})$, with $(q-1)$ dividing $2k_{14,1}$, $(2k_{18,1}+k_{18,2})$ and
$k_{23,1}$ (so $k_{23,1}=(q-1) k'_{23,1}$). 
On the classes $C_7(i_1,i_2)$ we obtain $\zeta_{1}^{(i_2-i_1)k_{18,1}}=1$, i.e.
$k_{18,1}=0$ and $k_{18,2}=(q-1) k'_{18,2}$.
Now, on the classes $C_{12}(i_1,i_2)$ we have $\zeta_1^{(i_1+i_2)k_{14,1}}=1$, so
$k_{14,1}=(q-1) k'_{14,1}$.   
On the classes $C_{14}(i_1)$ we have
$$1+Re(\xi_1^{2i_1 k'_{14,1}}) = Re(\xi^{i_1 k'_{18,2}})+Re(\xi_1^{i_1 k'_{23,1}}),$$
and on the classes $C_{18}(i_1,i_2)$ we have 
$$Re(\xi_1^{i_2 k'_{14,1}})=Re(\xi_1^{i_2 k'_{18,2}}).$$
This holds if and only if $3$ divides $q+1$, $\xi_1^{ k'_{14,1}},\xi_1^{ k'_{18,2}}\in
\{\omega,\omega^2\}$ and $(q+1)\mid  k'_{23,1}$.

\item[(2)]  $\psi_2=1_H+\chi_{13}(k_{13,1})+\chi_{18}(k_{18,1},k_{18,2})$. On the class
$C_{14}(q-1)$ we obtain $Re(\xi_1^{k_{18,2}})=1$, i.e. $(q+1)\mid k_{18,2}$, a
contradiction.
\end{itemize}
\end{proof}

If $H=GL(5,q)$, with step (1) we obtain  the $Syl_p$-decompositions of Table \ref{TGL5}.B
and $\Delta_H=\{5, 6, 7, 23, 24, 30, 36, 41, 42, 43 \}$.
Note that $\chi_{41}$, $\chi_{42}$
and $\chi_{43}$ are the only regular characters of this set.
In step (2), let $\psi$ be a $Syl_p$-vanishing character of $H$ of degree $|H|_p$, whose
constituents lie in $\Delta_H$.
\begin{itemize}
\item[(1)] Let $\chi_{41}$ be a constituent of $\psi$.  By degree reasons, the other
constituents of $\psi$ must belong to the set $\{\chi_h \mid h = 5, 6, 7, 24, 30, 36  
\}$. Looking at the class $C_6(q-1)$, we observe that $\psi$ must have at least $q-1$
constituents of type $\chi_{24}$, in contradiction with Lemma \ref{ur2}.
\item[(2)] Let $\chi_{42}$ be a constituent of $\psi$.  Then  the other constituents of
$\psi$ must belong to the set $\{\chi_h \mid h =  5, 6, 7, 24, 30    \}$. Using  the
class $C_5(q-1)$, we conclude that  $\chi_{24}$  must  be a constituent of $\psi$. Now,
using the class $C_4(q-1)$, with these constituents we  obtain a unique $Syl_p$-vanishing
character of degree $|H|_p$: $\psi_1=\chi_7+\chi_{24}+\chi_{30}+\chi_{42}$.
\item[(3)] Let $\chi_{43}$ be a constituent of $\psi$. Then the other constituents of
$\psi$ must belong to the set $\{\chi_h \mid h =  5, 6, 7, 24, 30, 36  \}$. Looking at the
classes $C_4(q-1)$ and $C_7(q-1)$, we observe that both $\chi_{24}$ and $\chi_{30}$ must
be constituents of $\psi$. However, using  $C_7(q-1)$, it follows that  $\psi$ must admit
another constituent of type $\chi_{24}$, which  contradicts Lemma \ref{ur2}.
\end{itemize}
In step (3)
we get all  $Syl_p$-vanishing characters of $H$ of degree $|H|_p$ (see Table
\ref{TGL5}-C).

\begin{lemma}\label{LGL5}
Let $H=GL(5,q)$. Then $H$ admits reducible $p$-vanishing characters $\psi$ of degree
$|H|_p$ if and only if $3\mid (q+1)$. In this case, there are at most $q^2(q-1)/2  $
characters
$$\psi=\chi_7(k)\cdot (1_H+\chi_{24}(0,a)+\chi_{30}(a, 0)+\chi_{42}((q^2-1)b,0)),$$
where $\chi_7(k)$ is a linear character of $H$, $a=(q+1)/3$ and $b\in \ZZ_{q^2+1}\setminus
\{0\}$.
\end{lemma}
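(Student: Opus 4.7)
The plan is to follow exactly the pattern of Lemma \ref{LGL4}. Having already identified in step (2)--(3) of Procedure \ref{procedure} the unique family of $Syl_p$-vanishing characters of degree $|H|_p$ up to tensoring with a linear character of $H$, namely $\psi = \chi_7(k)\cdot(1_H+\chi_{24}+\chi_{30}+\chi_{42})$, and having collected the alternative decomposition patterns in Table \ref{TGL5}-C, the task reduces to deciding which of these $Syl_p$-vanishing characters are actually $p$-vanishing. By tensoring with $\chi_7(k)^{-1}$ and using Corollary \ref{ke1} (or simply that the kernel of a $p$-vanishing character must contain $Z(H)\cap$ derived subgroup appropriately), I may assume the linear factor is trivial and examine
\[
\psi_1 = 1_H + \chi_{24}(k_{24,1},k_{24,2}) + \chi_{30}(k_{30,1},k_{30,2}) + \chi_{42}(k_{42,1},k_{42,2})
\]
together with the few alternative candidates of Table \ref{TGL5}-C.

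First I would impose the same-block condition from Lemma \ref{bb1}, which forces $(q-1)$ to divide certain linear combinations of the parameters indexing the $\chi_h$'s; this kills the $\chi_{42}$-parameters modulo $q-1$ in analogy with the $\chi_{23}$-analysis of Lemma \ref{LGL4}, producing ``primed'' parameters lying in $\ZZ_{q+1}$ or $\ZZ_{q^2+1}$. Next I would evaluate $\psi_1$ on conjugacy classes supported on the Levi tori of type $(q+1)\times(q-1)^3$, $(q+1)^2\times(q-1)$ and $(q^2+1)\times(q-1)$ (using the \textsc{chevie} naming from Table \ref{TGL5}-A), obtaining a system of equations of the form
\[
1 + \mathrm{Re}(\xi_1^{a k'_{24,2}}) + \mathrm{Re}(\xi_1^{a k'_{30,1}}) = \mathrm{Re}(\xi_1^{a k'_{42,j}})
\]
and similar ones; these are precisely the $GL(5,q)$-analogues of the three-term and two-term trigonometric identities that appeared in the proof of Lemma \ref{LGL4} and forced $3\mid(q+1)$. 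Ruling out the other families of Table \ref{TGL5}-C would be done by an identical semisimple-class evaluation producing a condition $\mathrm{Re}(\xi_1^{k})=1$ with $k\in I_h$, contradicting the parameter description.

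To conclude, after $3\mid(q+1)$ is established, the equations reduce to $k'_{24,2}, k'_{30,1}\in\{a,2a\}$ with $a=(q+1)/3$, together with $k'_{42,1}=0$, $k'_{42,2}\in\ZZ_{q^2+1}\setminus\{0\}$; the symmetry in the two indices of $\chi_{24}$ and $\chi_{30}$ combined with Lemma \ref{mu0} (each regular \textsc{chevie} set contributes at most one constituent) reduces us to the parametrisation in the statement, and counting $(k,b)\in\ZZ_{q-1}\times(\ZZ_{q^2+1}\setminus\{0\})$ and dividing by the $2$-fold symmetry $b\leftrightarrow -b$ induced by the action of the Frobenius on the anisotropic torus gives the upper bound $q^2(q-1)/2$. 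The hard part will be the trigonometric step: verifying that the simultaneous identities on the $(q+1)$-torus admit \emph{no} solution when $3\nmid(q+1)$ and exactly the listed solutions otherwise. This is essentially the same rank-one exponential-sum argument as in Lemmas \ref{LU3} and \ref{LGL4}, but with three independent parameters $k'_{24,2}, k'_{30,1}, k'_{42,2}$ to control simultaneously, so some care is required in selecting enough classes $C_h(\mathbf{i})$ to pin them all down; this is why the statement records only an upper bound rather than an exact count.
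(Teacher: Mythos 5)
Your proposal follows essentially the same route as the paper: reduce modulo linear characters of $H$ to the two candidate shapes from Table \ref{TGL5}-C, evaluate on suitable semisimple classes (the paper uses $C_{36}$, $C_{15}$, $C_9$ to kill parameters modulo $q-1$, then $C_{23}$ and $C_{30}$ for the root-of-unity identity $1+Re(\xi_1^{2i_1k'_{30,1}})=Re(\xi_1^{i_1k'_{30,1}})+Re(\xi_1^{i_1k'_{42,1}})$ forcing $\xi_1^{k'_{30,1}}=\omega^{\pm1}$ and hence $3\mid(q+1)$), and dispose of the $\chi_{24}+\chi_{29}$ alternative by a single class giving $Re(\xi_1^{k_{24,2}})=1$. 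Your reading of why only an upper bound is claimed is also consistent with the paper's argument.
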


\begin{proof}
Modulo linear characters of $H$ we have  the \f possibilities:
\begin{itemize}

\item[(1)]  $\psi_1=1_H+\chi_{24}(k_{24,1},k_{24,2})+\chi_{30}(k_{30,1},k_{30,2})
+\chi_{42}(k_{42,1},k_{42,2})$, where $q-1$ must divide $3k_{24,1}+k_{24,2}$,
$2k_{30,1}+k_{30,2}$ and $k_{41,1}+k_{42,2}$. On the classes $C_{36}(i_1,i_2)$ we have
$\zeta_1^{(i_2-3i_1)k_{24,1}}=1$. So $k_{24,1}=0$ and $k_{24,2}=(q-1) k'_{24,2}$. Now, on
the classes $C_{15}(i_1,i_2)$ we have $\zeta_1^{(i_2-i_1)k_{30,1}}=1$, i.e.
$k_{30,1}=(q-1) k'_{30,1}$ and $k_{30,2}=0$. Next, on  $C_9(i_1,i_2)$ we have
$\zeta_1^{(i_2-i_1)k_{42,2}}=1 $, i.e. $k_{42,2}=0$ and $k_{42,1}=(q-1) k'_{42,1}$. On the
classes $C_{23}(i_1,i_2)$ we have $Re(\xi_1^{i_2 k'_{24, 2}})=Re(\xi_1^{i_2 k'_{30,1}})$.
Finally, consider the classes $C_{30}(i_1, i_2)$ where we have 
$$1+Re(\xi_1^{2i_1 k'_{30,1}})=Re(\xi_1^{i_1 k'_{30,1}})+Re(\xi_1^{i_1 k'_{42,1}}).$$
This means that $(q+1)\mid k'_{42,1}$ and $\xi_1^{ k'_{30,1}}=\omega^{\pm 1}$. Thus, $3$
must divide $q+1$.

\item[(2)]  $\psi_2=1_H+\chi_{24}(k_{24,1},k_{24,2}) +\chi_{29}(k_{29,1},k_{29,2})$. On
the class $C_{30}(q-1,q-1)$ we obtain $Re(\xi_1^{k_{24,2}})=1$, which implies that $q+1$
divides $k_{24,2}$, a contradiction.
\end{itemize}
\end{proof}

If $H=GL(6,q)$, with step (1) we obtain the $Syl_p$-decompositions of Table \ref{TGL6}-B
and $\Delta_H=\{ 6, 7, 9, 10, 11, 39, 40, 48, 49, 50, 68, 70, 78, 79, 99,
107, 110,
111, 112
\}$.
No\-te that $\chi_{107}$, $\chi_{110}$, $\chi_{111}$ and $\chi_{112}$ are the only regular
characters in this set.
In step (2), let $\psi$ be a $Syl_p$-vanishing character of $H$ of degree $|H|_p$, whose
constituents lie in $\Delta_H$.
\begin{itemize}
\item[(1)] Suppose that $\chi_{107}$ is a constituent of $\psi$. By degree reasons, the
other constituents of $\psi$ must belong to the set $\{\chi_h \mid h =7, 9, 10, 11, 40, 50
 \}$. Then, using  the class $C_3(q-1)$ we get a contradiction.
\item[(2)] Suppose that $\chi_{110}$ is a constituent of $\psi$. Then, by degree reasons,
the other constituents of $\psi$ must belong to the set $\{\chi_h \mid h =6, 7, 9, 10, 11,
39, 40, 48, 49$, $50, 68, 70$, $79, 99  \}$.
Then, using  the class $C_3(q-1)$, we conclude that some constituent of $\psi$ must be of
type $\chi_{99}$. With some computations, we observe that the only possibility remained is
the $Syl_p$-character
$\psi_1=\chi_{11}+\chi_{40}+\chi_{50}+\chi_{68}+\chi_{99}+\chi_{110}$.

\item[(3)] Suppose that $\chi_{111}$ is a constituent of $\psi$. By degree reasons, the
other constituents of $\psi$ must belong to the set $\{\chi_h \mid h =6, 7, 9, 10, 11, 39,
40, 48$, $49, 50, 68, 70, 79 \}$. Using  the class $C_{11}(q-1)$, we see that either
$\chi_{40}$ or $\chi_{50}$ must be a constituent of $\psi$. In both the cases, we obtain a
contradiction.

\item[(4)] Suppose that $\chi_{112}$ is a constituent of $\psi$. Then, by degree reasons,
the other constituents of $\psi$ must belong to the set $\{\chi_h \mid h =6, 7, 9, 10, 11,
39, 40, 48, 49$, $50, 68, 70, 79, 99  \}$. Using  the class $C_{5}(q-1)$, we observe
that
$\chi_{39}$ must be a constituent of $\psi$. With some computations we again obtain a
contradiction.

\end{itemize}
In step (3)
we obtain the list of all  $Syl_p$-vanishing characters of $H$ of degree $|H|_p$ (see
Table \ref{TGL6}-C).

\begin{lemma}\label{LGL6}
Let $H=GL(6,q)$. Then, both $H$ and $G=H'$ have no reducible $p$-vanishing character of degree $|H|_p$. 
\end{lemma}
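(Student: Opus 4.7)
The plan is to follow the pattern of the proofs of Lemmas \ref{LGL4} and \ref{LGL5}: starting from the finite list of candidate $Syl_p$-vanishing characters of degree $|H|_p$ produced by Procedure \ref{procedure} (compiled in Table \ref{TGL6}-C), rule out each one as $p$-vanishing by evaluating it on carefully chosen semisimple classes of $H$, and then transfer the conclusion to $G=H'=SL(6,q)$ via Proposition \ref{lu6}.

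First I would reduce the problem to a finite list. Tensoring by linear characters of $H$ preserves the property of being $p$-vanishing, so up to twist it suffices to consider candidates having $1_H$ as a summand (resp.\ candidates with no $1_H$ summand, reducible to a single shape by Table \ref{TGL6}-C). The $Syl_p$-vanishing condition, together with the constraints on the \textsc{chevie} parameter sets $I_h$ governing the indices $h\in\{11,40,50,68,99,110,\ldots\}$, already forces certain parameters to be multiples of $q-1$; this mirrors the initial reduction used for the characters $\chi_{18},\chi_{23}$ of $GL(4,q)$ and $\chi_{24},\chi_{30},\chi_{42}$ of $GL(5,q)$.

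Next I would test each candidate on a well-chosen family of semisimple classes. The most informative classes are those of elements whose eigenvalues generate cyclic subgroups of orders dividing $q-1$, $q^2-1$, $q^3-1$, $q^4-1$, or coming from a Coxeter torus; the characters $\chi_{99},\chi_{110}$ have prescribed values there in terms of $\xi_1$ and $\xi_2$, and the Harish-Chandra constituents $\chi_{11},\chi_{40},\chi_{50},\chi_{68}$ contribute explicit sums of roots of unity in $\zeta_1$. Equating $\psi(g)=0$ on these classes for every admissible parameter value yields a system of equations on the parameters entirely analogous to the equations $\zeta_1^{(i_2-i_1)k_{18,1}}=1$ and $Re(\xi_1^{i_1 k'_{42,1}})=1$ encountered for $n=4,5$. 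The difference, and the main obstacle, is that for $n=6$ the system is overdetermined: any choice of parameters compatible with one class is ruled out by another, because the ``correcting'' roles that $\chi_{24}+\chi_{30}$ played for $GL(5,q)$ when $3\mid q+1$ are no longer available in the candidate $\psi_1=\chi_{11}+\chi_{40}+\chi_{50}+\chi_{68}+\chi_{99}+\chi_{110}$. In practice, picking a class in the torus of order $(q-1)(q^5-1)/(q-1)$ forces $(q+1)\mid k$ for some parameter $k$ in the range $1\leq k\leq q$, a contradiction.

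The hard part is doing this uniformly in $q$ and across the (comparatively long) list in Table \ref{TGL6}-C without duplication; this is precisely where the explicit $Syl_p$-decompositions of step (1) pay off, since they let one replace any candidate containing a subsum matching a known $Syl_p$-decomposition by a strictly smaller one and thus bring all cases down to a very short list. Finally, to deduce the statement for $G=SL(6,q)$, I would verify that each semisimple class used in the elimination meets $H'$, so that Proposition \ref{lu6} applies: if $G$ had a reducible $p$-vanishing character of degree $|G|_p=|H|_p$, it would extend to a $Syl_p$-vanishing character of $H$ of the same degree; by the previous step this extension is forced to take a nonzero value on some class contained in $G$, contradicting the $p$-vanishing hypothesis for the restriction.
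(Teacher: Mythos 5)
Your overall strategy is exactly the one the paper follows: run Procedure \ref{procedure} to get the finite list of $Syl_p$-vanishing candidates of degree $|H|_p$ (Table \ref{TGL6}-C), eliminate each reducible candidate by evaluating on suitable conjugacy classes, and pass to $G=H'$ via Proposition \ref{lu6} using the fact that the classes employed lie in $H'$. So there is no divergence of method. There are, however, two problems with what you have written. First, a conceptual slip: you propose to rule out candidates ``by evaluating on carefully chosen \emph{semisimple} classes.'' A $p$-vanishing character is not required to vanish at semisimple elements (indeed the paper emphasizes that the exceptional characters it finds never vanish there), so semisimple classes impose no constraints whatsoever. The classes that do the work are the $p$-singular (mixed) classes $su$ with $s\neq 1$ semisimple, $u\neq 1$ unipotent, $su=us$; the $Syl_p$-vanishing condition already handles the unipotent classes, and the extra equations in roots of unity $\xi_1=\exp(2\pi i/(q+1))$, $\zeta_1$, etc., come precisely from forcing $\psi$ to vanish on these mixed classes (in the paper's proof these are $C_{39}((q-1)i_1')$, $C_{40}((q-1)i_1')$, $C_{49}(q-1,\cdot)$, $C_{99}(q-1,q-1)$, and so on).

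Second, and more seriously, the eliminations themselves are asserted rather than carried out, and for a lemma of this kind the computations \emph{are} the proof. Statements such as ``the system is overdetermined'' and ``picking a class in the torus of order $(q-1)(q^5-1)/(q-1)$ forces $(q+1)\mid k$ for some parameter $k$'' are not derivations; nothing in your text shows which pair of classes yields incompatible conditions for, say, $\psi_1=1_H+\chi_{40}+\chi_{50}+\chi_{68}+\chi_{99}+\chi_{110}$. The paper's argument for that case compares the values on $C_{40}((q-1)i_1')$ and $C_{39}((q-1)i_1')$ for $i_1'=1,2$ and concludes that $q+1$ must divide $k_{40,1}$ or $k_{50,2}$, contradicting the parameter constraints of Table \ref{TGL6}-A; the other four candidates each require their own (different) pairs of classes. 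Your heuristic explanation --- that the ``correcting'' constituents available for $GL(5,q)$ when $3\mid(q+1)$ have no analogue here --- is a reasonable intuition for why $n=6$ behaves differently from $n=4,5$, but it is not a substitute for exhibiting the contradictions. To complete the proof you would need to write out, for each of the five reducible entries of Table \ref{TGL6}-C, the explicit vanishing equations on named mixed classes and show they have no admissible solution uniformly in $q$.
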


\begin{proof}                   
Suppose the contrary. Modulo linear characters of $H$, we have to inspect  the following
cases.
\begin{itemize} 
\item[(1)]
 $\psi_1~=~ 1_H ~~+~~ \chi_{40}(k_{40,1})~~ + ~~ \chi_{50}(k_{50,1},k_{50,2})~~ + ~~
\chi_{68}(k_{68,1},k_{68,2})~~ + ~~\chi_{99}(k_{99,1},k_{99,2})~~ + ~~~~~~~~~           
   \newline+\chi_{110}(k_{110,1},k_{110,2})$. On the classes $C_{40}((q-1)i_1' )$ ($i_1' 
=1,2$) we have 
$1+Re(\xi_1^{2i_1' k_{68,2}})+ Re(\xi_1^{i_1' (k_{110,1}+k_{110,2})}) +Re(\xi_1^{i_1'
(k_{110,1}-k_{110,2})})=Re(\xi_1^{i_1' k_{40,1}})+Re(\xi_1^{3i_1'
k_{40,1}})+Re(\xi_1^{i_1' k_{50,2}})+\newline+Re(\xi_1^{i_1' k_{99,2}})$.
Next, on the classes $C_{39}((q-1)i_1' )$ ($i_1' =1,2$) we get $1+
Re(\xi_1^{2i_1' k_{68,2}})=Re(\xi_1^{i_1' k_{1,40}})+Re(\xi_1^{i_1' k_{50,2}})$.
This implies that $q+1$ divides either $k_{40,1}$ or $k_{50,2}$. In both the cases, we
have a contradiction.

\item[(2)]  $\psi_2= 1_H + \chi_{40}(k_{40,1}) + \chi_{50}(k_{50,1},k_{50,2}) +
\chi_{67}(k_{67,1},k_{67,2}) + \chi_{110}(k_{110,1},k_{110,2})$. On the classes
$C_{40}((q-1)i_1' )$  ($i_1' =1,2$) we have
$$Re(\xi_1 ^{i_1'  k_{40,1}})+Re(\xi_1^{3i_1'  k_{40,1}})+Re(\xi_1 ^{i_1'  k_{50,2}})=1+
Re(\xi_1^{i_1' ( k_{110,1}+k_{110,2}) })+Re(\xi_1^{i_1' (k_{110,1}-k_{110,2})}).$$ 
Furthermore, on the classes $C_{39}((q-1)i_1' )$ ($i_1' =1,2$) we get $1+Re(\xi_1^{2i_1' 
k_{67,1}})=Re(\xi_1^{i_1'  k_{40,1}})+Re(\xi_1^{i_1'  k_{50,2}})$. This implies that $q+1$
divides either $k_{40,1}$ or $k_{50,2}$. In both the cases, we have a contradiction.

\item[(3)] $\psi_3= 1_H + \chi_{38}(k_{38,1}) + \chi_{50}(k_{50,1},k_{50,2}) +
\chi_{68}(k_{68,1},k_{68,2}) + \chi_{99}(k_{99,1},k_{99,2})$. On the class
$C_{49}(q-1,q-1)$ we have $Re(\xi_1^{k_{50,2}})=Re(\xi_1^{k_{68,2}})$. On the classes
$C_{40}((q-1)i_2' )$ ($i_2'  =1,2$) we have $1+Re(\xi_1^{i_2' k_{99,2}})=Re(\xi_1^{i_2'
k_{50,2}} )+Re(\xi_1 ^{2i_2' k_{68,2}})$, which implies $Re(\xi_1 ^{2 k_{68,2}})=1$ and
$Re(\xi_1^{k_{99,2}})=Re(\xi_1^{k_{50,2}} )$. 
On the classes $C_{39}((q-1)i_1' )$ ($i_2'  =1,2$) we have 
$(q^2+1)(1+Re(\xi_1^{2i_2' k_{68,2}}) =(q^2+1)Re(\xi_1^{i_1' k_{50,2}} )+q^2Re(\xi_1
^{i_1' k_{38,1}})+Re(\xi_1 ^{i_1' k_{99,1}})$. 

So $(q^2+2)Re(\xi_1^{i_1' k_{50,2}})+q^2Re(\xi_1^{i_1' k_{38,1}})=2(q^2+1)$
which implies $Re(\xi_1 ^{k_{38,1}})=1$, i.e. $(q+1)\mid k_{38,1}$, a contradiction.

\item[(4)]  $\psi_4=1_H + \chi_{40}(k_{40,1}) + \chi_{50}(k_{50,1},k_{50,2}) +
\chi_{68}(k_{68,1},k_{68,2}) + \chi_{98}(k_{98,1},k_{98,2})$. 
On the classes $C_{99}(q-1,q-1)$ we have $Re(\xi_1^{k_{40,1}})= Re(\xi_1^{ k_{68,2}})$,
and on the classes $C_{49}(q-1,(q-1)i_2' )$ ($i_2' =1,2$) we have
$q-1+Re(\xi_1^{i_2'  k_{50,2}})=(q-1)Re(\xi_1^{i_2'  k_{40,1}})+Re(\xi_1^{i_2' 
k_{68,2}})$. Hence, $q-1+Re(\xi_1^{i_2'  k_{50,2}})=qRe(\xi_1 ^{i_2'  k_{40,1}})$, which
implies $Re(\xi_1^{k_{40,1}})=1$, i.e. $(q+1)\mid k_{40,1}$, a contradiction.

\item[(5)]  $\psi_5= 1_H + \chi_{38}(k_{38,1}) + \chi_{50}(k_{50,1},k_{50,2}) +
\chi_{67}(k_{67,1},k_{67,2})$. On the class $C_{40}(q-1)$ we have
$Re(\xi_1^{k_{50,2}})=1$, i.e. $(q+1)\mid k_{50,2}$. However, this contradicts the
conditions on the parameters.
\end{itemize}

Since all the conjugacy classes that we have considered belong to $G=H'$, if follows from
Proposition \ref{lu6} that  $G$ also has no $p$-vanishing character of degree $|G|_p$.
\end{proof}

\subsection{Groups $H=CSp(6,q)$ with $q$ odd}

Applying Procedure \ref{procedure}, in step (1) we obtain the $Syl_p$-decompositions of
Table \ref{TSp6}-B and $\Delta_H=\{
 1, 4, 5, 7, 10, 11, 18, 36,$ $45, 50, 51, 65, 71, 97, 98   \}$. 
For step (2), we note that the only regular characters belonging to this set are
$\chi_{97}$ and $\chi_{98}$. Let $\psi$ be a $Syl_p$-vanishing character whose
constituents lie in $\Delta_H$.
\begin{itemize}
 \item[(i)] Let $\chi_{97}$ be a constituent of $\psi$. Then, by degree reasons, the other
constituents of $\psi$ must belong to the set $\{\chi_h \mid h =1, 4, 5, 18, 50, 71  \}$. 
Using  the class $C_{10}(q-1)$, $\psi$ must have at least one constituent of type
$\chi_{18}$. This  easily leads to a contradiction.
\item[(ii)] Let $\chi_{98}$ be a constituent of $\psi$. Then, using  the class $C_8(q-1)$,
we observe that another constituent of $\psi$ must be either of type $\chi_{45}$, or
$\chi_{50}$, or $\chi_{51}$.
We get, with some efforts, the following $Syl_p$-vanishing characters:
\begin{eqnarray*}
\psi_1 & =  &\chi_{1} + \chi_{5} + \chi_{11} + \chi_{18} + \chi_{45} + \chi_{45}' +
\chi_{45}'' + \chi_{50} + \chi_{51} + \chi_{65} + \chi_{71} + \chi_{98},\\
\psi_2 & =& \chi_{1} + \chi_{11} + \chi_{11}' + \chi_{18} + \chi_{45} + \chi_{45}' +
\chi_{45}'' + \chi_{45}''' + \chi_{50} + \chi_{65} + \chi_{71} + \chi_{98},\\
\psi_3 & = & \chi_{1} + \chi_{5} + \chi_{5}' + \chi_{18} + \chi_{45} + \chi_{45}' +
\chi_{50} + \chi_{51} + \chi_{51}' + \chi_{65} + \chi_{71} + \chi_{98},\\ 
\psi_4 &= & \chi_{5} + \chi_{10} + \chi_{11} + \chi_{11}' + \chi_{18} + \chi_{45} +
\chi_{45}' + \chi_{45}'' + \chi_{50} + \chi_{71} + \chi_{98},\\  
\psi_5 & = &  \chi_{5} + \chi_{5}' + \chi_{10} + \chi_{11} + \chi_{18} + \chi_{45} +
\chi_{45}' + \chi_{50} + \chi_{51} + \chi_{71} + \chi_{98},\\ 
\psi_6 & =&  \chi_{5} + \chi_{5}' + \chi_{5}'' + \chi_{10} + \chi_{18} + \chi_{45} +
\chi_{50} + \chi_{51} + \chi_{51}' + \chi_{71} + \chi_{98}.
\end{eqnarray*}
\end{itemize}
Now, in step (3)
we obtain the list of all  $Syl_p$-vanishing characters of $H$ of degree $|H|_p$ (see
Table \ref{TSp6}-C).

\begin{lemma}\label{LSp6}
Let $H=CSp(6,q)$, $q$ odd. Then both $H$ and $G=H'$ have no reducible $p$-vanishing
character of degree $|H|_p$. 
\end{lemma}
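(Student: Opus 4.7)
The plan is to close out the analysis of $CSp(6,q)$ exactly as in the preceding lemmas for $GL(n,q)$ and $U(n,q)$. Procedure \ref{procedure}, combined with steps (1) and (2) already carried out above, reduces the problem to the finite list of $Syl_p$-vanishing characters $\psi$ of degree $|H|_p$ tabulated in Table \ref{TSp6}-C, namely the six families $\psi_1,\ldots,\psi_6$ together with the further characters produced by step (3) (substituting known $Syl_p$-decompositions into the $\psi_i$). It suffices to prove that none of these $\psi$ is $p$-vanishing when we regard it as a class function on $H$.

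For each candidate $\psi$, I would first pin down the linear constraints on its parameters coming from the requirement that all irreducible constituents lie in a common $p$-block (Lemma \ref{bb1}). This typically forces congruences of the form $(q-1)\mid k_{h,i}$, reducing many parameters to multiples of $q-1$. Then I would evaluate $\psi$ at a sequence of non-conjugate $p$-singular classes $C_h(\ldots)$ represented by elements $g=su$ with $s\ne 1$ semisimple of order dividing $q+1$ (or $q^2+1$) and $u$ a nontrivial unipotent element centralising $s$; the relevant classes can be read directly from the \textsc{chevie} tables of $CSp(6,q)$. Each such evaluation yields a trigonometric identity of the form
\[
\sum_j \mathrm{Re}(\xi_1^{i\,a_j})=\sum_k \mathrm{Re}(\xi_1^{i\,b_k})
\]
in the parameters, parametrised by an integer $i$ varying over a sizable range. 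Comparing these identities for sufficiently many $i$ forces $(q+1)$ (or $q^2+1$) to divide one of the $a_j$ or $b_k$ that is required by its parameter set to be nonzero modulo this number, producing the desired contradiction.

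The one case that is genuinely different, as the proofs of Lemmas \ref{LSp4.1} and \ref{LGL4} suggest, is the one analogous to $\psi_1$ in Lemma \ref{LSp4.1}: here the characters involved are of type $\chi_{45}$, $\chi_{50}$, $\chi_{51}$, $\chi_{65}$, $\chi_{71}$, whose parameter sets indexed by $\ZZ_{q+1}$ might conspire (as they did for $7\mid q+1$ in $Sp(4,q)$) to satisfy all the resulting identities simultaneously. The hard part will be verifying that the system of equations coming from several classes $C_h((q-1)i_1,i_2)$ (for all admissible $i_1,i_2$) leaves no consistent nonzero solution regardless of the arithmetic of $q$; concretely, one has to rule out the analogues of equation \eqref{q7} by choosing two independent class-families so that the resulting polynomial in $\mathrm{Re}(\xi_1^{k})$ has no common root on the unit circle other than $1$. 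In each case this is a bounded computation that \textsc{chevie} can carry out cleanly.

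Finally, to obtain the same conclusion for $G=H'=Sp(6,q)$, I would observe that every conjugacy class used in the preceding contradictions is represented by an element of $G$ (semisimple parts of order dividing $q+1$ and unipotent parts both lie in the derived subgroup). Hence any reducible $p$-vanishing character of $G$ of degree $|G|_p$ would extend, by Proposition \ref{lu6}, to a $Syl_p$-vanishing character $\chi'$ of $H$ that is $p$-vanishing on the classes we have used; but no such $\chi'$ exists by the analysis above. This yields the statement of Lemma \ref{LSp6}.
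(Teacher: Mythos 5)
Your overall strategy (take the $Syl_p$-vanishing candidates of degree $|H|_p$ from Table \ref{TSp6}-C, kill each one by evaluating at $p$-singular classes, then descend to $G=H'$ via Proposition \ref{lu6} using classes that lie in $H'$) is the right one and matches the paper in outline. But there are two concrete gaps. First, you omit the step that makes the enumeration tractable: Table \ref{TSp6}-C contains $199$ families, and the paper does not filter them by block considerations (Lemma \ref{bb1}), which for $CSp(6,q)$ only yields weak congruences modulo $q-1$ on central characters. Instead it uses the Harish-Chandra restriction to the Levi subgroup with $G_J\cong Sp(4,q)$: by Lemma \ref{x2a} this restriction is again a reducible $p$-vanishing character of degree $|G_J|_p$, so by Lemma \ref{LSp4.1} it must equal $1_{G_J}+\chi_4(a,b)+\chi_6(c)+\chi_{10}(d)$ with $7\mid(q+1)$. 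This single condition eliminates all but roughly thirty of the $199$ families before any character values are computed. Without some such reduction your case analysis is not "a bounded computation that \textsc{chevie} can carry out cleanly" in any practical sense.

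Second, and more seriously, you correctly identify the crux --- that the surviving candidates might admit a parameter conspiracy analogous to the $7\mid(q+1)$ solution of equation \eqref{q7} for $Sp(4,q)$ --- but you do not resolve it, and the resolution is the actual mathematical content of the lemma. It is not a priori true that "the resulting polynomial has no common root on the unit circle other than $1$"; for $Sp(4,q)$ the analogous system \emph{did} have roots (at seventh roots of unity), which is exactly why Lemma \ref{LSp4.1} has an exceptional case. In the paper's treatment of $\psi_{81}$ and $\psi_{83}$ the systems of equations coming from the classes $C_{37}$, $C_{38}$, $C_{68}$, $C_{70}$, $C_{72}$, $C_{74}$ do admit partial solutions (one is forced to $\xi_1^{k_{51,1}}=\omega^{\pm1}$, hence $3\mid(q+1)$, and to real parts $A=\frac{1\pm\sqrt5}{4}$ or $A=\frac{-1\pm\sqrt5}{4}$), and the contradiction only emerges from a second, independent system forcing $X^2+Y^2=2A^2$ together with $2XY=-1/2$ and $X+Y\in\{0,2A\}$, which yields $\frac12=2\bigl(\frac{1\pm\sqrt5}{4}\bigr)^2$ or $A^2=-\frac14$. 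Until you exhibit these incompatible systems explicitly, the possibility remains open that $CSp(6,q)$ has an exceptional reducible $p$-vanishing character for some congruence class of $q$, exactly as $Sp(4,q)$ does.
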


\begin{proof} 
Suppose the contrary. We consider the characters $\psi_j$ described in Table \ref{TSp6}-C.
By Lemma \ref{LSp4.1}, a reducible $p$-vanishing character $\psi$ of $G$ of degree $|G|_p$
must satisfy 
$(\overline{\psi}_{P_J})|_{G_J}=1_G+\chi_4(a,b)+\chi_6(c)+\chi_{10}(d)$, where $G_J\cong
Sp(4,q)$. So it suffices 
to consider only the following cases:
\begin{itemize}
\item[(1)] Take $\psi_j$ with $j \in \{55,56\}$. We obtain a contradiction with the value
at the class $C_{26}(1,0)$.

\item[(2)] Take $\psi_j$ with $j\in \{58,59,61,62,63,64,65,66,67,70,71,74,75\}$. We obtain
a contradiction with the values at  the 
class $C_{25}(1,0)$.

\item[(3)] Take $\psi_{81}$. Using  the class $C_{70}(1,q-1)$ we have
$Re(\xi_1^{k_{85,1}})= Re(\xi_1^{k_{65,1}})$  and using  the class $C_{72}(i_1,1,q-1)$ we
have $Re(\xi_1^{k_{85,2}})=Re(\xi_1^{k_{41,1}})$.
On the classes $C_{74}(i_1,i_2,q-1)$ we have
$Re(\xi_1^{(k_{87,1}+k_{87,2})i_1})+Re(\xi_1^{(k_{87,1}-k_{87,2})i_1})
=Re(\xi_1^{2k_{65,1}i_1})+Re(\xi_1^{k_{41,1}i_1})$ for all $i_1=1,\ldots,\frac{q-1}{2}$.
Using  the classes $C_{68}(i_1, i_2$, $q-1)$ we have $Re(\xi_1^{i_1k_{87,1}})+
Re(\xi_1^{i_1k_{87,2}}) =Re(\xi_1^{i_1k_{65,1}})+Re(\xi_1^{i_1k_{41,1}})$ for all
$i_1=1,\ldots,\frac{q-1}{2}$.
Using  the classes $C_{38}(i_1,q-1)$ and  the previous information, we obtain 
$$Re(\xi_1^{(k_{98,1}+k_{98,2}+k_{98,3})i_1})+Re(\xi_1^{(k_{98,1}-k_{98,2}+k_{98,3})i_1})
+$$ 
$$+Re(\xi_1^{(k_{98,1}-k_{98,2}-k_{98,3})i_1})+Re(\xi_1^{(k_{98,1}+k_{98,2}-k_{98,3})i_1
})   = $$
$$=Re(\xi_1^{(2k_{85,1}+k_{85,2})i_1})+Re(\xi_1^{(2k_{85,1}-k_{85,2})i_1})+Re(\xi_1^{k_
{51,1}i_1})+Re(\xi^{k_{85,2}i_1})$$ for all $i_1=1,\ldots,\frac{q-1}{2}$.
Using  the classes $C_{37}(i_1,q-1)$ and  the previous information, we obtain
 $1+
Re(\xi_1^{(2k_{85,1}+k_{85,2})i_1})+Re(\xi_1^{(2k_{85,1}-k_{85,2})i_1})=Re(\xi_1^{2k_{65,1
}i_1 })+Re(\xi_1^{k_{41,1}i_1})+Re(\xi_1^{3k_{51,1}i_1})$ for all
$i_1=1,\ldots,\frac{q-1}{2}$.  This implies that either $\xi_1^{2k_{65,1}}=1$, or
$\xi_1^{k_{41,1}}=1$, or $\xi_1^{3k_{51,1}}=1$. 
 In the first two cases, we get a contradiction with the conditions on the parameters
$k_{65,1}$ and $k_{41,1}$. 
 So $\xi_1^{3k_{51,1}}=1$, i.e. $\xi_1^{k_{51,1}}=\omega^{\pm 1}$. Furthermore,
$Re(\xi_1^{(2k_{85,1}+k_{85,2})i_1})+Re(\xi_1^{(2k_{85,1}-k_{85,2})i_1})=Re(\xi_1^{2k_{65,
1}i_1 })+Re(\xi^{k_{41,1}i_1})$.

First observe that 
 $$\left\{\begin{array}{l} Re(\xi_1^{k_{85,1}})=Re(\xi_1^{k_{65,1}}) \\
Re(\xi_1^{k_{85,2}})=Re(\xi_1^{k_{41,1}}) \\
Re(\xi_1^{2k_{85,1}+k_{85,2}})+Re(\xi_1^{2k_{85,1}-k_{85,2}})=Re(\xi_1^{2k_{65,1}}
)+Re(\xi_1^{k_{41,1}}) \\
Re(\xi_1^{4k_{85,1} +2k_{85,2} })+Re(\xi_1^{4k_{85,1} -2k_{85,2}
})=Re(\xi_1^{4k_{65,1}})+Re(\xi_1^{2k_{41,1}}) \\
\end{array}\right. .$$
Setting $A=Re(\xi_1^{k_{85,1}})$ and $B=Re(\xi_1^{k_{85,2}})$, we obtain

$$\quad \left\{\begin{array}{l} 2(2A^2-1)B=(2A^2-1)+B\\
2(8A^4-8A^2+1)(2B^2-1)=(8A^4-8A^2+1)+(2B^2-1)\\
\end{array}\right.  . $$
Solving this system and remembering that $A\neq \pm 1$, we obtain that either
$$(a)\quad A=\frac{1\pm \sqrt{5}}{4}, \qquad B=-A,$$
or
$$(b)\quad A=\frac{-1\pm \sqrt{5}}{4}, \qquad B=A.$$

Now, consider the following equations: $$\left\{\begin{array}{l}
Re(\xi_1^{k_{87,1} })+Re(\xi_1^{k_{87,2}})=Re(\xi_1^{k_{65,1}})+Re(\xi_1^{k_{41,1}} ) \\
Re(\xi_1^{k_{87,1}+k_{87,2}})+Re(\xi_1^{k_{87,1}-k_{87,2}})=Re(\xi_1^{2k_{65,1}}
)+Re(\xi_1^{k_{41,1}}) \\
Re(\xi_1^{2k_{87,1}})+Re(\xi_1^{2k_{87,2}})=Re(\xi_1^{2k_{65,1}})+Re(\xi_1^{2k_{41,1}} )
\\
\end{array}\right. .$$
Setting $X=Re(\xi_1^{k_{87,1}})$ and $Y=Re(\xi_1^{k_{87,2}})$, we obtain in case (a)
$$\left\{\begin{array}{l} 2XY=-1/2\\ X+Y =
0\\ X^2+Y^2=2A^2
\end{array}\right.  . $$
So $Y=-X$, $X^2=\frac{1}{4}$ and $\frac{1}{2}=2(\frac{1\pm
\sqrt{5}}{4})^2$, an absurd.
In case (b) we get
$$\left\{\begin{array}{l} 2XY=-1/2\\ X+Y =
2A\\ X^2+Y^2=2A^2
\end{array}\right.  . $$
This implies $A^2=-1/4$, again an absurd.

\item[(4)] Take $\psi_{82}$. Using the classes $C_{70}(1,q-1)$ and $C_{72}(i_1,q+1,q-1)$,
we have
 $Re(\xi_1^{k_{85,1}})=Re(\xi_1^{k_{65,1}})$ and
$Re(\xi_1^{k_{85,2}})=Re(\xi_1^{k_{41,1}})$. Now, considering also the class
$C_{32}(q,q-1)$, we obtain
$Re(\xi_1^{k_{85,1}})+Re(\xi_1^{k_{85,2}})+Re(\xi_1^{k_{51,1}})=1+Re(\xi_1^{k_{65,1}}
)+Re(\xi_1^{k_{41,1}})$, whence $Re(\xi_1^{k_{51,1}})=1$, a contradiction.

\item[(5)] Take $\psi_{83}$. Using  the class $C_{70}(1,q-1)$ we have
$Re(\xi_1^{k_{86,1}})=Re(\xi_1^{k_{65,1}})$, and looking at the class $C_{72}(i_1,1,q-1)$
we have $Re(\xi_1^{k_{86,2}})=Re(\xi_1^{k_{41,1}})$. Using  the classes
$C_{74}(i_1,i_2,q-1)$ we have
$Re(\xi_1^{(k_{87,1}+k_{87,2})i_1})+Re(\xi_1^{(k_{87,1}-k_{87,2})i_1})=Re(\xi_1^{2k_{65,1}
i_1})+Re(\xi_1^{k_{41,1}i_1})$ for all $i_1=1,2,3$.
Using  the classes $C_{37}(i_1,q-1)$ and the previous information, we obtain
$q+qRe(\xi_1^{(2k_{86,1}-k_{86,2})i_1}) +qRe(\xi_1^{(2k_{86,1}+k_{86,2})i_1})
+(q-1)Re(\xi_1^{k_{41,1}i_1})=qRe(\xi_1^{2k_{65,1}i_1})+qRe(\xi_1^{3k_{51,1}i_1})+
(2q-1)Re(\xi_1^{k_{51,1}i_1})$ for all $i_1=1,\ldots,\frac{q-1}{2}$. This implies that
either
$\xi_1^{2k_{65,1}}=1$, or $\xi_1^{k_{51,1}}=1$, or $\xi_1^{3k_{51,1}}=1$. In the first two
cases we get a contradiction with the conditions on parameters. So $\xi_1^{3k_{51,1}}=1$,
i.e. $\xi_1^{k_{51,1}}=\omega^{\pm 1}$ and
$Re(\xi_1^{k_{51,1}})=-\frac{1}{2}$. Furthermore,
$Re(\xi_1^{k_{41,1}})=Re(\xi_1^{k_{51,1}})$ and
\begin{equation}\label{37}
 Re(\xi_1^{2k_{86,1}+k_{86,2}})+Re(\xi_1^{2k_{86,1}-k_{86,2}})=Re(\xi_1^{2k_{65,1}}
)+Re(\xi_1^{k_{41,1}}).
\end{equation}

Using  the classes $C_{32}(i_1,q-1)$ we have
$Re(\xi_1^{i_1k_{87,1}})+Re(\xi_1^{i_1k_{87,2}})=
Re(\xi_1^{i_1k_{86,1}})+Re(\xi_1^{i_1k_{65,1}})=2Re(\xi_1^{i_1k_{65,1}})$, for all
$i_1=1,\ldots,\frac{q-1}{2}$. This implies
$Re(\xi_1^{k_{87,1}})=Re(\xi^{k_{87,2}})=Re(\xi_1^{k_{65,1}})$.
 Using  the classes $C_{68}(i_1,j,q-1)$ we have
$Re(\xi_1^{k_{87,2}})=Re(\xi_1^{k_{41,1}})$.
Therefore,
$Re(\xi_1^{k_{86,1}})=Re(\xi_1^{k_{86,2}})=Re(\xi_1^{k_{65,1}})+Re(\xi_1^{k_{41,1}}
)=Re(\xi_1^{k_{51,1}})=-\frac{1}{2}$.
Now, equation \eqref{37} becomes $$
Re(\xi^{2k_{86,1}+k_{86,2}})+Re(\xi^{2k_{86,1}-k_{86,2}})=-1,$$ which has no solution with
our values of the parameters.

\item[(6)] Take $\psi_{84},\psi_{85}$. Using the classes $C_{74}(1,i_2,0)$, we have
$k_{41,1}=q+1$, a contradiction.

\item[(7)] Take $\psi_{86},\psi_{87}$. Using the classes $C_{72}(i_1,1,0)$, we have
$k_{41,1}=q+1$, a contradiction.

\item[(8)] Take $\psi_{92},\psi_{93},\psi_{100}$. Using the class $C_{70}(1,0)$, we have
$k_{65,1}=q+1$, a contradiction.

\item[(9)] Take $\psi_{96}, \psi_{97}$. Using the classes $C_{68}(i_1,q,0)$, we have
$k_{65,1}=q+1$, a contradiction.

\item[(10)] Take $\psi_{78}$. Using the classes $C_{70}(q,0)$ and
$C_{72}(i_1,0,0)$  we obtain
$Re(\xi_1^{k_{41,1}})=Re(\xi_1^{k_{50,1}})=Re(\xi_1^{k_{65,1}})$. Using  the
classes $C_{68}(i_1,q,0)$   we obtain
$Re(\xi_1^{k_{87,1}})=Re(\xi_1^{k_{87,2}})$, a contradiction.

\item[(11)] Take $\psi_{79}$. Proceed as for $\psi_{78}$.

\item[(12)] Take $\psi_{89}$. Using  the classes $C_{37}(i_1,0)$ and $C_{74}(i_1,i_2,0)$
($i_1=0,1$), 
we obtain $1+Re(\xi_1^{k_{52,1}})=Re(\xi_1^{k_{41,1}})+Re(\xi_1^{2k_{65,1}})$. So either
$q+1$ divides 
$k_{41,1}$ or $(q+1)/2$ divides $k_{65,1}$. In both the cases, we have a contradiction.
\end{itemize}

\medskip
\noindent Since all the conjugacy classes that we have considered belong to $H'$, it
follows from
Proposition \ref{lu6} that also $G=H'$ has no
$p$-vanishing character of degree $|G|_p$.
\end{proof}

\section{$p$-vanishing characters in high rank groups}

We can now prove that a Chevalley group of rank $\ell\geq 6$ has no $p$-vanishing character of degree $|G|_p$.
 
\begin{theo}\label{fr4}
Let $G$ be a Chevalley group, and 
$\chi$ be a $p$-vanishing character of $G$ of degree $|G|_p$.
Then $\chi=St$ in each of the following cases:
\begin{itemize}
\item[(1)] $SL(n,q)$ with $n\geq 6$;

\item[(2)] $SU(n,q)$ with $n\geq 4$;

\item[(3)] $Sp(2m,q)$ with either $m\geq 3$ if $7\mid (q+1)$ or  $m\geq 2$ otherwise;

\item[(4)] $Spin(2m+1,q)$, $q$ odd, with $m\geq 6$;

\item[(5)] $Spin^+(2m,q)$  with $m\geq 6$;

\item[(6)] $Spin^-(2m,q)$ with $m\geq 3$;

\item[(7)] $E_6(q)$, $E_7(q)$, $E_8(q)$;

\item[(8)] ${}^2E_6(q)$;

\item[(9)] $F_4(q)$;

\item[(10)] $Spin(2m+1,q)$, $q$ odd, with either  $(q+1,7)=1$ and $m=3,4,5$ or $(q+1,3)=1$ and $m=4,5$.

\item[(11)] $Spin^+(2m,q)$  with $ m=4, 5$ and $(q+1,3)=1$.
\end{itemize}
\end{theo}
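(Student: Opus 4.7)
The argument will proceed by induction on the BN-pair rank $\ell$ of $G$, with the base cases supplied by the explicit computations of Sections~5, 7, and 9 (Lemmas \ref{LSL3}--\ref{L2F4} for groups of BN-pair rank $1$ and $2$, and Lemmas \ref{LGL4}--\ref{LSp6} for the tabulated higher-rank examples), combined with Proposition \ref{lu6} to pass from groups with connected center back to the quasi-simple groups appearing in the statement. By Theorem \ref{nh2}, either $\chi=St$ (and the claim holds) or $(\chi,1_G)=1$; I intend to exclude the second possibility.

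The key observation is the following: since the truncation of the trivial character $1_G$ to any standard Levi $L=L_J$ is $1_L$, the assumption $(\chi,1_G)=1$ gives $(\overline{\chi}_L,1_L)\geq 1$, and restricting further to the semisimple part $G_J=[L_J,L_J]$, the character $\overline{\chi}_L|_{G_J}$ contains $1_{G_J}$ as a constituent. By Lemma \ref{x2a}, $\overline{\chi}_L|_{G_J}$ is itself a $p$-vanishing character of $G_J$ of degree $|G_J|_p$. Hence, if one can exhibit a \emph{single} Levi subgroup $L_J$ whose semisimple part $G_J$ is a simple Chevalley group to which Theorem \ref{fr4} applies inductively, then the only such $p$-vanishing character of $G_J$ is $St_{G_J}$, forcing $\overline{\chi}_L|_{G_J}=St_{G_J}$. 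But $St_{G_J}$ does not contain the trivial character, yielding the required contradiction.

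It therefore suffices to exhibit, in each case of the statement, a Levi subgroup whose semisimple part is simple and belongs to the inductive hypothesis or to the base cases. For the classical families (1)--(3) I would remove an end simple root of the Dynkin diagram, producing $G_J=SL(n-1,q)$ in case (1) for $n\geq 7$, $G_J=SU(n-2,q)$ in case (2) for $n\geq 6$, and $G_J=Sp(2(m-1),q)$ in case (3) for $m\geq 4$; the smallest ranks are then handled by the base cases. For the orthogonal families (4)--(6) and the exceptional families (7)--(9), I exploit the fact that each Dynkin diagram of types $B$, $D$, $F_4$, and $E_n$ (and of the twisted type ${}^2E_6$) contains a Levi of a type that has already been settled: in particular, the $SL(m,q)$-Levis available for $m\geq 6$ fall under case (1), the $C_3$-Levi $Sp(6,q)$ of $F_4$ is given by the base case (Lemma \ref{LSp6}), and the tower $E_8\supset E_7\supset E_6$ of Levis provides the inductive chain for (7). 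The restricted cases (10)--(11) use the same mechanism but invoke Theorem \ref{mt1}: the $SL(m,q)$- or $Sp(4,q)$-Levi subgroups appearing for small $m$ have no reducible $p$-vanishing character of maximal degree precisely under the congruences $(q+1,3)=1$ or $(q+1,7)=1$ imposed in the statement.

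The main technical obstacle will be bookkeeping: for each family I must verify that the Levi $G_J$ selected above is genuinely a \emph{simple} Chevalley group rather than a non-trivial direct product of smaller Chevalley groups, since the contradiction sketched above does not extend to products $G_J=A\times B$ (a $p$-vanishing character of $A\times B$ of degree $|A|_p|B|_p$ need not factor as a tensor product of $p$-vanishing characters of the factors, so Theorem \ref{fr4} applied to $A$ and $B$ individually does not immediately yield the desired conclusion). The lower bounds on rank and the congruence conditions on $q+1$ listed in Theorem \ref{fr4} are imposed precisely to guarantee the existence of a simple Levi satisfying the inductive hypothesis in every case.
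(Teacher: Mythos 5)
Your proposal is correct and follows essentially the same route as the paper: reduce via Theorem \ref{nh2} to the case $(\chi,1_G)=1$, then obtain a contradiction by Harish-Chandra restriction to a single Levi subgroup whose semisimple part has no reducible $p$-vanishing character of degree equal to the order of its Sylow $p$-subgroup, with the same (or interchangeable) choices of Levi in each family and the same base cases from Sections 5, 7 and 9 together with Proposition \ref{lu6}. The only cosmetic difference is your suggestion to use the tower $E_8\supset E_7\supset E_6$ for the exceptional groups, where the paper restricts directly to an $SL(n,q)$ Levi and invokes case (1).
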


\begin{proof}
We proceed by induction on the BN-pair rank $\ell$ of the group.  Let $\chi$ be a $p$-vanishing character of $G$ of degree $|G|_p$. Take two subsets $J,K$ of $I_\ell$ with $I_\ell=J\cup K$ and $J\cap K \neq \emptyset$, 
such that $|K|=2$. 
By Theorem \ref{nh2}, either $\chi=St$ or $(\chi,1_G)=1$. In the latter case, $(\overline{\chi}_L,1_L)=1$ for every Levi subgroup $L$ of $G$.
So, we obtain the statement if we are able to find $J$ such that $G_J$ has no reducible $p$-vanishing characters of degree $|G_J|_p$.

(1) Let $G=SL(n,q)$ with $n\geq 7$. Then we can take $J,K$ as described above, such
that $G_J\cong SL(n-1,q)$ and $G_K\cong SL(3,q)$.

(2) Let $G=SU(n,q)$ with $n\geq 6$. In this case, we can take $J,K$ such that
$G_J\cong SU(n-2,q)$ and $G_K\cong SL(3,q)$.

(3) Let $G=Sp(2m,q)$, with either $m\geq 4$ if $7$ divides $q+1$ or $m\geq 3$,
otherwise. In this case, we can take $J,K$ such that $G_J=Sp(2m-2, q)$ and $G_K\cong
SL(3,q)$.

(4) Let $G=Spin(2m+1,q)$ with $m\geq 6$. In this case, we can take $J,K$ such that
$G_J=SL(m,q)$ and $G_K\cong Sp(4,q)$. So the result follows from item (1).

(5) Let $G=Spin^+(2m,q)$  with $m\geq 6$. In this case, we can take $J, K$ such that
$G_J=SL(m,q)$ and $G_K\cong SL(3,q)$. So the result follows from item (1).

(6) If $G=Spin^-(6,q)$, then $G\cong SU(4,q)$ and so $\chi=St$ by Lemma \ref{LU4}.
So, assume that  $G=Spin^-(2m,q)$ with $m> 3$. Thus, we can take $J,K$ such that $G_J\cong
Spin^-(2m-2,q)$ and $G_K\cong SL(3,q)$.

(7) Let $G=E_n(q)$ with $n=6,7,8$. In this case we can take $J,K$ such that
$G_J\cong SL(n,q)$ and $G_K\cong SL(3,q)$. So, the result follows from item (1).

(8) Let $G={}^2 E_6(q)$. In this case we can take $J,K$ such that $G_J\cong SU(6,q)$
and $G_K\cong SL(3,q)$.

(9) Let $G=F_4(q)$. In this case we can take $J,K$ such that $G_J\cong Sp(6,q)$ and
$G_K\cong \Omega(5,q)$.

(10) Let $G=Spin(2m+1,q)$ and $q$ odd. Assume first that $(q+1,7)=1$. In this case,
for $m=3,4,5$ we can take $G_J\cong Spin(2m-1,q)$ and $G_K\cong SL(3,q)$. Now, suppose
that $(q+1,3)=1$. Hence, for $m=4,5$ we can take  $G_J\cong SL(m,q)$ and $G_K\cong
Sp(4,q)$.

(11) Let $G=Spin^+(2m,q)$  with $m=4,5$ and $(q+1,3)=1$. In this case we can take
$J,K$ such that $G_J\cong SL(m,q)$ and $G_K\cong SL(3,q)$. So the result follows from
Lemmas \ref{LGL4} and \ref{LGL5}.
\end{proof}

\newpage
\section{The tables}\label{tables}

The tables below contain information on the $Syl_p$-vanishing characters and, when they exist, 
on the reducible $p$-vanishing characters of degree $St(1)$. (This equals the order of a \syl $U$ of the group in question.)
 The tables  are organized as follows.
For every quasi-simple Chevalley group $G$ and/or its version $H$ with connected center we assign 
a subsection in which we provide 
some properties of the groups $G,H$ and their characters, necessary for understanding
our results about these groups tabulated in the same subsection. In some cases we 
have distinct subsections for the same group $H$, depending on whether   $q$ is odd or even, or
congruent to $1$ or $-1$ modulo $3$, etc. 

First we recall that \textsc{chevie} partitions the conjugacy classes  and 
\ir characters of $H$ in subsets which we denote here by ${\mathbf C}_h$ and ${\mathbf X}_h$ ($h=1,2,\ldots$). 
Note that the size of every class in ${\mathbf C}_h$ is the same. Similarly,
the degrees of the characters in ${\mathbf X}_h$ are the same.
(Sometimes the degrees of characters from distinct ${\mathbf X}_h$ may coincide.) We keep the notation 
of \textsc{chevie} for irreducible characters and conjugacy classes. The symbol $\chi_h(k)$ 
identifies the irreducible character of the set ${\bf X}_h$ associated to the particular element $k\in I_h$ 
according to \textsc{chevie}. We denote by $I_h$ the set of parameters describing the individual elements of ${\bf X}_h$.
If it is not necessary to specify the parameter $k$, we simply write $\chi_h$ to 
indicate any element of the set ${\bf X}_h$. Similarly, an individual conjugacy class
$C_h(k)\in {\bf C}_h$ is the class which corresponds to the parameter $k\in I_h$.

The parameters $k\in I_h$ labeling  individual characters $\chi_h(k)\in {\bf X}_h$ and
classes $C_h(k)\in {\bf C}_h$ are represented as 
elements of a direct product of cyclic groups, with some conditions (note that not
necessarily to distinct parameters 
correspond distinct characters). In the tables we indicate the group $I_h$ belongs to in
the columns headed $I_h$
and the exact description of $I_h$ is obtained by the exclusion of the elements written in
the columns headed `exceptions'.

In each subsection we usually give two or  three tables   headed as Table A,
Table B, Table C and Table D (in some cases Table D is omitted).  

Tables  A  provides some information on the irreducible characters of $H$ occurred in Tables B,C,D.
Specifically, Tables A  list the degrees of the members of the sets ${\bf X}_h$, 
the range of $h$ and we also describe the structure of the character parameter group $I_h$.
In some degenerate case the set ${\bf X}_h$ consists of a single element, so no parameter
needs to be assigned to the element of  the corresponding set $I_h$. In this case the
respective position in the table is marked by $-$, see for instance Table \ref{TU3}.2.A.
In order to avoid misunderstanding with \textsc{chevie} notation $\chi_h(k)$, in Tables A
we use $\deg \chi_h$ to indicate the degree of a character $\chi_h$.

In Tables B, we report the $Syl_p$-decompositions that we have found and  used in   our
computations, 
see Sections \ref{rk1}, \ref{rk2} and \ref{rk3}. 

In Tables C we list all the $Syl_p$-vanishing characters of degree $|H|_p$. 
The column headed by ${\bf v}$ lists, in more friendly notation, the $Syl_p$-vanishing
characters $\psi=\sum\chi_h$, where $\chi_h\in{\bf X}_h$ and the sum ranges over the
indices from ${\bf v}$. Say, if ${\bf v}=(1,4)$
then $\psi=\chi_1+\chi_4$. The column headed by $j$ is simply the ordering numbers for
${\bf v}$; we could write ${\bf v}_j$ to be more precise.

 Finally, in Tables D, when they exist, we provide the values of the $p$-vanishing
characters $\psi$ of  
degree $|H|_p$ satisfying the condition $(\psi,1_H)=1$. 
Note that no information on the character values is lost because all the other $p$-vanishing characters 
can be obtained from these by multiplying them with linear characters of $H$.  In addition, we explicitly 
write down only the values of $\psi$ at those conjugacy classes where $\psi$ differs from  the 
Steinberg character. Unless otherwise stated, the conjugacy classes in Table D are
parametrised by the same 
sets $I_h$ that are used to parametrise the characters in Table A.

To facilitate understanding the tables, as an illustration, 
 in Section \ref{TGL2} for groups $GL(2,q)$ and $SL(2,q)$ we write 
additional  comments about the table content.

We use the following notation for some primitive complex roots of the unity:
$\omega=\exp(\frac{2\pi i}{3})$, 
$\xi_1=\exp(\frac{2\pi i}{q+1})$, $\xi_2=\exp(\frac{2\pi i}{q^2+1})$, 
$\varphi_8'=\exp(\frac{2\pi i}{q^2+\sqrt{2}q+1})$ and $\varphi_{12}''=\exp(\frac{2\pi i}{q^2+\sqrt{3}q+1})$.

\subsection{Groups $H=GL(2,q)$}\label{TGL2}

\begin{tab}[\ref{TGL2}-A]\tabA{$GL(2,q)$}

\smallskip
\begin{center}
\begin{footnotesize}
$\left.
\begin{array}{l|l||l|l}
h & \deg \chi_h & h & \deg \chi_h\\\hline 
1 &  1 & 2 &  q \\
3 &  q+1 & 4 & q-1\\
\end{array}
\right.
\quad 
\left.
~~~~~~~~~~\begin{array}{l|l|l}
h & I_h & {\rm exceptions} \\\hline
1,2  & k\in \ZZ_{q-1} \\ 
3 & (k,l)\in \ZZ_{q-1}^2 & (q-1)\mid (k-l) \\ 
4 & k\in \ZZ_{q^2-1} & (q+1)\mid k\\
\end{array}\right.$
\end{footnotesize}
\end{center}

\smallskip
These two tables provide the following information. The irreducible characters of the
group $GL(2,q)$ are partitioned into $4$ sets ${\bf X}_h$. The characters of the set ${\bf
X}_1$ have degree $\deg \chi_1=1$ and are parametrised by the elements of the set
$I_1=\ZZ_{q-1}$. This means that ${\bf X}_1=\{\chi_1(0), \ldots,\chi_1(q-2)\}$. The
characters of the set ${\bf X}_2$ have degree $q$ and are  parametrised by the elements of
$I_2=\ZZ_{q-1}$. So, ${\bf X}_2=\{\chi_2(u) \mid u \in \ZZ_{q-1}\}$. The characters of the
set ${\bf X}_3$ have degree $q+1$ and  are parametrised by the elements of 
$I_3=\ZZ_{q-1}\times \ZZ_{q-1}$. 
One more time, we put in evidence that different choices of the parameters $(k,l)\in I_3$
may identify the same irreducible character. Furthermore, the exceptions listed in the
third column mean that the character $\chi_3(k,l)$ is not a member of ${\bf X}_3$ whenever
$(q-1)\mid (k-l)$. It turns out that the set ${\bf X}_3$ contains $(q-2)(q-1)/2$ distinct
characters. Finally, the characters of the set ${\bf X}_4$ have degree $q-1$ and are
parametrised by the elements of  $I_
4=\ZZ_{q^2-1}$. In this case, $\chi_4(k)$ does not belong to ${\bf X}_4$, whenever $ (q+1)
\mid k$. (Observe that $(q-1)\mid(k-l)$ is equivalent to $k=l$, but we prefer to keep
\textsc{chevie}  notation.)
\end{tab}

\begin{tab}[\ref{TGL2}-B] \tabB{$GL(2,q)$}

\begin{center}
\begin{footnotesize}
$\left.\begin{array}{c|c||c|c}
h & \mathbf{v} & h & \mathbf{v} \\ \hline 
2 & (1,4) & 3 & (1,2)\\
\end{array}\right.$
\end{footnotesize}
\end{center}

\smallskip
This table   give the following information on the $Syl_p$-decompositions that we were able to find. The restriction to $U$ of each character $\chi_2$ belonging to the set ${\bf X}_2$ can be written as the sum of two summands: the first one is the restriction of any character of the set ${\bf X}_1$, the second one is the restriction of any character of the set ${\bf X}_4$. Briefly, using the notation of Section \ref{Syldec}, $\chi_2\equiv \chi_1+\chi_4 \pmod U$. Similarly, $\chi_3\equiv \chi_1+\chi_2\pmod U$.
\end{tab}

\begin{tab}[\ref{TGL2}-C] \tabC{$GL(2,q)$}

\begin{center}
\begin{footnotesize}
$\left. \begin{array}{c|c||c|c}
j & \mathbf{v} & j & \mathbf{v} \\ \hline 
1 & (1,4) & 2 & (2)\\
\end{array}\right.$
\end{footnotesize}
\end{center}

\smallskip
This table  lists all  $Syl_p$-characters $\psi_j$  of $H=GL(2,q)$ of degree $|H|_p$. For $H$ they are of two types: $\psi_1=\chi_1+\chi_4$ 
(i.e. the sum of two irreducible characters: the first one is any character belonging to the set ${\bf X}_1$,
the second one is any character belonging to the set ${\bf X}_4$); $\psi_2=\chi_2$ (i.e. any character of the set ${\bf X}_2$). 
\end{tab}

\begin{tab}[\ref{TGL2}-D] \tabD{$GL(2,q)$}

\smallskip
In view of Lemma \ref{LG2} we have \tabDD{$\psi=1_H+\chi_4(k)$, where $k\in I_4$
and $k=(q-1)k'$} 

\begin{center} 
\begin{footnotesize}
$\left.
\begin{array}{l||l|l}
C_h & St & \psi\\\hline
C_4(i) & -1 & 1-2Re(\xi_1^{ik'})\\
\end{array}
 \right.$
\end{footnotesize}
\end{center}

\smallskip
In the above table, we give  the values of the reducible $p$-vanishing character $\psi$ and those of the Steinberg character $St$ of $H$ only at the conjugacy classes where these values differ. 
The notation for the conjugacy classes, which is similar to the notation for the irreducible characters, is taken according to \textsc{chevie}.
\end{tab}

\subsection{Groups $U(3,q)$ and $SU(3,q)$}\label{TU3}

\begin{tab}[\ref{TU3}.1-A] \tabA{$U(3,q)$}

\smallskip
\begin{center}
\begin{footnotesize}
$\left.
\begin{array}{l|l||l|l||l|l}
h & \deg \chi_h & h & \deg \chi_h& h & \deg \chi_h \\\hline 
1 &  1 & 2 &  q (q-1) &
3 &  q^3 \\
4 & q^2-q+1 &
5 &  q (q^2-q+1) &
 6 &  (q-1) (q^2-q+1)\\
7 &  (q+1) (q^2-q+1) & 8 &  (q+1)^2 (q-1)\\
\end{array}
\right.$

\med
$\left. 
\begin{array}{l|l|l}
h & I_h & {\rm exceptions} \\ \hline
1,2,3  & u\in \ZZ_{q+1}\\
4,5 & (u,v)\in \ZZ_{q+1}\times \ZZ_{q+1} & (q+1)\mid (u-v)\\
6 & (u,v,w)\in \ZZ_{q+1}\times \ZZ_{q+1}\times \ZZ_{q+1} &  (q+1)\mid (u-v), (u-w),
(v-w)\\
7 & (v,u)\in \ZZ_{q+1}\times \ZZ_{q^2-1}  &  (q-1)\mid u\\
8 & u \in \ZZ_{q^3+1} & (q^2-q+1)\mid u\\
\end{array} \right.$
\end{footnotesize}
\end{center}
\end{tab}

\begin{tab}[\ref{TU3}.1-B] \tabB{$U(3,q)$}

\begin{center}
\begin{footnotesize}
$\left. \begin{array}{c|c||c|c||c|c||c|c||c|c||c|c}
h & \mathbf{v} & h & \mathbf{v} & h & \mathbf{v} & h & \mathbf{v} & h & \mathbf{v} & h & \mathbf{v} \\ \hline 
3 &  (2,5) & 4 & (1,2) & 5 & (4,6) & 7 & (1,3) & 7 & (4,5) & 8 & (2,2,2,6)\\
\end{array} \right.$
\end{footnotesize}
\end{center}

\smallskip
The above table describes 6 types of $Syl_p$-decompositions of \ir characters of $U(3,q)$.
For instance, the last column tells us that we have
$\chi_8\equiv \chi_2+\chi'_2+\chi''_2+\chi_6\pmod U$. This means that the restriction to $U$ of any character of the set ${\bf X}_8$ is the sum of the restrictions of four characters: any $3$ characters $\chi_2,\chi'_2,\chi''_2$ of the set ${\bf X}_2$ (distinct or not) and an arbitrary  
character $\chi_6$ from  ${\bf X}_6$.
\end{tab}

\begin{tab}[\ref{TU3}.1-C] \tabC{$U(3,q)$}

\begin{center}
\begin{footnotesize}
$\left. \begin{array}{c|c||c|c||c|c||c|c}
j & \mathbf{v} & j & \mathbf{v} & j & \mathbf{v} & j & \mathbf{v} \\\hline 
1 & (1,2,2,6) & 2 & (2,4,6) & 3 & (2,5) & 4  & (3)\\
\end{array}\right.$
\end{footnotesize}
\end{center}

\smallskip
Note that  the above table gives  the $Syl_p$-vanishing character 
 $\psi_1=\chi_1+\chi_2+\chi'_2+\chi_6$. 
This means that $\psi_1$ is the sum of $4$ characters:
$\chi_1$ is any character the set ${\bf X}_1$, $ \chi_2,\chi'_2$ are any  characters (distinct or not) of 
 ${\bf X}_2$, and $\chi_6$ is any character of  ${\bf X}_6$.
\end{tab}

\begin{tab}[\ref{TU3}.1-D] \tabD{$U(3,q)$ when $3$ divides $q+1$}

\smallskip
In view of Lemma \ref{LU3}, we have
\tabDD{$\psi= 1_H+ \chi_{2}(a) + \chi_{2}(2a) + \chi_{6}(a,2a,3a)$, where $a=(q+1)/3$}

\begin{center} 
\begin{footnotesize}
 $\left.
\right.$
\end{footnotesize}
\end{center}
\end{tab}


\begin{thebibliography}{hhhhhh}


\bibitem{Ble} F. M. Bleher, Finite groups of Lie type of small rank, \emph{Pacific J. Math.} 187 (1999), no. 2, 215--239.

\bibitem{Ca} R.W. Carter, \emph{Finite Groups of Lie type. Conjugacy Classes and Complex Characters}, John  Wiley \& Sons, Chichester, 1985.

\bibitem{Atl}  J.H. Conway, R.T. Curtis, S.P. Norton,  R.A. Parker and R.A. Wilson, \emph{Atlas of finite groups. Maximal subgroups and ordinary characters for simple groups},
Oxford University Press, Eynsham, 1985.

\bibitem{CIK} Ch.W. Curtis, N. Iwahori and R. Kilmoyer, Hecke
algebras and characters of parabolic type of finite groups with
(B,N)-pairs, \emph{Publ. Math. IHES} 40 (1972), 81--116.

\bibitem{CR1}  Ch.W. Curtis and I. Reiner,
\emph{Methods of representation theory. With applications to finite groups and orders}, 
Vol. 1, John Wiley \& Sons, New York, 1981.

\bibitem{CR2}   Ch.W. Curtis and I. Reiner,
\emph{Methods of representation theory. With applications to finite groups and orders}, 
Vol. 2, John Wiley \& Sons, New York, 1987.


\bibitem{DM}  F. Digne and J. Michel, \emph{Representations of finite groups of Lie type}, London Math. Soc. Student Texts no. 21, Cambridge University Press, 1991.

\bibitem{Fe} W. Feit, \emph{The representation theory of finite
groups}, North-Holland Mathematical Library, Amsterdam, 1982.

\bibitem{chevie} M. Geck, G. Hiss, F. L\"ubeck, G. Malle, and G. Pfeiffer, \textsc{chevie}  -- A system for computing and processing generic character tables for finite groups of Lie type, Weyl groups and Hecke algebras, \emph{Appl. Algebra Engrg. Comm. Comput.}, 7 (1996), 175--210.

\bibitem{GG} I.M. Gelfand and M.I. Graev, Construction of \ir
\reps of simple algebraic groups over finite fields, \emph{Dokl. Acad. Nauk SSSR} 147 (1962), 529--532 (Soviet Math. Doklady 3(1962), 1646--1648).

\bibitem{GLS} D. Gorenstein, R. Lyons and R. Solomon, \emph{The
Classification of Finite Simple Groups}, No. 3, Part I, Chapter A,
Amer. Math. Soc. Providence, R.I. 1998.

\bibitem{HH} F. Himstedt  and  S.-C. Huang, Dade's invariant conjecture for the Ree groups ${}^2
F_4(q)$ in defining characteristic,  \emph{Comm. Algebra}, 40 (2012), 452--496.

 
\bibitem{Hub} J. Humphreys,  \emph{Modular \reps of finite groups of Lie
type}, Cambridge Univ. Press, Cambridge, 2006.

\bibitem{Ko} D. Kotlar, On the \ir constituents of degenerate Gelfand-Graev characters,
\emph{J. Algebra} 173 (1995), 348--360.

\bibitem{Lu} D. Lusztig, On the \reps of reductive groups
with disconnected centre. Orbites unipotentes et
repr\'esentations, I, \emph{Ast\'erisque No. 168} (1988), 10, 157--166.
 
\bibitem{MT} G. Malle and D. Testerman, \emph{Linear algebraic groups
and finite groups of Lie type}, Cambridge Univ. Press, Cambridge,
2011.

\bibitem{MW} G. Malle and Th. Weigel, Finite groups with minimal $1$-PIM, \emph{ Manuscripta Math}, 126 (2008), 315--332.

\bibitem{MZ} G. Malle and A.E. Zalesski\u{\i}, Prime power degree representations of quasi-simple groups, \emph{Arch. Math. (Basel)} 77 (2001), 461--468.
 
 
\bibitem{Prz} A. Przygocki, Schur indices of symplectic groups, \emph{Comm. Algebra} 10 (1982), no. 3, 279--310.


\bibitem{Sh} 
K. Shinoda, The characters of the finite conformal symplectic group CSp(4,q), \emph{Comm. Algebra} 
10 (1982), no. 13, 1369--1419. 

\bibitem{Sri} B. Srinivasan, The characters of the finite symplectic group $Sp(4,q)$, \emph{Trans. Amer. Math. Soc.} 131 (1968) 488--525.
 
\bibitem{St} R. Steinberg, \emph{Lectures on Chevalley groups}, Yale University, New Haven, 1968.


\bibitem{Ve} F.D. Veldkamp, Regular elements in anisotropic tori, in \emph{Contributions to algebra (collection of papers dedicated to Ellis Kolchin)}, Academic Press, New York, 1977, 389--424.


\bibitem{Z} A. Zalesski, Low dimensional projective indecomposable modules for Chevalley
groups in defining characteristic, \emph{J. Algebra} 377 (2013), 125--156.


\end{thebibliography}
\end{document}